\DeclareSymbolFontAlphabet{\mathbb}{AMSb}
\DeclareSymbolFontAlphabet{\mathbbl}{bbold}
\renewcommand{\epsilon}{\varepsilon}
\renewcommand{\rho}{\varrho}
\renewcommand{\phi}{\varphi}
\newcommand{\NN}{\ensuremath{\mathbb{N}}\xspace}
\newcommand{\ZZ}{\ensuremath{\mathbb{Z}}\xspace}
\newcommand{\CC}{\ensuremath{\mathbb{C}}\xspace}
\newcommand{\FF}{\ensuremath{\mathbb{F}}\xspace}
\newcommand{\TT}{\ensuremath{\mathbb{T}}\xspace}
\newcommand{\DD}{\ensuremath{\mathbbl{\Delta}}\xspace}
\newcommand{\Sp}{\mathcal{S}p}
\newcommand{\tc}{\ensuremath{\mathrm{TC}}}
\newcommand{\thh}{\ensuremath{\mathrm{THH}}}
\newcommand{\tp}{\ensuremath{\mathrm{TP}}}
\newcommand{\tr}{\ensuremath{\mathrm{TR}}}
\DeclareMathOperator*{\colim}{\ensuremath{\operatorname{colim}}}
\DeclareMathOperator{\fdim}{\operatorname{dim}_F}
\DeclareMathOperator{\ndim}{\operatorname{dim}_{\mathcal{N}}}
\newtheorem{thm}{Theorem}[section]
\newtheorem{prop}[thm]{Proposition}
\newtheorem{lem}[thm]{Lemma}
\newtheorem{cor}[thm]{Corollary}
\newtheorem{defn}[thm]{Definition}
\theoremstyle{remark}
\newtheorem{rem}[thm]{Remark}
\newtheorem{note}[thm]{Notation}
\newtheorem{ex}[thm]{Example}
\newtheorem{con}[thm]{Construction}
\title{\texorpdfstring{$K$-Theory of Truncated Polynomials}{K-Theory of Truncated Polynomials}}
\author{Noah Riggenbach}
\begin{document}
\maketitle
\begin{abstract}
    We study the algebraic $K$-theory of rings of the form $R[x]/x^e$. We do this via trace methods and filtrations on topological Hochschild homology and related theories by quasisyntomic sheaves. We produce computations for $R$ a perfectoid ring in terms of the big Witt vectors of $R$, for $R$ a smooth curve over a perfectoid ring in terms of the prismatic cohomology of $R$, and for $R$ a complete mixed characteristic discrete valuation rings with perfect residue field in terms of the prismatic cohomology and Hodge-Tate divisor of $R$.
\end{abstract}
\tableofcontents

\section{Introduction}

The algebraic $K$-theory of truncated polynomials has been studied by many authors. As the simplest singular family of rings and a prototypical example of nilpotent thickenings it serves as a fantastic test bed for advances in trace methods. In addition, it also plays an important role among nilpotent extensions. For example for any ring $R$ and nonunit $\pi\in R$ there is a filtration on $R/\pi^n$ which has the same associated graded terms as the $x$-adic filtration on $(R/\pi[x])/x^n$. 

The problem of computing the algebraic $K$-theory of truncated polynomial algebras in all degrees was first successfully done by Hesselholt and Madsen in \cite[Theorem E]{HM_Witt_vector_K_theory} where they compute $K_*(k[x]/x^2, (x))$ for $k$ a perfect field of characteristic $p>0$. This was extended by the same authors to a calculation of $K_*(k[x]/x^e,(x))$ in \cite{Cyclic_Polytopes}. Both of these calculations were revisited by Speirs in \cite{Speirs_truncated} using the reformulation of topological cyclic homology due to Nikolaus and Scholze in \cite{NS}. In the case of $e=2$ these results were then revisited again by Mathew in \cite[Theorem 10.4]{Mathew_recent_advances} using the quasisyntomic filtration of \cite{BMS2}. The case $e>2$ was then computed by Sulyma in \cite{Sulyma_truncated} using these methods. 

These computations are given in terms of the big Witt vectors of $k$ and the Verschiebung maps and is surprisingly simple. Generalizing these results to different base rings has been done in several directions. The case of $A/k$ a smooth algebra was considered next. In \cite{Hesselholt-Madsen} it is shown that the groups $K_*(A[x]/x^e, (x))$ fit into a long exact sequence with truncated de Rham-Witt complexes and Verschiebung maps in a similar fashion as their answer when $A=k$. When the base ring is the tensor product of other truncated polynomial algebras this was studied by Angeltveit, Gerhardt, Hill, and Lindenstrauss in \cite{AGHL} where they calculate the $K$-groups explicitly when the characteristic does not divide the exponents and prove an inductive formula when they do. Moving away from the positive characteristic case, the $K$-theory of truncated polynomials over the integers has also been studied by Angeltveit, Gerhardt, and Hesselholt in \cite{AGH} where they show that the even groups are finite, the odd groups are free, and compute the cardinality and ranks of these groups respectively. 

Using truncated polynomials it was then shown by Betley and Schlichtkrull in \cite{Betley_Schlichtkrull} that profinitely one can recover the topological cyclic homology of the base. Specifically one has an equivalence $\tc(A;\widehat{\ZZ})\simeq \lim \Omega K(A[x]/x^e, (x);\widehat{\ZZ})$. Here the limit is over both the map $A[x]/x^{ef}\to A[x]/x^e$ given by $x\mapsto x$ and the maps $A[x]/x^e\to A[x]/x^{ef}$ given by $x\mapsto x^f$. These maps were then identified in terms of the calculations done above when $A/k$ is smooth in \cite{Hesselholt_tower}. In addition it was also shown by Hesselholt in \cite{Hesselholt_curves} that the result of \cite{Betley_Schlichtkrull} could be refined to a computation of $\tr(A)$ when $A$ is a commutative $\ZZ/p^j$-algebra for some $j\geq 1$. The conditions on $A$ were removed by McCandless in \cite[Theorem A]{McCandless_curves}. 

The goal of this paper is to use the quasisyntomic topology to extend these computations and to study topological restriction homology. Our first result is to extend the computations for $k$ a perfect $\FF_p$-algebra to perfectoid rings.

\begin{thm}\label{thm: main computation for perfectoids}
Let $R$ be a perfectoid ring. Then \[K_{2r-1}(R[x]/x^e, (x);\ZZ_p)\cong \mathbb{W}_{re}(R)/V_e\mathbb{W}_r(R)\] and the even groups are trivial.
\end{thm}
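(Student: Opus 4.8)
The plan is to run the trace--methods computation of Hesselholt--Madsen and Speirs, but with the topological Hochschild homology of the base ring upgraded from a perfect field to a perfectoid ring via B\"okstedt periodicity. First I would use the Dundas--Goodwillie--McCarthy theorem: since $(x)\subset R[x]/x^e$ is nilpotent, the cyclotomic trace is an equivalence on relative $p$-complete theories, so $K(R[x]/x^e,(x);\ZZ_p)\simeq\tc(R[x]/x^e,(x);\ZZ_p)$ and it suffices to compute the right-hand side.

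Next I would invoke the cyclic bar construction: writing $\Pi_e=\{0,1,t,\dots,t^{e-1}\}$ for the pointed monoid with $t^e=0$, one has $\thh(R[x]/x^e)\simeq\thh(R)\otimes_{\mathbb{S}}\mathbb{S}[N^{\mathrm{cy}}(\Pi_e)]$, which splits $S^1$-equivariantly by weight (total $t$-degree) as $\bigoplus_{d\ge 0}\thh(R[x]/x^e;d)$, with weight-zero part $\thh(R)$ and relative part $\overline{\thh}(R[x]/x^e)=\bigoplus_{d\ge 1}\thh(R[x]/x^e;d)$. Hesselholt--Madsen's analysis of the cyclic polytopes $N^{\mathrm{cy}}(\Pi_e;d)$ gives each weight piece an explicit $S^1$-equivariant form --- a suspension of an induced-type spectrum built from $\thh(R;\ZZ_p)$, with homotopy concentrated in a short range determined by $d\bmod e$ and $\lceil d/e\rceil$ --- and for perfectoid $R$ one feeds in that $\thh_*(R;\ZZ_p)$ is polynomial over $R$ on a class in degree $2$ (B\"okstedt periodicity, \cite{BMS2}). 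Up to this point everything is formally identical to the perfect-field case with $k$ replaced by $R$.

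Then I would compute $\tc$ weight by weight using the Nikolaus--Scholze formula $\tc(-;\ZZ_p)=\operatorname{fib}\!\big(\tc^-(-;\ZZ_p)\xrightarrow{\varphi-\mathrm{can}}\tp(-;\ZZ_p)\big)$ from \cite{NS}. Since the cyclotomic Frobenius multiplies weight by $p$, the weights $d\ge 1$ group into orbits $\{d_0,pd_0,p^2d_0,\dots\}$; for each orbit, after passing to $S^1$-homotopy fixed points and the Tate construction one gets a tower of $A_{\mathrm{inf}}(R)$-modules --- here $\tp_*(R;\ZZ_p)=A_{\mathrm{inf}}(R)[\sigma^{\pm 1}]$ and $\tc^-_*(R;\ZZ_p)$ has its \cite{BMS2} description, with $\varphi$ governed by the Witt Frobenius on $A_{\mathrm{inf}}(R)=W(R^\flat)$ --- and $\tc$ of the orbit is the fiber of an explicit Frobenius-semilinear operator, whose cohomology I would compute by the same $\operatorname{colim}$ manipulations as in Speirs and Sulyma. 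Equivalently, since $\pi_0\tr^m$ of an ordinary ring is the big Witt vectors $\mathbb{W}_m$, one can run this through $\tr(R[x]/x^e,(x);\ZZ_p)$ and its restriction/Frobenius structure, which is where the Witt-vector packaging of the final answer originates.

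Finally I would assemble the pieces: a parity argument shows each weight contributes only in odd total degree, so the even groups vanish; in degree $2r-1$ exactly the weights $1\le d\le re$ contribute, and separating $e\mid d$ from $e\nmid d$ and identifying the cokernel of the relevant Frobenius operator on $A_{\mathrm{inf}}(R)$-modules with the corresponding component of the big Witt vectors of $R$ itself, the sum over all $d$ reorganizes into $\mathbb{W}_{re}(R)/V_e\mathbb{W}_r(R)$, with the $e\mid d$ weights producing precisely the Verschiebung image. I expect this last identification to be the main obstacle: converting the Frobenius-cokernels of $A_{\mathrm{inf}}(R)[\sigma^{\pm1}]$-type modules output by the $\tc$ computation into honest big Witt vectors of the perfectoid ring $R$ (not of its tilt), carrying out the bookkeeping of weights, degrees, and truncation levels uniformly in $e$ and $r$, and handling the $p$-torsion phenomena when $p$ is a zero divisor in $R$, where several of the B\"okstedt-periodicity and cyclic-polytope inputs need care in this generality.
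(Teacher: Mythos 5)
Your proposal is correct in outline and its topological inputs coincide with the paper's: Dundas--Goodwillie--McCarthy, the pointed-monoid splitting $\thh(R[x]/x^e)\simeq \thh(R)\wedge B^{cy}(\Pi_e)$ with Hesselholt--Madsen's identification of the weight pieces $B_d$, B\"okstedt periodicity for perfectoid rings, the Nikolaus--Scholze fiber sequence applied orbit-by-orbit under weight multiplication by $p$, and finally $A_{inf}(R)/\tilde{d}_n\cong W_n(R)$ to repackage the answer in Witt vectors. Where you differ is in packaging: you propose the direct Speirs--Sulyma-style computation on homotopy groups, whereas the paper derives the perfectoid case as a corollary of a general theorem (Theorem~\ref{thm: tc filtration simplification segal}) obtained by sheafifying exactly this computation over the quasisyntomic site; since a perfectoid ring is itself quasiregular semiperfectoid, the paper's filtration degenerates to the double-speed Postnikov filtration there and the two arguments become essentially the same calculation. (The paper explicitly acknowledges that Sulyma's methods extend to cover this case, so your route is known to close.) Your approach is more elementary and self-contained for this one theorem; the paper's buys the F-smooth and $\mathcal{N}$-dimension~$\leq 1$ generalizations for free. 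Two points you flag as obstacles are indeed where the content lies, and both are handled cleanly in the general setting: the conversion from $A_{inf}(R)$-module cokernels to $\mathbb{W}_{re}(R)/V_e\mathbb{W}_r(R)$ uses that $\tilde{d}_n$ is a non-zero-divisor together with \cite[Lemma 3.12]{BMS1} and the reindexing of \cite[Lemma 2]{Speirs_truncated} (which works for any $p$-complete ring), and the $p$-torsion issues are absorbed because the relevant statements about $A_{inf}(R)$ and the weight pieces hold for arbitrary perfectoid $R$. One step you should make explicit rather than call a ``parity argument'': the vanishing of the even groups requires that $\mathrm{can}-\phi$ be \emph{surjective} on the (odd-degree) homotopy of the infinite products, which is the inductive ``push errors to infinity'' argument using that $\phi$ strictly increases the weight index while $\mathrm{can}$ preserves it.
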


We will also consider the results for $\ZZ$ (and other discrete valuation rings) and for smooth algebras over perfectoid rings. The recent advances in topological cyclic homology will allow us to consider both of these cases at the same time. Specifically, these rings have bounds on their F-dimensions and $\mathcal{N}$-dimensions, as defined in Subsection~\ref{ssec:f and n dimension}.

In order to state the next Theorem it will be helpful to first fix some notation. First define $J_p:=\NN\setminus p\NN$ and for any $e\in \ZZ$ let $e'\in J_p$ be the integer $e':=e/p^{v_p(e)}$. As in \cite[Section 2]{Speirs_truncated} let $s:=s(p,r,u)$ be the unique positive integer such that \[up^{s-1}\leq r<up^s\] if such an integer exists and $s=0$ otherwise. Define the function $t:=t(u,p,s,e)$ to be the function $t=\lfloor \frac{up^{s-1}-1}{e}\rfloor$.

\begin{thm}\label{thm: computation for F-curves}
Let $S$ be an F-smooth ring with $\ndim(S)\leq 1$(see Definition~\ref{defn: n-dim}) and $r\geq 2$. Then there are isomorphisms
\begin{align*}
&K_{2r-1}(S[x]/x^e, (x);\ZZ_p)\simeq \\
&\prod_{u\in J_p\setminus e'J_p}
{H^0\left(S, \mathcal{I}^{-\lfloor t/p\rfloor}\otimes \mathcal{N}^{\geq r-t-1}\mathcal{O}_{\widehat{\DD}}/(\mathcal{N}^{\geq r-t}\mathcal{O}_{\widehat{\DD}}\otimes \mathcal{I}_{s-1})\{r-1\}\right)} \\ 
&\times \prod_{u\in e'J_p}\begin{cases}{H^0\left(S, \mathcal{I}^{-\lfloor t/p\rfloor}\otimes \mathcal{N}^{\geq r-t-1}\mathcal{O}_{\widehat{\DD}}/(\mathcal{N}^{\geq r-t}\mathcal{O}_{\widehat{\DD}}\otimes \mathcal{I}_{s-1})\{r-1\}\right)} & \textrm{if }up^{v_p(e)}\geq er\\
H^0\left(S, \mathcal{I}^{-\lfloor t/p\rfloor}\otimes \mathcal{N}^{\geq r-t-1}\mathcal{O}_{\widehat{\DD}}/(\phi^{s-1-v_p(e)})^*(\mathcal{I}_{v_p(e)})\{r-1\}\right) & \textrm{otherwise}\end{cases} \end{align*}
and
\begin{align*}
&K_{2r-2}(S[x]/x^e, (x);\ZZ_p)\simeq \\
&\prod_{u\in J_p\setminus e'J_p}
{H^1\left(S,\mathcal{I}^{-\lfloor t/p\rfloor}\otimes \mathcal{N}^{\geq r-t-1}\mathcal{O}_{\widehat{\DD}}/(\mathcal{N}^{\geq r-t}\mathcal{O}_{\widehat{\DD}}\otimes \mathcal{I}_{s-1})\{r-1\}\right)} \\ 
&\times \prod_{u\in e'J_p}\begin{cases}{H^1\left(S, \mathcal{I}^{-\lfloor t/p\rfloor}\otimes \mathcal{N}^{\geq r-t-1}\mathcal{O}_{\widehat{\DD}}/(\mathcal{N}^{\geq r-t}\mathcal{O}_{\widehat{\DD}}\otimes \mathcal{I}_{s-1})\{r-1\}\right)} & \textrm{if }up^{v_p(e)}\geq er\\
H^1\left(S, \mathcal{I}^{-\lfloor t/p\rfloor}\otimes \mathcal{N}^{\geq r-t-1}\mathcal{O}_{\widehat{\DD}}/(\phi^{s-1-v_p(e)})^*(\mathcal{I}_{v_p(e)})\{r-1\}\right) & \textrm{otherwise}\end{cases} \end{align*}
with $s=s(p,re,u)$. When $s=0$ we define the above groups to be zero.
\end{thm}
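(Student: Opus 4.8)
\emph{Reduction to $\tc$ and the weight decomposition.} The plan is to carry out the trace-method computation of Hesselholt--Madsen and Speirs in the quasisyntomic topology, letting the base $S$ vary. Since $(x)\subset S[x]/x^e$ is nilpotent, the cyclotomic trace gives an equivalence $K(S[x]/x^e,(x);\ZZ_p)\simeq \tc(S[x]/x^e,(x);\ZZ_p)$ by Dundas--Goodwillie--McCarthy, so it suffices to compute the relative $\tc$. The ring $S[x]/x^e$ is graded by $x$-weight, and this grading is inherited by $\thh(S[x]/x^e;\ZZ_p)$ as a cyclotomic spectrum; the relative term is the sum of the positive-weight summands, the comparison map $\mathrm{can}$ preserves weight, and the cyclotomic Frobenius $\varphi_p$ multiplies weights by $p$. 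Following \cite[\S 2]{Speirs_truncated}, one organizes the positive weights into orbits under multiplication by $p$, each with a unique representative $u\in J_p$. Because the relative $\thh$ in weight $i$ is, after $p$-completion, concentrated around homological degree $2\lceil i/e\rceil$, only the first $s=s(p,re,u)$ terms of the orbit of $u$ can contribute to $K_{2r-1}$ and $K_{2r-2}$ (and none contribute when $s=0$), while $t=\lfloor(up^{s-1}-1)/e\rfloor$ records how many complete blocks of length $e$ fit below the smallest relevant weight in the orbit; this is the mechanism by which $re$, rather than $r$, enters the indexing.

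\emph{Per-orbit computation and quasisyntomic descent.} Fixing $u$, the contribution of its orbit to the relative $\tc$ is the fiber of $\mathrm{can}-\varphi$ on the corresponding finite product of weight summands of $\tc^-(S[x]/x^e;\ZZ_p)$ and $\tp(S[x]/x^e;\ZZ_p)$; over a perfectoid ring each such summand is a shift of $S$ with its Frobenius, which is how Theorem~\ref{thm: main computation for perfectoids} arises. For general $S$ I would invoke quasisyntomic descent in the style of \cite{BMS2} and Bhatt--Scholze: each weight summand of $\thh$, $\tc^-$, and $\tp$ is the global sections over $S$ of a quasisyntomic sheaf whose value on quasiregular semiperfectoids is the corresponding weight summand there, and on the associated graded of the Nygaard filtration these become Breuil--Kisin-twisted, Nygaard-truncated Hodge--Tate cohomology $\mathcal{N}^{\geq\bullet}\mathcal{O}_{\widehat{\DD}}$ with the cyclotomic Frobenius inducing the divided prismatic Frobenius. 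The hypothesis that $S$ is F-smooth guarantees that these Nygaard filtrations are complete and that the relevant Frobenius-comparison (``Segal conjecture''-type) maps are isomorphisms in the pertinent weight range, so that for each $u$ the fiber of $\mathrm{can}-\varphi$ collapses to an explicit two-term complex of sheaves on $S$; from this one reads off the Nygaard levels $r-t-1$ and $r-t$, the twist $\mathcal{I}^{-\lfloor t/p\rfloor}\{r-1\}$, and, in the range where the Frobenius is divisible by a power of the Hodge--Tate ideal, the divided term $(\phi^{s-1-v_p(e)})^*(\mathcal{I}_{v_p(e)})$.

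\emph{Extracting the $K$-groups.} The splitting of the product into $u\in J_p\setminus e'J_p$ and $u\in e'J_p$ reflects whether the orbit of $u$ contains a weight that is an exact multiple of $e$, equivalently whether $e'\mid u$: in the latter case an additional ``diagonal'' term survives, and the subdichotomy $up^{v_p(e)}\geq er$ versus otherwise records whether the $x$-adic truncation has already cut this term off at Nygaard level $r-t$, or whether instead one has entered the range where $\varphi$ fails to be an isomorphism and the divided ideal $(\phi^{s-1-v_p(e)})^*(\mathcal{I}_{v_p(e)})$ appears. Finally, the bound $\ndim(S)\leq 1$ forces the cohomology of each of these two-term complexes to be concentrated in degrees $0$ and $1$, so the descent spectral sequence degenerates and the long exact sequence relating the two $K$-groups splits into the stated isomorphisms, with $H^0$ computing $K_{2r-1}$ and $H^1$ computing $K_{2r-2}$.

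\emph{Main obstacle.} The technical heart is the per-orbit computation together with the descent identification: tracking the Nygaard level and Breuil--Kisin twist through the $\tc$-equalizer across an entire multiplication-by-$p$ orbit, and in particular pinning down exactly when the cyclotomic Frobenius becomes divisible by a power of $\mathcal{I}$ so that the divided-Frobenius term $(\phi^{s-1-v_p(e)})^*(\mathcal{I}_{v_p(e)})$ appears, is delicate, and it is precisely here that F-smoothness (completeness of the Nygaard filtration and the Segal-conjecture-type isomorphism) and the bound $\ndim(S)\leq 1$ are used. Matching the resulting combinatorics exactly with the functions $s(p,re,u)$ and $t=\lfloor(up^{s-1}-1)/e\rfloor$, including the boundary cases $s=0$ and $up^{v_p(e)}\geq er$, is the remaining bookkeeping.
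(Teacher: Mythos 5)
Your proposal is correct in outline and follows essentially the same route as the paper: Dundas--Goodwillie--McCarthy reduction to relative $\tc$, the Hesselholt--Madsen/Speirs weight decomposition of $\thh(-[x]/x^e)$ organized into multiplication-by-$p$ orbits, quasisyntomic descent identifying the graded pieces of the Nygaard-type filtration on $\tc^-$ and $\tp$ in terms of $\mathcal{N}^{\geq\bullet}\mathcal{O}_{\widehat{\DD}}$ and the Hodge--Tate ideal, the Segal-conjecture-type collapse of $\mathrm{can}-\phi$ onto the $m=s-1$ terms under F-smoothness, and degeneration of the resulting spectral sequence from $\ndim(S)\leq 1$. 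The one ingredient you gloss over that the paper treats at length is the origin of the twist $\mathcal{I}^{-\lfloor t/p\rfloor}$, which comes from identifying the representation spheres $S^{\lambda_k}$ appearing in $B^{cy}(\Pi_e)$ with tensor powers of $\mathcal{I}_{v_p(l)}$ and then trivializing Frobenius pullbacks; but this is bookkeeping you flag rather than a wrong turn.
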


\begin{rem}
It is helpful for \cite{Speirs_truncated} to introduce a function $h:=h(p,r, e,u)$ similar to the $s$ function above. This function allows them to avoid the seperate cases in their theorem statement. This will not work in our case since the Frobenius pullback $(\phi^s)^*(\mathcal{I})$ will in general not be equivalent to $\mathcal{I}$. In \cite{Speirs_truncated} the only case considered is when $\mathcal{I}=p\mathcal{O}_{\widehat{\DD}}$ and so the Frobenius pullback will have no effect. 
\end{rem}

The strategy to prove this is to first construct a spectral sequence for general $p$-complete rings $S$ with $E_2$ page described in terms of quasisyntomic sheaves. We do this in Section~\ref{sec: bms spectral sequence for truncated polynomial algebras}. While the spectral sequence we construct is in terms of quasisyntomic sheaves, we emphasize here that the assumption on $\ndim(S)$ is not just to get the spectral sequence to collapse but is used in significantly simplifying the $E_2$ page. The statement on the $E_2$ page works when $r> \frac{p}{p-1}(\fdim(S)-1)+1$ without any assumptions on $\ndim(S)$. The exact statement (with $i=r-1$) is Theorem~\ref{thm: tc filtration simplification segal}.

We include two applications of our methods. The first application is to topological restriction homology. In \cite[Lemma 7.11]{k1_local_tr} Mathew computed $\tr(\mathcal{O}_C)$ where $\mathcal{O}_C\subseteq C$ is the ring of integers in a spherically complete algebraically closed nonarchemidian field of mixed characteristic $(0,p)$. One of the key steps in the proof is showing that the odd groups $\tr_{2i+1}(\mathcal{O}_C)$ vanish which crucially uses spherical completeness. Note that $C$ is also a perfectoid field in the sense of Tate, and so we have another measure of spherical completeness given by the cokernel of the map $\theta_\infty: A_{inf}(\mathcal{O}_C)\to W(\mathcal{O}_C)$ by \cite[Lemma 3.23]{BMS1}. 

\begin{thm}\label{thm: main calculation for tr}
Let $S$ be a F-smooth ring with $\ndim(S)\leq 1$(see Definition~\ref{defn: n-dim}). Then for all $i\geq 1$ there are functorial in $S$ chain complexes $\theta^{\widehat{\DD}_S}_{\infty}(i)$ such that
\begin{enumerate}
    \item when $R$ is a perfectoid ring there is a quasi-isomorphism \[\theta^{\widehat{\DD}_R}_{\infty}(1)\simeq ( \ldots\to 0\to A_{inf}(R)\xrightarrow{\theta_{\infty}} W(R)\to 0\to \ldots)\] with $A_{inf}(R)$ is degree zero and $\theta^{\widehat{\DD}_R}_\infty(i)\cong \operatorname{fib}(A_{inf}(R)\to \lim_{n\to \infty} A_{inf}(R)/(d_n)^i)$ more generally;
    \item there are short exact sequences  \[0\to H^2(\theta^{\widehat{\DD}_S}_\infty(i+1)) \to  \tr_{2i}(S;\ZZ_p) \to  H^0(\theta^{\widehat{\DD}_S}_{\infty}(i))\to 0 \] and isomorphisms \[\tr_{2i-1}(S)\cong H^1(\theta^{\widehat{\DD}_S}_\infty (i))\] for all $i\geq 1$, and in particular the groups $\mathrm{TR}_{2i-1}(\mathcal{O}_{C})$ vanish for $C$ a spherically complete perfectoid field; and
    \item for all $i\geq 1$,  $\theta^{\widehat{\DD}_S}_\infty (i)\in D^{[0,\ndim(S)+1]}(\ZZ_p)$.
\end{enumerate}
\end{thm}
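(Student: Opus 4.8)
The plan is to prove Theorem~\ref{thm: main calculation for tr} by assembling the three parts from the trace-method machinery developed earlier in the paper, with the bulk of the work going into part (2) and the definition of the complexes $\theta^{\widehat{\DD}_S}_\infty(i)$. First I would define $\theta^{\widehat{\DD}_S}_\infty(i)$ as a limit over $e$ of the relevant truncated quasisyntomic sheaf cohomology complexes appearing in Theorem~\ref{thm: computation for F-curves}, set up so that taking the inverse limit over the transition maps $S[x]/x^{ef}\to S[x]/x^e$ recovers $\tr(S;\ZZ_p)$ via the Betley--Schlichtkrull/McCandless identification $\tr(S;\ZZ_p)\simeq \lim_e \Omega K(S[x]/x^e,(x);\ZZ_p)$ referenced in the introduction. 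Concretely, in the perfectoid case the $E_2$-page reduces to the big Witt vector description of Theorem~\ref{thm: main computation for perfectoids}, and taking the limit over $e$ of $\mathbb{W}_{re}(R)/V_e\mathbb{W}_r(R)$ along the restriction maps collapses, by a cofinality/telescope argument on the Witt components indexed by $J_p$, to the two-term complex $A_{inf}(R)\xrightarrow{\theta_\infty} W(R)$; this is exactly the computation of \cite[Lemma 3.23]{BMS1} repackaged, and the generalization to $\operatorname{fib}(A_{inf}(R)\to\lim_n A_{inf}(R)/(d_n)^i)$ for $i>1$ follows by tracking the $\mathcal{N}^{\geq\bullet}$ and $\mathcal{I}$-power filtrations through the same limit. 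This gives part (1).

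For part (2), I would argue that the spectral sequence of Section~\ref{sec: bms spectral sequence for truncated polynomial algebras}, once the $E_2$-page is simplified under the hypothesis $\ndim(S)\leq 1$ (via Theorem~\ref{thm: tc filtration simplification segal}, applied with $i=r-1$), has only two nonzero rows — the $H^0$ and $H^1$ of the quasisyntomic sheaves — so it degenerates, yielding the short exact sequences and isomorphisms in the statement of Theorem~\ref{thm: computation for F-curves}. Passing to the limit over $e$, the $H^1$-term in even degree $2r-2$ and the $H^0$-term in even degree $2r$ reorganize: the inverse limit of the $H^0$-pieces assembles into $H^0(\theta^{\widehat{\DD}_S}_\infty(i))$, and the inverse limit of the $H^1$-pieces shifted appropriately assembles into $H^2(\theta^{\widehat{\DD}_S}_\infty(i+1))$, producing the claimed short exact sequence $0\to H^2(\theta^{\widehat{\DD}_S}_\infty(i+1))\to \tr_{2i}(S;\ZZ_p)\to H^0(\theta^{\widehat{\DD}_S}_\infty(i))\to 0$; the odd case is cleaner since only the $H^1$-term survives, giving $\tr_{2i-1}(S)\cong H^1(\theta^{\widehat{\DD}_S}_\infty(i))$. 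The vanishing of $\tr_{2i-1}(\mathcal{O}_C)$ for $C$ spherically complete then follows from part (1): spherical completeness forces $\theta_\infty$ (hence all the $\theta_n$ built from it) to be surjective with the relevant $H^1$ vanishing, matching Mathew's \cite[Lemma 7.11]{k1_local_tr}.

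For part (3), the amplitude claim $\theta^{\widehat{\DD}_S}_\infty(i)\in D^{[0,\ndim(S)+1]}(\ZZ_p)$ is a consequence of the fact that each finite-stage complex, being built from $H^*(S,-)$ of a quasisyntomic sheaf on a ring with $\ndim(S)\leq 1$, lives in cohomological degrees $[0,\ndim(S)+1]$ (one degree for the sheaf cohomology of an F-smooth ring of bounded $\mathcal{N}$-dimension, stacked on the internal degree of the two-term filtration quotient), together with the observation that a filtered inverse limit of complexes with uniformly bounded amplitude in $[0,n]$ still lies in $D^{[0,n+1]}$ after accounting for a possible $\lim^1$, which here is absorbed because the transition maps are eventually surjective on the relevant pieces. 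I expect the main obstacle to be part (2): specifically, verifying that the transition maps $x\mapsto x$ on $K(S[x]/x^e,(x))$ are compatible, under the $E_2$-identification, with the prismatic transition maps defining $\theta^{\widehat{\DD}_S}_\infty$, and carefully bookkeeping the index shift $u\in J_p\setminus e'J_p$ versus $u\in e'J_p$ and the Frobenius-pullback cases from Theorem~\ref{thm: computation for F-curves} so that the limit genuinely produces $\operatorname{fib}(A_{inf}\to\lim_n A_{inf}/(d_n)^i)$ rather than something twisted; reconciling these two index conventions in the limit is where the real care is needed.
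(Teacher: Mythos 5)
Your proposal takes a genuinely different route from the paper, and as written it has a gap that the paper deliberately avoids. The paper does not pass through the $K$-theory of truncated polynomials at all: Section~\ref{sec: tr stuff} starts from McCandless's equalizer formula expressing $\tr(S)$ as the equalizer of the canonical and Frobenius maps from $\prod_{n}\thh(S;\ZZ_p)^{hC_{p^n}}$ to $\prod_{n}\thh(S;\ZZ_p)^{tC_{p^n}}$, filters each factor by the Postnikov/Nygaard filtrations of Section~\ref{sec: BMS spectral sequence}, and identifies $\mathrm{gr}^i\tr(S)$ with the equalizer of sheaves in Corollary~\ref{cor: quasisyntomic associated graded for tr}. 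Under the hypothesis $\fdim(S)\leq \ndim(S)\leq 1$ the Frobenius leg of that equalizer is an equivalence on each factor (Lemmas~\ref{lem: Tsaladis's algebraic theorem} and~\ref{lem: Segal conjecture with line bundles}), so the equalizer collapses to the derived inverse limit of the system $\widehat{\DD}_S/I_n\{i\}$ along the maps $\theta^{\widehat{\DD}_S}_{n+1,n}\{i\}=can\circ \phi_i^{-1}$; this is the \emph{definition} of $\theta^{\widehat{\DD}_S}_\infty(i)$, part (1) is an explicit manipulation of the inverse system $A_{inf}(R)/\tilde{d}_n$ for perfectoid $R$, and parts (2) and (3) follow from Lemma~\ref{lem: cohomology bounds stable with I}, the Milnor sequence, and degeneration of the resulting spectral sequence for degree reasons.

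The gap in your route is exactly the step you flag but do not carry out: identifying the transition maps $K(S[x]/x^{ef},(x);\ZZ_p)\to K(S[x]/x^{e},(x);\ZZ_p)$, $x\mapsto x$, on the associated graded of the filtration of Section~\ref{sec: bms spectral sequence for truncated polynomial algebras}. Nothing in the paper computes these maps; without them the limit over $e$ of the groups in Theorem~\ref{thm: computation for F-curves} cannot be identified with any functorial prismatic complex, let alone with $\operatorname{fib}(A_{inf}(R)\to\lim_n A_{inf}(R)/(d_n)^i)$ and the specific map $\theta_\infty$ demanded by part (1). The analogous identification of transition maps in the smooth characteristic-$p$ case is the content of a separate paper (\cite{Hesselholt_tower}), and the paper explicitly poses the comparison of the curves formulation with its own as an open question at the start of Section~\ref{sec: tr stuff}, so this is not a detail one can wave through. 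There is also a second-order problem: even granting the transition maps, you must interchange the limit over $e$ with the filtration, producing a doubly indexed system of $\lim/\lim^1$ terms (over $e$ and over the product index $u\in J_p$) whose reassembly into $H^0(\theta^{\widehat{\DD}_S}_\infty(i))$ and $H^2(\theta^{\widehat{\DD}_S}_\infty(i+1))$ is asserted rather than argued. Finally, your justification of part (3) is internally inconsistent: if the $\lim^1$ terms were ``absorbed'' by eventual surjectivity you would land in $D^{[0,\ndim(S)]}$, whereas the $+1$ in the statement is precisely the Milnor $\lim^1$ contribution, which the paper's argument keeps rather than kills.
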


Without the assumption that $\ndim(S)\leq 1$ there is a spectral sequence converging to $\tr(S)$ and $\mathrm{gr}^i\tr(S)\simeq \theta^{\widehat{\DD}_S}_\infty (i)$ for $i\geq \fdim(S)$. These complexes should be the cohomology of a quasisyntomic sheaf and should exist for all $i\in \NN$ without the assumption on $\fdim(S)$, but for this paper we only consider the above case. One could take $\theta^{\widehat{\DD}_S}_\infty(i):=\operatorname{R\Gamma}_{\mathrm{QSyn}}(S;\mathrm{gr}^i\tr(-)[-2i])$ where the filtration is the one constructed in Section~\ref{sec: tr stuff} but it takes some work to connect this definition to spherical completeness for perfectoid rings. 

For the next application of our results, we will change our notation slightly. We have thus far used $e$ for the exponent of $x$ which we are modding out by. This has been in an effort to match with the notation used by previous authors who have studied the $K$-theory of truncated polynomial algebras. In what follows, we will wish to talk about the ramification degree which is usually denoted by $e$. Thus for the rest of this section and Section~\ref{sec: agh proof} we will use $e$ for the ramification degree and $n$ for the exponent of $x$.

As another application of our ideas, Theorem~\ref{thm: computation for F-curves} can be used to recover and extend an interesting result of Angeltveit, Gerhardt, and Hesselholt. Specifically, in \cite{AGH} the following theorem is proven.

\begin{thm}[Theorem A, \cite{AGH}]\label{thm: agh}
Let $n$ be a positive integer and $i$ a non-negative integer. Then:
\begin{enumerate}
    \item The abelian group $K_{2i+1}(\ZZ[x]/x^n, (x))$ if free of rank $n-1$.
    \item The abelian group $K_{2i}(\ZZ[x]/x^n, (x))$ is finite of order $(ni)!(i!)^{n-2}$.
\end{enumerate}
\end{thm}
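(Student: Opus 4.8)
The plan is to derive Theorem~\ref{thm: agh} as a special case of Theorem~\ref{thm: computation for F-curves}, applied to the ring $S=\ZZ_p$ (so $n$ now denotes the truncation exponent), and then to compare with the rational computation and assemble the integral answer from the $p$-local pieces. First I would observe that $S=\ZZ_p$ is an F-smooth ring with $\ndim(\ZZ_p)\leq 1$ (in fact $\ndim=0$), being a complete mixed characteristic discrete valuation ring with perfect residue field $\FF_p$ and ramification degree $1$; here the Hodge--Tate divisor $\mathcal I$ and the prism $\mathcal O_{\widehat{\DD}}$ are completely explicit, with $\mathcal O_{\widehat{\DD}_{\ZZ_p}}=\ZZ_p$ (the prism $(\ZZ_p,(p))$) and $\mathcal I = (p)$, and all the Breuil--Kisin twists are trivial. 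So the cohomology groups $H^0$ and $H^1$ appearing in Theorem~\ref{thm: computation for F-curves} reduce to explicit finite quotients of $\ZZ_p$: $H^1$ always vanishes because everything in sight is a quotient of $\ZZ_p$ by a nonzero ideal (so $\ZZ_p$-torsion-free on $H^0$-level, nothing in $H^1$), and $H^0$ of each factor is a cyclic $p$-group $\ZZ_p/p^{a}$ for an exponent $a=a(p,r,n,u)$ that I would read off from the $\mathcal N^{\geq}$-filtration quotients, the power $\mathcal I^{-\lfloor t/p\rfloor}$, and the ideals $\mathcal I_{s-1}$, $(\phi^{s-1-v_p(n)})^*(\mathcal I_{v_p(n)})$.

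The next step is bookkeeping: for fixed $i\geq 1$ (so $r=i+1$ in the indexing of Theorem~\ref{thm: computation for F-curves}, or one sets up the indices to match $K_{2i+1}$, $K_{2i}$), the odd $K$-group $K_{2i+1}(\ZZ[x]/x^n,(x);\ZZ_p)$ is a product over $u$ in a finite index set of the $H^0$-factors, hence a finite product of cyclic $p$-groups, and I claim the total $p$-adic length is independent of $p$ and equals something that, summed appropriately, matches rank $n-1$ after passing to $\ZZ$. More precisely, I would run the arithmetic-fracture square: $K(\ZZ[x]/x^n,(x))$ is built from its rationalization and its $p$-completions for all primes $p$. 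Rationally, $K_*(\ZZ[x]/x^n,(x))\otimes\QQ$ is computed by cyclic homology / the classical results (relative $K$-theory of a nilpotent ideal is rationally cyclic homology $HC$ shifted), giving $K_{2i+1}(\ZZ[x]/x^n,(x))\otimes\QQ$ of dimension $n-1$ and $K_{2i}\otimes\QQ=0$; this forces the even integral groups to be finite and the odd ones to have rank $n-1$. Combined with the $p$-adic computation showing the odd $p$-completed groups are finite-rank-free over $\ZZ_p$ of the right rank and the even ones finite, a standard fracture-square argument yields that $K_{2i+1}(\ZZ[x]/x^n,(x))$ is finitely generated of rank $n-1$ and, since each $p$-completion is torsion-free, it is in fact free of rank $n-1$; and $K_{2i}(\ZZ[x]/x^n,(x))$ is finite with $p$-part equal to the $p$-completed group computed above.

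The final and most delicate step is the order count: one must show $\#K_{2i}(\ZZ[x]/x^n,(x)) = (ni)!\,(i!)^{n-2}$, equivalently that $\sum_p v_p\big(\#K_{2i}(\ZZ[x]/x^n,(x);\ZZ_p)\big)\log p = \log\big((ni)!(i!)^{n-2}\big)$, i.e. that for each prime $p$ the $p$-adic valuation of $(ni)!(i!)^{n-2}$ equals the sum of the exponents $a(p,i+1,n,u)$ over the index set $J_p\setminus n'J_p$ together with the $u\in n'J_p$ contributions (split into the two cases $up^{v_p(n)}\geq ni$ or not). Using Legendre's formula $v_p(m!)=\sum_{j\geq 1}\lfloor m/p^j\rfloor$, the target $v_p\big((ni)!(i!)^{n-2}\big) = \sum_{j\geq1}\big(\lfloor ni/p^j\rfloor + (n-2)\lfloor i/p^j\rfloor\big)$. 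On the other side, each local exponent $a(p,r,n,u)$ is, by the structure of the $\mathcal N^{\geq r-t-1}/\mathcal N^{\geq r-t}$ quotient and the twist by $\mathcal I^{-\lfloor t/p\rfloor}$, of the shape $\lfloor \text{something}/p^{\text{something}}\rfloor$ or a small sum thereof, and the claim is a combinatorial identity matching the double sum over $(u,j)$ with Legendre's double sum. I expect this identity to be the main obstacle: it requires carefully unwinding the definitions of $s=s(p,ni,u)$ and $t=\lfloor (up^{s-1}-1)/n\rfloor$, tracking how the exceptional indices $u\in n'J_p$ with $up^{v_p(n)}<ni$ contribute the "missing" terms, and reorganizing the resulting sums; I would structure it as a lemma isolating the purely number-theoretic identity $\sum_{u}a(p,i+1,n,u) = v_p\big((ni)!(i!)^{n-2}\big)$ and prove it by induction on $v_p(n)$ (reducing to the case $p\nmid n$ treated essentially in \cite{Speirs_truncated}, then bootstrapping with the $(\phi^{s-1-v_p(n)})^*(\mathcal I_{v_p(n)})$ correction term), checking the base case $n=1$ directly against \cite{AGH} or against the Hesselholt--Madsen formula for $K_*(\ZZ,(0))$-type inputs. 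As a sanity check I would verify small cases ($n=2$: order $(2i)!$, matching the known $K_{2i}(\ZZ[x]/x^2,(x))$; $n=3$: order $(3i)!\,i!$) before committing to the general identity.
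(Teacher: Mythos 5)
First, a structural point: the paper contains no proof of this statement. It is Theorem A of \cite{AGH}, quoted as a citation, and it is later used as an \emph{input} to the proof of Theorem~\ref{thm: AGH theorem redux} (to pin down the coefficient $l(1,0)=v_p((ni)!(i!)^{n-2})$, which the author explicitly declines to recompute). So your argument has to stand entirely on its own, and it cannot lean on Theorem~\ref{thm: AGH theorem} or Theorem~\ref{thm: AGH theorem redux} without circularity.

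As written it does not stand, because of a concrete error in the cohomological bookkeeping. You assert that for $S=\ZZ_p$ one has $\mathcal{O}_{\widehat{\DD}_{\ZZ_p}}=\ZZ_p$ with $\mathcal{I}=(p)$, that $H^1$ of every factor vanishes, and that each $H^0$ is a finite cyclic $p$-group. This treats $\ZZ_p$ as if it were perfectoid, which it is not: the relevant prism is the Breuil--Kisin prism $(\ZZ_p[[z]],(z-p))$, and --- this is the decisive point --- the Nygaard graded pieces $\mathcal{N}^{j}\DD_{\ZZ_p}$ for $j\geq 1$ are concentrated in cohomological degree $1$ (homological degree $-1$), with $|H^1(\mathcal{N}^{j}\DD_{\ZZ_p})|=|\ZZ_p/j|=p^{v_p(j)}$; this is Lemma~\ref{lem: thh(A) purely algebraically} applied with $E(x)=x-p$, and it reflects the classical fact that $\pi_{2j-1}\thh(\ZZ)\cong\ZZ/j$. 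Consequently the generic factors in Theorem~\ref{thm: computation for F-curves} contribute finite $p$-torsion to $H^1$, i.e.\ to the \emph{even} $K$-groups, and the odd groups receive a free summand $H^0=\ZZ_p$ only from the $n-1$ exceptional factors where the Nygaard index $r-t-1$ drops to $0$ (this is the content of Lemmas~\ref{lem: only junk in odd degrees} and~\ref{lem: only junk in odd degrees coker of v} and the counting in the proof of Theorem~\ref{thm: AGH theorem redux}). Your reading inverts this and would output finite odd groups and vanishing even groups --- the opposite of the statement being proved. Separately, even after the degrees are corrected, the identity $\sum_u(\textrm{local exponents})=v_p\bigl((ni)!(i!)^{n-2}\bigr)$ that you rightly flag as the main obstacle is only sketched; it is exactly the combinatorial computation (Propositions 3.1--3.2 of \cite{AGH}) that the paper avoids by citing \cite{AGH}, so the order count is incomplete as well. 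The arithmetic fracture-square reduction to the rational computation of Soul{\'e}--Staffeldt plus the $p$-complete computations is fine in outline.
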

This result builds on earlier work of Soul{\'e} in \cite{Soule} and Staffeldt in \cite{Staffeldt} which computed the ranks of these groups. In addition, the results of Soul{\'e} and Staffeldt apply not just to the integers but to any number ring.

One of the surprising things about the above result is that there is no torsion group appearing in the odd relative $K$-groups. While we know from the results of Soul{\'e} and Staffeldt that the rational $K$-theory agrees with the rationalization of the above, there is no a priori reason to expect no $p$-torsion to appear in the odd homotopy groups.

Our results allow us to extend the above to more general discrete valuation rings. 

\begin{thm}\label{thm: AGH theorem}
Let $n$ be a positive integer, $i$ a non-negative integer, and $A$ a CDVR with of mixed characteristic $(0,p)$, uniformizer $\pi$ with Eisenstein polynomial $E(x)$, ramification index $e=\deg(E(x))$, and perfect residue field $k$. Then there are isomorphisms \[K_{2i+1}(A[x]/x^n, (x);\ZZ_p)\simeq A^{n-1}.\] If in addition $k$ is finite then \[v_p(|K_{2i}(A[x]/x^n, (x);\ZZ_p)|)=ev_p(|k|)v_p((ni)!(i!)^{(n-2)})+v_p(|k|)v_\pi(E'(\pi))(ni-i)\] and since these groups are $p$-torsion this determines the cardinality.
\end{thm}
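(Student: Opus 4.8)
The plan is to feed the CDVR $A$ into Theorem~\ref{thm: computation for F-curves}. A complete mixed-characteristic DVR with perfect residue field is $F$-smooth (indeed it is quasiregular semiperfectoid after base change, and its prismatic cohomology is concentrated in degree zero), and crucially $\ndim(A) = 0$: there is essentially only one relevant prism, the Breuil--Kisin prism $(\mathfrak{S} = W(k)\llbracket z\rrbracket, E(z))$, so all the $H^1$ terms vanish and all the $H^0$ terms become honest modules over $\mathfrak{S}$ or quotients thereof. So the first step is to specialize the general formula, using $\mathcal{O}_{\widehat{\DD}_A} = \mathfrak{S}$, $\mathcal{I} = (E(z))$, the Nygaard filtration $\mathcal{N}^{\geq j}\mathfrak{S}$, and the Breuil--Kisin twist, and to observe that the products over $u$ become finite: for fixed $r = i$ (here I keep the paper's convention momentarily and then translate), only finitely many $u \in J_p$ contribute because $s(p, ne, u) = 0$ once $u$ is large. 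Counting the contributing $u$ is exactly what produces the rank $n-1$ and, on the torsion side, the exponent formula.

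Second, for the odd groups I would show each contributing factor $H^0(A, \mathcal{I}^{-\lfloor t/p\rfloor} \otimes \mathcal{N}^{\geq r-t-1}\mathfrak{S}/(\cdots)\{r-1\})$ is a free $A$-module of rank one. Over $\mathfrak{S}$ these Nygaard quotients are free of rank one (a single Breuil--Kisin twist of a cyclic $\mathfrak{S}$-module with a non-zero-divisor relation), and applying $H^0(A, -) = \mathfrak{S} \otimes_{?} -$ — i.e. descending from the prism to $A$ along $\mathfrak{S}/E(z) \cong A$ — each stays free of rank one over $A$, because the Nygaard-filtered pieces base-change correctly. Then the total odd $K$-group is $A^{\#\{\text{contributing }u\}}$, and the combinatorial count of contributing $u$ gives $n-1$, matching the $\ZZ$ case $\mathfrak{S} = \ZZ_p\llbracket z\rrbracket$, $E(z) = z - p$. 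This recovers Soulé–Staffeldt rationally and shows no $p$-torsion in odd degrees, generalizing the surprising torsion-freeness from \cite{AGH}.

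Third, for the even groups I would compute the $\ZZ_p$-length (equivalently $v_p$ of the cardinality, since these are finite $p$-groups) of each factor $H^1$, which by the same $\ndim = 0$ collapse is the cokernel of the relevant map on the rank-one $\mathfrak{S}$-modules, i.e. a cyclic torsion module whose length is $v_p(|k|)$ times the $z$-adic (equivalently $E$-adic, up to the $v_\pi(E'(\pi))$ correction coming from the different) valuation of the determinant of the structure map. Summing $\lfloor \cdot \rfloor$-type contributions over the finitely many relevant $u$ reproduces, after the substitution to Hesselholt–Madsen–Sulyma-style factorials, the term $e\, v_p(|k|)\, v_p((ni)!(i!)^{n-2})$; the extra $v_p(|k|)\, v_\pi(E'(\pi))(ni - i)$ term is precisely the contribution of the Frobenius pullbacks $(\phi^{s-1-v_p(e)})^*(\mathcal{I}_{v_p(e)})$ in the ``otherwise'' branch of Theorem~\ref{thm: computation for F-curves}, which in the unramified ($A = \ZZ_p$, or more generally $e=1$) case is trivial, and in general measures how far $(\phi^s)^*(E)$ is from $E$ — this is where $v_\pi(E'(\pi))$ enters, via the standard identity relating the different ideal to $E'(\pi)$.

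I expect the main obstacle to be the bookkeeping in the third step: pinning down exactly which $u \in e'J_p$ land in the ``otherwise'' branch versus the first branch, and showing that the Frobenius-twist factors contribute $v_\pi(E'(\pi))$ per relevant index in aggregate $(ni-i)$ times, while the Nygaard-quotient factors contribute the factorial term. This requires carefully matching the indexing function $t = \lfloor (up^{s-1}-1)/n\rfloor$ against the $p$-adic valuations appearing in $(ni)!$ and $(i!)^{n-2}$ via Legendre's formula, and keeping the two valuation sources (Nygaard length vs. different) separated. The count of contributing $u$ for the rank statement is comparatively routine — it is the same count as in \cite{Speirs_truncated, Sulyma_truncated}, since ranks are insensitive to the Frobenius twist — so the rank-$(n-1)$ and torsion-freeness claims should follow quickly once the specialization in step one is set up.
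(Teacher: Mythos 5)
Your proposal rests on the claim that $\ndim(A)=0$ and that ``all the $H^1$ terms vanish,'' and this is where it breaks. For a ramified CDVR (and even for $A=\ZZ_p$ once $p\mid j$) the Nygaard graded pieces are \emph{not} concentrated in degree zero: the paper's Lemma~\ref{lem: thh(A) purely algebraically} shows that for $j\geq 1$ the complex $\mathcal{N}^j\DD_A$ is concentrated entirely in cohomological degree $1$, with $|H^1(\mathcal{N}^j\DD_A)|=|A/E'(\pi)|\cdot|A/j|$, so that $\ndim(A)=1$. This is not a technicality one can collapse away: the even relative $K$-groups in the theorem are built precisely out of these nonvanishing $H^1$'s, so under your hypothesis the second formula would read $v_p(|K_{2i}|)=0$, contradicting the statement you are trying to prove. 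Your step three is internally inconsistent for the same reason (you invoke ``the same $\ndim=0$ collapse'' to compute groups that would be zero under that collapse). Relatedly, your rank count is wrong: the set of $u\in J_p$ with $s(p,ni,u)\neq 0$ has roughly $ni(1-1/p)$ elements, not $n-1$. The rank $n-1$ does not come from finiteness of the product; it comes from identifying the exceptional factors whose Nygaard index drops to $0$ (so that $\mathcal{N}^0\mathcal{O}_{\widehat{\DD}}\otimes\mathcal{I}_{s-1}$ appears and contributes $H^0=A$), namely the $n-1$ integers $up^{s-1}\in\{n(i+1)+1,\dots,n(i+1)+n-1\}$; every other factor has $H^0=0$ and is pure $p$-power torsion in $H^1$. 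Your assertion that each contributing Nygaard quotient ``stays free of rank one over $A$'' is therefore false for all but these boundary factors.

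The attribution of the $v_\pi(E'(\pi))(ni-i)$ term exclusively to the Frobenius-pullback (``otherwise'') branch is also incorrect. In the paper that term accumulates one copy of $v_p(|k|)v_\pi(E'(\pi))$ at \emph{every} step of the recursion in Lemma~\ref{lem: only junk in odd degrees}, in every factor, because each step peels off an $H^1(\mathcal{N}^{j}\DD_A)$ whose order contains $|A/E'(\pi)|$; this traces back to $\mathbb{L}\Omega^j_{A/\ZZ_p}\simeq A/E'(\pi)[j-1]$ via the conjugate filtration on diffracted Hodge cohomology, not to the twist $(\phi^{s-1-v_p(e)})^*(\mathcal{I}_{v_p(e)})$. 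The total multiplicity $ni-i$ is the count $\sum_{u}h(p,i+1,n,u)-(n-1)$ of recursion steps, which the paper extracts without redoing Legendre-type bookkeeping by observing that the answer is a homogeneous linear function $l(ev_p(|k|),\,v_p(|k|)v_\pi(E'(\pi)))$, evaluating $l(1,0)$ from the known case $A=\ZZ_p$ of \cite{AGH} and $l(0,1)$ from Speirs's Witt-vector count. To repair your argument you would need to (a) replace the $\ndim=0$ claim with the correct degree-$1$ concentration of $\mathcal{N}^j\DD_A$ for $j\geq 1$, (b) redo the rank count by locating the factors with nonzero $H^0$, and (c) track the $E'(\pi)$ contribution through the recursion rather than through the Frobenius twists.
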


\begin{rem}
    When $A$ is tamely ramified we have that $v_\pi(E'(\pi))=e-1$, and in particular when $e=1$ the final term in the above formula vanishes.
\end{rem}

\begin{rem}
    If instead we take $A=\mathcal{O}_K$ where $K/\mathbb{Q}$ is a finite extension, then \[K(A[x]/x^n, (x);\ZZ_p)\simeq K(A^{\wedge}_p[x]/x^n,(x);\ZZ_p)\] and so we may apply the above result. In particular \[v_p(|K_{2i}(A[x]/x^n,(x);\ZZ_p)|)=[K:\mathbb{Q}]v_p((ni)!(i!)^{n-2})+v_p(|A^\wedge_p/\pi|)v_\pi(E'(\pi))(ni-i)\] where $\pi\in A^\wedge_p$ is a uniformizer with Eisenstein polynomial $E(x)$. Applying a fracture square argument, where the rational input is coming from \cite{Soule, Staffeldt}, we find that \[|K_{2i}(A[x]/x^n,(x))|=[(ni)!(i!)^{n-2}]^{[K:\mathbb{Q}]}\times|A/\mathfrak{D}_{K/\mathbb{Q}}|^{ni-i}\] where $\mathfrak{D}_{K/\mathbb{Q}}$ is the different, ie the annihilator of $\Omega_{A/\ZZ}$ which after $p$-completion is given by the ideal $(\mathfrak{D}_{K/\mathbb{Q}})^\wedge_p=(E'(\pi))$. The exponent of $[K:\mathbb{Q}]$ is appearing in the $p$-adic valuation as $ef$ where $f=v_p(|A^\wedge_p/\pi_p|)=[A^\wedge_p/\pi : \FF_p]$.
\end{rem}

To the author's knowledge both parts of this computation are new for $\mathcal{O}\neq \ZZ$.

\textbf{Acknowledgements.}
The author would like to thank Benjamin Antieau, Micah Darrell, Sanath Devalapurkar, Jeremy Hahn, Ayelet Lindenstrauss, Akhil Mathew, Catherine Ray, and Yuri Sulyma for many helpful conversations. The author would also like to thank Martin Speirs for their many helpful conversations, guidance, and collaboration on the early stages of this project. Finally, the author would like to thank an anonymous referee who's comments and suggestions greatly improved the readability of this paper.

\section{The sheaves and definitions appearing in Theorem~\ref{thm: computation for F-curves}}\label{sec: Background}
This Section is dedicated to unwinding the objects appearing in Theorem~\ref{thm: computation for F-curves} and Theorem~\ref{thm: main calculation for tr}, and for recording results and constructions which will be used throughout the paper. In Subsection~\ref{ssec: background info} we will mostly be discussing the sheaves appearing in Theorem~\ref{thm: computation for F-curves}. At the end we will also record a theorem in Borel equivariant homotopy theory which we will use repeatedly. None of the constructions or results in Subsection~\ref{ssec: background info} are due to us. In Subsection~\ref{ssec:f and n dimension} we will define the notation of F-dimension and $\mathcal{N}$-dimension.

\subsection{Background information}\label{ssec: background info}
We begin by recalling the quasisyntomic site. 

\begin{defn}[Definition 4.10, \cite{BMS2}] 

A ring $A$ is quasisyntomic if the following conditions hold: 

\begin{enumerate} 

    \item $A$ is $p$-complete and has bounded $p^\infty$-torsion; 

    \item $L_{A/\ZZ_p}\in D(A)$ has $p$-complete Tor amplitude in $[-1,0]$, i.e. $L_{A/\ZZ_p}\otimes_A^{\mathbb{L}}A/p\in D(A/p)$ has Tor amplitude in $[-1,0]$. 

\end{enumerate} 

Let $\mathrm{QSyn}$ denote the category of such rings. We say that a map $A\to B$ in $\mathrm{QSyn}$ is a quasisyntomic map (respectively quasisyntomic cover) if 

\begin{enumerate} 

    \item $B$ is a $p$-completely (faithfully) flat $A$-module, i.e. $B\otimes_A^{\mathbb{L}} A/p$ is a (faithfully) flat $A/p$-module in degree $0$; 

    \item and $L_{B/A}\in D(B)$ has $p$-complete Tor amplitude in $[-1,0]$. 

\end{enumerate} 

\end{defn} 

The category of quasisyntomic rings contains two subcategories of particular importance for us. The first is the category of perfectoid rings which we now define.

\begin{defn}[Definition 3.5, \cite{BMS1}]
    A ring $R$ is perfectoid if
    \begin{enumerate}
        \item $R$ is $\pi$-adically complete for some $\pi\in R$ such that $\pi^p\mid p$;
        \item the Frobenius map $\phi:R/p\to R/p$ is surjective;
        \item and the kernel of the map $\theta:A_{inf}(R)\to R$ is principle.
    \end{enumerate}
    following \cite[Definition 3.2(2)]{Bhatt_Scholze} we will call a choice of generator $\ker(\theta)=(d)$ and orientation of $R$. We will also frequently use the notation $\Tilde{d}=\phi(d)$ and $\Tilde{d}_n = \phi(d)\phi^2(d)\ldots \phi^n(d)$.
\end{defn}

The second subcategory is the category of quasiregular semiperfectoid rings.  

\begin{defn}[Definition 4.20, \cite{BMS2}] 

A ring $S$ is quasiregular semiperfectoid if: 

\begin{enumerate} 

    \item $S\in \mathrm{QSyn}$; 

    \item there is a map $R\to S$ where $R$ is a perfectoid ring; 

    \item and $S/p$ is semiperfect, i.e. the Frobenius map $x\mapsto x^p$ is surjective. 

\end{enumerate} 

\end{defn}

There are several equivalent ways of defining when $S$ is quasiregular semiperfectoid. For instance, we can replace items $(i)$ and $(ii)$ in the above definition with the requirement that there is some surjective map $R\to S$ where $R$ is perfectoid.

We are now ready to define the relevant sites. 

\begin{defn}[Lemma 4.17, \cite{BMS2}; Lemma 4.27, \cite{BMS2}; Variant 4.33, \cite{BMS2}; Variant 4.35, \cite{BMS2}] 

Let $A\in \mathrm{QSyn}$. Define the sites $\mathrm{QSyn}_A$, $\mathrm{QRSPerfd}_A$, $\mathrm{qSyn}_A$, and $\mathrm{qrsPerfd}_A$ to be the sites of quasisyntomic rings with a map from $A$, quasiregular semiperfectoid rings with a map from $A$, quasisyntomic $A$-algebras, and quasiregular semiperfectoid $A$-algebras, respectively. All of the sites are topologized using quasisyntomic covers.  

\end{defn} 

As stated before the definition of quasiregular semiperfectoid rings, these rings form a basis for the above sites.

\begin{thm}[Proposition 4.31, \cite{BMS2}]\label{thm: qrsp are basis} 

The functors $U:\mathrm{QRSPerfd}_{A}^{op}\to \mathrm{QSyn}_A^{op}$ and $u:\mathrm{qrsPerfd}_{A}^{op}\to \mathrm{qSyn}_A^{op}$ induce equivalences  

\[\operatorname{Shv}_\mathcal{C}(\mathrm{QSyn}^{op})\simeq \operatorname{Shv}_{\mathcal{C}}(\mathrm{QRSPerfd}_A^{op})\] and \[\operatorname{Shv}_\mathcal{C}(\mathrm{qSyn}^{op})\simeq \operatorname{Shv}_{\mathcal{C}}(\mathrm{qrsPerfd}_A^{op})\] for any presentable $\infty$-category $\mathcal{C}$. We will refer to the inverse functor as unfolding. 

\end{thm}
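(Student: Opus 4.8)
The plan is to prove this as a comparison-of-sites statement: I would show that $\mathrm{QRSPerfd}_A$ (resp. $\mathrm{qrsPerfd}_A$) is a basis for the quasisyntomic topology in the precise sense that every quasisyntomic ring over $A$ admits a quasisyntomic cover by a quasiregular semiperfectoid ring and the associated \v{C}ech nerve again consists of quasiregular semiperfectoid rings, and then invoke the standard descent criterion, identifying the inverse equivalence with right Kan extension along $u^{op}$.

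First I would recall the construction of quasiregular semiperfectoid covers. Given $B \in \mathrm{QSyn}_A$, choose a surjection $\ZZ_p[x_i : i \in I]^{\wedge}_p \twoheadrightarrow B$ (for instance with $I$ the underlying set of $B$), and set $S := B \,\widehat{\otimes}_{\ZZ_p[x_i]} \ZZ_p[x_i^{1/p^\infty}]^{\wedge}_p$. One checks that $B \to S$ is $p$-completely faithfully flat and that $L_{S/B}$ has $p$-complete Tor amplitude in $[-1,0]$ (the roots $x_i^{1/p^n}$ contribute nothing after $p$-completion), so $B \to S$ is a quasisyntomic cover; the perfectoid ring $\ZZ_p[x_i^{1/p^\infty}]^{\wedge}_p$ maps to $S$, and $S/p$ is a quotient of the perfect ring $\FF_p[x_i^{1/p^\infty}]$, hence semiperfect, so $S \in \mathrm{QRSPerfd}_A$. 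The relative site is handled identically, with $A$ in place of $\ZZ_p$.

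Next I would verify that for such a cover $B \to S$ the \v{C}ech nerve $S^\bullet = (S \to S \,\widehat{\otimes}_B S \to \cdots)$, formed with derived $p$-complete tensor products, lies in $\mathrm{QRSPerfd}_A$: quasisyntomicity is preserved because $\mathrm{QSyn}$ is closed under $p$-completed tensor products along quasisyntomic maps (so the nerve is even defined in the site); each $S^n/p$ is semiperfect since a $p$-completed tensor product of semiperfect $\FF_p$-algebras over any base is semiperfect (Frobenius is a ring map, surjective on each factor, and flatness kills the derived corrections); and a perfectoid ring maps in via the first factor. This is the technical crux --- in particular preserving bounded $p^\infty$-torsion and the $[-1,0]$ Tor-amplitude of the cotangent complex under these operations --- and I expect it to be the main obstacle; the preceding step is a direct construction and the remaining step is formal.

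Finally I would run the abstract descent argument. Restriction along $u^{op}$ has right adjoint the right Kan extension $u_*$; by the previous two steps and \v{C}ech descent, for a sheaf $\mathcal{G}$ on $\mathrm{QRSPerfd}_A$ one has $(u_*\mathcal{G})(B) \simeq \lim_{\Delta} \mathcal{G}(S^\bullet)$ for any quasiregular semiperfectoid cover $B \to S$, independent of the choice since any two are dominated by a common refinement which is again such a cover, and this formula exhibits $u_*\mathcal{G}$ as a quasisyntomic sheaf. The counit $u^* u_* \mathcal{G} \to \mathcal{G}$ is an equivalence because a quasiregular semiperfectoid ring admits the trivial cover by itself, whose \v{C}ech nerve is constant; the unit $\mathcal{F} \to u_* u^* \mathcal{F}$ is an equivalence because every quasisyntomic ring over $A$ has a quasiregular semiperfectoid cover and $\mathcal{F}$ satisfies descent along it. Thus restriction is an equivalence with inverse $u_*$, which is the unfolding functor, and the identical argument with $U$ in place of $u$ gives the statement for the big sites, since nothing used whether the base of the relative tensor products was $\ZZ_p$ or $A$.
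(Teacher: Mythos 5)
The paper offers no proof of this statement---it is quoted directly from \cite{BMS2} (Proposition 4.31)---and your proposal is essentially a reconstruction of the argument given there: produce a quasiregular semiperfectoid quasisyntomic cover of every quasisyntomic ring by freely adjoining $p$-power roots of polynomial generators, check that the $p$-completed \v{C}ech nerve of such a cover stays inside the basis, and conclude by the standard comparison of a site with a basis (restriction versus right Kan extension). So the strategy is the right one, and your second and third steps are in order at the level of detail expected.

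There is, however, a concrete error in your first step: the ring $\ZZ_p[x_i^{1/p^\infty}]^\wedge_p$ is \emph{not} perfectoid. Perfectoidness requires an element $\pi$ with $\pi^p\mid p$ for which the ring is $\pi$-adically complete, and no such element exists here (map onto $\ZZ_p$ by killing all $x_i^{1/p^n}$; one would obtain $\bar\pi\in\ZZ_p$ with $\bar\pi^p\mid p$, which forces $\bar\pi$ to be a unit and hence $\pi$-adic completeness to fail). Consequently your verification that $S=B\,\widehat{\otimes}_{\ZZ_p[x_i]}\ZZ_p[x_i^{1/p^\infty}]^\wedge_p$ lies in $\mathrm{QRSPerfd}_A$ is incomplete: the one condition in the definition you must still supply---a map to $S$ from an honest perfectoid ring---is exactly the one your named ring does not provide. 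The standard repair, and the one used in \cite{BMS2}, is to first pass along the quasisyntomic cover $B\to B\,\widehat{\otimes}_{\ZZ_p}\ZZ_p^{cycl}$ (or any perfectoid base) and only then adjoin $p$-power roots of the variables, so that the auxiliary ring $\ZZ_p^{cycl}[x_i^{1/p^\infty}]^\wedge_p$ genuinely is perfectoid and maps to the cover. (Alternatively, since your index set contains a variable mapping to $p\in B$, the ring $S$ acquires a compatible system of $p$-power roots of $p$ and hence a map from the perfectoid ring $\ZZ_p[p^{1/p^\infty}]^\wedge_p$ when $p\neq 0$ in $B$, and from $\FF_p$ otherwise---but this needs to be said, and it is not what you wrote.) With that correction the rest of the argument goes through as you describe.
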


We will often use the above Theorem to compute sheaves by reducing to the quasiregular semiperfectoid ring case. We can also use this equivalence to define several sheaves on the sites $\mathrm{QSyn}^{op}_A$ and $\mathrm{qSyn}^{op}_A$. We begin by defining $\mathcal{O}_{\widehat{\DD}}\in \operatorname{Shv}_{D(\ZZ_p)}(\mathrm{QSyn}^{op}_A)$. Note that we are using different notation than \cite{BMS2} in the following. Originally, we did follow their notation, but with some of the formulas appearing in our paper we found that their notation caused confusion between the sheaves and the derived global sections of the sheaves in question. This was not a danger in their work but would certainly cause issues in ours since we are often taking tensor products of sheaves and global sections is not symmetric monoidal.

\begin{note} 

Define the sheaf $\mathcal{O}_{\widehat{\DD}}\in \operatorname{Shv}_{D(\ZZ_p)}(\mathrm{QSyn}^{op}_A)$ to be the sheaf given under the above equivalence from the sheaf $\pi_0(\tc^{-}(-;\ZZ_p))$ on $\mathrm{QRSPerfd}^{op}_A$. We will use the same notation for the corresponding sheaf on $\mathrm{qSyn}_A^{op}$. This sheaf comes with a natural endomorphism $\phi:\mathcal{O}_{\widehat{\DD}}\to \mathcal{O}_{\widehat{\DD}}$ which on $\mathrm{QRSPerfd}_A$ is given by $\pi_0(\phi_p)$. In \cite{BMS2} these objects are denoted by $\widehat{\DD}_{-}$ and $\phi$, respectively 

\end{note}

This notation matches up with that of \cite{ALB_prismatic_Dieudonne}, as will the following notation. One of the main results of \cite{BMS2}[Construction 7.7] is that the above sheaf lifts to a sheaf valued in the complete derived filtered category $\widehat{\operatorname{DF}}(\ZZ_p)$. This is the full subcategory of $\operatorname{DF}(\ZZ_p):= \operatorname{Fun}(\ZZ^{op}, \operatorname{D}(\ZZ_p))$ spanned by those functors $F^{\geq *}:\ZZ^{op}\to \operatorname{D}(\ZZ_p)$ such that $F^{\geq -\infty}:=\lim_{\ZZ^{op}}F^{\geq *}=0$.

\begin{note} 

Let $\mathcal{N}^{\geq *}\mathcal{O}_{\widehat{\DD}}\in \operatorname{Shv}_{\widehat{\operatorname{DF}}(\ZZ_p)}(\mathrm{QSyn}^{op}_A)$ denote the Nygaard filtration on $\mathcal{O}_{\widehat{\DD}}$ constructed in \cite[Construction 7.7]{BMS2}. For each $i$ we can also describe $\mathcal{N}^{\geq i}\mathcal{O}_{\widehat{\DD}}$ as the sheaf corresponding under Theorem~\ref{thm: qrsp are basis} to the sheaf $\operatorname{Im}(v^i:\tc^-_{2i}(-;\ZZ_p)\to \tc^-_0(-;\ZZ_p))$ on $\mathrm{QRSPerfd}$. Here $v\in \tc^-_{-2}(-;\ZZ_p)$ is a lift of the first chern class along the map $\tc^{-}(-;\ZZ_p)\to (-)^{h\TT}$.\footnote{The element $v\in \tc^{-}(-;\ZZ_p)$ cannot be chosen consistently even on $\mathrm{QRSPerfd}$. Even so, the image of any given choice will always be the same and agree with the filtration produced by the homotopy fixed points spectral sequence.} 

\end{note} 

Note that for any $S\in \mathrm{QSyn}_A$ we have by definition that $\mathcal{N}^{\geq i}\mathcal{O}_{\widehat{\DD}}(S)=\mathcal{N}^{\geq i}\widehat{\DD}_S$ as defined in \cite[Construction 7.7]{BMS2} for any $i\in \ZZ$.

In addition to the above definitions there are a number of line bundles over $\mathcal{O}_{\widehat{\DD}}$ which we will need. We note here that there are some size issues which make some definitions difficult, and that these size issues are in some sense a fundamental part of the definition of these sites. More specifically, since the notion of covers in this site has no finiteness assumption it is not automatic that there is a sheafification functor, see for example \cite[Theorem 5.5]{Waterhouse} for an example of the lack of such a sheafification functor in the fpqc site. Thus to define sheaves such as the tensor product of two sheaves in the quasisyntomic site, or to calculate cohomology groups, we cannot use the standard definition of taking the tensor product or resolution in the category of presheaves and sheafify the result. While in general this can cause quite a few issues, throughout this paper we will be working with line bundles over $\mathcal{O}_{\widehat{\DD}}$ or $\mathcal{N}^{\geq *}\mathcal{O}_{\widehat{\DD}}$ and so there are no set theoretic issues with forming things like the tensor product of the sheaves we will be considering.

\begin{note} 

Let $\mathcal{I}=:\mathcal{I}_1\in \operatorname{Shv}_{D(\ZZ_p)}(\mathrm{QSyn}_A^{op})$ be the $\mathcal{O}_{\widehat{\DD}}$-line bundle which corresponds under Theorem~\ref{thm: qrsp are basis} to the sheaf given by $\operatorname{Ker}(\tc^{-}_0(-;\ZZ_p)\to \pi_0(\thh(-;\ZZ_p)^{tC_p}))$. 

\end{note} 

A few remarks are in order. First note that this is indeed a line bundle since on $\mathrm{QRSPerfd}_A$ every ring admits a map from a perfectoid ring by definition and so \cite[Proposition 6.4]{BMS2} and \cite[Theorem 7.2(5)]{BMS2} apply. We also note that this is not the same sheaf as in \cite{ALB_prismatic_Dieudonne} since our sheaf takes values in \textit{Nygaard complete} groups. Since we never use the non-complete version of $\mathcal{I}$ and we need to subscript for a different construction we decided to use the above.

\begin{note} 

Define $\mathcal{I}_m:=\mathcal{I}\otimes_{\mathcal{O}_{\widehat{\DD}}}\phi^*(\mathcal{I})\otimes_{\mathcal{O}_{\widehat{\DD}}}\ldots \otimes_{\mathcal{O}_{\widehat{\DD}}}(\phi^{m-1})^*(\mathcal{I})$. By \cite[Theorem 7.2(5)]{BMS2} This is the same as the sheaf which under Theorem~\ref{thm: qrsp are basis} corresponds to the sheaf $\mathcal{I}\cdot \phi^*(\mathcal{I})\cdot\ldots\cdot (\phi^{m-1})^*(\mathcal{I})$. 

\end{note}

We will need one more line family of line bundles. Just as in the previous case we will not distinguish our notation, which is Nygaard complete, with the notation for the non-complete setting. 

\begin{note} 

Define $\mathcal{O}_{\widehat{\DD}}\{1\}\in \operatorname{Shv}_{D(\ZZ_p)}(\mathrm{QSyn}_A^{op})$ to be the sheaf which under Theorem~\ref{thm: qrsp are basis} corresponds to the sheaf $\tp_2(-;\ZZ_p)$. For all $i\in \ZZ$ and any $\mathcal{O}_{\widehat{\DD}}$-vector bundle $\mathcal{V}$ define $\mathcal{V}\{i\}:=\mathcal{V}\otimes_{\mathcal{O}_{\widehat{\DD}}}\mathcal{O}_{\widehat{\DD}}\{1\}^{\otimes_{\mathcal{O}_{\widehat{\DD}}} i}$. 

\end{note}

The sheaves $\mathcal{O}_{\widehat{\DD}}\{i\}$ are line bundles as can be seen by reducing to $\mathrm{QRSPerfd}_A$ where $\mathrm{TP}(-;\ZZ_p)$ is $2$-periodic by the paragraph before \cite[Theorem 7.2]{BMS2}. Since there are set theoretic issues with defining the global tensor product anyway, we will use $-\otimes -$ to mean $-\otimes_{\mathcal{O}_{\widehat{\DD}}}-$ when we are working with $\mathcal{O}_{\widehat{\DD}}$ vector bundles.

Finally, we will often make use of the following result from equivariant homotopy theory.

\begin{prop}[\cite{Hesselholt_Nikolaus}, Proposition 3]\label{prop: Induced/coinduced action and fixed points} 

Let $G$ be a compact Lie group. Let $H\subseteq G$ be a closed subgroup, let $\lambda=T_H(G/H)$ be the tangent space at $H=eH$ with the adjoint left $H$-action, and let $S^\lambda$ be the one-point compactification of $\lambda$. For every spectrum with $G$-action $X$, there are canonical natural equivalences \[\left(X\wedge \left(G/H\right)_+\right)^{hG}\simeq \left(X\wedge S^\lambda\right)^{hH},\] \[\left(X\wedge \left(G/H\right)_+\right)^{tG}\simeq \left(X\wedge S^\lambda\right)^{tH}.\] 

\end{prop}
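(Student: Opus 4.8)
The statement to prove is Proposition~\ref{prop: Induced/coinduced action and fixed points}, the Hesselholt--Nikolaus formula expressing homotopy fixed points (and Tate constructions) of an induced $G$-spectrum $X \wedge (G/H)_+$ in terms of a Thom-twisted fixed point construction for the subgroup $H$. Since this is cited verbatim from \cite{Hesselholt_Nikolaus}, the honest thing is to recall the argument rather than reprove it from scratch; I would present it as a sketch. The key input is the \emph{Adams isomorphism} (or Wirthmüller-type equivalence) relating the induced and coinduced spectra: for a compact Lie group $G$ and closed subgroup $H$ with $\lambda = T_H(G/H)$ carrying the adjoint $H$-action, one has a natural equivalence of $G$-spectra $G_+ \wedge_H Y \simeq F_H(G_+, S^{-\lambda}\wedge Y)$, or dually $G_+ \wedge_H (S^\lambda \wedge Y) \simeq F_H(G_+, Y)$. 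Here the twist by the representation sphere $S^\lambda$ is exactly the deviation between the left and right adjoints of restriction along $H \hookrightarrow G$.

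First I would reduce both equivalences to a single statement by observing that for $X$ a spectrum with $G$-action, $X \wedge (G/H)_+ \simeq G_+ \wedge_H \operatorname{res}^G_H X$, where on the right $\operatorname{res}^G_H X$ denotes $X$ with its restricted $H$-action (this is the standard "induction from a point-set model of the orbit"). Next, applying the Adams/Wirthmüller equivalence in the form $G_+ \wedge_H Z \simeq F_H(G_+, S^\lambda \wedge Z)$ with $Z = \operatorname{res}^G_H X$, we get $X \wedge (G/H)_+ \simeq F_H(G_+, S^\lambda \wedge \operatorname{res}^G_H X)$ as $G$-spectra (with $G$ acting through the free action on $G_+$ on the source, i.e.\ the coinduced $G$-action). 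Then I would take $(-)^{hG}$ of both sides: homotopy fixed points commute with the cofree/coinduced construction in the sense that $\big(F_H(G_+, W)\big)^{hG} \simeq W^{hH}$ for any $H$-spectrum $W$ — this is just the adjunction $(G/H \times E G)_{hG} \simeq (EG)_{hH} = BH$ dualized, or more directly the Shapiro-lemma-type statement for continuous group cohomology of Lie groups. Applying this with $W = S^\lambda \wedge \operatorname{res}^G_H X$ yields $(X\wedge (G/H)_+)^{hG} \simeq (S^\lambda \wedge X)^{hH}$, which is the first displayed equivalence (writing $X \wedge S^\lambda$ for $S^\lambda \wedge X$ with the diagonal $H$-action). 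For the Tate version one runs the same argument after noting that $(-)^{tG}$ also turns the coinduction $F_H(G_+, -)$ into $(-)^{tH}$, since homotopy orbits commute with induction ($(G_+ \wedge_H W)_{hG} \simeq W_{hH}$) and the norm map is compatible with all of this; then the Tate square gives the claim.

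The one point requiring care — and the step I expect to be the main obstacle in making the sketch rigorous rather than merely plausible — is the naturality and the precise form of the Adams isomorphism with its representation-sphere twist: for non-finite compact Lie groups (and $\TT$, $\TT^n$, and finite groups are the only cases we actually apply it to) one must either invoke the genuine-equivariant Adams isomorphism of Lewis--May--Steinberger, or, in the Borel-equivariant / Nikolaus--Scholze setting where we really live, reprove the needed instance by hand. In practice for $G$ abelian (our case: $G = \TT$, $H = C_{p^n}$ or $H = \TT$) the adjoint action on $\lambda$ is trivial, so $S^\lambda$ is just an ordinary sphere $S^{\dim G - \dim H}$ and the twist is a plain shift, which sidesteps the genuinely equivariant subtleties; I would remark on this simplification since it is all we need downstream. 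I would therefore keep the proof to a couple of sentences citing \cite{Hesselholt_Nikolaus} for the general statement and \cite{NS} for the ambient framework, noting that in all applications in this paper $G$ is abelian so $\lambda$ is a trivial representation and the equivalences reduce to a suspension shift.
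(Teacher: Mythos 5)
The paper gives no proof of this proposition: it is quoted verbatim from \cite{Hesselholt_Nikolaus} as background, so your plan of citing that reference and sketching the Wirthm\"uller/Adams argument is exactly in line with the paper's treatment, and your sketch (projection formula, induced $\simeq$ coinduced up to the $S^\lambda$-twist, then the Shapiro-type identities $F_H(G_+,W)^{hG}\simeq W^{hH}$ and $(G_+\wedge_H W)_{hG}\simeq W_{hH}$ feeding into the Tate square) is the standard and correct proof. One small slip: your preliminary statement of the Wirthm\"uller isomorphism carries the twist $S^{-\lambda}$, which is inconsistent with the form $G_+\wedge_H Z\simeq F_H(G_+,S^\lambda\wedge Z)$ that you actually use; the latter is the correct one, as the check $G=\TT$, $H=C_{p^n}$ (giving $\Sigma X^{hC_{p^n}}$ rather than $\Sigma^{-1}$) confirms.
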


In particular for $G=\TT$ and $H=C_n$ we have for any $\TT$ spectrum $X$ natural euqivalences \[(X\wedge(\TT/C_n)_+)^{h\TT}\simeq \Sigma X^{hC_n}\] and \[(X\wedge (\TT/C_n)_+)^{t\TT}\simeq \Sigma X^{tC_n}\] which is the form of this statement we will use. 

\subsection{\texorpdfstring{F-dimension and $\mathcal{N}$-dimension}{F-dimension and N-dimension}}\label{ssec:f and n dimension}

 We first recall and introduce some definitions. We will make use of prismatic cohomology, the Hodge-Tate divisor, and the Nygaard filtration as constructed by Bhatt, Morrow, and Scholze in \cite{BMS1,BMS2, Bhatt_Scholze}. We include a brief review of the relevant constructions in Section~\ref{ssec: background info}. As in \cite{Bhatt_Scholze} we will abbreviate $\DD_S/I$ as $\overline{\DD}_S$

\begin{defn}[Definition 1.7, \cite{fsmooth_paper}]
Let $S$ be a $p$-complete ring. Then we say that $S$ is F-smooth if 
\begin{enumerate}
    \item the complexes $\DD_S\{i\}$ are Nygaard complete for all $i$; and
    \item the fiber of the map $\phi_i:\mathcal{N}^i\DD_S\to \overline{\DD}_S\{i\}$ has $p$-complete Tor amplitude in degrees $\geq i+2$.
\end{enumerate}
\end{defn}

\begin{ex}
Bouis studied the relative versions of this definition, which they call $p$-Cartier smooth maps in \cite{bouis2022cartier}. When the base ring is perfectoid these notions agree, and Bouis showed that $p$-completely smooth rings over a perfectoid ring and Cartier smooth $\mathbb{F}_p$-algebras are $p$-Cartier smooth, see \cite[Theorem C]{bouis2022cartier}. Bhatt and Mathew have also shown independently that any $p$-completely smooth ring over a perfectoid ring (\cite[Proposition 4.12]{fsmooth_paper}), any regular noetherian $p$-complete ring (\cite{fsmooth_paper}), and any Cartier smooth $\mathbb{F}_p$-algebra (\cite[Proposition 4.14]{fsmooth_paper}) is F-smooth. 
\end{ex}

Instead of taking $S$ to be a $p$-complete ring in the above definition we could instead only define F-smoothness for quasisyntomic rings. This is what is done in \cite[Definition 1.7]{fsmooth_paper}.

While strictly speaking we do not need the above definition in anything that follows, each of the following definitions are only well behaved on this class or rings. For example in what follows we would no longer have any relation between the F and $\mathcal{N}$ dimensions. We would also need to account for this in our Theorem statements. In addition all of the examples of interest where our results apply are F-smooth anyway.

\begin{defn}\label{defn: f-dim}
Let $S$ be an F-smooth ring in the sense of \cite[Definition 1.7]{fsmooth_paper}. We say that $S$ has F-dimension at most $j$, written as $\fdim(S)\leq j$, if the maps\[\phi_i:\mathcal{N}^i\DD_S\to \overline{\DD}_S\{i\}\] are equivalences for all $i\geq j$.
\end{defn}

\begin{ex}
    As a first example note that any perfectoid ring $R$ has $\fdim(R)=0$. In this case $\mathcal{N}^i\DD_R=d^iA_{inf}(R)/d^{i+1}A_{inf}(R)$, $\overline{\DD}_R=A_{inf}(R)/\Tilde{d}$, and the map $\phi_i$ is the map $\phi(-)/\Tilde{d}^i$. Here $d$ is an orientation of $R$.
\end{ex}

One common source of examples is the following which is a consequence of Lemma~\ref{lem: cohomology bounds stable with I}.

\begin{lem}
Any F-smooth ring $S$ with $\mathcal{N}^{\geq i}\DD_S\in D^{[0,j]}(\ZZ_p)$ for all $i\in \ZZ$ has $\fdim(S)\leq j$.
\end{lem}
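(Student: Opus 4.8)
The plan is to reduce the statement about $\fdim$ to a statement about the $p$-complete Tor amplitude of the fiber of $\phi_i$, which by F-smoothness is already concentrated in degrees $\geq i+2$, and then to use the bound $\mathcal{N}^{\geq i}\DD_S\in D^{[0,j]}(\ZZ_p)$ together with the cited Lemma~\ref{lem: cohomology bounds stable with I} to transfer the amplitude bound to $\overline{\DD}_S\{i\}$ and to the graded pieces of the Nygaard filtration. The point is that once $i$ is large enough, the constraint "$\operatorname{fib}(\phi_i)$ lives in degrees $\geq i+2$" combined with "source and target live in degrees $\leq j$" forces $\operatorname{fib}(\phi_i)$ to vanish, hence $\phi_i$ is an equivalence.

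First I would unwind the definitions: $\phi_i\colon \mathcal{N}^i\DD_S\to \overline{\DD}_S\{i\}$ is a map whose fiber $F_i$ has $p$-complete Tor amplitude in degrees $\geq i+2$ by F-smoothness. I would next observe that $\mathcal{N}^i\DD_S = \mathrm{gr}^i_{\mathcal{N}}\DD_S$ sits in a fiber sequence with $\mathcal{N}^{\geq i}\DD_S$ and $\mathcal{N}^{\geq i+1}\DD_S$, and since by hypothesis both of the latter lie in $D^{[0,j]}(\ZZ_p)$, the graded piece $\mathcal{N}^i\DD_S$ lies in $D^{[-1,j]}(\ZZ_p)$ — actually I expect the cleaner route is via Lemma~\ref{lem: cohomology bounds stable with I}, which presumably says exactly that the $D^{[0,j]}$ bound on $\mathcal{N}^{\geq \bullet}\DD_S$ propagates (stably in the Breuil–Kisin twist $\mathcal{I}$) to $\overline{\DD}_S\{i\}$ and to the associated gradeds; this is where the "I" in the lemma name is used, to handle the twist $\{i\}$. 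So after invoking that lemma I would have: both $\mathcal{N}^i\DD_S$ and $\overline{\DD}_S\{i\}$ lie in $D^{[?,j]}(\ZZ_p)$ for a bound independent of $i$ (possibly $D^{[-1,j]}$ or $D^{[0,j]}$ — the exact lower endpoint does not matter).

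Then I would combine the two facts. The fiber sequence $F_i \to \mathcal{N}^i\DD_S \xrightarrow{\phi_i} \overline{\DD}_S\{i\}$ shows that $F_i$ has cohomology concentrated in degrees $\leq j+1$ (from the long exact sequence, since the outer two terms vanish above degree $j$). But F-smoothness says $F_i$ has $p$-complete Tor amplitude, hence cohomology, concentrated in degrees $\geq i+2$. For $i \geq j$ these two ranges $[i+2,\infty)$ and $(-\infty, j+1]$ are disjoint (as $i+2 \geq j+2 > j+1$), so $F_i = 0$, i.e.\ $\phi_i$ is an equivalence. Since this holds for all $i \geq j$, Definition~\ref{defn: f-dim} gives $\fdim(S)\leq j$.

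The main obstacle — really the only nontrivial input — is the precise form of Lemma~\ref{lem: cohomology bounds stable with I}: I need it to guarantee that the $D^{[0,j]}$ bound on $\mathcal{N}^{\geq i}\DD_S$ implies the same kind of bound (with $i$-independent endpoints) on $\overline{\DD}_S\{i\}$, despite the Breuil–Kisin twist. If instead one only has a bound on $\mathcal{N}^i\DD_S$ directly, one still needs to check $\overline{\DD}_S\{i\}$ is bounded; the cleanest way is to note $\overline{\DD}_S = \DD_S/\mathcal{I}$ and use that $\DD_S$ itself (or its Nygaard-completion) inherits bounds, then twist. Everything else is a formal amplitude-chasing argument in $D(\ZZ_p)$, and I would not expect to need to grind through it in detail.
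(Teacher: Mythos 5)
Your argument is correct and is exactly the one the paper intends: the paper gives no written proof beyond asserting that the statement ``is a consequence of Lemma~\ref{lem: cohomology bounds stable with I}'', and that lemma is precisely the input you identify for bounding $\overline{\DD}_S\{i\}$ and the Breuil--Kisin-twisted terms despite the twist by $\mathcal{I}$. The remaining amplitude chase --- $\operatorname{fib}(\phi_i)\in D^{\leq j+1}(\ZZ_p)$ from the hypothesis on $\mathcal{N}^{\geq \bullet}\DD_S$ versus $\operatorname{fib}(\phi_i)\in D^{\geq i+2}$ from F-smoothness, with the two ranges disjoint once $i\geq j$ --- is the whole content, and you have it right.
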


\begin{ex}
Let $A$ be a complete DVR of mixed characteristic $(0,p)$ with perfect residue field. Then a special case of the first example is that $\fdim(A)\leq 1$. The ring $A$ satisfies the above conditions by \cite[Proposition 5.27]{antieau2021beilinson}, or for a purely algebraic proof see the proof of Lemma~\ref{lem: thh(A) purely algebraically}.
\end{ex}

\begin{ex}
Let $S$ be a $p$-completely smooth $R$-algebra, $R$ a perfectoid ring. Then \[\fdim(S)\leq \dim(S \textrm{ rel. }R)\] which can be seen by the first example together with the Hodge-Tate comparison Theorem \cite[Theorem 4.11]{Bhatt_Scholze}.
\end{ex}

Since the condition defining the above is important in its own right we also give it a name.

\begin{defn}\label{defn: n-dim}
Let $S$ be an F-smooth ring. Then we say that $S$ has $\ndim(S)\leq j$ if for all $i\in \ZZ$, $\mathcal{N}^{\geq i}\DD_S\in D^{[0,j]}(\ZZ_p)$. By the above we always have that $\fdim(S)\leq \ndim(S)$.
\end{defn}

\begin{ex}
    For $R$ a perfectoid ring we have that $\ndim(R)=0$.
\end{ex}

\begin{rem}
In Definition~\ref{defn: n-dim}, if we did not include the requirement that $S$ is F-smooth then we would have that $\ndim(S)=0$ for any quasiregular semiperfectoid ring. On the other hand, for any quaisregular semiperfectoid ring $S$ which is not a perfectoid ring, such as $S=\FF_p[x^{1/p^{\infty}}]/(x)$, we have that $\fdim(S)=\infty$.
\end{rem}

\section{A BMS style spectral sequence at finite levels}\label{sec: BMS spectral sequence}

From \parencite[Remark 3.5]{BMS2} we have that $\thh(-;\ZZ_p)^{hC_{p^n}}$ and $\thh(-;\ZZ_p)^{tC_{p^n}}$ are fpqc sheaves. It follows that they are also quasisyntomic sheaves. We will see in Section~\ref{sec: bms spectral sequence for truncated polynomial algebras} that the topological cyclic homology of truncated polynomials decomposes naturally into terms involving these sheaves and so it will be helpful to understand these objects as much as possible. In this section we redo the arguments in \cite{BMS2} used to produce the spectral sequences in \parencite[Theorem 1.12(4)]{BMS2} and identify the $E_2$ terms of these spectral sequences. We begin by identifying what these sheaves look like for $S$ a quasiregular semiperfectoid ring.

\begin{lem}\label{lem: finite levels for qrsp}
Let $S$ be a quasiregular semiperfectoid ring, and let $R$ be a perfectoid ring with a map $R\to S$. Let $d$ be an orientation of $R$. Then there are canonical (up to the choice of $R$ and $d$) equivalences \[\thh(S;\ZZ_p)^{hC_{p^n}}\cong \tc^{-}(S;\ZZ_p)/(\phi(d)\ldots \phi^n(d)v)\] and \[\thh(S;\ZZ_p)^{tC_{p^n}}\cong \tp(S;\ZZ_p)/(\phi(d)\ldots \phi^n(d))\] where $v\in \tc_{-2}^-(S;\ZZ_p)$ is the image of the generator of $\tc^{-}_{-2}(R;\ZZ_p)$ under the induced map. 
\end{lem}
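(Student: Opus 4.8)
The plan is to realize $\thh(S;\ZZ_p)^{hC_{p^n}}$ and $\thh(S;\ZZ_p)^{tC_{p^n}}$ as cofibers of multiplication by a single ``Euler class'' on $\tc^-(S;\ZZ_p)$, respectively $\tp(S;\ZZ_p)$, and then to identify that Euler class by reducing to the perfectoid ring $R$. We use freely the results of \cite[Section 6]{BMS2}: for $S$ quasiregular semiperfectoid, $\thh(S;\ZZ_p)$ is concentrated in even degrees (hence complex orientable), $\tc^-(S;\ZZ_p)$ and $\tp(S;\ZZ_p)$ are even with $\pi_0 \cong \widehat{\DD}_S$, and the class $v \in \tc^-_{-2}(S;\ZZ_p)$ --- the image of a generator of $\tc^-_{-2}(R;\ZZ_p) \cong A_{inf}(R)$ --- inverts to the periodicity class of $\tp(S;\ZZ_p)$.

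For any spectrum $X$ with $\TT$-action and any $n \geq 0$, smash $X$ into the cofiber sequence of pointed $\TT$-spaces $(\TT/C_{p^n})_+ \to S^0 \to S^{\lambda_{p^n}}$, where $\lambda_{p^n}$ is $\CC$ with $\TT$ acting through the $p^n$-th power character (so $S(\lambda_{p^n}) \simeq \TT/C_{p^n}$). Applying $(-)^{h\TT}$ and $(-)^{t\TT}$, Proposition~\ref{prop: Induced/coinduced action and fixed points} identifies $(X \wedge (\TT/C_{p^n})_+)^{h\TT} \simeq \Sigma X^{hC_{p^n}}$ and similarly for $(-)^{t\TT}$, yielding cofiber sequences
\[\Sigma X^{hC_{p^n}} \to X^{h\TT} \to (S^{\lambda_{p^n}} \wedge X)^{h\TT}, \qquad \Sigma X^{tC_{p^n}} \to X^{t\TT} \to (S^{\lambda_{p^n}} \wedge X)^{t\TT}.\]
Taking $X = \thh(S;\ZZ_p)$ and using the equivariant Thom isomorphism (available since $\thh(S;\ZZ_p)$ is complex orientable and $\lambda_{p^n}$ is a complex line), the right-hand terms become $\Sigma^2 \tc^-(S;\ZZ_p)$ and $\Sigma^2 \tp(S;\ZZ_p)$, with the maps becoming multiplication by the Euler class $e_n := e(\lambda_{p^n}) \in \tc^-_{-2}(S;\ZZ_p)$ of the $\TT$-action on $\thh(S;\ZZ_p)$. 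Rotating, this gives $\thh(S;\ZZ_p)^{hC_{p^n}} \simeq \tc^-(S;\ZZ_p)/e_n$ and $\thh(S;\ZZ_p)^{tC_{p^n}} \simeq \tp(S;\ZZ_p)/e_n$; for $n = 0$ one has $\lambda_{p^0} = \lambda_1$ and $e_0$ a unit multiple of $v$, recovering the standard equivalence $\tc^-(S;\ZZ_p)/v \simeq \thh(S;\ZZ_p)$.

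It remains to show $e_n$ is a unit multiple of $\phi(d)\phi^2(d)\cdots\phi^n(d)\,v$; since rescaling by a unit of $\pi_0\tc^-(S;\ZZ_p)$ does not change the cofibers, this yields the asserted formulas (and the dependence on the auxiliary data $(R,d)$ enters only through this unit, proving the stated canonicity). The whole construction is natural in $R \to S$, and $v$ and the $\phi^i(d)$ are pulled back from $R$, so one reduces to $S = R$ perfectoid. For $R$ perfectoid, $\tc^-(R;\ZZ_p)$ and $\tp(R;\ZZ_p)$ --- and the Nygaard filtration and cyclotomic Frobenius on them --- are computed explicitly via B\"okstedt periodicity in \cite[Section 6]{BMS2}, and tracing $e(\lambda_{p^n})$ through that computation exhibits it as $\phi(d)\cdots\phi^n(d)$ times a unit times $v$. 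Alternatively one can induct on $n$: the Tate orbit lemma of \cite{NS} (applicable as $\thh(R;\ZZ_p)$ is connective) gives $\thh(R;\ZZ_p)^{tC_{p^n}} \simeq (\thh(R;\ZZ_p)^{tC_p})^{hC_{p^{n-1}}}$, and similarly for $(-)^{hC_{p^n}}$, and the residual circle action together with the cyclotomic Frobenius multiplies the relevant periodicity class by $\phi^n(d)$ when passing from level $n-1$ to level $n$.

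I expect this last identification of the Euler class to be the real obstacle. Because the $\TT$-action on $\thh(R;\ZZ_p)$ is the nontrivial cyclotomic action rather than a split one, $e(\lambda_{p^n})$ is not governed by a naive formal group law computation over $\pi_0$; one genuinely needs the explicit description of $\thh(R;\ZZ_p)$ and of the cyclotomic Frobenius from \cite[Section 6]{BMS2} to see that this class is $\phi(d)\phi^2(d)\cdots\phi^n(d)$ up to a unit. The other ingredients --- the cofiber sequence coming from the representation sphere, the Thom isomorphism, and the reduction to the perfectoid case --- are formal.
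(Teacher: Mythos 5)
Your architecture is genuinely different from the paper's and is, in outline, viable. The paper never mentions Euler classes: it proves a base-change statement $M^{hC_{p^n}}\simeq \thh(R;\ZZ_p)^{hC_{p^n}}\otimes_{\tc^-(R;\ZZ_p)}M^{h\TT}$ for every $\TT$-equivariant $\thh(R;\ZZ_p)$-module $M$, using compactness of $\thh(R;\ZZ_p)^{hC_{p^n}}$ over $\tc^-(R;\ZZ_p)$ to reduce along Postnikov and Whitehead towers to the case of $M$ discrete, where $M^{hC_{p^n}}$ is classically the cofiber of $p^n v$ on $M^{h\TT}$; the bridge between $p^n$ and $\tilde{d}_n=\phi(d)\cdots\phi^n(d)$ is Lemma~\ref{lem: EM module case}, the elementary prism computation that $\phi^n(d)\equiv r_n p \pmod d$ with $r_n$ a unit. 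Taking $M=\thh(S;\ZZ_p)$ (resp.\ $M=\thh(S;\ZZ_p)^{tC_p}$ with module structure via the Frobenius) then gives both statements at once. Your route --- the cofiber sequence $(\TT/C_{p^n})_+\to S^0\to S^{\lambda_{p^n}}$, the Thom isomorphism for the even (hence complex orientable) ring $\thh(S;\ZZ_p)$, and reduction of the Euler class to the perfectoid base --- is close in spirit to what the paper itself does for representation spheres in Section~\ref{sec: two auxiliary computations}, and its formal skeleton is correct.

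The caveat is that all the content of the lemma is concentrated in the one step you yourself flag as ``the real obstacle,'' namely the identification $e(\lambda_{p^n})=(\mathrm{unit})\cdot\tilde{d}_n v$ in $\tc^-_{-2}(R;\ZZ_p)\cong A_{inf}(R)\cdot v$, and you do not prove it. Be aware that the obvious shortcut fails: naturality along the truncation $\thh(R;\ZZ_p)\to R$ only shows that $\theta(c_n)$ is a unit times $p^n$, where $e(\lambda_{p^n})=c_n v$; that determines $c_n$ only modulo $d$ and cannot distinguish $\tilde{d}_n$ from, say, $p^n$ itself, which generates a different ideal of $A_{inf}(R)$ in general. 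So one genuinely needs either the $\tc^-(R;\ZZ_p)$-module identification of $\thh(R;\ZZ_p)^{hC_{p^n}}$ from \cite[Proposition 6.4]{BMS2} and \cite[Lemma IV.4.12]{NS} (after which the long exact sequence of the cofiber sequence recovers $(c_n)=(\tilde{d}_n)$), or your inductive Tate-orbit-lemma argument carried out in detail. Either would complete the proof; as written, the decisive computation is asserted rather than established.
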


The proof of this follows the proof of the end of \parencite[Proposition 6.4]{BMS2}, which itself follows the proof of \parencite[Lemma IV.4.12]{NS}. In order to apply results of this style, we first need to know what happens for Eilenberg-MacLane $\thh(R;\ZZ_p)$-modules.

\begin{lem}\label{lem: EM module case}
For some unit $r\in R^\times$, the square 
\[
\begin{tikzcd}
\Sigma^{-2}\tc^{-}(R;\ZZ_p) \arrow[rr, "\phi(d)\ldots\phi^n(d)v"] \arrow[d] &  & \tc^{-}(R;\ZZ_p) \arrow[d] \\
\Sigma^{-2}R^{h\TT} \arrow[rr, "rp^n v"]                                                &  & R^{h\TT}                  
\end{tikzcd}
\]
commutes, where the vertical arrows are induced by the truncation map $\thh(R;\ZZ_p)\to R$, and $v\in \pi_{-2}(R^{h\TT})\cong H^2(\CC P^\infty; R)$ is the first chern class of the tautological bundle.
\end{lem}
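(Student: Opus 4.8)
The plan is to reduce everything to a computation in $R^{h\TT}$ and $\tc^-(R;\ZZ_p)$ where both sides are honest power series rings, so that the commutativity of the square becomes a statement comparing two elements of $\pi_{-2}\tc^-(R;\ZZ_p)$ (or of the appropriate graded piece). First I would recall the structure of these homotopy groups for $R$ perfectoid: by \cite[Theorem 6.1 and Proposition 6.2]{BMS2} (or the surrounding discussion), $\tp_*(R;\ZZ_p) = A_{inf}(R)[\sigma^{\pm 1}]$ with $\sigma$ in degree $-2$, and $\tc^-_*(R;\ZZ_p)$ sits inside $\tp_*$ as the subring generated by $A_{inf}(R)$, the class $v = \sigma$ in degree $-2$, and the class $u = \tilde d \sigma^{-1}$ in degree $2$, subject to $uv = \tilde d$. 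Likewise $R^{h\TT}_* = R[[v]]$ with $v$ in degree $-2$ (the image of the first Chern class), and $R^{t\TT}_* = R[v^{\pm 1}]$. The truncation map $\thh(R;\ZZ_p) \to R$ induces on $\tc^-$ the map $A_{inf}(R) \to R$ which is the usual $\theta$, sending $v \mapsto v$; this is the content I would cite or quickly verify from the fact that the $\TT$-Tate/fixed-point spectral sequences for $\thh(R;\ZZ_p)$ and for $R$ (as an $\mathbb{E}_\infty$-ring with trivial $\TT$-action) are compatible via the truncation, and that $\tc^-(R) \to R^{h\TT}$ is the base change along $\theta$.

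The heart of the matter is then to chase the generator $\tilde d_n v := \phi(d)\cdots\phi^n(d)\, v \in \pi_{-2}\tc^-(R;\ZZ_p)$ through the right-hand vertical map and compare it with $rp^n v \in \pi_{-2} R^{h\TT}$. Under $\theta$, each $\phi^j(d)$ maps to $\theta(\phi^j(d)) = \theta(\phi(\phi^{j-1}(d)))$. Here I would use the standard fact that for a perfectoid ring with orientation $d$, the element $\theta(\tilde d) = \theta(\phi(d))$ is, up to a unit, equal to $p$: indeed $\phi(d) \equiv d^p \pmod p$ is false in general, but one has $\tilde d = \phi(d)$ generates $\ker(\theta \circ \phi^{-1})$... more precisely the key input is \cite[Lemma 3.23 / the computation in Section 3]{BMS1}, namely that $\theta(\phi(d))$ differs from $p$ by a unit in $R$. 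Applying $\phi^{j}$ repeatedly and using that $\phi$ is an automorphism of $A_{inf}(R)$ compatible with the Witt vector Frobenius, one gets $\theta(\phi^{j}(d)) = r_j \cdot p$ for units $r_j \in R^\times$ for each $j \geq 1$ — this is where I expect the main bookkeeping to live, making sure the "unit" really is a unit and tracking that there are exactly $n$ factors $\phi(d), \dots, \phi^n(d)$, each contributing one factor of $p$. Multiplying, $\theta(\tilde d_n) = \big(\prod_{j=1}^n r_j\big) p^n =: r p^n$ with $r \in R^\times$, and since $\theta(v) = v$ we conclude that the image of $\tilde d_n v$ in $\pi_{-2}R^{h\TT}$ is exactly $r p^n v$. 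The commutativity of the square is then immediate: both composites $\Sigma^{-2}\tc^-(R;\ZZ_p) \to R^{h\TT}$ send a class $z$ to $\theta(\tilde d_n)\cdot v \cdot (\text{image of } z) = rp^n v \cdot(\text{image of }z)$, because multiplication by $\tilde d_n v$ followed by $\theta$ equals $\theta$ followed by multiplication by $\theta(\tilde d_n v) = rp^n v$ ($\theta$ being a ring map and the module structures being compatible).

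The main obstacle I anticipate is not conceptual but a matter of pinning down the precise normalization: which generator of $\tc^-_{-2}(R;\ZZ_p)$ is called $v$, which generator of $\ker\theta$ is called $d$, and hence what the unit $r$ is. In particular one must be careful that $v$ on the $\tc^-$ side (a lift of the Bott-type class) really does map to the first Chern class $v \in H^2(\CC P^\infty;R)$ under the truncation — this is essentially the definition of how $v$ is normalized in \cite{BMS2}, but it should be stated. A secondary subtlety: the vertical maps are maps of $\mathbb{E}_\infty$-rings (or at least of $\tc^-(R;\ZZ_p)$-modules via the truncation), so to even phrase "the square commutes" one uses that multiplication by $\tilde d_n v$ on $\tc^-(R;\ZZ_p)$ and by $rp^n v$ on $R^{h\TT}$ are $\tc^-(R;\ZZ_p)$-linear, which follows because $v$ and $\tilde d_n$ are central (they live in homotopy rings of $\mathbb{E}_\infty$-rings). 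With those normalizations fixed, the diagram commutes up to the canonical homotopy coming from $\theta$ being a ring map, and there is nothing further to check.
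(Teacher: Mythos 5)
Your reduction is the same as the paper's: since the truncation sends $v$ to $v$ and induces $\theta\colon A_{inf}(R)\to R$ on $\pi_0$, commutativity of the square comes down to showing $\theta(\phi(d)\cdots\phi^n(d))=rp^n$ for a unit $r$, i.e.\ that each $\theta(\phi^j(d))$, $j\geq 1$, is a unit multiple of $p$. You correctly flag this as where ``the main bookkeeping lives,'' but the mechanism you propose for it does not work: knowing $\theta(\phi(d))=(\text{unit})\cdot p$ and that $\phi$ is an automorphism of $A_{inf}(R)$ does not let you ``apply $\phi$ repeatedly'' to propagate the statement to $\phi^j(d)$, because $\theta$ is not $\phi$-equivariant (the twisted map $\theta\circ\phi^{-j}$ actually kills $\phi^j(d)$, and compatibility with the Witt vector Frobenius only gives $\theta\circ\phi^j=F^j\circ\theta_{j+1}$, a different map into $R$ through $W_{j+1}(R)$). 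Note also that the identity $\phi^j(d)=(\phi^{j-1}(d))^p+p\delta(\phi^{j-1}(d))$ only makes the first term visibly lie in $(d)$ when $j=1$; for $j\geq 2$ you cannot conclude anything about $\theta(\phi^j(d))$ without already controlling $\theta(\phi^{j-1}(d))$.

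The paper closes exactly this gap by induction using the $\delta$-ring structure. First, $\delta(\phi^j(d))$ is a unit for every $j$ (the paper deduces this from $p\in(\phi^j(d),\phi^{j+1}(d))$ via \cite[Lemma 2.25]{Bhatt_Scholze}; alternatively $\delta\circ\phi=\phi\circ\delta$ on the $p$-torsion-free ring $A_{inf}(R)$, so $\delta(\phi^j(d))=\phi^j(\delta(d))$ is a unit). Then, assuming $\phi^j(d)\equiv r_jp \bmod d$ with $r_j$ a unit, one computes
\[
\phi^{j+1}(d)=(\phi^j(d))^p+p\delta(\phi^j(d))\equiv p\bigl(r_j^pp^{p-1}+\delta(\phi^j(d))\bigr)\bmod d,
\]
and the parenthesized element is a unit because $p$ lies in the Jacobson radical of $A_{inf}(R)$. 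With this induction (or an honest Witt-vector argument actually carried out) supplied, your proof coincides with the paper's; everything else in your write-up --- the identification of the vertical maps, the normalization of $v$, and the reduction of the square to a $\pi_0$ statement --- is fine.
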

\begin{proof}
By definition the truncation map will send $v$ to $v$, so to check this it is enough to show that the map $A_{inf}(R)=\tc_0(R;\ZZ_p)\xrightarrow{\theta}R=\pi_0(R^{h\TT})$ send $\phi(d)\ldots \phi^n(d)\mapsto rp^n$. We will show something slightly stronger than the result, that in fact $\phi^n(d)=r_n p\mod d$ for all $n$, where $r_n\in A_{inf}(R)^\times$. We proceed by induction, noting that when $n=1$, $\phi(d)=d^p+p\delta(d)=p\delta(d)\mod d$ and $\delta(d)$ is already a unit in $A_{inf}(R)$.

Since $A_{inf}(R)$ is $(p,d)$-complete, $p,d\in \mathrm{rad}(A_{inf}(R))$ and since $\phi$ is an isomorphism it follows that $\phi^n(d)\in \mathrm{rad}(A_{inf}(R))$ for all $n\in \ZZ$. Consequently $\phi^n(d)$ is distinguished if and only if $p\in (\phi^n(d), \phi^{n+1}(d))$ by \parencite[Lemma 2.25]{Bhatt_Scholze}. From $d$ being distinguished we see that $p\in (d,\phi(d))$, but then $p=\phi^n(p)\in (\phi^n(d),\phi^{n+1}(d))$ and $\delta(\phi^n(d))$ is a unit.

 Now suppose that $\phi^n(d)=r_np \mod d$ for some $r_n\in A_{inf}(R)^\times$. It is then enough to show that $\phi^{n+1}(d)=pr_{n+1}\mod d$ for some $r_{n+1}\in A_{inf}(R)^\times$. To that end, note that \[\phi^{n+1}(d)=\phi(\phi^{n}(d))=(\phi^n(d))^p+p\delta(\phi^n(d))=p(r_n^pp^{p-1}+\delta(\phi^n(d))) \mod d\] and since $\delta(\phi^n(d))$ is a unit and $p\in \mathrm{rad}(A_{inf}(R))$ we have that $r_{n+1}:=\delta(\phi^n(d))+r_n^pp^{p-1}$ is a unit in $A_{inf}(R)$.
\end{proof}

From the fiber sequence $\TT\to BC_{p^n}\to B\TT$ and the resulting AH spectral sequence we can see that for a discrete $\ZZ$-module $M$, the map $M^{h\TT}\to M^{hC_{p^n}}$ identifies the target with the cofiber of $M^{h\TT}\xrightarrow{p^n v}M^{h\TT}$. 

\begin{proof}[Proof of Lemma~\ref{lem: finite levels for qrsp}]
Let $M$ be a $\TT$-equivariant $\thh(R;\ZZ_p)$ module. Then we will first show that the map $\thh(R;\ZZ_p)^{hC_{p^n}}\otimes_{\tc^{-}(R;\ZZ_p)}M^{h\TT}\to M^{hC_{p^n}}$ induced by the lax symmetric monoidal structure of homotopy $\TT$ fixed points is an equivalence. Since $\thh(R;\ZZ_p)^{hC_{p^n}}$ is a compact $\tc^{-}(R;\ZZ_p)$ module by \parencite[Corollary 5.8]{Riggenbach}, tensoring with it will commute with both filtered limits and colimits. Thus taking the Postnikov and Whitehead filtrations and using the fact the homotopy fixed points commute with both by \parencite[Lemma I.2.6]{NS}, we reduce to the case of $M$ Eilenberg-MacLane concentrated in degree zero.

In this the $\thh(R;\ZZ_p)$-module structure on $M$ comes from an $R$-module structure on $M$ via the map $\thh(R;\ZZ_p)\to R$. It follows that the $\tc^{-}(R;\ZZ_p)$-module structure on $M^{h\TT}$ comes from an $R^{h\TT}$-module structure via the map $\tc^{-}(R;\ZZ_p)\to R^{h\TT}$. Then by Lemma~\ref{lem: EM module case} it follows that the diagram 
\[
\begin{tikzcd}
\Sigma^{-2}\tc^{-}(R;\ZZ_p)\otimes_{\tc^{-}(R;\ZZ_p)}M^{h\TT} \arrow[d] \arrow[r, "\Tilde{d}_n v\otimes id_M"] & \tc^{-}(R;\ZZ_p)\otimes_{\tc^{-}(R;\ZZ_p)}M^{h\TT} \arrow[d] \\
\Sigma^{-2}R^{h\TT}\otimes_{R^{h\TT}}M^{h\TT} \arrow[r, "rp^nv\otimes id_M"]                                   & R^{h\TT}\otimes_{R^{h\TT}}M^{h\TT}               
\end{tikzcd}
\]
commutes. The vertical arrows are equivalences, so the cofibers are equivalent. The cofiber of the top is $\thh(R;\ZZ_p)^{hC_{p^n}}\otimes_{\tc^{-}(R;\ZZ_p)}M^{h\TT}$ and the cofiber of the bottom map is $M^{hC_{p^n}}$ and the maps in this diagram are the induced maps from the lax symmetric monoidal structure of homotopy $\TT$ fixed points, so the result follows.

 For the Tate construction, take $M=\thh(S;\ZZ_p)^{tC_p}$ with module structure given by the $\mathbb{E}_\infty$ ring map $\thh(R;\ZZ_p)\xrightarrow{\phi_p}\thh(R)^{tC_p}\to \thh(S)^{tC_p}$.
\end{proof}

In order to get the desired filtration, we will also want to know what the homotopy groups of these spectra are in terms of the prismatic cohomology of $S$. This almost follows immediately from Lemma~\ref{lem: finite levels for qrsp}, but we need to know that we do not introduce odd degree homotopy groups. For $\thh(S;\ZZ_p)^{tC_{p^n}}$ this follows from \parencite[Corollary 7.10(2)]{BMS2}, and for $\thh(S;\ZZ_p)^{hC_{p^n}}$ this follows from the statement for $\thh(S;\ZZ_p)^{tC_{p^n}}$ and the fact that $can: \tc^{-}(S;\ZZ_p)\to \tp(S;\ZZ_p)$ is injective on homotopy groups. This leads to the following corollary.

\begin{cor}
Let $S$, $R$, and $d$ be as in Lemma~\ref{lem: finite levels for qrsp}. Then $\tilde{d}_n:=\phi(d)\phi^2(d)\ldots \phi^{n}(d)$ is a non zero-divisor in the prismatic cohomology $\DD_S$ of $S$. Furthermore, there are isomorphisms \[\pi_{2i}\left(\thh(S;\ZZ_p)^{hC_{p^n}}\right)\cong \mathcal{N}^{\geq i}\widehat{\DD}_S\{i\}/\mathcal{N}^{\geq i+1}\widehat{\DD}_S\{i\}\tilde{d}_n \] and \[\pi_{2i}\left(\thh(S;\ZZ_p)^{tC_{p^n}}\right)\cong \widehat{\DD}_S\{i\}/\tilde{d}_n\] of $\widehat{\DD}_S$ modules, and the odd homotopy groups vanish.
\end{cor}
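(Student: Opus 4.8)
The plan is to deduce the Corollary directly from Lemma~\ref{lem: finite levels for qrsp} together with the known structure of $\tc^-$ and $\tp$ on quasiregular semiperfectoid rings. First I would recall from \cite[Theorem 7.2]{BMS2} (and the discussion around it) that for $S$ quasiregular semiperfectoid the ring $\pi_0(\tp(S;\ZZ_p)) = \widehat{\DD}_S$ is concentrated in even degrees, $\tp(S;\ZZ_p)$ is $2$-periodic with $\pi_{2i}\tp(S;\ZZ_p) \cong \widehat{\DD}_S\{i\}$, and $\tc^-(S;\ZZ_p)$ is also concentrated in even degrees with $\pi_{2i}\tc^-(S;\ZZ_p) \cong \mathcal{N}^{\geq i}\widehat{\DD}_S\{i\}$, the canonical map $can\colon \tc^- \to \tp$ being injective on homotopy and identifying $\pi_{2i}\tc^-$ with the Nygaard filtration step. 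The element $v\in \tc^-_{-2}(S;\ZZ_p)$ maps under $can$ to a generator of $\tp_{-2}$, and $\tilde d_n = \phi(d)\cdots\phi^n(d)$ is an element of $\pi_0\tc^- = \widehat{\DD}_S$.

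Next I would handle the Tate side. By Lemma~\ref{lem: finite levels for qrsp}, $\thh(S;\ZZ_p)^{tC_{p^n}} \cong \tp(S;\ZZ_p)/\tilde d_n$, i.e.\ the cofiber of multiplication by $\tilde d_n$ on $\tp(S;\ZZ_p)$. By \cite[Corollary 7.10(2)]{BMS2} this cofiber has homotopy concentrated in even degrees; since $\tp(S;\ZZ_p)$ has even homotopy, the long exact sequence then forces $\tilde d_n\colon \pi_{2i}\tp \to \pi_{2i}\tp$ to be injective, hence $\tilde d_n$ is a non-zero-divisor in $\widehat{\DD}_S\{i\}$ for every $i$, and in particular (taking $i=0$) a non-zero-divisor in $\widehat{\DD}_S$; by flatness of the various $\{i\}$ twists this is equivalent to being a non-zero-divisor in $\DD_S$ before Nygaard completion as well (using that $\widehat\DD_S$ is derived $p$-complete and the twists are invertible line bundles). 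The long exact sequence then collapses to short exact sequences $0\to \pi_{2i}\tp \xrightarrow{\tilde d_n}\pi_{2i}\tp \to \pi_{2i}(\tp/\tilde d_n)\to 0$, giving $\pi_{2i}(\thh(S;\ZZ_p)^{tC_{p^n}}) \cong \widehat{\DD}_S\{i\}/\tilde d_n$ and vanishing odd groups.

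For the homotopy fixed point side, Lemma~\ref{lem: finite levels for qrsp} identifies $\thh(S;\ZZ_p)^{hC_{p^n}}$ with the cofiber of multiplication by $\tilde d_n v$ on $\tc^-(S;\ZZ_p)$, i.e.\ the cofiber of the degree $-2$ self-map of $\tc^-(S;\ZZ_p)$ given by $v$ followed by $\tilde d_n$. To see its homotopy is even, I would use that $can\colon \tc^- \to \tp$ is injective on homotopy and compatible with these self-maps (the image of $\tilde d_n v$ under $can$ is $\tilde d_n$ times the periodicity generator, which is injective on $\pi_*\tp$ by the previous paragraph), so $\tilde d_n v$ acts injectively on $\pi_*\tc^-$; alternatively I would cite that this is exactly the content of ``$can$ injective on homotopy'' remarked just before the Corollary. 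Hence again the long exact sequence breaks into short exact sequences, and using that the $v$-multiplication $\pi_{2i}\tc^- \to \pi_{2i-2}\tc^-$ has image $\mathcal{N}^{\geq i}\widehat\DD_S\{i\}$ sitting inside $\pi_{2i-2}\tc^- \cong \mathcal{N}^{\geq i-1}\widehat\DD_S\{i-1\}$ — so that $\pi_{2i}(\tc^-/\tilde d_n v)$ is the cokernel of $\tilde d_n$ acting from $\mathcal{N}^{\geq i+1}\widehat\DD_S\{i\}$ into $\mathcal{N}^{\geq i}\widehat\DD_S\{i\}$ — one reads off $\pi_{2i}(\thh(S;\ZZ_p)^{hC_{p^n}}) \cong \mathcal{N}^{\geq i}\widehat\DD_S\{i\}/\mathcal{N}^{\geq i+1}\widehat\DD_S\{i\}\,\tilde d_n$, with odd groups vanishing. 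Finally I would note the module structure over $\widehat\DD_S = \pi_0\tc^-$ is automatic since all the identifications are of $\tc^-(S;\ZZ_p)$- (resp.\ $\tp(S;\ZZ_p)$-)modules.

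The main obstacle is the bookkeeping in the homotopy fixed points case: one must be careful that the self-map in question is ``$v$ then $\tilde d_n$'' shifting degree by $2$, and that the relevant short exact sequence expresses $\pi_{2i}$ of the cofiber as a quotient of $\pi_{2i}\tc^-$ rather than $\pi_{2i-2}\tc^-$, which requires correctly tracking how $v$-multiplication interacts with the Nygaard filtration (namely $v\colon \pi_{2i}\tc^- = \mathcal{N}^{\geq i}\widehat\DD_S\{i\} \hookrightarrow \mathcal{N}^{\geq i-1}\widehat\DD_S\{i-1\} = \pi_{2i-2}\tc^-$ is the inclusion of filtration steps, up to the twist identification). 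Everything else is a formal consequence of evenness plus the long exact sequence of a cofiber.
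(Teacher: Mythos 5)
Your identification of the homotopy groups is correct and follows essentially the same route as the paper: evenness of $\thh(S;\ZZ_p)^{tC_{p^n}}$ from \cite[Corollary 7.10(2)]{BMS2}, evenness of the homotopy fixed points from injectivity of $can$ on homotopy, and then the long exact sequence of the cofiber sequences from Lemma~\ref{lem: finite levels for qrsp}, with the $v$-multiplication correctly tracked as the inclusion of Nygaard filtration steps.

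The gap is in the first assertion of the Corollary, which is precisely the part the paper flags as ``the only thing left to show'': that $\tilde{d}_n$ is a non zero-divisor in the \emph{non-Nygaard-completed} prismatic cohomology $\DD_S$. Your long-exact-sequence argument only shows that $\tilde{d}_n$ acts injectively on $\widehat{\DD}_S\{i\}=\pi_{2i}\tp(S;\ZZ_p)$, i.e.\ on the Nygaard completion. The step ``by flatness of the various $\{i\}$ twists this is equivalent to being a non-zero-divisor in $\DD_S$ before Nygaard completion'' is a non sequitur: the twists are irrelevant here, and what you would actually need is injectivity of the completion map $\DD_S\to\widehat{\DD}_S$ (Nygaard-separatedness), which you neither assert nor prove and which is not automatic for a general quasiregular semiperfectoid ring. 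The paper avoids this entirely with a direct algebraic argument on $\DD_S$ itself: since $(\DD_S,d)$ is a prism, $\DD_S$ has no $d$-torsion, so $\DD_S\hookrightarrow\DD_S[\tfrac{1}{d}]$; by \cite[Theorem 1.8(6)]{Bhatt_Scholze} the Frobenius is an isomorphism on $\DD_S[\tfrac{1}{d}]$, so each $\phi^i(d)$, and hence $\tilde{d}_n$, is a non zero-divisor there, and therefore already in $\DD_S$. You should replace your completion-comparison step with an argument of this kind (or supply a proof that $\DD_S\to\widehat{\DD}_S$ is injective in this generality).
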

\begin{proof}
The only thing left to show is the first statement, i.e. that even before Nygaard completion $\tilde{d}_n$ is not a zero-divisor. Note that since $(\DD_S,d)$ is a prism, $\DD_S$ in particular will have no $d$-torsion. In addition, after inverting $d$ the Frobinius $\phi: \DD_S[\frac{1}{d}]\to \DD_S[\frac{1}{d}]$ is an isomorphism by \parencite[Theorem 1.8(6)]{Bhatt_Scholze} taking $A=A_{inf}(R)$. Thus $\tilde{d}_n$ is a non zero-divisor in $\DD_S[\frac{1}{d}]$, but then since $\DD_S\hookrightarrow \DD_S[\frac{1}{d}]$ is an injection $\tilde{d}_n$ is a non zero-divisor in $\DD_S$ as well.
\end{proof}

\begin{con}[The BMS filtration at finite levels]
We see from the homotopy group calculation that $\tau_{\geq 2i}\thh(-;\ZZ_p)^{hC_{p^n}}$ and $\tau_{\geq 2i}\thh(-;\ZZ_p)^{tC_{p^n}}$ are already quasisyntomic sheaves on $\mathrm{QRSPerfd}_R$. We already have that $\tau_{2j}\thh(-;\ZZ_p)$ is a sheaf on quasisyntomic rings and so combining this we have sheaves \[\mathrm{Fil}^{\geq i, \geq j}\thh(-;\ZZ_p)= \tau_{\geq 2i}(\tau_{\geq 2j}\thh(-;\ZZ_p))^{hC_{p^n}}: \mathrm{QRSPerfd}\to \Sp\] where we can remove the perfectoid ring $R$ since every map of quasisyntomic rings is an $R$-module map for some perfectoid ring $R$. Assembling this data into a single object gives a sheaf \[\mathrm{Fil}^{\geq *, \geq \ast}\thh(-;\ZZ_p)^{hC_{p^n}}: \mathrm{QRSPerfd}\to \widehat{\operatorname{DF}}(\widehat{\operatorname{DF}})\] where $\widehat{\mathrm{DF}}(\mathcal{C})$ is the derived completely filtered category of objects in $\mathcal{C}$ and $\widehat{\mathrm{DF}}:=\widehat{\mathrm{DF}}(\mathbb{S})$, see \cite[Section 5.1]{BMS2} for a review of the relevant facts about this construction. We will call the second filtration the Nygaard filtration by virtue of \cite[Theorem 1.12]{BMS2}, and since we will need to use it later we will use the notation \[\mathcal{N}^{\geq *}\thh(-;\ZZ_p)^{hC_{p^n}}:=\mathrm{Fil}^{\geq -\infty, \geq *}\thh(-;\ZZ_p)^{hC_{p^n}}\] and similarly for the Tate construction. Unfolding this then gives a sheaf \[\mathrm{Fil}^{\geq *, \geq *}\thh(-;\ZZ_p)^{hC_{p^n}}:\mathrm{QSyn}\to \widehat{\mathrm{DF}}(\widehat{\mathrm{DF}})\] and the above works with the Tate construction as well. Note that the sheaves $\mathcal{N}^{\geq *}\thh(-;\ZZ_p)^{hC_{p^n}}$ and $\mathcal{N}^{\geq *}\thh(-;\ZZ_p)^{tC_{p^n}}$ are given by unfolding the double speed Postnikov filtration on $\thh(-;\ZZ_p)$ and then applying the homotopy fixed points or Tate construction, respectively.
\end{con}

\begin{lem}\label{lem: Associated graded terms at finite levels}
In the filtration constructed above, we have that \[\mathrm{gr}^i\thh(-;\ZZ_p)^{hC_{p^n}}\simeq \mathcal{N}^{\geq i}\mathcal{O}_{\widehat{\DD}}\{i\}/\mathcal{N}^{\geq i+1}\mathcal{O}_{\widehat{\DD}}\{i\}\otimes \mathcal{I}_n[2i]\] and \[\mathrm{gr}^i\thh(-;\ZZ_p)^{tC_{p^n}}\simeq \mathcal{O}_{\widehat{\DD}}\{i\}/\mathcal{I}_n[2i]\] where we are taking the graded terms of the first filtration and the second filtration is induced from the Nygaard filtration. 
\end{lem}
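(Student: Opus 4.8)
The plan is to reduce to the case of quasiregular semiperfectoid rings via unfolding (Theorem~\ref{thm: qrsp are basis}), and then read off the associated graded terms from the homotopy group computation already established in the corollary following Lemma~\ref{lem: finite levels for qrsp}. First I would note that it suffices to identify $\mathrm{gr}^i$ of $\mathrm{Fil}^{\geq *, \geq j}\thh(-;\ZZ_p)^{hC_{p^n}}$ on $\mathrm{QRSPerfd}_R$, since unfolding is exact and commutes with taking the graded pieces of a filtration (the unfolding functor is an equivalence of $\infty$-categories of sheaves, hence preserves all limits and colimits, and in particular cofiber sequences of filtered objects). So fix a quasiregular semiperfectoid ring $S$ with a perfectoid ring $R\to S$ and an orientation $d$ of $R$.

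Next I would unpack the definition: the outer filtration is the double-speed Postnikov (Whitehead) filtration, so by construction $\mathrm{gr}^i(\tau_{\geq 2j}\thh(S;\ZZ_p))^{hC_{p^n}}$ for $j\leq i$ is $(\tau_{=2i}\thh(S;\ZZ_p))^{hC_{p^n}}$ placed appropriately; but the point of the corollary is that $\thh(S;\ZZ_p)^{hC_{p^n}}$ has homotopy concentrated in even degrees, so the double-speed Postnikov filtration has $\mathrm{gr}^i$ equal to $\pi_{2i}\thh(S;\ZZ_p)^{hC_{p^n}}[2i]$. By the corollary this is $\left(\mathcal{N}^{\geq i}\widehat{\DD}_S\{i\}/\mathcal{N}^{\geq i+1}\widehat{\DD}_S\{i\}\tilde d_n\right)[2i]$. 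The second (Nygaard) filtration direction is handled the same way: its $\mathrm{gr}$ picks out the subquotient $\mathcal{N}^{\geq i}/\mathcal{N}^{\geq i+1}$ of $\widehat{\DD}_S\{i\}$, twisted down by $\tilde d_n$, which is exactly the statement being unfolded. For the Tate construction the identical argument using $\pi_{2i}\thh(S;\ZZ_p)^{tC_{p^n}}\cong \widehat{\DD}_S\{i\}/\tilde d_n$ gives $\mathrm{gr}^i\thh(S;\ZZ_p)^{tC_{p^n}}\simeq \left(\widehat{\DD}_S\{i\}/\tilde d_n\right)[2i]$.

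The remaining task is to match the naive description "$/\tilde d_n$" on $\mathrm{QRSPerfd}_R$ with the invariantly-defined sheaf-theoretic expressions $\mathcal{N}^{\geq i}\mathcal{O}_{\widehat{\DD}}\{i\}/\mathcal{N}^{\geq i+1}\mathcal{O}_{\widehat{\DD}}\{i\}\otimes \mathcal{I}_n$ and $\mathcal{O}_{\widehat{\DD}}\{i\}/\mathcal{I}_n$ appearing in the statement. Here I would invoke the already-recorded identifications: $\mathcal{I}_n$ corresponds under unfolding to the ideal $\mathcal{I}\cdot\phi^*(\mathcal{I})\cdots(\phi^{n-1})^*(\mathcal{I})$, which by \cite[Theorem 7.2(5)]{BMS2} is $(\tilde d_n) = (\phi(d)\phi^2(d)\cdots\phi^n(d))\subseteq \widehat{\DD}_S$, using that $\mathcal{I}$ is generated by $d$ on perfectoids and that $\phi^j$ pulls back the generator $d$ to $\phi^j(d)$. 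Tensoring the line bundle $\mathcal{I}_n$ with $\mathcal{O}_{\widehat{\DD}}\{i\}$ (or with the line bundle $\mathcal{N}^{\geq i}/\mathcal{N}^{\geq i+1}$ twisted by $\{i\}$) and forming the quotient then produces exactly multiplication-by-$\tilde d_n$ modulo that submodule, which is what we have. Since all these sheaves are line bundles over $\mathcal{O}_{\widehat{\DD}}$ there are no set-theoretic obstructions to forming the tensor products, as discussed before Lemma~\ref{lem: finite levels for qrsp}, so the comparison is literally an identity of sheaves on $\mathrm{QRSPerfd}_R$ and we conclude by unfolding.

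The main obstacle I anticipate is purely bookkeeping rather than conceptual: correctly threading the two filtration directions (outer Postnikov and Nygaard) through the unfolding equivalence and making sure the Tate twists $\{i\}$ and the shift $[2i]$ land in the right place, together with pinning down the precise generator $\tilde d_n$ of $\mathcal{I}_n$ so that the quotient sheaf on $\mathrm{QRSPerfd}_R$ matches the corollary's formula on the nose. Everything else is a direct consequence of the even-concentration of homotopy groups proved just above.
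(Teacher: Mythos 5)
Your proposal is correct and follows essentially the same route as the paper: reduce to $\mathrm{QRSPerfd}$ via Theorem~\ref{thm: qrsp are basis}, invoke Lemma~\ref{lem: finite levels for qrsp} and its corollary to read off the even-concentrated homotopy groups, and identify $\mathcal{I}_n$ with $(\tilde{d}_n)$. The only point where the paper is more careful than your phrase ``literally an identity of sheaves'' is that, since $\tilde{d}_n$ depends on the non-canonical choice of $R$ and $d$, the comparison must be exhibited as an actual map of sheaves — the paper produces it from the filtered canonical map $\tc^{-}(-;\ZZ_p)\to \thh(-;\ZZ_p)^{hC_{p^n}}$ together with a nullhomotopy of its restriction to $\mathcal{N}^{\geq i+1}\mathcal{O}_{\widehat{\DD}}\{i\}\otimes\mathcal{I}_n[2i]$ — but your identification via the corollary is implicitly this same canonical quotient map, so this is a matter of presentation rather than a gap.
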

\begin{proof}
We will show this for the homotopy fixed points case, the Tate construction case is similar. Note that by unfolding the map $\tc^{-}(-;\ZZ_p)\to \thh(-;\ZZ_p)^{hC_{p^n}}$ is a filtered map since on quasiregular semiperfectoid rings both filtrations are given by the Postnikov filtration which is functorial. In particular we get a map $\mathrm{gr}^i_{BMS}\tc^{-}(-;\ZZ_p)\to \mathrm{gr}^i\thh(-;\ZZ_p)^{hC_{p^n}}$ which gives a map of sheaves \[\mathcal{N}^{\geq i}\mathcal{O}_{\widehat{\DD}}\{i\}[2i]\to \mathrm{gr}^i\thh(-;\ZZ_p)^{hC_{p^n}}\] which is again a filtered map. Consider the composite map \[\begin{tikzcd}
{\mathcal{N}^{\geq i+1}\mathcal{O}_{\widehat{\DD}}\{i\}\otimes \mathcal{I}_n[2i]} \arrow[r] \arrow[rrd] & {\mathcal{N}^{\geq i}\mathcal{O}_{\widehat{\DD}}\{i\}\otimes \mathcal{I}_n[2i]} \arrow[r] & {\mathcal{N}^{\geq i}\mathcal{O}_{\widehat{\DD}}\{i\}[2i]} \arrow[d] \\
                                                                                    &                                                                       & \mathrm{gr}^i\thh(-;\ZZ_p)^{hC_{p^n}}                     
\end{tikzcd}\] which is contractible on quasiregular semiperfectoid rings by Lemma~\ref{lem: finite levels for qrsp}. By unfolding it must therefore be the zero map, and hence there exists a map \[\mathcal{N}^{\geq i}\mathcal{O}_{\widehat{\DD}}\{i\}/\mathcal{N}^{\geq i+1}\mathcal{O}_{\widehat{\DD}}\{i\}\otimes \mathcal{I}_n[2i]\to \mathrm{gr}^i\thh(-;\ZZ_p)^{hC_{p^n}}\] which is an equivalence on quasiregular semiperfectoid rings. Unfolding then once again shows that this is an equivalence globally as desired.
\end{proof}

\begin{ex}
Let $R$ be a perfectoid ring. Then the outside filtration on $\thh(R;\ZZ_p)^{hC_{p^n}}$ is the double speed Postnikov filtration and \[\mathrm{gr}^i(\thh(R;\ZZ_p)^{hC_{p^n}})\simeq \pi_{2i}(\thh(R;\ZZ_p)^{hC_{p^n}})[2i]\simeq d^{\max\{i,0\}}A_{inf}(R)/d^{\max\{i+1,0\}}\tilde{d}_{n}A[2i].\] This identification uses the fact that the Nygaard filtration can be identified with the $d$-adic filtration on $A_{inf}(R)$. The filtration on the graded terms is then given as follows: \[\mathrm{Fil}^{\geq j}\mathrm{gr}^i(\thh(R;\ZZ_p)^{hC_{p^n}})\simeq d^{\max\{i, j ,0\}}A_{inf}(R)/d^{\max\{i+1, j+1 ,0\}}\tilde{d}_{n}A_{inf}(R)[2i] \] in other words it is the $d$-adic filtration starting in filtration degree $\max\{i, 0\}$. Similarly we have that \[\mathrm{Fil}^{\geq j}\mathrm{gr}^i(\thh(R;\ZZ_p)^{tC_{p^n}})\simeq d^{\max\{j,0\}}A_{inf}(R)/d^{\max\{j,0\}}\tilde{d}_nA_{inf}(R)[2i]\] in other words the $d$-adic filtration starting in filtration degree $0$.
\end{ex}

To get this to work for all $p$-complete commutative rings, we need to show that the filtration we have constructed is in fact already Kan extended from $p$-completed finitely generated polynomial algebras. This is the strategy taken in \parencite[Section 5]{antieau2021beilinson} to extend the BMS filtration to all $p$-complete rings, and we will follow their strategy here. 

\begin{lem}\label{lem: Left Kan Extended}
For all and all $n\in \NN\cup\{\infty\}$ the functors \[\mathrm{Fil}^{\geq *, \geq *}\thh(-;\ZZ_p)^{hC_{p^n}}:\mathrm{QSyn}\to \widehat{\mathrm{DF}}(\widehat{\mathrm{DF}})\] and \[\mathrm{Fil}^{\geq *, \geq *}\thh(-;\ZZ_p)^{tC_{p^n}}:\mathrm{QSyn}\to \widehat{\mathrm{DF}}(\widehat{\mathrm{DF}})\] are left Kan extended from $p$-completed finitely generated polynomial $\ZZ_p$-algebras.
\end{lem}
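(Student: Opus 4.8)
The plan is to follow the template of \cite[Section 5]{antieau2021beilinson} essentially verbatim, exploiting the fact that all of the constructions involved are built functorially out of $\thh(-;\ZZ_p)$ and its double-speed Postnikov filtration together with the (lax symmetric monoidal, limit-preserving) functors $(-)^{hC_{p^n}}$ and $(-)^{tC_{p^n}}$. Write $P$ for the category of $p$-completed finitely generated polynomial $\ZZ_p$-algebras and let $\mathrm{LKE}_{P\to \mathrm{QSyn}}$ denote left Kan extension along the inclusion $P\hookrightarrow \mathrm{QSyn}$. The goal is to show the natural comparison map
\[
\mathrm{LKE}_{P\to \mathrm{QSyn}}\left(\mathrm{Fil}^{\geq *,\geq *}\thh(-;\ZZ_p)^{hC_{p^n}}|_P\right)\xrightarrow{\ \sim\ } \mathrm{Fil}^{\geq *,\geq *}\thh(-;\ZZ_p)^{hC_{p^n}}
\]
is an equivalence in $\operatorname{Fun}(\mathrm{QSyn},\widehat{\mathrm{DF}}(\widehat{\mathrm{DF}}))$, and likewise for the Tate construction.

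**Key steps.** First I would reduce to checking the statement after passing to associated graded pieces in \emph{both} filtration directions: since $\widehat{\mathrm{DF}}(\widehat{\mathrm{DF}})$ is built from $D(\ZZ_p)$ by two complete filtrations, and left Kan extension commutes with the colimits defining associated graded, a map of bifiltered objects is an equivalence as soon as it is one on each $\mathrm{gr}^{i}\mathrm{gr}^{j}$. By Lemma~\ref{lem: Associated graded terms at finite levels} (and its Nygaard-refined form), each such bigraded piece is, up to a shift, a tensor of the sheaves $\mathcal{N}^{\geq i}\mathcal{O}_{\widehat{\DD}}\{i\}/\mathcal{N}^{\geq i+1}\mathcal{O}_{\widehat{\DD}}\{i\}$ (resp.\ $\mathcal{O}_{\widehat{\DD}}\{i\}$) with the line bundle $\mathcal{I}_n$; these are in turn, via the Hodge--Tate / conjugate filtrations, expressible through the Hodge cohomology sheaves $\wedge^j L_{-/\ZZ_p}[-j]$. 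Second, I would invoke the fact---established in \cite[Section 5]{antieau2021beilinson} and ultimately resting on quasisyntomic descent plus the identification of $\thh$, $\tc^-$, $\tp$ on quasiregular semiperfectoids with prismatic cohomology---that the functors $S\mapsto \wedge^{j}L_{S/\ZZ_p}[-j]$, and hence the prismatic sheaves $\mathcal{N}^{\geq i}\mathcal{O}_{\widehat{\DD}}$, $\mathcal{O}_{\widehat{\DD}}\{i\}$, are themselves left Kan extended from $P$ (equivalently, they are the quasisyntomic sheafifications of left Kan extensions from $P$, and on $P$ the sheafification map is already an equivalence since polynomial algebras are quasisyntomic). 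Combining this with the observation that $\mathcal{I}_n$ and the Breuil--Kisin twists are \emph{invertible}, so tensoring with them commutes with the relevant colimits, gives that each bigraded piece is left Kan extended from $P$. Reassembling the (complete) filtrations then yields the claim.

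**The main obstacle.** The delicate point is the interchange of left Kan extension with the two completed filtrations: left Kan extension a priori commutes with colimits, not with the limits implicit in Nygaard-completeness and in the outer double-speed Postnikov tower. One must argue that on the category $P$ the relevant towers are already sufficiently convergent---e.g.\ that for a fixed polynomial algebra the Nygaard and Postnikov filtrations have graded pieces concentrated in a controlled range of degrees, so that the completion is a finite limit in each internal degree and hence commutes with the filtered colimit computing the Kan extension. Concretely this is where one uses that $\mathrm{gr}^i_{\mathrm{BMS}}$ of $\thh$ of a polynomial algebra is $(\wedge^{\leq i} L)[i]$-like and bounded, so that passing to $p$-completed polynomial algebras of all sizes and taking the sifted colimit is harmless; this is exactly the technical heart of \cite[Section 5]{antieau2021beilinson}, and I would cite it rather than redo it, noting only that the presence of the extra invertible twist $\mathcal{I}_n$ changes nothing. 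The Tate case is identical, replacing $\mathcal{N}^{\geq i+1}\mathcal{O}_{\widehat{\DD}}\{i\}\otimes\mathcal{I}_n$ by $\mathcal{I}_n$ throughout and using $2$-periodicity of $\tp$ on quasiregular semiperfectoids.
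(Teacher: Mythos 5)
Your reduction to bigraded pieces, followed by the devissage to $\mathcal{N}^j\mathcal{O}_{\widehat{\DD}}\{i\}$ and its $\mathcal{I}_n$-twists and the citation of \cite[Corollary 5.21]{antieau2021beilinson}, reproduces the second half of the paper's argument: the paper performs exactly this devissage for the associated graded terms of the outer filtration, using that the Breuil--Kisin twists trivialize on Nygaard graded pieces and that tensoring with a line bundle preserves Kan-extendedness. However, there is a genuine gap. The paper's proof does not only check graded pieces: it first proves that the underlying Nygaard-filtered functor $\mathcal{N}^{\geq *}\thh(-;\ZZ_p)^{hC_{p^n}}=\mathrm{Fil}^{\geq -\infty,\geq *}\thh(-;\ZZ_p)^{hC_{p^n}}$ is itself left Kan extended, and this is where the real work lies. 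Your bigraded check only controls the bicompleted filtered object; it says nothing about whether the colimit over $\widehat{\mathrm{PFG}}_{/S}$ actually recovers the spectrum $\thh(S;\ZZ_p)^{hC_{p^n}}$ together with its exhaustive filtration, which is what the Lemma must deliver for its downstream use (the extension to $\mathrm{SCR}_p$ and the convergence of the resulting spectral sequence to $\pi_{*}\thh(S;\ZZ_p)^{hC_{p^n}}$).

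Relatedly, your diagnosis of the ``main obstacle'' points at the wrong mechanism. The issue is not that the completion is a finite limit in each internal degree (it is not, and need not be); it is that $(-)^{hC_{p^n}}$, equivalently up to a shift $\left(-\wedge\left(\TT/C_{p^n}\right)_+\right)^{h\TT}$, is an infinite limit that must be commuted past the sifted colimit over $\widehat{\mathrm{PFG}}_{/S}$ computing the Kan extension. The paper does this by writing $\thh(S;\ZZ_p)^{hC_{p^n}}$ as the limit of its quotients by the Nygaard filtration, observing that each quotient $\thh(-;\ZZ_p)/\mathrm{Fil}^{\geq *}\thh(-;\ZZ_p)$ is a bounded spectrum, and invoking that homotopy $\TT$-fixed points of bounded spectra commute with filtered colimits because $B\TT$ is simply connected and of finite type (\cite[Lemma 4.2]{Cusps-Paper}); the remaining limit over $*$ is then absorbed into the colimit in $\widehat{\operatorname{DF}}$ via the completion adjunction. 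This step, and the resulting identification of the underlying object, is absent from your proposal and cannot be supplied by the graded-piece argument alone.
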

\begin{proof}
In order to show that these functors are left Kan extended, it is enough to show that the underlying functors \[\mathcal{N}^{\geq *}\thh(-;\ZZ_p)^{hC_{p^n}}:\mathrm{QSyn}\to \widehat{\mathrm{DF}}\] and \[\mathcal{N}^{\geq *}\thh(-;\ZZ_p))^{tC_{p^n}}:\mathrm{QSyn}\to \widehat{\mathrm{DF}}\] are left Kan extended and that the associated graded terms on the outside filtration are left Kan extended. 

For the first assertion, by Proposition~\ref{prop: Induced/coinduced action and fixed points} we have an equivalence of sheaves \[\mathcal{N}^{\geq *}\thh(-;\ZZ_p)^{hC_{p^n}}\simeq (\mathrm{Fil}^{\geq *}\thh(-;\ZZ_p) \wedge (\TT/C_{p^n})_+)^{h\TT}\] where the filtration on $\thh(-;\ZZ_p)$ on the right is the unfolding of the double speed Postnikov filtration. Notice that we have that $\mathrm{Fil}^{\geq *}\thh(-;\ZZ_p)\wedge (\TT/C_{p^n})_+\to \thh(-;\ZZ_p)\wedge (\TT/C_{p^n})_+$ is an equivalence in degrees $\geq 2*+1$. To see this note that $\tau_{\geq 2*}:\Sp\to \Sp_{\geq 2*}$ is right adjoint to the inclusion functor $\Sp_{\geq 2*}\to \Sp$ where $\Sp_{\geq 2*}$ is the full subcategory of $\Sp$ spanned by $(2*)$-connective objects. It the follows that for any quasisyntomic ring $A$ and any quasisyntomic cover of $A$ by a quasiregular semiperfectoid ring $A\to S$, \begin{align*}
\tau_{\geq 2*}\mathrm{Fil}^{\geq *}\thh(A;\ZZ_p) &\simeq \tau_{\geq 2*}(\lim_{[n]\in\Delta}(\tau_{\geq 2*}\thh(S^{\widehat{\otimes}_{A}\bullet+1};\ZZ_p)))\\
&\simeq \lim_{[n]\in \Delta}(\tau_{\geq 2*}\tau_{\geq 2*}\thh(S^{\widehat{\otimes}_A n+1};\ZZ_p))\\
&\simeq \lim_{[n]\in \Delta}(\tau_{\geq 2*}\thh(S^{\widehat{\otimes}_A n+1};\ZZ_p))\\
&\simeq \tau_{\geq 2*}(\lim_{[n]\in \Delta}\thh(S^{\widehat{\otimes}_A n+1};\ZZ_p))\\
&\simeq \tau_{\geq 2*}\thh(A;\ZZ_p)
\end{align*} where the limits in the second and third line are taken in the category $\Sp_{\geq 2*}$. After smashing with $(\TT/C_{p^n})_+$, which nonequivariantly is just $S^0\vee S^1$, the connectivity bounds follow.

From \parencite[Corollary 5.21]{antieau2021beilinson} we have equivalences \[\thh(S;\ZZ_p)\simeq \operatorname{colim}_{P\in \widehat{\mathrm{PFG}}_{/S}}\thh(P;\ZZ_p)\] and \[\mathrm{Fil}^{\geq *}\thh(S;\ZZ_p)\simeq \operatorname{colim}_{P\in \widehat{\mathrm{PFG}}_{/S}}\mathrm{Fil}^{\geq *}\thh(P;\ZZ_p)\] where $S\in \mathrm{QSyn}$ and $\widehat{\mathrm{PFG}}$ is the category of $p$-completed finitely generated polynomial $\ZZ_p$ algebras. For any $S\in\mathrm{QSyn}$ we then have that 
\begin{align*}
    \thh(S;\ZZ_p)^{hC_{p^n}} &\simeq \lim \thh(S;\ZZ_p)^{hC_{p^n}}/\mathcal{N}^{\geq *}\thh(S;\ZZ_p)^{hC_{p^n}}\\
                            &\simeq \lim_{*\to \infty} \Sigma^{-1}\left((\thh(S;\ZZ_p)/\mathrm{Fil}^{\geq *}\thh(S;\ZZ_p)) \wedge (\TT/C_{p^n})_+\right)^{h\TT}\\
                            &\simeq \lim_{*\to \infty} \Sigma^{-1}\left((\colim_{P\in \widehat{\mathrm{PFG}}_{/S}}\thh(P;\ZZ_p)/\mathrm{Fil}^{\geq *}\thh(P;\ZZ_p)) \wedge (\TT/C_{p^n})_+\right)^{h\TT}\\
                            &\simeq \lim_{*\to \infty} \colim\limits_{P\in \widehat{\mathrm{PFG}}_{/S}} \Sigma^{-1}\left((\thh(P;\ZZ_p)/\mathrm{Fil}^{\geq *}\thh(P;\ZZ_p)) \wedge (\TT/C_{p^n})_+\right)^{h\TT}\\
                            &\simeq \lim_{*\to \infty} \colim\limits_{P\in \widehat{\mathrm{PFG}}_{/S}} \thh(P;\ZZ_p)^{hC_{p^n}}/\mathcal{N}^{\geq *}\thh(P;\ZZ_p)^{hC_{p^n}}
\end{align*}
where the colimit and homotopy fixed points commute since the spectra on the inside are bounded and $B\TT$ is simply connected and of finite type, see \cite[Lemma 4.2]{Cusps-Paper}. The last term in this line is the colimit in $\widehat{\mathrm{DF}}$ by the identification of completion as the left adjoint of the inclusion functor $\widehat{\operatorname{DF}}\subseteq \operatorname{DF}$ in \cite[Lemma 5.2(2)]{BMS2}, and so we have that $ \mathcal{N}^{\geq *}\thh(-;\ZZ_p)^{hC_{p^n}}$ is Kan extended from $p$-completed finitely generated polynomial $\ZZ_p$ algebras as desired. The argument for the Tate construction is verbatim. 

It remains to show that the associated graded terms of the outside filtration are also Kan extended. By the discussion above these are given by terms of the form \[(\mathcal{N}^{\geq i}\mathcal{O}_{\widehat{\DD}}\{i\}/(\mathcal{N}^{\geq i+1}\mathcal{O}_{\widehat{\DD}}\otimes \mathcal{I}_n\{i\})[2i], \mathcal{N}^{\geq \max\{*, i\}}\mathcal{O}_{\widehat{\DD}}\{i\}/(\mathcal{N}^{\geq \max\{*+1, i+1\}}\mathcal{O}_{\widehat{\DD}}\otimes \mathcal{I}_n\{i\})[2i])\] as an element of $\operatorname{Shv}_{\widehat{\operatorname{DF}(\ZZ_p)}}(\mathrm{QSyn})$. By devissage we reduce to showing that $\mathcal{N}^j\mathcal{O}_{\widehat{\DD}}\{i\}$ and $\mathcal{N}^{j+1}\mathcal{O}_{\widehat{\DD}}\{i\}\otimes \mathcal{I}_n$ are Kan extended. In this case the Breuil-Kisin twists are canonically trivializable and so the first case follows from \parencite[Corollary 5.21]{antieau2021beilinson}. In particular by devissage $\mathcal{N}^{\geq *}\mathcal{O}_{\widehat{\DD}}$ is Kan extended from $p$-complete finitely generated polynomial $\ZZ_p$-algebras and so tensoring with an $\mathcal{O}_{\widehat{\DD}}$ vector bundle will not change if something is Kan extended from the same subcategory. The result follows, and the Tate case follows from the above since the associated graded terms of the outside filtration on the Tate construction are the sheaves $\mathcal{O}_{\widehat{\DD}}\{i\}/\mathcal{I}_n\{i\}[2i]$ again with its Nygaard filtration.
\end{proof}

We may now further Kan extend this result to all simplicial commutative $p$-complete rings. 

\begin{cor}
Let $\mathrm{SCR}_p$ be the category of simplicial commutative $p$-complete rings. Then the functors $\thh(-;\ZZ_p)^{hC_{p^n}},\thh(-;\ZZ_p)^{tC_{p^n}}:\mathrm{SCR}_p\to \Sp$ lift to functors \[\mathrm{Fil}^{\geq *, \geq *}\thh(-;\ZZ_p)^{hC_{p^n}}:\mathrm{SCR}_p\to \widehat{\mathrm{DF}}(\widehat{\mathrm{DF}})\] and \[\mathrm{Fil}^{\geq *, \geq *}\thh(-;\ZZ_p)^{tC_{p^n}}:\mathrm{SCR}_p\to \widehat{\mathrm{DF}}(\widehat{\mathrm{DF}})\] where the individual filtration pieces commute with sifted colimits as functors to $\widehat{\mathrm{DF}}$, and the internal filtration is the Nygaard filtration.
\end{cor}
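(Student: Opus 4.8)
The plan is to left Kan extend the functors of Lemma~\ref{lem: Left Kan Extended} along the inclusion $\widehat{\mathrm{PFG}}\hookrightarrow \mathrm{SCR}_p$ of $p$-completed finitely generated polynomial $\ZZ_p$-algebras into all simplicial commutative $p$-complete rings. Since $\mathrm{SCR}_p$ is generated under sifted colimits by $\widehat{\mathrm{PFG}}$ (the $p$-complete version of the fact that $\mathrm{SCR}$ is freely generated under sifted colimits by finitely generated polynomial rings) and the target $\widehat{\mathrm{DF}}(\widehat{\mathrm{DF}})$ is presentable, this left Kan extension exists and, by construction, commutes with sifted colimits; call the resulting functors $\mathrm{Fil}^{\geq *, \geq *}\thh(-;\ZZ_p)^{hC_{p^n}}$ and $\mathrm{Fil}^{\geq *, \geq *}\thh(-;\ZZ_p)^{tC_{p^n}}$ on $\mathrm{SCR}_p$. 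By Lemma~\ref{lem: Left Kan Extended} they agree on $\widehat{\mathrm{PFG}}$ with the restriction of the functors on $\mathrm{QSyn}$, so the internal filtration is the Nygaard filtration on polynomial algebras, hence, being a sifted colimit of these, everywhere.

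Next I would check that these lifts sit over the underlying functors $\thh(-;\ZZ_p)^{hC_{p^n}},\thh(-;\ZZ_p)^{tC_{p^n}}\colon \mathrm{SCR}_p\to \Sp$. For this it suffices that those underlying functors are themselves left Kan extended from $\widehat{\mathrm{PFG}}$: $\thh(-;\ZZ_p)$ commutes with sifted colimits, and by Proposition~\ref{prop: Induced/coinduced action and fixed points} we may write $\thh(-;\ZZ_p)^{hC_{p^n}}\simeq (\thh(-;\ZZ_p)\wedge (\TT/C_{p^n})_+)^{h\TT}$; one then reruns the argument of Lemma~\ref{lem: Left Kan Extended}, expressing $\thh(S;\ZZ_p)^{hC_{p^n}}$ as the limit over $*$ of the bounded spectra $\Sigma^{-1}((\thh(-;\ZZ_p)/\mathrm{Fil}^{\geq *}\thh(-;\ZZ_p))\wedge (\TT/C_{p^n})_+)^{h\TT}$, each of which commutes with the relevant colimits because $B\TT$ is simply connected of finite type. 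The Tate case is identical with $\tp$ in place of $\tc^{-}$. Since the forgetful functor $\widehat{\mathrm{DF}}(\widehat{\mathrm{DF}})\to \Sp$ sends the left Kan extension of the restricted filtered functor to a left Kan extension of the restricted underlying functor, it recovers $\thh(-;\ZZ_p)^{hC_{p^n}}$ (resp.\ its Tate version).

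The remaining clause is that the individual filtration pieces commute with sifted colimits as functors to $\widehat{\mathrm{DF}}$; the subtlety is that colimits in $\widehat{\mathrm{DF}}$ are not levelwise but are the completion of the levelwise ($\mathrm{DF}$-)colimit, so a priori evaluation of the outside filtration at a fixed level need not commute with sifted colimits and the functor could land only in $\mathrm{DF}(\mathrm{DF})$ rather than $\widehat{\mathrm{DF}}(\widehat{\mathrm{DF}})$. I would handle this by devissage: by Lemma~\ref{lem: Associated graded terms at finite levels} the associated graded of the outside filtration is built from $\mathcal{N}^{\geq i}\mathcal{O}_{\widehat{\DD}}\{i\}/(\mathcal{N}^{\geq i+1}\mathcal{O}_{\widehat{\DD}}\otimes \mathcal{I}_n)\{i\}$, and a further devissage on the Nygaard filtration reduces to the graded pieces $\mathcal{N}^{j}\mathcal{O}_{\widehat{\DD}}\{i\}$, where the Breuil–Kisin twists are trivializable, so that \parencite[Corollary 5.21]{antieau2021beilinson} together with Beilinson $t$-structure bookkeeping shows these commute with sifted colimits and remain Nygaard complete; tensoring with a vector bundle does not affect this. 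The main obstacle is exactly this last reconciliation of ``left Kan extension'' (a colimit, which in the complete filtered setting involves a completion) with ``each filtration level commutes with sifted colimits,'' and the resolution is the same bounded-approximation trick as in Lemma~\ref{lem: Left Kan Extended}: work modulo $\mathrm{Fil}^{\geq N}$, where everything is bounded and $B\TT$-finiteness lets colimits pass through $(-)^{h\TT}$, then reassemble.
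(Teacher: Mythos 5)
Your proposal takes exactly the same route as the paper, which simply left Kan extends the filtered functors of Lemma~\ref{lem: Left Kan Extended} from $p$-completed finitely generated polynomial $\ZZ_p$-algebras to all of $\mathrm{SCR}_p$ and asserts the stated properties follow. You supply more of the verification (recovering the underlying functors, completeness of the Kan-extended filtration) than the paper's one-line proof does, but the underlying argument is identical.
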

\begin{proof}
This extension is given by Kan extending the filtrations produced in this section to all of $\mathrm{SCR}_p$, and the desired properties follow trivially. 
\end{proof}

\begin{cor}
Let $S\in \mathrm{SCR}_p$. Then there are functorial conditionally convergent multiplicative spectral sequences 
\[E^{s,t}_2 = H^{s-t}(\mathcal{N}^{\geq -t}\widehat{\DD}_S\{-t\}/\mathcal{N}^{\geq 1-t}\widehat{\DD}_S\{-t\}\otimes I_n)\implies \pi_{s+t}(\thh(S;\ZZ_p)^{hC_{p^n}}) \] and \[E^{s,t}_2 = H^{s-t}(\widehat{\DD}_S\{-t\}/I_n)\implies \pi_{s+t}(\thh(S;\ZZ_p)^{tC_{p^n}})\] which comes from a complete filtration which for $S\in \mathrm{QSyn}$ is exhaustive.
\end{cor}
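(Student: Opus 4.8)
The plan is to read this corollary off formally from the preceding one together with Lemma~\ref{lem: Associated graded terms at finite levels}. Starting from the filtered object $\mathrm{Fil}^{\geq\ast,\geq\ast}\thh(-;\ZZ_p)^{hC_{p^n}}\colon\mathrm{SCR}_p\to\widehat{\mathrm{DF}}(\widehat{\mathrm{DF}})$ of the previous corollary, I would forget the internal Nygaard filtration to obtain a functor $\mathrm{Fil}^{\geq\ast}\thh(-;\ZZ_p)^{hC_{p^n}}\colon\mathrm{SCR}_p\to\widehat{\mathrm{DF}}$, and likewise for the Tate construction. Since the target is the \emph{completed} filtered derived category, this is by construction a complete filtered spectrum, functorially in $S$, so the spectral sequence of a filtered spectrum is attached to it and is conditionally convergent (Boardman). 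Multiplicativity is automatic: $\thh(-;\ZZ_p)$, the double-speed Postnikov filtration, $(-)^{hC_{p^n}}$ and $(-)^{tC_{p^n}}$, unfolding, and left Kan extension are all lax symmetric monoidal, so $\mathrm{Fil}^{\geq\ast}\thh(-;\ZZ_p)^{hC_{p^n}}$ is a filtered $\mathbb{E}_\infty$-ring and the spectral sequence inherits a multiplicative structure, the product on the $E_2$-page being induced by the evident cup products of the graded pieces.

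To identify the $E_2$-page I would invoke Lemma~\ref{lem: Associated graded terms at finite levels}, which computes $\mathrm{gr}^i\thh(-;\ZZ_p)^{hC_{p^n}}\simeq \mathcal{N}^{\geq i}\mathcal{O}_{\widehat{\DD}}\{i\}/\mathcal{N}^{\geq i+1}\mathcal{O}_{\widehat{\DD}}\{i\}\otimes\mathcal{I}_n[2i]$ on $\mathrm{QSyn}$; the same formula holds on all of $\mathrm{SCR}_p$ because both sides are left Kan extended from $p$-completed finitely generated polynomial $\ZZ_p$-algebras (the left-hand side by the preceding corollary, the right-hand side by the devissage in the proof of Lemma~\ref{lem: Left Kan Extended}). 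Evaluating at $S$, the weight-$i$ graded piece is a $2i$-fold shift of the $D(\ZZ_p)$-valued object $\mathcal{N}^{\geq i}\widehat{\DD}_S\{i\}/\mathcal{N}^{\geq i+1}\widehat{\DD}_S\{i\}\otimes I_n$, so its homotopy groups are the cohomology groups of that complex; the standard reindexing for motivic-type spectral sequences (weight $i=-t$, total degree $s+t$, cohomological degree $s-t$) produces exactly the stated $E_2^{s,t}$. The Tate case is identical, using $\mathrm{gr}^i\thh(-;\ZZ_p)^{tC_{p^n}}\simeq\mathcal{O}_{\widehat{\DD}}\{i\}/\mathcal{I}_n[2i]$ from the same lemma, giving $E_2^{s,t}=H^{s-t}(\widehat{\DD}_S\{-t\}/I_n)$.

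It remains to treat the claim that the filtration is exhaustive when $S\in\mathrm{QSyn}$, so that the spectral sequence genuinely converges to $\pi_\ast(\thh(S;\ZZ_p)^{hC_{p^n}})$ there. On $\mathrm{QRSPerfd}$ the outer filtration is the double-speed Postnikov filtration, which is exhaustive for any spectrum; moreover $\thh(-;\ZZ_p)^{hC_{p^n}}$ (and $\thh(-;\ZZ_p)^{tC_{p^n}}$) is a quasisyntomic sheaf, as recalled at the start of Section~\ref{sec: BMS spectral sequence}, hence equal to the unfolding of its own restriction to $\mathrm{QRSPerfd}$. The one thing to verify is that the colimit over the filtration parameter commutes with the totalization computing the unfolding; this is the same interchange performed in the proof of Lemma~\ref{lem: Left Kan Extended}, where, after smashing with $(\TT/C_{p^n})_+$ and using Proposition~\ref{prop: Induced/coinduced action and fixed points} together with the fact that $B\TT$ is simply connected of finite type, the relevant spectra become, in each fixed degree, eventually independent of the filtration parameter.

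I expect the main difficulty to be bookkeeping rather than any genuine mathematical obstacle: all the substantive inputs --- the complete filtration of the preceding corollary, the computation of the graded pieces in Lemma~\ref{lem: Associated graded terms at finite levels}, and the left Kan extension statements of Lemma~\ref{lem: Left Kan Extended} --- are already available, so the work is to keep straight the two nested filtrations and the Breuil--Kisin twists through the reindexing and to confirm that the $E_2$-product is honestly the cup product. One point I would take care to phrase precisely is that for $S\in\mathrm{SCR}_p$ that is \emph{not} quasisyntomic the spectral sequence converges to $\pi_\ast$ of the colimit $\mathrm{Fil}^{\geq-\infty}\thh(S;\ZZ_p)^{hC_{p^n}}$, which is the left Kan extension of $\thh(-;\ZZ_p)^{hC_{p^n}}$ from polynomial algebras and may differ from $\thh(S;\ZZ_p)^{hC_{p^n}}$.
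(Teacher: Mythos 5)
Your proposal is correct and matches what the paper intends: the corollary is stated without proof as a formal consequence of the preceding Kan-extension corollary together with Lemma~\ref{lem: Associated graded terms at finite levels}, and your unpacking (completeness from landing in $\widehat{\mathrm{DF}}$, multiplicativity from lax symmetric monoidality, $E_2$-identification from the graded pieces, exhaustiveness on $\mathrm{QSyn}$ via the Postnikov filtration on $\mathrm{QRSPerfd}$ and unfolding) is exactly the intended argument. Your closing caveat about what the abutment is for non-quasisyntomic $S\in\mathrm{SCR}_p$ is a fair and careful reading of why the statement only claims exhaustiveness in the quasisyntomic case.
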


\section{Two auxiliary computations}\label{sec: two auxiliary computations}
Let $\Pi_e$ denote the pointed monoid $\Pi_e = \{0, 1, x ,\ldots, x^{e-1}\}$ under multiplication with the relation that $x^e=0$. We then have that for any $\mathbb{E}_\infty$-ring spectrum $A$ that $A[x]/x^e\cong A\wedge \mathbb{S}[\Pi_e]$ and so to understand $\thh(-[x]/x^e)$ one approach is to first understand $\thh(\mathbb{S}[\Pi_e])\simeq \Sigma^{\infty}B^{cy}(\Pi_e)$.

From the description of $B^{cy}(\Pi_e)$ by Hesselholt, which we record in this paper as Theorem~\ref{thm: calculation of the Bn}, in order to compute the topological negative cyclic and periodic homologies of the truncated polynomial ring we will need to understand two auxiliary constructions. One is the map \[M^{hC_{p^k}}\to M^{hC_{p^{k+e}}}\] induced by the map $\TT/C_{p^k}\to \TT/C_{p^{k+e}}$. We will be able to do this fairly explicitly when $M$ is a $\TT$-equivariant $\thh(R;\ZZ_p)$-module, $R$ a perfectoid ring. This then extends to a description over any quasisyntomic ring by unfolding. We will also need to understand how representation spheres interact with these constructions, which will follow a similar story.

\subsection{\texorpdfstring{The cofiber of the $V_e$ map}{The cofiber of the V map}}

\begin{lem}\label{lem: coker of Verschiebung qrsp case.}
Let $M$ be a $\TT$-equivariant $\thh(R;\ZZ_p)$-module. Then the map \[M^{hC_{p^k}}\xrightarrow{V_{e,k}} M^{hC_{p^{e+k}}}\] induced by the map $\TT/C_{p^k}\to \TT/C_{p^{e+k}}$ fits into the commutative diagram \[
\begin{tikzcd}
M^{h\TT} \arrow[r, "-\times r\tfrac{\Tilde{d}_{e+k}}{\Tilde{d}_{k}}"] \arrow[d] & M^{h\TT} \arrow[d]\\
M^{hC_{p^k}} \arrow[r,"V_{e,k}"] & M^{hC_{p^{e+k}}}
\end{tikzcd}
\]
where the top map is induced by the $\tc^{-}(R;\ZZ_p)$-module structure. The element $r\in \tc^{-}_0(R;\ZZ_p)$ is a unit.
\end{lem}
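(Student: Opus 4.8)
The plan is to reduce to the Eilenberg--MacLane case exactly as in the proof of Lemma~\ref{lem: finite levels for qrsp}, and then to identify the top horizontal map by a direct computation in $A_{inf}(R)^{h\TT}$. First I would recall that the map $M^{hC_{p^k}}\to M^{hC_{p^{e+k}}}$ is induced by the quotient $\TT/C_{p^k}\to \TT/C_{p^{e+k}}$, and that by Proposition~\ref{prop: Induced/coinduced action and fixed points} we may rewrite both sides as $(M\wedge(\TT/C_{p^k})_+)^{h\TT}$ and $(M\wedge(\TT/C_{p^{e+k}})_+)^{h\TT}$. So the whole square is obtained by applying $(-)^{h\TT}$ to a map of $\thh(R;\ZZ_p)$-modules with $\TT$-action. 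Tensoring the cofiber sequences defining $M^{hC_{p^\bullet}}$ over $\tc^-(R;\ZZ_p)$ with $\thh(R;\ZZ_p)^{hC_{p^k}}$ and $\thh(R;\ZZ_p)^{hC_{p^{e+k}}}$ (these are compact $\tc^-(R;\ZZ_p)$-modules by \parencite[Corollary 5.8]{Riggenbach}), and using that homotopy fixed points commute with the Postnikov and Whitehead filtrations, I would reduce to the case where $M$ is an Eilenberg--MacLane $R$-module concentrated in degree zero, just as in Lemma~\ref{lem: finite levels for qrsp}.

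In that case the $\tc^-(R;\ZZ_p)$-module structure on $M^{h\TT}$ factors through the map $\tc^-(R;\ZZ_p)\to R^{h\TT}$, so it suffices to understand the composite $R^{h\TT}\xrightarrow{V} (R^{h\TT}\text{ along }\TT/C_{p^k})\to (R^{h\TT}\text{ along }\TT/C_{p^{e+k}})$ purely in terms of the Atiyah--Hirzebruch/homotopy fixed point spectral sequence of $B\TT=\CC P^\infty$, analogously to the remark preceding the proof of Lemma~\ref{lem: finite levels for qrsp}. There the map $M^{h\TT}\to M^{hC_{p^n}}$ identifies the target with the cofiber of multiplication by $p^n v$; the transfer/Verschiebung $M^{hC_{p^k}}\to M^{hC_{p^{e+k}}}$ composed with the canonical map from $M^{h\TT}$ is multiplication by $p^{e+k}/p^k\cdot(\text{unit})=p^e\cdot(\text{unit})$ times the appropriate power of $v$ — concretely, on the level of $M^{h\TT}$ the composite is multiplication by $p^e v^0$ up to a unit (the ratio of the two Euler classes of the relevant line bundles). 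I would then invoke Lemma~\ref{lem: EM module case}, which says that under $\theta$ the element $\tilde d_n=\phi(d)\cdots\phi^n(d)$ maps to $r_n p^n$ with $r_n$ a unit, so that $\tilde d_{e+k}/\tilde d_k=\phi^{k+1}(d)\cdots\phi^{k+e}(d)$ maps to $(r_{e+k}/r_k)p^e$, a unit times $p^e$. Matching this against the topological description gives the commuting square with top arrow multiplication by $r\,\tilde d_{e+k}/\tilde d_k$ for a unit $r\in\tc^-_0(R;\ZZ_p)$, and tensoring back up with the compact modules $\thh(R;\ZZ_p)^{hC_{p^\bullet}}$ upgrades this from the $R$-module case to all $M$.

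The main obstacle I anticipate is bookkeeping the precise Euler-class/unit discrepancy: one must check that the composite "canonical map followed by Verschiebung" really is multiplication by $p^e$ up to a unit on $R^{h\TT}$ (and not, say, by a higher power of $v$ or with a shift), which amounts to a careful unwinding of the cofiber sequence $\Sigma^{-1}(\TT/C_{p^{e+k}})_+\to (\TT/C_{p^k})_+\to (\TT/C_{p^{e+k}})_+$ smashed with $R$ and the resulting long exact sequences in homotopy — equivalently tracking the map of homotopy fixed point spectral sequences on the $E_2$-page $H^*(\CC P^\infty;R)$. I would isolate this as a short lemma about $R^{h\TT}$-modules (the analogue of the parenthetical remark before the proof of Lemma~\ref{lem: finite levels for qrsp}), prove it there by the AH spectral sequence of the fibration $\TT\to BC_{p^n}\to B\TT$, and then the identification of the top map with multiplication by $r\,\tilde d_{e+k}/\tilde d_k$ is immediate from Lemma~\ref{lem: EM module case}. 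Everything else is a formal reduction identical to the proof of Lemma~\ref{lem: finite levels for qrsp}.
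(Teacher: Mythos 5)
Your computational core is sound and genuinely different from the paper's. The paper proves the square first for $M=\thh(R;\ZZ_p)$ by quoting the Witt-vector description of the Verschiebung on $\pi_*\thh(R;\ZZ_p)^{hC_{p^k}}$ (\parencite[Lemma 5.13]{Riggenbach} together with \parencite[Lemma 3.4]{BMS1}): there $V_{e,k}$ is multiplication by $\tilde{\lambda}_{k+1}\cdots\tilde{\lambda}_{k+e}$ with each $\tilde{\lambda}_i$ distinguished, hence a unit times $\phi^i(d)$, which produces $r\,\tilde{d}_{e+k}/\tilde{d}_k$ directly. You instead propose to identify the composite ``restriction followed by transfer'' on Borel fixed points of an Eilenberg--MacLane module as multiplication by the index $p^e$ and then match against Lemma~\ref{lem: EM module case}; that identification is correct (it is Frobenius reciprocity: transfer composed with restriction is multiplication by $[C_{p^{e+k}}:C_{p^k}]=p^e$ on $H\ZZ$-module spectra) and is arguably more self-contained than the paper's citations. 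One correction, though: the cofiber sequence you propose to unwind, $\Sigma^{-1}(\TT/C_{p^{e+k}})_+\to(\TT/C_{p^k})_+\to(\TT/C_{p^{e+k}})_+$, does not exist — non-equivariantly the cofiber of the degree-$p^e$ covering $(\TT/C_{p^k})_+\to(\TT/C_{p^{e+k}})_+$ is a shifted mod $p^e$ Moore spectrum — so you should argue via the transfer identity rather than via that sequence.

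The genuine gap is in your reduction to the Eilenberg--MacLane case. In Lemma~\ref{lem: finite levels for qrsp} the Postnikov/Whitehead filtration argument is legitimate because the statement checked there is that a single natural map is an equivalence, and equivalences are detected on the layers of a complete filtration. Here the statement is that a square commutes, i.e.\ that the difference of two composites $M^{h\TT}\to M^{hC_{p^{e+k}}}$ is nullhomotopic, and nullhomotopy is not detected on Postnikov layers: for a fiber sequence $M'\to M\to M''$ on which the difference vanishes at both ends, the difference on $M$ is only constrained to factor through a boundary term in the homotopy classes of maps from $(M'')^{h\TT}$ to $(M')^{hC_{p^{e+k}}}$, which need not vanish for a general $\TT$-equivariant $\thh(R;\ZZ_p)$-module. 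Your ``tensoring back up'' also runs in the wrong direction: the base-change equivalence $M^{hC_{p^n}}\simeq M^{h\TT}\otimes_{\tc^{-}(R;\ZZ_p)}\thh(R;\ZZ_p)^{hC_{p^n}}$ reduces the general case to $M=\thh(R;\ZZ_p)$, not to an Eilenberg--MacLane module, and one cannot recover $\thh(R;\ZZ_p)$ from $R$ by base change along these functors. The repair is the paper's second step: prove the square for the single object $M=\thh(R;\ZZ_p)$, where both composites can be computed on homotopy groups and the comparison upgraded to the level of spectra using that $\thh(R;\ZZ_p)^{hC_{p^k}}$ is a perfect $\tc^{-}(R;\ZZ_p)$-module, and then tensor that one commutative square with $M^{h\TT}$ over $\tc^{-}(R;\ZZ_p)$, using Proposition~\ref{prop: Induced/coinduced action and fixed points} and the lax monoidal structure to see that this recovers $V_{e,k}$ for arbitrary $M$.
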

\begin{proof}
We will first show this in the case of $M=\thh(R;\ZZ_p)$. In this case we have that $V_{e,k}$ induces the $e$ Verschiebung map on homotopy groups by \cite[Lemma 5.13]{Riggenbach}. From \cite[Lemma 3.4]{BMS1} and \cite[Corollary 5.8]{Riggenbach} we may then write this map as \[A_{inf}(R)[u,v]/(uv-d,\Tilde{d}_{k}v)\xrightarrow{\times \Tilde{\lambda}_{k+1}\Tilde{\lambda}_{k+2}\ldots \Tilde{\lambda}_{k+e}}A_{inf}[u,v]/(uv-d,\Tilde{d}_{k+e}v)\] where $\Tilde{\lambda}_i$ is an element such that $\Tilde{\theta}_i(\Tilde{\lambda_i})=V([1])\in W_{i}(R)$. In particular by \cite[Remark 3.11]{BMS1} each $\Tilde{\lambda}_i$ is distinguished for all $i$, and so up to a unit are the appropriate twist of our orientation $\phi^i(d)$. In this case the map above becomes 
\[A_{inf}(R)[u,v]/(uv-d, \Tilde{d}_kv)\xrightarrow{\times r\tfrac{\Tilde{d}_{e+k}}{\Tilde{d}_k}}A_{inf}(R)[u,v]/(uv-d,\Tilde{d}_{e+k}v))\] for some unit $r$ as desired.

Now, this map is a map of $\tc^{-}(R;\ZZ_p)$-modules, and both sides are finite as $\tc^{-}(R;\ZZ_p)$-modules. Consequently this map being as described on homotopy groups implies that it is as desired on the level of spectra. All that remains is to show that this remains true for any $\TT$-equivariant $\thh(R;\ZZ_p)$-module $M$. 

To this end, recall that in the proof of Lemma~\ref{lem: finite levels for qrsp} we actually showed that the map \[M^{h\TT}\otimes_{\tc^{-}(R;\ZZ_p)}\thh(R;\ZZ_p)^{hC_{p^k}}\to M^{hC_{p^k}}\] is an equivalence. Note that this map can be factored through \[M^{h\TT}\otimes_{\thh(R;\ZZ_p)^{h\TT}}\left(\Sigma^{-1}\thh(R;\ZZ_p)\wedge \left(\TT/C_{k}\right)_+\right)^{h\TT}\to \left(M\otimes_{\thh(R;\ZZ_p)}\Sigma^{-1}\thh(R;\ZZ_p)\wedge\left(\TT/C_k\right)_+\right)^{h\TT}\] by using Proposition~\ref{prop: Induced/coinduced action and fixed points}. From this we see that the square \[
\begin{tikzcd}
M^{h\TT}\otimes_{\tc^{-}(R;\ZZ_p)}\thh(R;\ZZ_p)^{hC_{p^k}}\arrow[d]\arrow[r, "id\otimes V_{e,k}"] & M^{h\TT}\otimes_{\tc^{-}(R;\ZZ_p)}\thh(R;\ZZ_p)^{hC_{p^{k+e}}}\arrow[d]\\
M^{hC_{p^k}}\arrow[r, "V_{e,k}"] & M^{hC_{p^{k+e}}}
\end{tikzcd}
\]
commutes. By tensoring the commutative square for $\thh(R;\ZZ_p)$ with $M^{h\TT}$ our result follows. 
\end{proof}

We are now able to compute the cofiber of the map $V_{e,k}$ using the above result.

\begin{lem}\label{lem: cofiber of virshebung general case}
Let $\mathrm{cofib}(V_{e,k,hfp})(-)$ denote the quasisyntomic sheaf given by \[\mathrm{cofib}(V_{e, k, hfp})(S)=\mathrm{cofib}(V_{e,k}:\thh(S;\ZZ_p)^{hC_{p^k}}\to \thh(S;\ZZ_p)^{hC_{p^{k+e}}})\] and $\mathrm{cofib}(V_{e,k,Tate})(-)$ the cofiber of the map on the Tate construction. Then there are isomorphisms \[\mathrm{gr}^i\left(\mathrm{cofib}(V_{e,k,hfp})\right)\simeq \mathcal{N}^{\geq i}\mathcal{O}_{\widehat{\DD}}/(\phi^{k})^*(\mathcal{I}_e)\{i\}[2i]\] and \[\mathrm{gr}^i\left(\mathrm{cofib}(V_{e,k,Tate})\right)=\mathcal{O}_{\widehat{\DD}}\{i\}/(\phi^k)^*(\mathcal{I}_e)[2i]\] where the filtration is the Postnikov filtration of quasisyntomic sheaves.  
\end{lem}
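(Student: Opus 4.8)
The plan is to compute the cofiber on quasiregular semiperfectoid rings first, using Lemma~\ref{lem: coker of Verschiebung qrsp case.}, and then transport the answer to all of $\mathrm{QSyn}$ by unfolding. So let $S$ be quasiregular semiperfectoid with a map from a perfectoid ring $R$ and orientation $d$. By Lemma~\ref{lem: coker of Verschiebung qrsp case.} (applied to $M = \thh(S;\ZZ_p)$), the map $V_{e,k}\colon \thh(S;\ZZ_p)^{hC_{p^k}}\to \thh(S;\ZZ_p)^{hC_{p^{k+e}}}$ sits in a square with $\thh(S;\ZZ_p)^{h\TT} = \tc^-(S;\ZZ_p)$ along the top and the vertical maps induced by the canonical maps to the finite-level fixed points; the top map is multiplication by $r\tilde d_{k+e}/\tilde d_k = r\,\phi^{k+1}(d)\cdots\phi^{k+e}(d)$ for a unit $r$. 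By Lemma~\ref{lem: finite levels for qrsp} the right-hand vertical map realizes the target as $\tc^-(S;\ZZ_p)/(\tilde d_{k+e} v)$ and the left as $\tc^-(S;\ZZ_p)/(\tilde d_k v)$. Passing to cofibers of the horizontal maps and using the octahedral axiom, the cofiber of $V_{e,k}$ is computed from the cofiber of multiplication by $\tilde d_{k+e}/\tilde d_k$ on $\tc^-(S;\ZZ_p)/(\tilde d_k v)$; since $\tilde d_{k+e}/\tilde d_k$ is (up to the unit $r$) a non-zero-divisor on the relevant graded pieces, one identifies $\mathrm{cofib}(V_{e,k,hfp})(S)$, at least on associated graded, with $\mathcal{N}^{\geq *}\mathcal{O}_{\widehat{\DD}}(S)$ modulo the ideal $(\phi^{k+1}(d)\cdots\phi^{k+e}(d)) = (\phi^k)^*(\mathcal{I}_e)(S)$, appropriately Breuil--Kisin twisted. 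The Tate case is identical with $\tp$ in place of $\tc^-$ and no Nygaard bookkeeping.

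Next I would upgrade this to a filtered statement. Both $\thh(-;\ZZ_p)^{hC_{p^k}}$ carry the Postnikov (``outside'') filtration constructed earlier, and $V_{e,k}$ is a filtered map because on quasiregular semiperfectoid rings every functor in sight is given by its double-speed Postnikov tower, which is functorial; hence the cofiber inherits a filtration and we get maps $\mathrm{gr}^i\thh(-;\ZZ_p)^{hC_{p^{k+e}}}\to \mathrm{gr}^i\big(\mathrm{cofib}(V_{e,k,hfp})\big)$. Using Lemma~\ref{lem: Associated graded terms at finite levels}, which says $\mathrm{gr}^i\thh(-;\ZZ_p)^{hC_{p^n}}\simeq \mathcal{N}^{\geq i}\mathcal{O}_{\widehat{\DD}}\{i\}/\mathcal{N}^{\geq i+1}\mathcal{O}_{\widehat{\DD}}\{i\}\otimes \mathcal{I}_n[2i]$, together with the identification $\tilde d_{k+e}/\tilde d_k$ being (up to a unit) the generator corresponding to $(\phi^k)^*(\mathcal{I}_e)$, one produces a candidate map $\mathcal{N}^{\geq i}\mathcal{O}_{\widehat{\DD}}/(\phi^k)^*(\mathcal{I}_e)\{i\}[2i]\to \mathrm{gr}^i(\mathrm{cofib}(V_{e,k,hfp}))$, exactly as in the proof of Lemma~\ref{lem: Associated graded terms at finite levels}: build it from the map out of $\mathcal{N}^{\geq i}\mathcal{O}_{\widehat{\DD}}\{i\}[2i]$, check the relevant composite vanishes on quasiregular semiperfectoid rings by the computation above, and then descend through the quotient. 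One then checks this is an equivalence on $\mathrm{QRSPerfd}$ by the homotopy-group computation and concludes it is an equivalence globally by unfolding (Theorem~\ref{thm: qrsp are basis}). The Tate version is obtained by the same argument starting from $\mathrm{gr}^i\thh(-;\ZZ_p)^{tC_{p^n}}\simeq \mathcal{O}_{\widehat{\DD}}\{i\}/\mathcal{I}_n[2i]$ and the $\tp$-level version of Lemma~\ref{lem: coker of Verschiebung qrsp case.}.

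The main obstacle I anticipate is the bookkeeping with Breuil--Kisin twists and the identification of the relevant ratio of line bundles: on a perfectoid ring $\tilde d_{j}$ corresponds to a product of Frobenius pullbacks of $\mathcal{I}$, so $\tilde d_{k+e}/\tilde d_k$ should correspond to $\phi^{k+1}(d)\cdots\phi^{k+e}(d)$ and hence to the line bundle $(\phi^k)^*(\mathcal{I}\otimes\phi^*(\mathcal{I})\otimes\cdots\otimes(\phi^{e-1})^*(\mathcal{I})) = (\phi^k)^*(\mathcal{I}_e)$; verifying that this matches the abstract definition of $\mathcal{I}_e$ via the kernel of $\tc^-_0\to \pi_0\thh^{tC_p}$ and that the Frobenius pullback behaves as claimed under Theorem~\ref{thm: qrsp are basis} requires care but is essentially contained in \cite[Theorem 7.2(5)]{BMS2} and the Notation section. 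A secondary subtlety is checking that dividing by $\tilde d_{k+e}/\tilde d_k$ really does kill a submodule isomorphic to $\mathcal{N}^{\geq i+1}\mathcal{O}_{\widehat{\DD}}\{i\}\otimes(\phi^k)^*(\mathcal{I}_e)$ rather than something larger, i.e. that the relevant element is a non-zero-divisor on the appropriate Nygaard graded piece; this follows from the non-zero-divisor statement for $\tilde d_n$ in the Corollary after Lemma~\ref{lem: finite levels for qrsp} together with the fact that $\phi$ becomes an isomorphism after inverting $d$.
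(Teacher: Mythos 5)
Your proposal is correct and follows essentially the same route as the paper: reduce to quasiregular semiperfectoid rings, use Lemma~\ref{lem: coker of Verschiebung qrsp case.} to identify $V_{e,k}$ with multiplication by $r\phi^k(\tilde d_e)$ on $\tc^-$ (resp.\ $\tp$), produce the candidate map on associated graded by the nullhomotopy/quotient argument of Lemma~\ref{lem: Associated graded terms at finite levels}, verify it is an equivalence by the fiber/cofiber comparison (your octahedral-axiom step is exactly the paper's verification that the commuting square is a pullback), and unfold via Theorem~\ref{thm: qrsp are basis}. The subtleties you flag — the non-zero-divisor property of $\phi^k(\tilde d_e)$ and the identification of the line bundle with $(\phi^k)^*(\mathcal{I}_e)$ — are precisely the points the paper's proof addresses.
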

\begin{proof}
By Theorem~\ref{thm: qrsp are basis} it is enough to identify these sheaves on quasiregular semiperfectoid rings. On quasiregular semiperfectoid rings the diagram 
\[
\begin{tikzcd}
\mathcal{N}^{\geq i}\mathcal{O}_{\widehat{\DD}}\otimes (\phi^k)^*(\mathcal{I}_e)\{i\} \arrow[r] & \mathcal{N}^{\geq i}\mathcal{O}_{\widehat{\DD}}\{i\} \arrow[d]& \\
\mathcal{N}^{\geq i}\mathcal{O}_{\widehat{\DD}}\{i\}/(\mathcal{N}^{\geq i+1}\mathcal{O}_{\widehat{\DD}}\otimes \mathcal{I}_k) \arrow[r,"gr^i(V_e)"] & \mathcal{N}^{\geq i}\mathcal{O}_{\widehat{\DD}}\{i\}/(\mathcal{N}^{\geq i+1}\mathcal{O}_{\widehat{\DD}}\otimes \mathcal{I}_{k+e}) \arrow[r] & gr^i(\mathrm{cofib}(V_{e,k,hfp}))[-2i]
\end{tikzcd}
\] gives a map of sheaves $\mathcal{N}^{\geq i}\mathcal{O}_{\widehat{\DD}}\{i\}\to gr^i(\mathrm{cofib}(V_{e,k,hfp}))[-2i]$. Precomposing with the map \[\mathcal{N}^{\geq i}\mathcal{O}_{\widehat{\DD}}\otimes (\phi^k)^*(\mathcal{I}_e)\{i\}\to \mathcal{N}^{\geq i}\mathcal{O}_{\widehat{\DD}}\{i\}\] will then give a nullhomotopic map by Lemma~\ref{lem: coker of Verschiebung qrsp case.}. In particular there is a map of sheaves \[\mathcal{N}^{\geq i}\mathcal{O}_{\widehat{\DD}}\{i\}/(\phi^k)^*(\mathcal{I}_e)\to gr^i(\mathrm{cofib}(V_{e,k,hfp}))\] which we will show is an equivalence.

To see this, let $S$ be a quasiregular semiperfectoid ring with $R\to S$ a map, $R$ a perfectoid ring with orientation $d$. Note that on the level of spectra the commutative diagram giving the map in question is
\[\begin{tikzcd}
\tc^{-}(S;\ZZ_p) \arrow[d] \arrow[r, "-\times r\phi^k(\tilde{d}_e)"] & \tc^{-}(S;\ZZ_p) \arrow[d]   \\
\thh(S;\ZZ_p)^{hC_{p^k}} \arrow[r, "V_e"]                            & \thh(S;\ZZ_p)^{hC_{p^{k+e}}}
\end{tikzcd}\] and our goal is to identify the horizontal cofibers, in other words to show that this square is pullback. Taking horizontal fibers then gives the commutative diagram
\[
\begin{tikzcd}
\Sigma^{-2}\tc^{-}(S;\ZZ_p) \arrow[d, "v\tilde{d}_k"] \arrow[r] & \Sigma^{-2}\tc^{-}(S;\ZZ_p) \arrow[d, "v\tilde{d}_{k+e}"]\\
\tc^{-}(S;\ZZ_p) \arrow[r, "r\phi^k(\tilde{d}_e)"] & \tc^{-}(S;\ZZ_p)
\end{tikzcd}
\]
from which we see that the top map must be multiplication by a unit. In particular the top map is an equivalence as desired and so the original square is pullback as desired.
\end{proof}

\subsection{Representation spheres}

In this Subsection we will study the sheaves $\left(\Sigma^{\lambda_k-2k}\thh(-;\ZZ_p)\right)^{hC_{p^n}}$ in the same way as in the previous section. Here $\lambda_k:=\CC(1)\oplus \CC(2)\oplus \ldots \oplus \CC(k)$ where $\CC(l)$ is the $\TT$-representation $\CC$ where $\theta$ acts by multiplication by $e^{il\theta}$. By \cite[Corollary 4.4]{Riggenbach} on a quasiregular semiperfectoid ring $S$ we have that \[\left(\Sigma^{\lambda_k-2k}\thh(S;\ZZ_p)\right)^{hC_{p^n}}\simeq \thh(S;\ZZ_p)^{hC_{p^n}}\] by a non-canonical isomorphism. Consequently $\tau_{\geq *}\left(\Sigma^{\lambda_k-2k}\thh(-;\ZZ_p)\right)^{hC_{p^n}}$ will be a sheaf on $\mathrm{QRSPerfd}$ and we still have that \[\left((\Sigma^{\lambda_k-2k}\thh(-;\ZZ_p))^{hC_{p^n}}, \tau_{\geq 2*}(\Sigma^{\lambda_k-2k}\thh(-;\ZZ_p))^{hC_{p^n}}\right):\mathrm{QRSPerfd}\to \widehat{\mathrm{DF}}(\widehat{\mathrm{DF}})\] is a sheaf. Unfolding this construction then gives a sheaf \[\left((\Sigma^{\lambda_k-2k}\thh(-;\ZZ_p))^{hC_{p^n}}, \mathrm{Fil}^{\geq *}(\Sigma^{\lambda_k-2k}\thh(-;\ZZ_p))^{hC_{p^n}}\right):\mathrm{QSyn}\to \widehat{\mathrm{DF}}(\widehat{\mathrm{DF}})\]

For the Tate construction to be a sheaf it remains to check that the homotopy orbits are still a sheaf after smashing with a representation sphere. To see this, first note that $S^{\lambda_k-2k}\simeq \bigwedge_{l=1}^k S^{\CC(l)-2}$. Then there is a cofiber sequence of pointed $\TT$-spaces \[S^0\hookrightarrow S^{\CC(k+1)}\to \Sigma (\TT/C_{k+1})_+\] where the first map is the inclusion of the points at $0$ and $\infty$. Applying $\Sigma^\infty$ to this sequence, which is left adjoint and so will preserve cofiber sequences, there is then a cofiber sequence of Borel $\TT$-spectra \[\mathbb{S} \to \mathbb{S}^{\CC(k+1)}\to \Sigma^{\infty +1}_+ \TT/C_{k+1}\] and then a fiber sequence 
\begin{equation}\label{eqn: fiber sequence for C(l)-2}
    \mathbb{S}^{-2}\to \mathbb{S}^{\CC(k+1)-2}\to \Sigma^{-1}_+ \TT/C_{k+1}
\end{equation}
after tensoring the above cofiber sequence with $\mathbb{S}^{-2}$. We may then tensor this fiber sequence with the functor  $\Sigma^{\lambda_{k}-2k}\thh(-;\ZZ_p)$ to get an equivariant cofiber sequence 
\[\Sigma^{-2}(\Sigma^{\lambda_{k}-2k}\thh(-;\ZZ_p))\to \Sigma^{\lambda_{k+1}-2(k+1)}\thh(-;\ZZ_p)\to \Sigma^{-1}(\Sigma^{\lambda_{k}-2k}\thh(-;\ZZ_p)\wedge (\TT/C_{k+1})_+)\] and so upon taking homotopy orbits a cofiber sequence \[\Sigma^{-2}\left(\Sigma^{\lambda_k-2k}\thh(-;\ZZ_p) \right)_{h\TT}\to \left( \Sigma^{\lambda_k-2k}\thh(-;\ZZ_p) \right)_{h\TT}\to \Sigma^{-1}\left(\Sigma^{\lambda_k-2k}\thh(-;\ZZ_p) \right)_{hC_{k+1}}\] and a similar cofiber sequence for the $C_{k+1}$ and $C_{p^n}$ homotopy orbits instead of the $\TT$ homotopy orbits. The identification of the last term comes from Proposition~\ref{prop: Induced/coinduced action and fixed points}. For the finite cyclic groups, note that there are cofiber sequences of $C_m$ space \[(C_m/C_{(l,m)})_+\to (\TT/C_l)_+ \to \Sigma_+ C_m/C_{(l,m)}\] for all $l,m\in \ZZ_+$. Thus we may express the homotopy $C_m$ orbits of a spectrum $X\wedge \TT/C_l$ in terms of the homotopy $C_{(l,m)}$ orbits of $X$. Hence by an inductive argument we have that the homotopy orbits are still a sheaf after smashing with $S^{\lambda_k-2k}$. By a similar argument on the Tate construction we then have a sheaf 
\[\left((\Sigma^{\lambda_k-2k}\thh(-;\ZZ_p))^{tC_{p^n}}, \mathrm{Fil}^{\geq *}(\Sigma^{\lambda_k-2k}\thh(-;\ZZ_p))^{tC_{p^n}}\right):\mathrm{QSyn}\to \widehat{\mathrm{DF}}(\widehat{\mathrm{DF}})\] where the outer filtration is the unfolding of the double speed Postnikov filtration after taking the Tate construction and the internal filtration is the Nygaard filtration. 

We will spend the rest of this section identifying the associated graded pieces of these terms. To this end we will first use the following lemma to reduce $\lambda_k$ to its individual summands.

\begin{lem}
There is an isomorphism of quasisyntonic sheaves 
\[\mathrm{gr}^0\left(\Sigma^{\CC(k+1)-2}\thh(-;\ZZ_p)\right)^{h\TT}\otimes_{\mathcal{O}_{\widehat{\DD}}}\mathrm{gr}^i\left(\Sigma^{\lambda_k-2k}\thh(-;\ZZ_p)\right)^{hC_{p^n}}\to \mathrm{gr}^i\left(\Sigma^{\lambda_{k+1}-2k-2}\thh(-;\ZZ_p)\right)^{hC_{p^n}}\] and similarly for the Tate construction.
\end{lem}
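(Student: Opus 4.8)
The plan is to reduce the statement, via Theorem~\ref{thm: qrsp are basis} (unfolding), to the case of quasiregular semiperfectoid rings, where all the relevant $\thh$-fixed-point spectra have their homotopy concentrated in even degrees and the outer filtration is simply the double-speed Postnikov filtration. So first I would fix a quasiregular semiperfectoid ring $S$ together with a perfectoid ring $R\to S$ and an orientation $d$ of $R$. The source of the desired map is a tensor product of two quasisyntomic sheaves; since both factors are line bundles over $\mathcal{O}_{\widehat{\DD}}$ (the first one by the $k=0$ case of the earlier computations, the second by the inductive hypothesis that $\mathrm{gr}^i(\Sigma^{\lambda_k-2k}\thh(-;\ZZ_p))^{hC_{p^n}}$ is a line bundle shifted by $[2i]$), the tensor product is harmless set-theoretically, as discussed in Subsection~\ref{ssec: background info}.

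Next I would produce the map itself. The fiber sequence~\eqref{eqn: fiber sequence for C(l)-2}, tensored with $\Sigma^{\lambda_k-2k}\thh(-;\ZZ_p)$ and with its double-speed Postnikov filtration, gives a filtered cofiber sequence
\[
\Sigma^{-2}(\Sigma^{\lambda_k-2k}\thh(-;\ZZ_p))^{hC_{p^n}}\to (\Sigma^{\lambda_{k+1}-2(k+1)}\thh(-;\ZZ_p))^{hC_{p^n}}\to \Sigma^{-1}((\Sigma^{\lambda_k-2k}\thh(-;\ZZ_p))\wedge(\TT/C_{k+1})_+)^{hC_{p^n}},
\]
and Proposition~\ref{prop: Induced/coinduced action and fixed points} identifies the last term. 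Passing to $\mathrm{gr}^i$ and using that, on quasiregular semiperfectoid rings, the relevant homotopy groups are even, the boundary map becomes "multiplication by a twist of a distinguished element'', and one reads off $\mathrm{gr}^i(\Sigma^{\lambda_{k+1}-2(k+1)}\thh)^{hC_{p^n}}$ as a line bundle. The comparison map is then obtained exactly as in the proofs of Lemma~\ref{lem: Associated graded terms at finite levels} and Lemma~\ref{lem: cofiber of virshebung general case}: the lax symmetric monoidal structure on homotopy $\TT$-fixed points (more precisely the module structure over $\tc^{-}(R;\ZZ_p)^{h\TT}$-type objects, together with the factorization through Proposition~\ref{prop: Induced/coinduced action and fixed points} used in the proof of Lemma~\ref{lem: coker of Verschiebung qrsp case.}) gives a natural map
\[
\mathrm{gr}^0(\Sigma^{\CC(k+1)-2}\thh(-;\ZZ_p))^{h\TT}\otimes_{\mathcal{O}_{\widehat{\DD}}}\mathrm{gr}^i(\Sigma^{\lambda_k-2k}\thh(-;\ZZ_p))^{hC_{p^n}}\to \mathrm{gr}^i(\Sigma^{\lambda_{k+1}-2k-2}\thh(-;\ZZ_p))^{hC_{p^n}}
\]
of quasisyntomic sheaves. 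Finally I would check it is an equivalence on quasiregular semiperfectoid rings, which reduces to comparing the two descriptions above on homotopy groups: both sides are, up to a non-canonical trivialization of Breuil--Kisin twists, quotients of $\widehat{\DD}_S$ by the same twist of a product of the $\phi^j(d)$, and the map is multiplication by a unit (the unit $r$ of Lemma~\ref{lem: coker of Verschiebung qrsp case.}). Unfolding then upgrades this to a global equivalence. The Tate case is verbatim, using the corresponding filtered cofiber sequence after $(-)^{tC_{p^n}}$ and the Tate half of Proposition~\ref{prop: Induced/coinduced action and fixed points}.

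The main obstacle I anticipate is bookkeeping rather than conceptual: one has to be careful that the map constructed from the lax monoidal structure really is the one appearing in the filtered cofiber sequence, i.e. that the two module structures (the $\mathcal{O}_{\widehat{\DD}}$-module structure coming from $\tc^-$ and the one coming from $\thh(R;\ZZ_p)\wedge(\TT/C_{k+1})_+$ via Proposition~\ref{prop: Induced/coinduced action and fixed points}) agree, exactly the point that was handled in the last paragraph of the proof of Lemma~\ref{lem: coker of Verschiebung qrsp case.}. A secondary subtlety is keeping track of which Breuil--Kisin twist $\{i\}$ or $\{k\}$ appears on each graded piece so that the domain and codomain of the claimed isomorphism genuinely match; this is a matter of carefully propagating the twists through the cofiber sequence~\eqref{eqn: fiber sequence for C(l)-2} and its iterates, as in the computation of $\mathrm{gr}^i\thh^{hC_{p^n}}$ in Lemma~\ref{lem: Associated graded terms at finite levels}.
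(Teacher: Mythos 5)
Your proposal is correct and follows essentially the same route as the paper: the map comes from the lax symmetric monoidal structure of homotopy fixed points applied to the evident decomposition of $\Sigma^{\lambda_{k+1}-2k-3}\thh(-;\ZZ_p)\wedge(\TT/C_{p^n})_+$ as a $\TT$-equivariant $\thh(-;\ZZ_p)$-module tensor product, it is checked to be an equivalence on $\mathrm{QRSPerfd}$ by trivializing the representation spheres at the level of Borel $\TT$-spectra (\cite[Corollary 4.4]{Riggenbach}), and one then unfolds. The only inaccuracy is cosmetic: the final equivalence check does not involve the unit $r$ of Lemma~\ref{lem: coker of Verschiebung qrsp case.} (that unit pertains to the Verschiebung), and your detour through the fiber sequence~\eqref{eqn: fiber sequence for C(l)-2} to re-identify the target is unnecessary once the spheres are trivialized.
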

\begin{proof}
This map exists for quasiregular semiperfectoid rings by the lax symmetric monoidal structure of $(-)^{h\TT}$ applied to the decomposition \[\Sigma^{\lambda_{k+1}-2k-3}\thh(-;\ZZ_p)\wedge(\TT/C_{p^n})_+\simeq \left(\Sigma^{\CC(k+1)-2}\thh(-;\ZZ_p)\right)\otimes_{\thh(-;\ZZ_p)}\left(\Sigma^{\lambda_{k}-2k-1}\thh(-;\ZZ_p)\wedge (\TT/C_{p^n})_+\right)\] as $\TT$-equivariant $\thh(-;\ZZ_p)$-modules. For quasiregular semiperfectoid rings the associated graded terms are all the homotopy groups since the filtration is the double speed Postnikov filtration and the odd homotopy groups vanish. 

Hence we get a map \[\mathrm{gr}^0\left(\Sigma^{\CC(k+1)-2}\thh(-;\ZZ_p)\right)^{h\TT}\otimes_{\mathcal{O}_{\widehat{\DD}}}\mathrm{gr}^i\left(\Sigma^{\lambda_k-2k}\thh(-;\ZZ_p)\right)^{hC_{p^n}}\to \mathrm{gr}^i\left(\Sigma^{\lambda_{k+1}-2(k+1)}\thh(-;\ZZ_p)\right)^{hC_{p^n}}\] as sheaves on $\mathrm{QRSPerfd}$ which is given by the composition of the canonical map \[\begin{tikzcd}\pi_0\left(\Sigma^{\CC(k+1)-2}\thh(-;\ZZ_p)\right)^{h\TT}\otimes_{\tc^{-}_0(-;\ZZ_p)}\pi_i\left(\Sigma^{\lambda_{k}-2k-1}\thh(-;\ZZ_p)\wedge (\TT/C_{p^n})_+\right)^{h\TT}\arrow[d]\\ \pi_i\left(\left(\Sigma^{\CC(k+1)-2}\thh(-;\ZZ_p)\right)^{h\TT}\otimes_{\tc^-(-;\ZZ_p)}\left(\Sigma^{\lambda_{k}-2k-1}\thh(-;\ZZ_p)\wedge (\TT/C_{p^n})_+\right)^{h\TT}\right)\end{tikzcd}\] with the lax monoidal structure map for homotopy fixed points. This map is an equivalence of $\mathrm{QRSPerfd}$ sheaves since we can check this levelwise and on a given quasiregular semiperfectoid ring we may trivialize the representation spheres even on the level of Borel $\TT$-spectra by \cite[Corollary 4.4]{Riggenbach}. 

Since this is an equivalence on $\mathrm{QRSPerfd}$ by unfolding we get an equivalence globally.
\end{proof}

By the above Lemma we need only compute the associated graded terms for the representation spheres $S^{\CC(l)-2}$. These will be given by quasisyntomic line bundles, and it will turn out to be easier to identify their inverses. In order to do this we will first find a topological description of their inverses.

\begin{lem}
There is an isomorphism of quasisyntomic sheaves \[\mathrm{gr}^0\left(\Sigma^{\CC(k)-2}\thh(-;\ZZ_p)\right)^{h\TT}\otimes\mathrm{gr}^0\left(\Sigma^{2-\CC(k)}\thh(-;\ZZ_p)\right)^{h\TT}\to \mathcal{O}_{\widehat{\DD}}\] and the same is true for the Tate construction.
\end{lem}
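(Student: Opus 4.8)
The plan is to build the comparison map by unfolding to quasiregular semiperfectoid rings and then check it is an equivalence there, following the pattern of the two preceding lemmas. The input is the canonical equivalence of Borel $\TT$-spectra $\mathbb{S}^{\CC(k)-2}\otimes \mathbb{S}^{2-\CC(k)}\simeq \mathbb{S}$, the representation sphere $\mathbb{S}^{\CC(k)}$ being $\otimes$-invertible. Tensoring with $\thh(-;\ZZ_p)$ gives an equivalence of $\TT$-equivariant $\thh(-;\ZZ_p)$-modules
\[\left(\Sigma^{\CC(k)-2}\thh(-;\ZZ_p)\right)\otimes_{\thh(-;\ZZ_p)}\left(\Sigma^{2-\CC(k)}\thh(-;\ZZ_p)\right)\xrightarrow{\ \sim\ }\thh(-;\ZZ_p),\]
and applying the lax symmetric monoidal structure on $(-)^{h\TT}$ produces a natural filtered map
\[\left(\Sigma^{\CC(k)-2}\thh(-;\ZZ_p)\right)^{h\TT}\otimes_{\tc^{-}(-;\ZZ_p)}\left(\Sigma^{2-\CC(k)}\thh(-;\ZZ_p)\right)^{h\TT}\longrightarrow \tc^{-}(-;\ZZ_p)\]
of sheaves on $\mathrm{QRSPerfd}$, where both sides carry the double speed Postnikov filtration. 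Passing to $\mathrm{gr}^0$, using that these outer filtrations are connective so that $\mathrm{gr}^0$ of the relative tensor product is the $\mathcal{O}_{\widehat{\DD}}$-linear tensor product of the individual $\mathrm{gr}^0$'s (the same manipulation as in the previous lemma) and that $\mathrm{gr}^0\tc^{-}(-;\ZZ_p)=\mathcal{O}_{\widehat{\DD}}$, I obtain the desired map on $\mathrm{QRSPerfd}$; unfolding via Theorem~\ref{thm: qrsp are basis} then defines it on $\mathrm{QSyn}$. Each $\mathrm{gr}^0$ appearing here is a line bundle over $\mathcal{O}_{\widehat{\DD}}$, since it becomes free of rank one on $\mathrm{QRSPerfd}$ by the next step, so the left-hand tensor product is set-theoretically unproblematic.

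To see the map is an equivalence it suffices, again by Theorem~\ref{thm: qrsp are basis}, to check it levelwise on a quasiregular semiperfectoid ring $S$. There \cite[Corollary 4.4]{Riggenbach} lets us trivialize $\mathbb{S}^{\CC(k)-2}$ over $\thh(S;\ZZ_p)$ as a Borel $\TT$-spectrum; taking the dual trivialization of $\mathbb{S}^{2-\CC(k)}$, each homotopy-fixed-point term becomes equivalent to $\tc^{-}(S;\ZZ_p)$, each $\mathrm{gr}^0$ becomes $\mathcal{O}_{\widehat{\DD}}(S)$, and both sides of the map are finite $\tc^{-}(S;\ZZ_p)$-modules. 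Under these trivializations the map is the evaluation pairing of a free rank-one $\mathcal{O}_{\widehat{\DD}}(S)$-module against its dual, hence surjective; a surjective endomorphism of $\mathcal{O}_{\widehat{\DD}}(S)$ is an isomorphism, so the map is an equivalence on $S$, and unfolding once more gives it globally. The Tate-construction case is verbatim, replacing $(-)^{h\TT}$ by $(-)^{t\TT}$ and $\tc^{-}$ by $\tp$, using that on $\mathrm{QRSPerfd}$ the outer filtration on the Tate construction is again the double speed Postnikov filtration and that $\mathrm{gr}^0\tp(-;\ZZ_p)=\mathcal{O}_{\widehat{\DD}}$.

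I expect the only real subtlety to be identifying $\mathrm{gr}^0$ of the relative tensor product $(-)^{h\TT}\otimes_{\tc^{-}(-;\ZZ_p)}(-)^{h\TT}$ with the $\mathcal{O}_{\widehat{\DD}}$-linear tensor product of the two $\mathrm{gr}^0$'s: one needs the outer Postnikov filtrations to be connective and compatible with the $\tc^{-}(-;\ZZ_p)$-module structure so that forming associated graded is monoidal in filtration degree zero. This is exactly the argument carried out in the proof of the preceding lemma, so in practice I would simply cite it; everything else — invertibility of representation spheres, \cite[Corollary 4.4]{Riggenbach}, finiteness of the modules involved, and unfolding — is formal.
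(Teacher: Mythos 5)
Your proposal is correct and follows essentially the same route as the paper: the paper's (two-sentence) proof likewise deduces the statement on quasiregular semiperfectoid rings from the canonical equivalence $S^{\CC(k)-2}\wedge S^{2-\CC(k)}\simeq S^0$ and then unfolds via Theorem~\ref{thm: qrsp are basis}. Your additional detail — lax monoidality of $(-)^{h\TT}$ to build the map, trivialization of the representation spheres on $\mathrm{QRSPerfd}$ via \cite[Corollary 4.4]{Riggenbach} to verify it is an isomorphism — is exactly what the paper leaves implicit.
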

\begin{proof}
This is true for quasiregular semiperfectoid rings since $S^{\CC(k)-2}\wedge S^{2-\CC(k)}\simeq S^0$ canonically. Hence by unfolding it is globally true.
\end{proof}

We consequently have an equivalence \[\mathrm{gr}^0\left(\Sigma^{\CC(k)-2}\thh(-;\ZZ_p)\right)^{h\TT}\simeq \left(\mathrm{gr}^0\left(\Sigma^{2-\CC(k)}\thh(-;\ZZ_p)\right)^{h\TT}\right)^{-1}\] and both are line bundles as $\mathcal{O}_{\widehat{\DD}}$-modules. 

\begin{lem}
There is an equivalence of sheaves \[\mathrm{gr}^0\left(\Sigma^{2-\CC(k)}\thh(-;\ZZ_p)\right)^{h\TT}\simeq \mathcal{I}_{v_p(k)}\{-1\}\] and the same statement is true for the Tate construction. Here $v_p(k)$ is the $p$-adic valuation of $k$.
\end{lem}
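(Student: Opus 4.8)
The plan is to reduce, via Theorem~\ref{thm: qrsp are basis}, to computing the sheaf $\mathrm{gr}^0(\Sigma^{2-\CC(k)}\thh(-;\ZZ_p))^{h\TT}$ on $\mathrm{QRSPerfd}$, where everything becomes concrete. On a quasiregular semiperfectoid ring $S$ with a chosen perfectoid base $R\to S$ and orientation $d$, the fiber sequence~\eqref{eqn: fiber sequence for C(l)-2} (with $k+1$ replaced by $k$, i.e.\ $\mathbb{S}^{-2}\to \mathbb{S}^{\CC(k)-2}\to \Sigma^{-1}_+\TT/C_k$) can be tensored with $\thh(-;\ZZ_p)$ and then dualized; equivalently, I would use the cofiber sequence $\mathbb{S}^{2-\CC(k)}\to \mathbb{S}^0 \to \Sigma_+^\infty\TT/C_k$ coming from dualizing the inclusion of fixed points $S^0\hookrightarrow S^{\CC(k)}$. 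Smashing with $\thh(S;\ZZ_p)$ and applying $(-)^{h\TT}$, and using Proposition~\ref{prop: Induced/coinduced action and fixed points} to identify $(\thh(S;\ZZ_p)\wedge(\TT/C_k)_+)^{h\TT}\simeq \Sigma\thh(S;\ZZ_p)^{hC_k}$, I get a fiber sequence relating $(\Sigma^{2-\CC(k)}\thh(S;\ZZ_p))^{h\TT}$, $\tc^{-}(S;\ZZ_p)$, and $\thh(S;\ZZ_p)^{hC_k}$. Taking $\pi_0$ and using that all the odd homotopy groups vanish (by the Corollary following Lemma~\ref{lem: finite levels for qrsp}), this identifies $\pi_0(\Sigma^{2-\CC(k)}\thh(S;\ZZ_p))^{h\TT}$ with a kernel/cokernel of the relevant map into $\tc^-_0$.

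The key computational input is then to identify that map with the one appearing in the definition of $\mathcal{I}_{v_p(k)}$. By Lemma~\ref{lem: finite levels for qrsp}, $\thh(S;\ZZ_p)^{hC_{p^n}}\simeq \tc^-(S;\ZZ_p)/(\tilde d_n v)$, and more generally for $C_k$ with $k=p^{v_p(k)}k'$, $k'$ prime to $p$, the $C_{k'}$-part is invertible (the relevant transfer/restriction composite is multiplication by $k'$, which is a $p$-adic unit), so only the $C_{p^{v_p(k)}}$-part contributes. Thus the connecting map $\Sigma^{-1}\thh(S;\ZZ_p)^{hC_k}\to \Sigma^{-2}\tc^-(S;\ZZ_p)$, after passing to $\pi_0$, becomes multiplication by $\tilde d_{v_p(k)} = \phi(d)\cdots\phi^{v_p(k)}(d)$ up to a unit. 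Hence $\pi_0(\Sigma^{2-\CC(k)}\thh(S;\ZZ_p))^{h\TT}$ is identified, as an $\mathcal{O}_{\widehat{\DD}}$-module, with the ideal generated by $\phi(d)\cdots\phi^{v_p(k)}(d)$ inside $\tc^-_0(S;\ZZ_p)=\mathcal{O}_{\widehat{\DD}}(S)$ — but one must be careful about the Breuil–Kisin twist: the factor $\Sigma^{2}$ versus $\Sigma^{-\CC(k)}$ bookkeeping gives a twist by $\mathcal{O}_{\widehat{\DD}}\{-1\}$, matching the claimed $\mathcal{I}_{v_p(k)}\{-1\}$. To see the ideal is exactly $\mathcal{I}_{v_p(k)}$ and not merely abstractly isomorphic to it, I would compare with the sheaf-level definition $\mathcal{I}_m = \mathcal{I}\otimes\phi^*\mathcal{I}\otimes\cdots\otimes(\phi^{m-1})^*\mathcal{I}$, which under Theorem~\ref{thm: qrsp are basis} corresponds to $\mathcal{I}\cdot\phi^*(\mathcal{I})\cdots(\phi^{m-1})^*(\mathcal{I})$; since $\mathcal{I}$ corresponds to $\ker(\tc^-_0\to \pi_0\thh^{tC_p})$, which on perfectoids is $(\tilde d)=(\phi(d))$, the product is $(\phi(d)\phi^2(d)\cdots\phi^{v_p(k)}(d))=(\tilde d_{v_p(k)})$, exactly what appears above.

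Once the identification holds on $\mathrm{QRSPerfd}$ — naturally in $S$ and compatibly with the $\mathcal{O}_{\widehat{\DD}}$-module structure — unfolding (Theorem~\ref{thm: qrsp are basis}) upgrades it to the desired equivalence of sheaves on $\mathrm{QSyn}_A$. The Tate construction case is handled verbatim: one uses $\tp(S;\ZZ_p)/(\tilde d_n)$ in place of $\tc^-(S;\ZZ_p)/(\tilde d_n v)$, and the only difference is that the relevant module is already $2$-periodic so no $\tc^-$-versus-$\tp$ subtlety arises.

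I expect the main obstacle to be \textbf{pinning down the Breuil–Kisin twist and the precise unit ambiguity}: the abstract line-bundle structure of $\mathrm{gr}^0(\Sigma^{2-\CC(k)}\thh(-;\ZZ_p))^{h\TT}$ drops out of the cofiber sequence almost formally, but matching it on the nose with $\mathcal{I}_{v_p(k)}\{-1\}$ — as opposed to $\mathcal{I}_{v_p(k)}\{-1\}$ tensored with some invertible unit sheaf that is secretly trivial — requires carefully tracking how the periodicity class $v\in\tc^-_{-2}$ and the Breuil–Kisin twist $\mathcal{O}_{\widehat{\DD}}\{1\}\leftrightarrow\tp_2$ enter, and checking that the residual $C_{k'}$-contribution really is canonically trivialized rather than merely abstractly trivial. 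The nonuniqueness of the choice of $v$ on $\mathrm{QRSPerfd}$ (flagged in the footnote earlier) means one should phrase the final statement as an equivalence of sheaves that exists but may not be canonical, exactly as the lemma is stated.
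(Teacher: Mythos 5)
Your proposal follows essentially the same route as the paper: dualize/twist the cofiber sequence for $S^{\CC(k)}$ to get a fiber sequence relating $\Sigma^{2-\CC(k)}\thh$, $\tc^{-}$, and $\thh^{hC_{p^{v_p(k)}}}$, reduce to $\mathrm{QRSPerfd}$, identify the second map with the canonical quotient $\mathcal{O}_{\widehat{\DD}}\{-1\}\to\mathcal{O}_{\widehat{\DD}}\{-1\}/\mathcal{I}_{v_p(k)}$ (the paper does this by recognizing the sequence as the one defining the Bökstedt--Goodwillie transfer), and unfold. The only slip is in your displayed sequence, which should read $S^{2-\CC(k)}\to S^2\to\Sigma^1_+\TT/C_k$ rather than having target $S^0$; this is exactly the $\Sigma^2$ bookkeeping you flag, and it is what produces the $\{-1\}$ twist.
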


\begin{proof}
Twisting the fiber sequence~\ref{eqn: fiber sequence for C(l)-2} by $S^{4-\CC(k)}$ gives the fiber sequence \[S^{2-\CC(k)}\to S^2\to \Sigma^1_+\TT/C_k\] where we are using the fact that $S^{\CC(k)}\wedge \left(\TT/C_{k}\right)_+\simeq \Sigma^2_+ \TT/C_k$ as Borel equivariant $\TT$-spectra. To see this note that we have the chain of equivalences \[S^{\CC(k)}\wedge (\TT/C_k)_+\simeq (S^{\CC(k)}\wedge S^0)\wedge_{C_k}\TT_+\simeq S^2\wedge_{C_k}\TT_+\] since the $C_k$-action on $\CC(k)$ is trivial.

We therefore have a fiber sequence \[\left(\Sigma^{2-\CC(k)}\thh(-;\ZZ_p)\right)^{h\TT}\to \Sigma^2\tc^{-}(-;\ZZ_p)\to \Sigma^2\thh(-;\ZZ_p)^{hC_{p^{v_p(k)}}}\] of quasisyntomic sheaves. In the last term we can take either the homotopy $C_k$ fixed points by Proposition~\ref{prop: Induced/coinduced action and fixed points} or, as we have done, take the homotopy $C_{p^{v_p(k)}}$ fixed points. These are equivalent on $p$-complete spectra since the map $BC_{p^{v_p(k)}}\to BC_k$ is a $p$-adic equivalence.

On $\mathrm{QRSPerfd}$ by taking $\pi_0$ we then get a fiber sequence of sheaves \[\mathrm{gr}^0\left(\Sigma^{2-\CC(k)}\thh(-;\ZZ_p)\right)^{h\TT}\to \mathcal{O}_{\widehat{\DD}}\{-1\}\to \mathcal{O}_{\widehat{\DD}}\{-1\}/\mathcal{I}_{v_p(k)}\] and hence by unfolding a fiber sequence globally. It remains to show that the map \[\mathcal{O}_{\widehat{\DD}}\{-1\}\to \mathcal{O}_{\widehat{\DD}}\{-1\}/\mathcal{I}_{v_p(k)}\] is the quotient map. To this end note that fiber sequence~\ref{eqn: fiber sequence for C(l)-2} is exactly the fiber sequence used to construct the transfer of \cite{BG_transfer}. Thus the map in question is induced from the usual map from $\tc^{-}(-;\ZZ_p)$ to $\thh(-;\ZZ_p)^{hC_{p^{v_p(k)}}}$ which was exactly the map we used to identified the associated graded terms in Lemma~\ref{lem: Associated graded terms at finite levels}.
\end{proof}

Combining this all we get the following.

\begin{cor}\label{cor: finite levels and rep spheres associated graded.}
For any $k\geq 0$ there are equivalences  \[\mathrm{gr}^i\left(\Sigma^{\lambda_k-2k}\thh(-;\ZZ_p)\right)^{hC_{p^n}}\simeq \left(\bigotimes_{l=1}^k \mathcal{I}_{v_{p}(l)}\right)^{-1}\otimes(\mathcal{N}^{\geq i}\mathcal{O}_{\widehat{\DD}}/(\mathcal{N}^{\geq i+1}\mathcal{O}_{\widehat{\DD}}\otimes \mathcal{I}_n))\{i+k\}[2i]\]
and 
\[\mathrm{gr}^i\left(\Sigma^{\lambda_k-2k}\thh(-;\ZZ_p)\right)^{tC_{p^n}}\simeq \left(\bigotimes_{l=1}^k \mathcal{I}_{v_{p}(l)}\right)^{-1}\otimes \mathcal{O}_{\widehat{\DD}}/\mathcal{I}_n\{i+k\}[2i]\]
of quasisyntomic $\mathcal{O}_{\widehat{\DD}}$-modules.
\end{cor}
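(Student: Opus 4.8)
The plan is to prove the corollary by induction on $k$, with the three lemmas of this subsection (together with Lemma~\ref{lem: Associated graded terms at finite levels}) supplying all the content; what remains is to chain them together and keep track of twists.

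For the base case $k=0$ one has $\lambda_0=0$, so $\Sigma^{\lambda_0-2\cdot 0}\thh(-;\ZZ_p)=\thh(-;\ZZ_p)$ and the empty tensor product $\bigotimes_{l=1}^{0}\mathcal{I}_{v_p(l)}$ is $\mathcal{O}_{\widehat{\DD}}$. The claim then becomes exactly Lemma~\ref{lem: Associated graded terms at finite levels}, once one notes that tensoring with the line bundle $\mathcal{O}_{\widehat{\DD}}\{i\}$ is exact and commutes with the quotient, so that
\[\mathcal{N}^{\geq i}\mathcal{O}_{\widehat{\DD}}\{i\}/\mathcal{N}^{\geq i+1}\mathcal{O}_{\widehat{\DD}}\{i\}\otimes\mathcal{I}_n\ \simeq\ \bigl(\mathcal{N}^{\geq i}\mathcal{O}_{\widehat{\DD}}/(\mathcal{N}^{\geq i+1}\mathcal{O}_{\widehat{\DD}}\otimes\mathcal{I}_n)\bigr)\{i\}.\]
The same remark gives the $k=0$ statement for the Tate construction.

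For the inductive step, assume the formula at level $k$. Since $\lambda_{k+1}=\lambda_k\oplus\CC(k+1)$, we have $\lambda_{k+1}-2(k+1)=(\lambda_k-2k)\oplus(\CC(k+1)-2)$, so the decomposition lemma of this subsection gives an equivalence of quasisyntomic $\mathcal{O}_{\widehat{\DD}}$-modules
\[\mathrm{gr}^i\bigl(\Sigma^{\lambda_{k+1}-2(k+1)}\thh(-;\ZZ_p)\bigr)^{hC_{p^n}}\ \simeq\ \mathrm{gr}^0\bigl(\Sigma^{\CC(k+1)-2}\thh(-;\ZZ_p)\bigr)^{h\TT}\otimes\mathrm{gr}^i\bigl(\Sigma^{\lambda_k-2k}\thh(-;\ZZ_p)\bigr)^{hC_{p^n}}.\]
By the duality lemma the first factor is the $\otimes_{\mathcal{O}_{\widehat{\DD}}}$-inverse of $\mathrm{gr}^0(\Sigma^{2-\CC(k+1)}\thh(-;\ZZ_p))^{h\TT}$, which by the identification lemma equals $\mathcal{I}_{v_p(k+1)}\{-1\}$; hence $\mathrm{gr}^0(\Sigma^{\CC(k+1)-2}\thh(-;\ZZ_p))^{h\TT}\simeq\mathcal{I}_{v_p(k+1)}^{-1}\{1\}$. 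Substituting the inductive hypothesis into the second factor and combining, the extra $\{1\}$ promotes $\{i+k\}$ to $\{i+k+1\}$ and the extra $\mathcal{I}_{v_p(k+1)}^{-1}$ extends the product to $l=1,\dots,k+1$, which is precisely the asserted formula at level $k+1$. The homological shift $[2i]$ is unchanged throughout because every $\mathrm{gr}^i$ here is taken with respect to a double-speed Postnikov filtration concentrated in even degrees. The Tate case is verbatim, using the Tate versions of the three lemmas and the Tate $k=0$ input.

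There is no genuine obstacle: the unfolding and module-structure arguments have already been dispatched inside the three lemmas, and the induction is pure bookkeeping. The steps most prone to convention errors are the passage from the inverse line bundle $(\mathcal{I}_{v_p(k+1)}\{-1\})^{-1}$ to $\mathcal{I}_{v_p(k+1)}^{-1}\{1\}$ and the empty-product normalization at $k=0$, so I would state those explicitly rather than leave them implicit.
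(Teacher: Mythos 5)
Your proof is correct and is exactly the argument the paper intends: the corollary is stated after the three lemmas of this subsection with the phrase ``Combining this all we get the following,'' and your induction on $k$ (base case Lemma~\ref{lem: Associated graded terms at finite levels}, inductive step via the decomposition, duality, and identification lemmas) is precisely that combination, with the twist bookkeeping $(\mathcal{I}_{v_p(k+1)}\{-1\})^{-1}\simeq\mathcal{I}_{v_p(k+1)}^{-1}\{1\}$ handled correctly.
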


We can also adapt the above arguments to work for $n=\infty$ as well, which gives the following. 
\begin{cor}
For any $k\geq 0$ there are equivalences \[\mathrm{gr}^i\left(\Sigma^{\lambda_k-2k}\thh(-;\ZZ_p)\right)^{h\TT}\simeq \left(\bigotimes_{l=1}^k \mathcal{I}_{v_{p}(l)}\right)^{-1}\otimes\mathcal{N}^{\geq i}\mathcal{O}_{\widehat{\DD}}\{i+k\}[2i]\]
and 
\[\mathrm{gr}^i\left(\Sigma^{\lambda_k-2k}\thh(-;\ZZ_p)\right)^{t\TT}\simeq \left(\bigotimes_{l=1}^k \mathcal{I}_{v_{p}(l)}\right)^{-1}\otimes \mathcal{O}_{\widehat{\DD}}\{i+k\}[2i]\]
of quasisyntomic sheaves.
\end{cor}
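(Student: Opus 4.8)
The plan is to replay the three lemmas of this subsection with $n=\infty$, so that $(-)^{hC_{p^n}}$ is replaced by $(-)^{h\TT}$ and $(-)^{tC_{p^n}}$ by $(-)^{t\TT}$, and then to induct on $k$. The one genuine difference from Corollary~\ref{cor: finite levels and rep spheres associated graded.} is that $\mathrm{gr}^i\thh(-;\ZZ_p)^{h\TT}\simeq \mathcal{N}^{\geq i}\mathcal{O}_{\widehat{\DD}}\{i\}[2i]$ (this is the BMS filtration on $\tc^{-}(-;\ZZ_p)$, \cite[Theorem 1.12]{BMS2}) and $\mathrm{gr}^i\thh(-;\ZZ_p)^{t\TT}\simeq \mathcal{O}_{\widehat{\DD}}\{i\}[2i]$, with no $\mathcal{I}_n$-divisor appearing; the sheaf structures (outer double-speed-Postnikov filtration, inner Nygaard filtration) after smashing with $S^{\lambda_k-2k}$ are obtained exactly as in the preamble to this subsection, using that on $\mathrm{QRSPerfd}$ one has noncanonical equivalences $(\Sigma^{\lambda_k-2k}\thh(-;\ZZ_p))^{h\TT}\simeq \tc^{-}(-;\ZZ_p)$ and $(\Sigma^{\lambda_k-2k}\thh(-;\ZZ_p))^{t\TT}\simeq \tp(-;\ZZ_p)$ by \cite[Corollary 4.4]{Riggenbach}.

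First I would establish the $n=\infty$ reduction lemma; here one does not need the $(\TT/C_{p^n})_+$-smashed decomposition. From $S^{\lambda_{k+1}-2(k+1)}\simeq S^{\CC(k+1)-2}\wedge S^{\lambda_k-2k}$ one gets an equivalence $\Sigma^{\lambda_{k+1}-2(k+1)}\thh(-;\ZZ_p)\simeq(\Sigma^{\CC(k+1)-2}\thh(-;\ZZ_p))\otimes_{\thh(-;\ZZ_p)}(\Sigma^{\lambda_k-2k}\thh(-;\ZZ_p))$ of $\TT$-equivariant $\thh(-;\ZZ_p)$-modules; applying the lax symmetric monoidal structure on $(-)^{h\TT}$ and passing to associated graded (on $\mathrm{QRSPerfd}$ the graded pieces are the homotopy groups, the odd ones vanishing) yields a map
\[\mathrm{gr}^0(\Sigma^{\CC(k+1)-2}\thh(-;\ZZ_p))^{h\TT}\otimes_{\mathcal{O}_{\widehat{\DD}}}\mathrm{gr}^i(\Sigma^{\lambda_k-2k}\thh(-;\ZZ_p))^{h\TT}\longrightarrow \mathrm{gr}^i(\Sigma^{\lambda_{k+1}-2(k+1)}\thh(-;\ZZ_p))^{h\TT}.\]
This is an equivalence of $\mathrm{QRSPerfd}$-sheaves, since on a fixed quasiregular semiperfectoid ring the representation spheres trivialize even on the level of Borel $\TT$-spectra by \cite[Corollary 4.4]{Riggenbach}, reducing it to the module statement for $\tc^{-}(-;\ZZ_p)$; unfolding makes it a global equivalence, and the same works verbatim for $t\TT$.

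The two remaining ingredients transfer with no change, because $C_k$ stays a finite group when $n=\infty$. The identity $S^{\CC(k)-2}\wedge S^{2-\CC(k)}\simeq S^0$ gives $\mathrm{gr}^0(\Sigma^{\CC(k)-2}\thh(-;\ZZ_p))^{h\TT}\otimes\mathrm{gr}^0(\Sigma^{2-\CC(k)}\thh(-;\ZZ_p))^{h\TT}\simeq\mathcal{O}_{\widehat{\DD}}$ (equivalence on $\mathrm{QRSPerfd}$, then unfold), so these are mutually inverse line bundles. Twisting the fiber sequence~\ref{eqn: fiber sequence for C(l)-2} gives $S^{2-\CC(k)}\to S^2\to\Sigma^1_+\TT/C_k$ (using $S^{\CC(k)}\wedge(\TT/C_k)_+\simeq\Sigma^2_+\TT/C_k$ since $C_k$ acts trivially on $\CC(k)$); tensoring with $\thh(-;\ZZ_p)$, taking $(-)^{h\TT}$, and applying Proposition~\ref{prop: Induced/coinduced action and fixed points} (together with the $p$-adic equivalence $BC_{p^{v_p(k)}}\to BC_k$) produces the fiber sequence $(\Sigma^{2-\CC(k)}\thh(-;\ZZ_p))^{h\TT}\to\Sigma^2\tc^{-}(-;\ZZ_p)\to\Sigma^2\thh(-;\ZZ_p)^{hC_{p^{v_p(k)}}}$. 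As~\ref{eqn: fiber sequence for C(l)-2} is the one defining the Becker--Gottlieb transfer \cite{BG_transfer}, the second map is the canonical one, whose $\mathrm{gr}^0$ is the quotient $\mathcal{O}_{\widehat{\DD}}\{-1\}\to\mathcal{O}_{\widehat{\DD}}\{-1\}/\mathcal{I}_{v_p(k)}$ by Lemma~\ref{lem: Associated graded terms at finite levels}; hence $\mathrm{gr}^0(\Sigma^{2-\CC(k)}\thh(-;\ZZ_p))^{h\TT}\simeq\mathcal{I}_{v_p(k)}\{-1\}$ and $\mathrm{gr}^0(\Sigma^{\CC(k)-2}\thh(-;\ZZ_p))^{h\TT}\simeq\mathcal{I}_{v_p(k)}^{-1}\{1\}$, and the $t\TT$ versions are identical.

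Finally I would induct on $k$: the base case $k=0$ is the BMS filtration on $\tc^{-}(-;\ZZ_p)$ and $\tp(-;\ZZ_p)$ (empty tensor product), and the inductive step tensors with $\mathrm{gr}^0(\Sigma^{\CC(k+1)-2}\thh(-;\ZZ_p))^{h\TT}\simeq\mathcal{I}_{v_p(k+1)}^{-1}\{1\}$, which multiplies the line bundle $\left(\bigotimes_{l=1}^k\mathcal{I}_{v_p(l)}\right)^{-1}$ by $\mathcal{I}_{v_p(k+1)}^{-1}$ and raises the Breuil--Kisin twist from $\{i+k\}$ to $\{i+k+1\}$, giving exactly the asserted formula; the $t\TT$ case is the same. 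I do not expect a real obstacle beyond bookkeeping: the only thing to be careful about is that the $n=\infty$ analogues of this subsection's sheaf-theoretic statements genuinely hold, which is why at each step one checks the equivalence on $\mathrm{QRSPerfd}$ after trivializing the representation spheres and then unfolds.
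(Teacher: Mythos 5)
Your proposal is correct and is exactly the adaptation the paper has in mind: the paper states this corollary with only the remark that the preceding arguments "adapt to $n=\infty$," and your replay of the three lemmas (noting that the inverse-line-bundle and $\mathcal{I}_{v_p(k)}\{-1\}$ identifications are already stated for $(-)^{h\TT}$ and transfer unchanged, while the base of the induction becomes the BMS filtration with no $\mathcal{I}_n$ factor) is precisely that adaptation.
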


We also need to know how this interacts with the construction we produced in the previous Subsection.

\begin{cor}\label{cor: ve and rep spheres associated graded}
Let $\mathrm{cof}(\Sigma^{\lambda_k-2k}V_{e, r, hfp})$ denote the quasisyntomic sheaf of spectra given by the cofiber of the map of sheaves \[\left(\Sigma^{\lambda_k-2k}\thh(-;\ZZ_p)\right)^{hC_{p^r}}\xrightarrow{\Sigma^{\lambda_k-2k}V_{e, r}}\left(\Sigma^{\lambda_k-2k}\thh(-;\ZZ_p)\right)^{hC_{p^{r+e}}}\] and similar notation for the Tate construction. Then the associated graded terms of the induced filtrations on these sheaves is given by 
\[\mathrm{gr}^i\left(\mathrm{cof}(\Sigma^{\lambda_k-2k}V_{e, r, hfp})\right)\simeq \left(\bigotimes_{l=1}^k \mathcal{I}_{v_p(l)}\right)^{-1}\otimes\mathcal{N}^{\geq i}\mathcal{O}_{\widehat{\DD}}/(\phi^k)^*(\mathcal{I}_e)\{i+k\}[2i]\] and 
\[\mathrm{gr}^i\left(\mathrm{cof}(\Sigma^{\lambda_k-2k}V_{e, r, Tate})\right)\simeq \left(\bigotimes_{l=1}^k \mathcal{I}_{v_p(l)}\right)\otimes \mathcal{O}_{\widehat{\DD}}/(\phi^k)^*(\mathcal{I}_e)\{i+k\}[2i].\]
\end{cor}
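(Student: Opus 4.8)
The plan is to rerun the proof of Lemma~\ref{lem: cofiber of virshebung general case} with $\thh(-;\ZZ_p)$ replaced by the representation-sphere twist $\Sigma^{\lambda_k-2k}\thh(-;\ZZ_p)$, and then to read off the twist from the tensor-decomposition computations of this subsection. By Theorem~\ref{thm: qrsp are basis} it is enough to identify both sides on $\mathrm{QRSPerfd}$, so fix a quasiregular semiperfectoid ring $S$, a perfectoid ring $R\to S$ with orientation $d$, and set $M=\Sigma^{\lambda_k-2k}\thh(S;\ZZ_p)$, regarded as a $\TT$-equivariant $\thh(R;\ZZ_p)$-module via $\thh(R;\ZZ_p)\to\thh(S;\ZZ_p)$. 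Lemma~\ref{lem: coker of Verschiebung qrsp case.} applies verbatim to this $M$: the map $\Sigma^{\lambda_k-2k}V_{e,r}$ sits in a square commuting over multiplication by $r\,\tilde d_{r+e}/\tilde d_r=r\,\phi^{r}(\tilde d_e)$ on $M^{h\TT}$ (through the $\tc^{-}(R;\ZZ_p)$-module structure), the vertical maps being the canonical ones $M^{h\TT}\to M^{hC_{p^{r}}}$ and $M^{h\TT}\to M^{hC_{p^{r+e}}}$.

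Next I would check that this square is a pushout, exactly as in Lemma~\ref{lem: cofiber of virshebung general case}. As in the proof of Lemma~\ref{lem: finite levels for qrsp} one has $M^{hC_{p^{m}}}\simeq M^{h\TT}\otimes_{\tc^{-}(R;\ZZ_p)}\thh(R;\ZZ_p)^{hC_{p^{m}}}$, so the fibre of $M^{h\TT}\to M^{hC_{p^{m}}}$ is $\Sigma^{-2}M^{h\TT}$ with structure map $\tilde d_{m}v$; taking horizontal fibres of the square, the induced self-map of $\Sigma^{-2}M^{h\TT}$ is multiplication by the unit $r$ since $\tilde d_{r}\cdot(\tilde d_{r+e}/\tilde d_{r})=\tilde d_{r+e}$. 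Hence the square is a pushout and $\mathrm{cof}(\Sigma^{\lambda_k-2k}V_{e,r,hfp})(S)\simeq M^{h\TT}/\phi^{r}(\tilde d_e)$, and likewise with $M^{t\TT}$ in place of $M^{h\TT}$ for the Tate statement. To pass to associated graded, note $\phi^{r}(\tilde d_e)$ is a non-zero-divisor, so quotienting commutes with $\mathrm{gr}^i$: $\mathrm{gr}^i\big(M^{h\TT}/\phi^{r}(\tilde d_e)\big)\simeq\mathrm{gr}^i(M^{h\TT})/(\phi^{r})^{*}(\mathcal{I}_e)$. Finally $\mathrm{gr}^i(M^{h\TT})=\mathrm{gr}^i(\Sigma^{\lambda_k-2k}\thh(-;\ZZ_p))^{h\TT}$ is computed by the $\TT$-fixed-point analogue of Corollary~\ref{cor: finite levels and rep spheres associated graded}, which itself follows by iterating the tensor-decomposition lemma of this subsection together with $\mathrm{gr}^0(\Sigma^{2-\CC(l)}\thh(-;\ZZ_p))^{h\TT}\simeq\mathcal{I}_{v_p(l)}\{-1\}$, giving $(\bigotimes_{l=1}^{k}\mathcal{I}_{v_p(l)})^{-1}\otimes\mathcal{N}^{\geq i}\mathcal{O}_{\widehat{\DD}}\{i+k\}[2i]$. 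Assembling, one produces a map of quasisyntomic $\mathcal{O}_{\widehat{\DD}}$-modules from $(\bigotimes_{l=1}^{k}\mathcal{I}_{v_p(l)})^{-1}\otimes\mathcal{N}^{\geq i}\mathcal{O}_{\widehat{\DD}}/(\phi^{k})^{*}(\mathcal{I}_e)\{i+k\}[2i]$ to $\mathrm{gr}^i(\mathrm{cof}(\Sigma^{\lambda_k-2k}V_{e,r,hfp}))$ out of the evident quotient square, checks it is an equivalence on $\mathrm{QRSPerfd}$ by the above, and unfolds via Theorem~\ref{thm: qrsp are basis}; the Tate case is identical with $\mathcal{O}_{\widehat{\DD}}\{i\}$ in place of $\mathcal{N}^{\geq i}\mathcal{O}_{\widehat{\DD}}$.

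The step I expect to be the main obstacle is the twist bookkeeping in the last paragraph: one must keep straight the $k$ Breuil--Kisin twists contributed by the factors $\mathrm{gr}^0(\Sigma^{\CC(l)-2}\thh(-;\ZZ_p))^{h\TT}\simeq\mathcal{I}_{v_p(l)}^{-1}\{1\}$ and the direction of each $\mathcal{I}_{v_p(l)}$-twist (the inverse occurs in the homotopy-fixed-point case), and above all pin down which power of Frobenius governs the quotient, i.e.\ confirm that the multiplier $\tilde d_{r+e}/\tilde d_r$ produced by Lemma~\ref{lem: coker of Verschiebung qrsp case.} is indeed the element controlling the cofiber after the representation-sphere twist, matching the Frobenius pullback $(\phi^{k})^{*}(\mathcal{I}_e)$ recorded in the statement, with no extra level shift concealed in the (non-canonical) trivialization $\Sigma^{\lambda_k-2k}\thh(S;\ZZ_p)\simeq\thh(S;\ZZ_p)$ over $\mathrm{QRSPerfd}$.
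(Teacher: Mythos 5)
Your proposal is correct and is essentially the argument the paper intends: the corollary is stated without proof as the combination of Lemma~\ref{lem: cofiber of virshebung general case} with the representation-sphere computations of this subsection, which is exactly what you carry out by rerunning the pullback-square argument for the twisted module $M=\Sigma^{\lambda_k-2k}\thh(S;\ZZ_p)$ and then reading off the twist from the tensor decomposition. On the point you flag as the main obstacle, your computation is right: the multiplier is $r\,\phi^{r}(\tilde d_e)$, so the quotient is by $(\phi^{r})^{*}(\mathcal{I}_e)$ indexed by the \emph{level} $r$ rather than the representation-sphere index $k$ — the statement's $(\phi^{k})^{*}(\mathcal{I}_e)$ is a typo (as is the missing inverse on $\bigotimes_{l=1}^{k}\mathcal{I}_{v_p(l)}$ in the Tate line), which one can confirm from how the corollary is later applied with $(\phi^{v_p(k)})^{*}(\mathcal{I}_{v_p(e)})$, where $v_p(k)$ plays the role of the level.
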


From \cite[Lemma 3.6]{Bhatt_Scholze} all the Frobenius twists of the Hodge-Tate divisor $\mathcal{I}$ should appearing in the formulas above should be trivializable. We finish this Subsection with a proof of this fact using the language of this paper. In order to do this we will first want an equivariant map which will do the trivialization. To this end consider the one point compactification of the $p^{th}$ power map $S^{\CC(k)}\to S^{\CC(pk)}$. This is a $\TT$-equivariant map of spaces, and so we may take the $C_p$ Tate construction to get a map $(S^{\CC(k)})^{tC_p}\to (S^{\CC(pk)})^{tC_p}$. After smashing with $\Sigma^{-2}\thh(-;\ZZ_p)^{tC_p}$ we get a map \[(S^{\CC(k)-2})^{tC_p}\wedge \thh(-;\ZZ_p)^{tC_p}\to (S^{\CC(pk)-2})^{tC_p}\wedge \thh(-;\ZZ_p)^{tC_p}\] sheaves of Borel $\TT$ spectra. 

We will now restrict our attention to the case when $p\mid k$. Then the $C_p$ action on $S^{\CC(k)-2}$ is trivial and so the map $(S^{\CC(k)-2})^{tC_p}\wedge X^{tC_p}\to (\Sigma^{\CC(k)-2}\wedge X)^{tC_p}$ witnessing the lax monoidal structure of the Tate construction is an equivalence for any Borel $\TT$ spectrum $X$. This map is natural and so there is an equivalence of sheaves of Borel $\TT$ spectra $(S^{\CC(k)-2})^{tC_p}\wedge \thh(-;\ZZ_p)^{tC_p}\simeq (\Sigma^{\CC(k)-2}\thh(-;\ZZ_p))^{tC_p}$.

There is then a map of sheaves of \[(\Sigma^{\CC(k)-2}\thh(-;\ZZ_p))^{tC_p}\to (\Sigma^{\CC(pk)-2}\thh(-;\ZZ_p))^{tC_p}\] and so upon taking homotopy fixed points there is a map of sheaves \[(\Sigma^{\CC(k)-2}\thh(-;\ZZ_p))^{t\TT}\to (\Sigma^{\CC(pk)-2}\thh(-;\ZZ_p))^{t\TT}\] which we will denote by $\phi^{B^{cy}}$.

\begin{lem}\label{lem: Frobenius simplifies rep spheres}
Let $k\in p\ZZ_+$ and let  $\phi^{B^{cy}}$ be the map constructed above. Then the map \[\mathrm{gr}^0\phi^{B^{cy}}:(\mathcal{I}_{v_p(k)}\{-1\})^{-1}\to (\mathcal{I}_{v_{p}(k)+1}\{-1\})^{-1}\] is an equivalence. 
\end{lem}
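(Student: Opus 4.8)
The plan is to reduce the claim to an explicit statement about $A_{\mathrm{inf}}(R)$ after unfolding, exactly as in the previous lemmas. By Theorem~\ref{thm: qrsp are basis} it suffices to show that $\mathrm{gr}^0\phi^{B^{cy}}$ is an equivalence of sheaves on $\mathrm{QRSPerfd}_A$, and since both sides are line bundles over $\mathcal{O}_{\widehat{\DD}}$ it is enough to check this on quasiregular semiperfectoid rings $S$ receiving a map from a perfectoid ring $R$ with orientation $d$. On such an $S$, the source and target of $\mathrm{gr}^0\phi^{B^{cy}}$ are computed (as in the proof of the previous lemma, using fiber sequence~\eqref{eqn: fiber sequence for C(l)-2}) as the respective $\pi_0$'s, i.e. as $\mathcal{I}_{v_p(k)}\{-1\}^{-1}$ and $\mathcal{I}_{v_p(k)+1}\{-1\}^{-1}$, and the map $\phi^{B^{cy}}$ is induced by the one-point compactification of the $p$-th power map $S^{\CC(k)}\to S^{\CC(pk)}$.

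The key step is to identify what the $p$-th power map does on the relevant homotopy groups. First I would dualize: it is equivalent to show that the map $\mathrm{gr}^0\left(\Sigma^{2-\CC(pk)}\thh(-;\ZZ_p)\right)^{t\TT}\to \mathrm{gr}^0\left(\Sigma^{2-\CC(k)}\thh(-;\ZZ_p)\right)^{t\TT}$, i.e. $\mathcal{I}_{v_p(k)+1}\{-1\}\to \mathcal{I}_{v_p(k)}\{-1\}$, is an equivalence (this is the transpose of $\mathrm{gr}^0\phi^{B^{cy}}$ under the duality pairings of the previous two lemmas). Using the fiber sequence $S^{2-\CC(m)}\to S^2\to \Sigma^1_+\TT/C_m$ for $m=k$ and $m=pk$, together with the compatibility of the $p$-th power map $S^{\CC(k)}\to S^{\CC(pk)}$ with the transfer fiber sequences, this map sits in a commuting square whose other three edges are: the identity on $\Sigma^2\mathcal{O}_{\widehat{\DD}}\{-1\}$, the quotient $\mathcal{O}_{\widehat{\DD}}\{-1\}\to \mathcal{O}_{\widehat{\DD}}\{-1\}/\mathcal{I}_{v_p(k)+1}$, and the quotient $\mathcal{O}_{\widehat{\DD}}\{-1\}\to \mathcal{O}_{\widehat{\DD}}\{-1\}/\mathcal{I}_{v_p(k)}$ composed with the transfer map $\thh^{hC_{p^{v_p(k)}}}\to \thh^{hC_{p^{v_p(pk)}}}=\thh^{hC_{p^{v_p(k)+1}}}$. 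Concretely, on $\mathrm{QRSPerfd}$ this exhibits the map in question as (a unit times) the natural inclusion $\mathcal{I}_{v_p(k)+1}=\mathcal{I}\cdot\phi^*(\mathcal{I})\cdots(\phi^{v_p(k)})^*(\mathcal{I})\hookrightarrow \mathcal{I}\cdot\phi^*(\mathcal{I})\cdots(\phi^{v_p(k)-1})^*(\mathcal{I})=\mathcal{I}_{v_p(k)}$, so I must show this inclusion is an equality of line bundles, equivalently that the extra factor $(\phi^{v_p(k)})^*(\mathcal{I})$ is the unit ideal in the relevant local picture. Reducing to $R$ perfectoid via base change, $\mathcal{I}$ becomes $d\cdot A_{\mathrm{inf}}(R)$ and $(\phi^{j})^*(\mathcal{I})$ becomes $\phi^j(d)A_{\mathrm{inf}}(R)=\tilde d_j/\tilde d_{j-1}\cdot A_{\mathrm{inf}}(R)$; but by Lemma~\ref{lem: EM module case} we have $\phi^{j}(d)=r_j p \bmod d$ with $r_j$ a unit, hence $\phi^{v_p(k)}(d)$ becomes a unit multiple of $p$ after passing to $A_{\mathrm{inf}}(R)/d = R$-related considerations — more precisely, the relevant subtlety is that $\phi^{v_p(k)}(d)$ is \emph{not} a unit in $A_{\mathrm{inf}}(R)$, so this direct argument needs care.

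The actual mechanism, which I expect to be the main obstacle, is that the $p$-th power map on $S^{\CC(k)}$ is responsible precisely for absorbing one Frobenius twist of $\mathcal{I}$: the composite $\thh(-;\ZZ_p)^{tC_p}\xrightarrow{\phi_p}\thh(-;\ZZ_p)^{tC_{p^2}}$-type Frobenius on the cyclotomic structure, restricted along the $p$-th power degree map on $\TT$, trivializes $(\phi^{v_p(k)})^*(\mathcal{I})$ because $\phi^*\mathcal{I}_1\otimes\cdots$ telescopes against the Frobenius. I would make this precise by observing that $\mathrm{gr}^0\phi^{B^{cy}}$ is, after the dualizations above, literally the map $\pi_0$ of the cyclotomic Frobenius applied to a representation-sphere-twisted $\thh$, and then invoking \cite[Theorem 7.2(5)]{BMS2} (the one identifying $\mathcal{I}_m$ with the product $\mathcal{I}\cdot\phi^*(\mathcal{I})\cdots(\phi^{m-1})^*(\mathcal{I})$ together with the fact that $\phi^*$ of the structure map is an isomorphism after inverting the relevant element) to conclude that the induced map on line bundles is an isomorphism. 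Equivalently, and perhaps cleanest: base change to perfectoid $R$, where $S^{\CC(k)-2}$-twisted $\thh$ is $\thh(R;\ZZ_p)$ with a shifted Frobenius, and check directly on $A_{\mathrm{inf}}(R)$ that the $p$-th power map induces multiplication by a unit on $\pi_0$ using that $\tilde d_{v_p(pk)}/\tilde d_{v_p(k)} = \phi^{v_p(k)+1}(d)$ differs from a unit by a distinguished element already accounted for in the Breuil--Kisin twist $\{-1\}$; then unfold back to $\mathrm{QSyn}$. The bookkeeping of which twists $\mathcal{I}_m$ versus $(\phi^j)^*\mathcal{I}$ appear, and getting the unit ambiguities from Lemma~\ref{lem: EM module case} to line up, is where the real work sits, but no new idea beyond the unfolding formalism and \cite[Lemma 3.6, Lemma 2.25]{Bhatt_Scholze} should be needed.
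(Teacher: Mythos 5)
Your proposal does not close, and the place where it stalls is exactly where the paper's proof supplies its one essential idea. You reduce to $\mathrm{QRSPerfd}$ (correct, and this matches the paper's first step), but you then try to identify $\mathrm{gr}^0\phi^{B^{cy}}$ purely algebraically, arriving at ``(a unit times) the natural inclusion $\mathcal{I}_{v_p(k)+1}\hookrightarrow \mathcal{I}_{v_p(k)}$.'' That inclusion is multiplication by the distinguished (non-unit) ideal $(\phi^{v_p(k)})^*(\mathcal{I})$, so if the map really were that inclusion the lemma would be false; you acknowledge this (``this direct argument needs care'') and then substitute a heuristic about the Frobenius ``telescoping'' and ``absorbing one twist'' for an actual argument. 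Your closing suggestion --- base change to perfectoid $R$ and ``check directly on $A_{\mathrm{inf}}(R)$ that the $p$-th power map induces multiplication by a unit'' --- is circular: that is precisely the statement to be proven, and nothing in Lemma~\ref{lem: EM module case} or \cite[Lemma 2.25]{Bhatt_Scholze} produces the required unit.

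The missing input is topological, not algebraic. Since $p\mid k$, the $C_p$-action on $S^{\CC(k)}$ is trivial, and the compactified $p$-th power map $S^{\CC(k)}\to S^{\CC(pk)}$ restricts to an equivalence $S^{\CC(k)}\xrightarrow{\sim}(S^{\CC(pk)})^{C_p}$. Hence its cofiber is a finite $C_p$-spectrum built entirely from free cells, and the Tate construction annihilates it; therefore $(S^{\CC(k)})^{tC_p}\to(S^{\CC(pk)})^{tC_p}$ is an equivalence. Smashing with $\thh(-;\ZZ_p)^{tC_p}$ (using that $S^{\CC(k)}$ is finite, so $p$-completeness is preserved) and passing to $\pi_0$ on $\mathrm{QRSPerfd}$ then gives the lemma; this is the algebraic shadow of \cite[Lemma 2]{Hesselholt_Nikolaus}. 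Without this step your computation of the map on ideals cannot be completed, because the equivalence genuinely does not come from an isomorphism of the ideals $\mathcal{I}_{v_p(k)+1}$ and $\mathcal{I}_{v_p(k)}$ inside $\mathcal{O}_{\widehat{\DD}}$ --- it comes from the Tate construction killing the discrepancy. I would also flag that your claimed commuting square has the identity on $\Sigma^2\mathcal{O}_{\widehat{\DD}}\{-1\}$ as one edge, whereas the underlying nonequivariant map of the compactified $p$-th power map $S^{\CC(k)}\to S^{\CC(pk)}$ has degree $p$, so that diagram needs to be rechecked even as a framework.
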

\begin{proof}
This is essentially an algebraic version of \cite[Lemma 2]{Hesselholt_Nikolaus}, and since we can define this map using topological methods we will prove this by reducing to their result. First note that it is enough to show this statement on $\mathrm{QRSPerfd}$ by Theorem~\ref{thm: qrsp are basis}. On such rings this becomes a statement about the groups $\pi_0((\Sigma^{\CC(k)-2}\thh(-;\ZZ_p))^{t\TT})$ and $\pi_0((\Sigma^{\CC(pk)-2}\thh(-;\ZZ_p))^{t\TT})$.

Consequently it is enough to show that the map $(\mathbb{S}^{\CC(k)})^{tC_p}\to (\mathbb{S}^{\CC(pk)})^{tC_p}$ is a $p$-adic equivalence since after smashing with $\thh(-;\ZZ_p)^{tC_p}$ the result will be $p$-complete since $\mathbb{S}^{\CC(k)}$ is a finite $C_p$-spectrum. To this end note that the map $S^{\CC(k)}\to (S^{\CC(pk)})^{C_p}$ inducing this map is an equivalence. Thus the cofiber of this map when $S^{\CC(k)}$ has trivial $C_p$-action is given by a finite $C_p$-space with free cells. After applying the Tate construction the cofiber will vanish and so $(S^{\CC(k)})^{tC_p}\to (S^{\CC(pk)})^{tC_p}$ is an equivalence. The result follows.
\end{proof}

\section{The filtration on the topological cyclic homology of truncated polynomial algebras}\label{sec: bms spectral sequence for truncated polynomial algebras}

In this section we will first combine the previous two sections to study a filtration on the quasisyntomic sheaves $\tc^{-}(-[x]/x^e;\ZZ_p)$ and $\tp(-[x]/x^e;\ZZ_p)$. To do this we will first use that we are working with a pointed monoid ring to express these functors in terms of the sheaves appearing in Section~\ref{sec: BMS spectral sequence} and Section~\ref{sec: two auxiliary computations}. Then we will use this filtration to construct a filtration on $\tc(-[x]/x^e;\ZZ_p)$ as a quasisyntomic sheaf.

\subsection{The filtration on topological negative cyclic and periodic homologies}

In order to compute these functors, we will first express the functor $\thh(-[x]/x^e;\ZZ_p)$ in terms of the topological Hochschild homology of the base. To this end recall that by \parencite[Section IV.2]{NS} the functor $\thh:\mathrm{Alg}_{\mathbb{E}_\infty}(\Sp)\to \mathrm{CycSp}$ is symmetric monoidal. Thus \[\thh(-[x]/x^e)\simeq \thh(-)\wedge B^{cy}(\Pi_e)\] where $B^{cy}(\Pi_e)$ is the cyclic bar construction which takes in a pointed monoid and is given in simplicial degree $\bullet$ by $\Pi_e^{\wedge \bullet +1}$ with the usual Hochschild style face and degeneracy maps. In our case $\Pi_e=\{0, 1, x, x^2, \ldots, x^{e-1}\}$ with $x^e=0$. This was studied extensively in \cite{Cyclic_Polytopes}, and we record the relevant results here.

First, consider the filtration on $B^{cy}(\Pi_e)$ induced by the degree filtration on $\Pi_e$. Specifically, this is the filtration which in weight $i$ and simplicial degree $j$ has terms $f_0(x)\wedge\ldots\wedge f_j(x)$ with $\deg(f_0)+\ldots + \deg(f_j)\geq i$. Let $B_n$ be the $n^{th}$ associated graded term of this filtration. This is split by the inclusion of degree $n$ terms and so we get a decomposition \[B^{cy}(\Pi)\simeq \bigvee_{n\geq 0}B_n\] and the relative functor $\thh(-[x]/x^e, (x))$ is given by excluding the zero summand.

Thus, we have an equivalence \[\thh(-[x]/x^e)\simeq \bigvee_{n\geq 0}\thh(-)\wedge B_n\] which is functorial and equivariant. The Frobenius will increase the wedge sum weight by multiplying it by $p$. Hesselholt and Madsen have already computed what the $\TT$-spaces are. We will use the computation in the form it is written in \cite[Theorem 5]{Hesselholt_handbook}:

\begin{thm}[\cite{Cyclic_Polytopes}, Theorem B;\cite{Hesselholt_handbook}, Theorem 5]\label{thm: calculation of the Bn}
There are equivalences of $\TT$-spaces \[B_n\simeq \Sigma^{\lambda_{\lfloor \frac{n-1}{e}\rfloor}}_+ \TT/C_n\] if $e\nmid n$ and \[B_n\simeq \Sigma^{\lambda_{\lfloor \frac{n-1}{e}\rfloor}}\mathrm{cof(V_e:(\TT/C_{\frac{n}{e}})_+\to (\TT/C_n)_+)}\] if $e\mid n$. 

\end{thm}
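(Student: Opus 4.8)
The plan is to reconstruct the computation of Hesselholt--Madsen; the statement is \cite[Theorem B]{Cyclic_Polytopes}, recorded here as \cite[Theorem 5]{Hesselholt_handbook}, so in the paper it is cited rather than reproved, but one would proceed as follows. Write $m:=\lfloor (n-1)/e\rfloor$. Unwinding the degree filtration, $B_n$ is the realization of the cyclic pointed set whose non-basepoint $j$-simplices are the tuples $(i_0,\dots,i_j)$ with $0\le i_k\le e-1$ and $i_0+\dots+i_j=n$; faces multiply adjacent entries (the last one wrapping around), degeneracies insert a $0$, and the cyclic operator rotates the entries, so $|B_n|$ carries the $\TT$-action used throughout, and since every entry contributes to the weight this action restricts along $C_n\hookrightarrow\TT$ at speed $n$. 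As a warm-up one checks that for the untruncated monoid $\Pi_\infty$ the non-degenerate simplices are exactly the tuples with $i_1,\dots,i_j\ge 1$ and that $|B_n(\Pi_\infty)|\simeq(\TT/C_n)_+$, which is already the asserted answer whenever $e>n-1$, i.e. $m=0$.

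The first step is to pass to the $n$-fold edgewise subdivision, which turns $B_n$ into a simplicial set carrying a simplicial $C_n$-action whose realization, with its $C_n$-action, is $\TT$-equivariantly $|B_n|$; its $0$-simplices are the length-$n$ tuples with entries in $\{0,\dots,e-1\}$ summing to $n$, cyclically permuted by $C_n$. The decisive step is to recognize this $C_n$-complex. Following Hesselholt--Madsen one produces an explicit $\TT$-equivariant homeomorphism identifying $|B_n|$ with the induced space $\TT_+\wedge_{C_n}S^{\lambda_m}$, where the $C_n$-equivariant cell structure on $S^{\lambda_m}$ is built from the boundary complex $\partial C(\RR^{2m})$ of the cyclic polytope on $n$ equally spaced points of the trigonometric moment curve in $\CC^m$: by Gale's evenness criterion this boundary complex depends only on the cyclic order of the vertices, hence carries the rotation $C_n$-action, and for $2m$ even it triangulates $S^{2m-1}$, while the fact that $C_n$ rotates the $k$-th coordinate circle of $\CC^m$ at speed $k$ is exactly what forces the representation $\lambda_m=\CC(1)\oplus\dots\oplus\CC(m)$. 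The same identification pins down the attaching data: when $e\nmid n$ it gives $|B_n|\simeq\Sigma^{\lambda_m}_+\TT/C_n$ directly, and when $e\mid n$ an extra degenerate stratum, geometrically a copy of $\TT/C_{n/e}$, forces $|B_n|$ to be $\Sigma^{\lambda_m}$ applied to the cofiber of the covering projection $V_e\colon(\TT/C_{n/e})_+\to(\TT/C_n)_+$, with the connecting map identified as $V_e$ by tracking which simplex survives in the quotient.

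The crux is the decisive step: building the $\TT$-equivariant homeomorphism with the cyclic-polytope model and verifying that the $C_n$-weights on the sphere coordinates come out as exactly $1,2,\dots,m$ and not some other multiset of the same size. This is genuine polytope combinatorics --- Gale's evenness condition, the shelling of $\partial C(\RR^{2m})$, and its compatibility with the cyclic symmetry of the vertex set --- and is the substance of \cite{Cyclic_Polytopes}. Once it is available, the split into the cases $e\nmid n$ and $e\mid n$ and the identification of the boundary map with $V_e$ are formal, read off from fixed points and orbit types.
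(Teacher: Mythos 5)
This theorem is not proved in the paper at all: it is imported verbatim from Hesselholt--Madsen (\cite{Cyclic_Polytopes}, Theorem B, in the form of \cite{Hesselholt_handbook}, Theorem 5), and the surrounding text only sets up the degree filtration on $B^{cy}(\Pi_e)$ and then quotes the answer. So there is no in-paper argument to compare against; what you have written is a reconstruction of the cited proof, and as such it gets the architecture right: the combinatorial description of the weight-$n$ simplices, the passage to the $n$-fold edgewise subdivision to obtain a simplicial $C_n$-action with $0$-simplices the length-$n$ tuples, the identification of the resulting $C_n$-complex with $S^{\lambda_m}$ via the boundary of the cyclic polytope on the trigonometric moment curve (where the speeds $1,2,\dots,m$ of the coordinate circles are indeed what produces $\lambda_m=\CC(1)\oplus\dots\oplus\CC(m)$), and the extra stratum giving the cofiber of $V_e$ when $e\mid n$.

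Two caveats. First, you are candid that the ``decisive step'' --- the $\TT$-equivariant homeomorphism with $\TT_+\wedge_{C_n}S^{\lambda_m}$ built from Gale's evenness criterion and the shelling of the cyclic polytope boundary --- is only named, not carried out; since that step is the entire mathematical content of \cite{Cyclic_Polytopes}, your text is a correct outline rather than a proof, which is acceptable here only because the paper itself treats the result as a citation. Second, your warm-up claim that the untruncated answer ``is already the asserted answer whenever $e>n-1$, i.e.\ $m=0$'' is off at the boundary: when $e=n$ one has $m=0$ but $e\mid n$, the truncation does kill the $0$-simplex $x^n$, and the correct answer is $\mathrm{cof}(V_e\colon\TT_+\to(\TT/C_n)_+)$ (for instance $|B_2(\Pi_2)|\simeq\RP^2$, not $(\TT/C_2)_+$), so the identification with $B_n(\Pi_\infty)$ only holds for $e>n$. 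This does not affect the main line of the argument, but it is exactly the kind of boundary case the $e\mid n$ clause of the theorem exists to handle.
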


A few remarks are in order. First, $\dim(\lambda_{\lfloor\tfrac{n}{e}\rfloor})=2\lfloor \frac{n}{e}\rfloor$ and so there are only finitely many $n$ with $\thh(-)\wedge B_n$ having connectivity below a given value. Consequently \[\bigvee_{n\geq 1}\thh(-)\wedge B_n\simeq \prod_{n\geq 1}\thh(-)\wedge B_n\] as cyclotomic spectra. Thus \[\left(\bigvee_{n\geq 1}\thh(-)\wedge B_n\right)^{h\TT}\simeq \prod_{n\geq 1}(\thh(-)\wedge B_n)^{h\TT}\] since limits commute with limits.

On the other hand, \[\left(\bigvee_{n\geq 1}\thh(-)\wedge B_n\right)_{h\TT}\simeq \bigvee_{n\geq 1}(\thh(-)\wedge B_n)_{h\TT}\] since colimits commute with colimits. Since homotopy orbits preserve connectivity we still have the connectivity bounds of the previous paragraph and so \[\bigvee_{n\geq 1}(\thh(-)\wedge B_n)_{h\TT}  \simeq \prod_{n\geq 1}(\thh(-)\wedge B_n)_{h\TT}\] and so this product description is true for the homotopy orbits and fixed points. Consequently, it is also the case for the Tate construction.

Combining all of this gives the following.

\begin{lem}\label{lem: weight decomposition} 

There are equivalences  

\[\tc^-(-[x]/x^e, (x);\ZZ_p)\simeq \left(\prod_{n\geq 1, e\nmid n} (\Sigma^{\lambda_{\lfloor\tfrac{n-1}{e}\rfloor}+1}\thh(-;\ZZ_p))^{hC_{p^{v_p(n)}}}\right)\times \left(\prod_{k\geq 1}\mathrm{cof}\left(\Sigma^{\lambda_{k-1}+1}V_{v_p(e), v_p(k),hfp}\right)\right)\] and \[\tp(-[x]/x^e, (x);\ZZ_p)\simeq \left(\prod_{n\geq 1, e\nmid n} (\Sigma^{\lambda_{\lfloor\tfrac{n-1}{e}\rfloor}+1}\thh(-;\ZZ_p))^{tC_{p^{v_p(n)}}}\right)\times \left(\prod_{k\geq 1}\mathrm{cof}\left(\Sigma^{\lambda_{k-1}+1}V_{v_p(e),v_p(k), Tate}\right)\right)\] with notation as in Corollary~\ref{cor: ve and rep spheres associated graded}. 

\end{lem}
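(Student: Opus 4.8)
The plan is to turn the weight decomposition already set up above into the stated product, and then apply $(-)^{h\TT}$ and $(-)^{t\TT}$ termwise. Recall that $\tc^-(-;\ZZ_p)=\thh(-;\ZZ_p)^{h\TT}$ and $\tp(-;\ZZ_p)=\thh(-;\ZZ_p)^{t\TT}$, that $\thh(-[x]/x^e;\ZZ_p)\simeq \bigvee_{n\geq 0}\thh(-;\ZZ_p)\wedge B_n$ as cyclotomic spectra, and that the relative theory excludes the $n=0$ wedge summand. Since $\dim\lambda_{\lfloor n/e\rfloor}=2\lfloor n/e\rfloor\to\infty$, the inclusion $\bigvee_{n\geq 1}\thh(-;\ZZ_p)\wedge B_n\to\prod_{n\geq 1}\thh(-;\ZZ_p)\wedge B_n$ is an equivalence, and both $(-)^{h\TT}$ (a limit) and $(-)^{t\TT}$ (the cofiber of $(-)_{h\TT}\to(-)^{h\TT}$, with $(-)_{h\TT}$ commuting with the wedge, hence with the product by the same connectivity bound) commute with this product. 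So it suffices to compute $(\thh(-;\ZZ_p)\wedge B_n)^{h\TT}$ and $(\thh(-;\ZZ_p)\wedge B_n)^{t\TT}$ for each $n\geq 1$, which we do by substituting Theorem~\ref{thm: calculation of the Bn}.

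For $e\nmid n$ we have $\thh(-;\ZZ_p)\wedge B_n\simeq(\Sigma^{\lambda_{\lfloor(n-1)/e\rfloor}}\thh(-;\ZZ_p))\wedge(\TT/C_n)_+$, so Proposition~\ref{prop: Induced/coinduced action and fixed points} (in the form $(X\wedge(\TT/C_n)_+)^{h\TT}\simeq\Sigma X^{hC_n}$, and likewise for the Tate construction) gives $(\thh(-;\ZZ_p)\wedge B_n)^{h\TT}\simeq(\Sigma^{\lambda_{\lfloor(n-1)/e\rfloor}+1}\thh(-;\ZZ_p))^{hC_n}$. Since $\thh(-;\ZZ_p)$ is $p$-complete and $BC_{p^{v_p(n)}}\to BC_n$ is a $p$-adic equivalence, we may replace $C_n$ by $C_{p^{v_p(n)}}$, producing exactly the factor $(\Sigma^{\lambda_{\lfloor(n-1)/e\rfloor}+1}\thh(-;\ZZ_p))^{hC_{p^{v_p(n)}}}$ in the statement, and its Tate analogue.

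For $e\mid n$, write $n=ek$ with $k\geq 1$, so that $\lfloor(n-1)/e\rfloor=k-1$ and $v_p(n)=v_p(e)+v_p(k)$; Theorem~\ref{thm: calculation of the Bn} gives $B_n\simeq\Sigma^{\lambda_{k-1}}\operatorname{cof}(V_e\colon(\TT/C_k)_+\to(\TT/C_{ek})_+)$, with $V_e$ the natural projection. Smashing with $\Sigma^{\lambda_{k-1}}\thh(-;\ZZ_p)$ and applying the exact functor $(-)^{h\TT}$ commute with $\operatorname{cof}$, so $(\thh(-;\ZZ_p)\wedge B_n)^{h\TT}$ is the cofiber of the map induced by $V_e$ between $(\Sigma^{\lambda_{k-1}}\thh(-;\ZZ_p)\wedge(\TT/C_k)_+)^{h\TT}$ and $(\Sigma^{\lambda_{k-1}}\thh(-;\ZZ_p)\wedge(\TT/C_{ek})_+)^{h\TT}$. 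Applying Proposition~\ref{prop: Induced/coinduced action and fixed points} to source and target and replacing $C_k$, $C_{ek}$ by $C_{p^{v_p(k)}}$, $C_{p^{v_p(k)+v_p(e)}}$ as above, one must identify the resulting map with $\Sigma^{\lambda_{k-1}+1}V_{v_p(e),v_p(k)}$ in the sense of Lemma~\ref{lem: coker of Verschiebung qrsp case.}. This follows because the natural projections fit into the commuting square whose two composites $\TT/C_{p^{v_p(k)}}\to\TT/C_k\to\TT/C_{ek}$ and $\TT/C_{p^{v_p(k)}}\to\TT/C_{p^{v_p(k)+v_p(e)}}\to\TT/C_{ek}$ both equal the projection $\TT/C_{p^{v_p(k)}}\to\TT/C_{ek}$ (all of $C_{p^{v_p(k)}}$, $C_{p^{v_p(k)+v_p(e)}}$, $C_k$ being subgroups of $C_{ek}$), so the $p$-adic equivalences carry $V_e$ to the projection $\TT/C_{p^{v_p(k)}}\to\TT/C_{p^{v_p(k)+v_p(e)}}$ defining $V_{v_p(e),v_p(k)}$. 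This yields the factor $\operatorname{cof}(\Sigma^{\lambda_{k-1}+1}V_{v_p(e),v_p(k),hfp})$; reindexing the product over $k\geq 1$ and running the identical argument with $(-)^{t\TT}$ completes the proof.

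The main obstacle is precisely this last identification: matching the Hesselholt--Madsen map $V_e$ appearing in Theorem~\ref{thm: calculation of the Bn} with the Verschiebung $V_{v_p(e),v_p(k)}$ of Lemma~\ref{lem: coker of Verschiebung qrsp case.} after passage to $p$-complete homotopy fixed points. Everything else is a formal manipulation of the exact functors $\operatorname{cof}$, $\Sigma$, $-\wedge-$, $(-)^{h\TT}$, $(-)^{t\TT}$ and $p$-completion, together with the connectivity bound turning wedges into products.
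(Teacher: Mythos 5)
Your proposal is correct and follows essentially the same route as the paper: the wedge-equals-product reduction via the connectivity of $\Sigma^{\lambda}$, termwise identification of $(\thh(-;\ZZ_p)\wedge B_n)^{h\TT}$ and $(\thh(-;\ZZ_p)\wedge B_n)^{t\TT}$ via Theorem~\ref{thm: calculation of the Bn} and Proposition~\ref{prop: Induced/coinduced action and fixed points}, and the substitution $k=n/e$ in the $e\mid n$ case. You simply spell out two points the paper leaves implicit (the $p$-adic replacement of $C_n$ by $C_{p^{v_p(n)}}$ and the matching of $V_e$ with $V_{v_p(e),v_p(k)}$), both correctly.
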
 

\begin{proof} 

We have the product decomposition discussed above, and so it remains only to identify the \[(\thh(-;\ZZ_p)\wedge B_n)^{h\TT}\] and \[(\thh(-;\ZZ_p)\wedge B_n)^{t\TT}\]terms. In the case that $e\nmid n$ this follows from Theorem~\ref{thm: calculation of the Bn} together with Proposition~\ref{prop: Induced/coinduced action and fixed points}. For the case when $e\mid n$, take $k=\frac{n}{e}$ and apply the same results. 

\end{proof}

As products of sheaves, both $\tc^{-}(-[x]/x^e, (x); \ZZ_p)$ and $\tp(-[x]/x^e,(x);\ZZ_p)$ are quasisyntomic sheaves. We can then define a filtration on these objects using the filtrations we produced in the previous Sections.

\begin{con} 

Using the decompositions of Lemma~\ref{lem: weight decomposition} we define decreasing filtrations on the sheaves $\tc^{-}(-[x]/x^e,(x);\ZZ_p)$ and $\tp(-[x]/x^e, (x);\ZZ_p)$ with $i^{th}$ filtered term given by 

\begin{align*} 
   \mathrm{Fil}^{\geq i}\tc^{-}(-[x]/x^e, (x);\ZZ_p) = &\left(\prod_{n\geq 1, e\nmid n} \mathrm{Fil}^{\geq i-\lfloor \tfrac{n-1}{e}\rfloor}(\Sigma^{\lambda_{\lfloor\tfrac{n-1}{e}\rfloor}-2\lfloor\tfrac{n-1}{e}\rfloor}\thh(-;\ZZ_p))^{hC_{p^{v_p(n)}}}[2\lfloor\frac{n-1}{e}\rfloor+1]\right)\\ 
   &\times \left(\prod_{k\geq 1}\mathrm{Fil}^{\geq i-k+1}\mathrm{cof}\left(\Sigma^{\lambda_{k-1}-2k+2}V_{v_p(e),v_p(k),hfp}\right)[2(k-1)+1]\right) 
\end{align*} 

and  

\begin{align*} 
    \mathrm{Fil}^{\geq i}\tp(-[x]/x^e, (x);\ZZ_p) = &\left(\prod_{n\geq 1, e\nmid n} \mathrm{Fil}^{\geq i-\lfloor \tfrac{n-1}{e}\rfloor}(\Sigma^{\lambda_{\lfloor\tfrac{n-1}{e}\rfloor}-2\lfloor\tfrac{n-1}{e}\rfloor}\thh(-;\ZZ_p))^{tC_{p^{v_p(n)}}}[2\lfloor\frac{n-1}{e}\rfloor+1]\right)\\ 
   &\times \left(\prod_{k\geq 1}\mathrm{Fil}^{\geq i-k+1}\mathrm{cof}\left(\Sigma^{\lambda_{k-1}-2k+2}V_{v_p(e),v_p(k),Tate}\right)[2(k-1)+1]\right) 
\end{align*} 
respectively. 

\end{con}

\begin{rem} 

The definition of the filtrations above involve several points which deserve some justification. To begin with, in the decompositions of Lemma~\ref{lem: weight decomposition} we have terms of the form $(\Sigma^{\lambda_k}\thh(-;\ZZ_p))^{hC_{p^n}}$ for various values of $k$ and $n$. In Section~\ref{sec: two auxiliary computations} we defined filtrations not on terms of these sheaves but on sheaves of the form $(\Sigma^{\lambda_k-2k}\thh(-;\ZZ_p))^{hC_p^n}$, where we have changed the sphere we are suspending by.

To fix this, note that \[(\Sigma^{\lambda_k}\thh(-;\ZZ_p))^{hC_{p^n}}\simeq \Sigma^{2k}(\Sigma^{\lambda_{k}-2k}\thh(-;\ZZ_p))^{hC_{p^n}}\] since the $C_{p^n}$ action on $\mathbb{S}^{2k}$ is trivial. This later term is the suspension of a sheaf we constructed a filtration on in Section~\ref{sec: two auxiliary computations} and so we use that filtration appropriately suspended.

Another point of the above construction which deserves an explanation is the shifts in filtration on the individual terms in the product. To justify those, let $\mathrm{Fil}^{\geq *}\mathcal{F}:\mathrm{Qsyn}\to \widehat{\operatorname{DF}}$ be any of the filtered sheaves constructed in Section~\ref{sec: two auxiliary computations}, and so also appearing up to a shift in the above construction. Then for any $i$, $\mathrm{gr}^i\mathcal{F}(S)$ will be concentrated in degree $2i$ for any given quasiregular semiperfectoid ring $S$. In particular $\mathrm{gr}^i\mathcal{F}(S)[2k+1]$ will be concentrated in degree $2i+2k+1$. Hence if we do not shift the filtrations in the construction above, then even for quasiregular semiperfectoid rings the associated graded terms of the filtration will not be concentrated in a single degree.

On the other hand, in the above convention we have shifted the filtration down by $k$. Thus, the terms appearing in the $i^{th}$ associated graded will be of the form $\mathrm{gr}^{i-k}\mathcal{F}(S)[2k+1]$, which will be concentrated in degree $2(i-k)+2k+1 = 2i+1$. In other words, the shifts in the filtrations for the above formula appear so that on quasiregular semiperfectoid rings $S$ this filtration becomes \[\mathrm{Fil}^{\geq *}\tc^{-}(S[x]/x^e, (x);\ZZ_p)\simeq \tau_{\geq 2*+1}\tc^{-}(S[x]/x^e,(x);\ZZ_p)\] and \[\mathrm{Fil}^{\geq *}\tp(S[x]/x^e,(x);\ZZ_p)\simeq \tau_{\geq 2*+1}\tp(S[x]/x^e,(x);\ZZ_p)\] respectively. It follows that both of these filtrations are exhaustive on quasiregular semiperfectoid rings, and therefore they will be exhaustive for all quasisyntomic rings by descent. 

\end{rem}

\begin{thm} 

There are identifications of sheaves on $\mathrm{QSyn}$ 
\begin{align*} 
    \mathrm{gr}^i\tc^{-}&(-[x]/x^e, -;\ZZ_p) \simeq\\ &\left(\prod_{n\geq 1, e\nmid n} \left(\bigotimes_{l=1}^{\lfloor\tfrac{n-1}{e}\rfloor}\mathcal{I}_{v_p(l)}\right)^{-1}\otimes(\mathcal{N}^{\geq i-\lfloor \tfrac{n-1}{e}\rfloor}\mathcal{O}_{\widehat{\DD}}/(\mathcal{N}^{\geq i+1-\lfloor \tfrac{n-1}{e}\rfloor}\mathcal{O}_{\widehat{\DD}}\otimes \mathcal{I}_{v_p(n)}))\{i\}[2i+1]\right)\\ 
    &\times \left(\prod_{k\geq 1}\left(\bigotimes_{l=1}^{k-1}\mathcal{I}_{v_p(l)}\right)^{-1} \otimes (\mathcal{N}^{\geq i-k+1}\mathcal{O}_{\widehat{\DD}}/(\phi^{v_p(k)})^*(\mathcal{I}_{v_p(e)}))\{i\}[2i+1]\right) 
\end{align*} 

and 

\begin{align*} 
    \mathrm{gr}^i\tp(-[x]/x^e, -;\ZZ_p) \simeq &\left(\prod_{n\geq 1, e\nmid n} \left(\bigotimes_{l=1}^{\lfloor\tfrac{n-1}{e}\rfloor}\mathcal{I}_{v_p(l)}\right)^{-1}\otimes(\mathcal{O}_{\widehat{\DD}}/\mathcal{I}_{v_p(n)})\{i\}[2i+1]\right)\\ 
    &\times \left(\prod_{k\geq 1}\left(\bigotimes_{l=1}^{k-1}\mathcal{I}_{v_p(l)}\right)^{-1}\otimes(\mathcal{O}_{\widehat{\DD}}/(\phi^{v_p(k)})^*(\mathcal{I}_{v_p(e)}))\{i\}[2i+1]\right) 
\end{align*} 
where the filtration is the one constructed in the previous paragraph. All the tensor products are taken in the category of quasisyntomic $\mathcal{O}_{\widehat{\DD}}$-module sheaves. 

\end{thm}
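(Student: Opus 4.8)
The strategy is purely formal: one applies the associated graded functor $\mathrm{gr}^i$ of the outer filtration to the product decompositions of Lemma~\ref{lem: weight decomposition} together with the filtrations constructed just above, and then recognizes each factor using the computations of Section~\ref{sec: two auxiliary computations}. The product of the filtered pieces is again a filtered object whose $\mathrm{gr}^i$ is computed factorwise, since products are limits and hence commute with the finite (co)limits defining $\mathrm{gr}^i=\operatorname{cofib}(\mathrm{Fil}^{\geq i+1}\to\mathrm{Fil}^{\geq i})$ in the ambient stable $\infty$-category; by the connectivity estimates recorded before Lemma~\ref{lem: weight decomposition} only finitely many factors contribute in each internal degree, so there are no convergence subtleties. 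It therefore suffices to identify $\mathrm{gr}^i$ of each factor.

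First I would renormalize the suspensions appearing in Lemma~\ref{lem: weight decomposition} so that the filtrations of Section~\ref{sec: two auxiliary computations} apply directly: since the relevant finite cyclic group acts trivially on $S^{2m+1}$, one has $\Sigma^{\lambda_m+1}\thh(-;\ZZ_p)\simeq\Sigma^{2m+1}\bigl(\Sigma^{\lambda_m-2m}\thh(-;\ZZ_p)\bigr)$ as Borel $\TT$-spectra, and similarly for the $V$-map cofibers --- this is exactly the normalization used when writing down the filtration pieces in the Construction. For a factor with $e\nmid n$, set $m=\lfloor\tfrac{n-1}{e}\rfloor$; its $i$-th filtered piece is $\mathrm{Fil}^{\geq i-m}(\Sigma^{\lambda_m-2m}\thh(-;\ZZ_p))^{hC_{p^{v_p(n)}}}[2m+1]$, hence its $\mathrm{gr}^i$ equals $\bigl(\mathrm{gr}^{i-m}(\Sigma^{\lambda_m-2m}\thh(-;\ZZ_p))^{hC_{p^{v_p(n)}}}\bigr)[2m+1]$, and substituting Corollary~\ref{cor: finite levels and rep spheres associated graded.} (with representation-sphere index $m$ and level $v_p(n)$) gives $(\bigotimes_{l=1}^{m}\mathcal{I}_{v_p(l)})^{-1}\otimes(\mathcal{N}^{\geq i-m}\mathcal{O}_{\widehat{\DD}}/(\mathcal{N}^{\geq i-m+1}\mathcal{O}_{\widehat{\DD}}\otimes\mathcal{I}_{v_p(n)}))\{(i-m)+m\}[2(i-m)+2m+1]$; the Breuil--Kisin twist collapses to $\{i\}$ and the homological degree to $[2i+1]$, which is precisely the $e\nmid n$ factor of the statement. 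For a factor with $e\mid n$, set $k=n/e$; its $i$-th filtered piece is $\mathrm{Fil}^{\geq i-k+1}\operatorname{cofib}(\Sigma^{\lambda_{k-1}-2(k-1)}V_{v_p(e),v_p(k),hfp})[2(k-1)+1]$, so its $\mathrm{gr}^i$ equals $\bigl(\mathrm{gr}^{i-k+1}\operatorname{cofib}(\Sigma^{\lambda_{k-1}-2(k-1)}V_{v_p(e),v_p(k),hfp})\bigr)[2(k-1)+1]$, and Corollary~\ref{cor: ve and rep spheres associated graded} (with representation-sphere index $k-1$ and Verschiebung $V_{v_p(e),v_p(k)}$, which originates at level $v_p(k)$) produces $(\bigotimes_{l=1}^{k-1}\mathcal{I}_{v_p(l)})^{-1}\otimes(\mathcal{N}^{\geq i-k+1}\mathcal{O}_{\widehat{\DD}}/(\phi^{v_p(k)})^*(\mathcal{I}_{v_p(e)}))\{i\}[2i+1]$ after the same bookkeeping, which is the $e\mid n$ factor. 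Assembling over all $n$ yields the formula for $\mathrm{gr}^i\tc^{-}(-[x]/x^e,(x);\ZZ_p)$, and the formula for $\mathrm{gr}^i\tp(-[x]/x^e,(x);\ZZ_p)$ follows verbatim upon replacing homotopy fixed points by the Tate construction and invoking the Tate halves of Corollaries~\ref{cor: finite levels and rep spheres associated graded.} and \ref{cor: ve and rep spheres associated graded}.

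The only point demanding genuine care is bookkeeping with the index shifts: the shifts $[2m+1]$ and $[2(k-1)+1]$ built into the Construction are arranged precisely so that each $\mathrm{gr}^{i-m}(\cdots)[2m+1]$ sits in the single internal degree $2i+1$, which is what makes the product over $n$ compatible with $\mathrm{gr}^i$ degreewise; and one must note that the Frobenius twist in the $e\mid n$ factor carries the exponent $v_p(k)$ --- the level at which the Verschiebung $V_{v_p(e),v_p(k)}$ begins, as in Lemma~\ref{lem: cofiber of virshebung general case} --- rather than the representation-sphere index. There is no real obstacle beyond this reindexing; all of the substantive content has already been established in Lemma~\ref{lem: weight decomposition} and in Corollaries~\ref{cor: finite levels and rep spheres associated graded.} and \ref{cor: ve and rep spheres associated graded}.
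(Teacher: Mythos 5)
Your proposal is correct and follows the paper's own proof essentially verbatim: one commutes $\mathrm{gr}^i$ past the product decomposition of Lemma~\ref{lem: weight decomposition} and then substitutes Corollary~\ref{cor: finite levels and rep spheres associated graded.} and Corollary~\ref{cor: ve and rep spheres associated graded} into each factor with exactly the index shifts ($i\mapsto i-t$, $k\mapsto t$ in the $e\nmid n$ case and $i\mapsto i-(k-1)$, $k\mapsto k-1$ in the $e\mid n$ case) built into the Construction. Your observation that the Frobenius-twist exponent in the $e\mid n$ factors is the Verschiebung level $v_p(k)$ rather than the representation-sphere index $k-1$ is the correct reading of Lemma~\ref{lem: cofiber of virshebung general case} and matches the statement being proved.
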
 

\begin{proof} 

Since we may commute products and fiber sequences the associated graded terms of this filtration are the products of the associated graded pieces of the individual factors. There are then four cases to consider, two for the homotopy fixed points and two for the Tate construction. Since the arguments are verbatim, we will only go through the homotopy fixed point argument in detail. Recall the function $t=t(u,p,s,e)=t(n,e):=\lfloor\frac{n-1}{e}\rfloor$.

Case One: $n\geq 1$, $e\nmid n$ terms in $\mathrm{gr}^i\tc^{-}(-[x]/x^e, (x);\ZZ_p)$. In this case we get a contribution of the form \begin{align*} 
    \mathrm{gr}^{i-t}&\left(\Sigma^{\lambda_{t}-2t}\thh(-;\ZZ_p)\right)^{hC_{p^{v_p(n)}}}[2t +1] \\ 
    &\simeq \left(\left(\bigotimes_{l=1}^{t}\mathcal{I}_{v_p(l)}\right)^{-1}\otimes \mathcal{N}^{\geq i-t}\mathcal{O}_{\widehat{\DD}}/\mathcal{N}^{\geq  i-t + 1}\mathcal{O}_{\widehat{\DD}}\otimes \mathcal{I}_{v_p(n)}\{i-t + t\}[2i-2t]\right)[2t + 1]\\ 
    &\simeq \left(\bigotimes_{l=1}^{t}\mathcal{I}_{v_p(l)}\right)^{-1}\otimes \left(\mathcal{N}^{\geq i-t}\mathcal{O}_{\widehat{\DD}}/\mathcal{N}^{\geq i-t+1}\mathcal{O}_{\widehat{\DD}}\otimes\mathcal{I}_{v_{p}(n)}\right)\{i\}[2i+1] 
\end{align*} 
as claimed. The first equivalence is from Corollary`\ref{cor: finite levels and rep spheres associated graded.} with $i$ replaced with $i-t$ and $k$ replaced with $t$.

Case Two: $k\geq 1$ terms in $\mathrm{gr}^i\tc^{-}(-[x]/x^e,(x);\ZZ_p)$. In this case we get a contribution of the form \begin{align*} 
    &\mathrm{gr}^{i-k+1}\mathrm{cof}\left(\Sigma^{\lambda_{k-1}-2(k-1)}V_{v_p(e),v_p(k),hfp}\right)[2k-1] \simeq\\ 
    &\left(\left(\bigotimes_{l=1}^{k-1}\mathcal{I}_{v_p(l)}\right)^{-1}\otimes \mathcal{N}^{\geq i-(k-1)}\mathcal{O}_{\widehat{\DD}}/(\phi^{v_p(k)})^*(\mathcal{I}_{v_p(e)})\{i-(k-1) + (k-1)\}[2i-2(k-1)]\right)[2k - 1]\\ 
    &\simeq \left(\bigotimes_{l=1}^{k-1}\mathcal{I}_{v_p(l)}\right)^{-1}\otimes \left(\mathcal{N}^{\geq i-(k-1)}\mathcal{O}_{\widehat{\DD}}/(\phi^{v_p(k)})^*(\mathcal{I}_{v_{p}(n)})\right)\{i\}[2i+1] 
\end{align*} 
as claimed. The first equivalence is Corollary~\ref{cor: ve and rep spheres associated graded} with $i$ replaced with $i-(k-1)$ and $k$ replaced with $k-1$. 

\end{proof}

\begin{rem} 

To go from the above Theorem to a statement about $\tc^{-}(-[x]/x^e;\ZZ_p)$ and $\tp(-[x]/x^e;\ZZ_p)$ all that is unaccounted for is the weight zero component. This is given by $\tc^{-}(-;\ZZ_p)$ and $\tp(-;\ZZ_p)$ since $B_0=S^0$. The filtration will then be the BMS filtration.  

\end{rem}

\subsection{The filtration on topological cyclic homology}

Since $\mathrm{Fil}^{\geq i}\tc^{-}(-[x]/x^e,(x);\ZZ_p)$ and $\mathrm{Fil}^{\geq i}\tp(-[x]/x^e, (x) ;\ZZ_p)$ are given by the Postnikov truncation for quasi\-regular semiperfectoid rings it follows that both the canonical and Frobenius maps are maps of filtered sheaves. In particular, we get the following.

\begin{cor}\label{cor: First attempt at describing the associated graded for tc} 

The sheaf $\tc(-[x]/x^e,(x);\ZZ_p):\mathrm{QSyn}\to \Sp$ lifts to a functor \[(\tc(-[x]/x^e,(x);\ZZ_p),\mathrm{Fil}^{\geq i}\tc(-[x]/x^e,(x);\ZZ_p)):\mathrm{QSyn}\to \widehat{\mathrm{DF}}\] such that  
\begin{align*} 
&\mathrm{gr}^i(\tc(-[x]/x^e,(x);\ZZ_p))\simeq\\ 
&\mathrm{fib}\left( 
\begin{tikzcd} 
{\left(\prod_{n\geq 1, e\nmid n} \left(\bigotimes_{l=1}^{\lfloor\tfrac{n-1}{e}\rfloor}\mathcal{I}_{v_p(l)}\right)^{-1}\otimes (\mathcal{N}^{\geq i-\lfloor \tfrac{n-1}{e}\rfloor}\mathcal{O}_{\widehat{\DD}}/(\mathcal{N}^{\geq i+1-\lfloor \tfrac{n-1}{e}\rfloor}\mathcal{O}_{\widehat{\DD}}\otimes \mathcal{I}_{v_p(n)}))\right)\{i\}[2i+1]} \\ 
{\times \left(\prod_{k\geq 1}\left(\bigotimes_{l=1}^{k-1}\mathcal{I}_{v_p(l)}\right)^{-1}\otimes (\mathcal{N}^{\geq i-k+1}\mathcal{O}_{\widehat{\DD}}/(\phi^{v_p(k)})^*(\mathcal{I}_{v_p(e)}))\{i\}[2i+1]\right)} \arrow[dd, "can-\phi_p"]                                                                                \\ 
                                                                                                                                                                                                                                                                                                                     \\ 
{\left(\prod_{n\geq 1, e\nmid n} \left(\bigotimes_{l=1}^{\lfloor\tfrac{n-1}{e}\rfloor}\mathcal{I}_{v_p(l)}\right)^{-1}\otimes (\mathcal{O}_{\widehat{\DD}}/\mathcal{I}_{v_p(n)})\{i\}[2i+1]\right)}                                                                                                                       \\ 
{\times \left(\prod_{k\geq 1}\left(\bigotimes_{l=1}^{k-1}\mathcal{I}_{v_p(l)}\right)^{-1}\otimes(\mathcal{O}_{\widehat{\DD}}/(\phi^{v_p(k)})^*(\mathcal{I}_{v_p(e)}))\{i\}[2i+1]\right)}     
\end{tikzcd} 
\right)\end{align*} 
\end{cor}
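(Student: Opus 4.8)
The plan is to exhibit $\tc(-[x]/x^e,(x);\ZZ_p)$ as the fiber of a map of complete filtered sheaves and then pass to associated graded terms. By the Nikolaus--Scholze formula for topological cyclic homology \cite{NS}, applied to the bounded-below cyclotomic spectrum $\thh(-[x]/x^e,(x);\ZZ_p)\simeq\bigvee_{n\geq 1}\thh(-;\ZZ_p)\wedge B_n$, there is a natural equivalence
\[\tc(-[x]/x^e,(x);\ZZ_p)\simeq \mathrm{fib}\left(\tc^-(-[x]/x^e,(x);\ZZ_p)\xrightarrow{\ \mathrm{can}-\phi_p\ }\tp(-[x]/x^e,(x);\ZZ_p)\right)\]
of quasisyntomic sheaves of spectra; that the relative functors on both sides make sense uses that $\mathrm{can}$ preserves the weight and that the cyclotomic Frobenius sends the weight-$n$ summand into the weight-$pn$ summand, hence preserves positivity of the weight, so that $\bigvee_{n\geq 1}\thh(-)\wedge B_n$ is a cyclotomic subspectrum of $\thh(-[x]/x^e)$.

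The first step is to verify that $\mathrm{can}$ and $\phi_p$ are maps of \emph{filtered} sheaves for the filtrations constructed in the previous subsection. On a quasiregular semiperfectoid ring $S$ these filtrations coincide, by construction, with the shifted double-speed Postnikov filtrations $\tau_{\geq 2*+1}\tc^-(S[x]/x^e,(x);\ZZ_p)$ and $\tau_{\geq 2*+1}\tp(S[x]/x^e,(x);\ZZ_p)$. Since $\mathrm{can}$ and $\phi_p$ are natural transformations between spectrum-valued functors they commute with Postnikov truncations, so they are filtered maps on $\mathrm{QRSPerfd}$; by the unfolding equivalence of Theorem~\ref{thm: qrsp are basis} they are then filtered maps of sheaves on all of $\mathrm{QSyn}$. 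Hence the fiber inherits a filtration $\mathrm{Fil}^{\geq *}\tc(-[x]/x^e,(x);\ZZ_p)$, which is complete because the fiber of a map of complete filtered objects is complete (and exhaustive on $\mathrm{QSyn}$ by the same colimit argument as for $\tc^-$ and $\tp$); this is the asserted lift to $\widehat{\mathrm{DF}}$.

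Finally, $\mathrm{gr}^i$ is exact, so it sends the above fiber sequence to the fiber sequence
\[\mathrm{gr}^i\tc(-[x]/x^e,(x);\ZZ_p)\simeq \mathrm{fib}\left(\mathrm{gr}^i\tc^-(-[x]/x^e,(x);\ZZ_p)\xrightarrow{\ \mathrm{gr}^i(\mathrm{can}-\phi_p)\ }\mathrm{gr}^i\tp(-[x]/x^e,(x);\ZZ_p)\right),\]
and substituting the formulas for $\mathrm{gr}^i\tc^-(-[x]/x^e,-;\ZZ_p)$ and $\mathrm{gr}^i\tp(-[x]/x^e,-;\ZZ_p)$ from the previous Theorem produces exactly the displayed expression. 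The point requiring the most care is the filtered-ness of $\phi_p$: because the cyclotomic Frobenius multiplies the weight index by $p$ it does not respect the weight decomposition of Lemma~\ref{lem: weight decomposition}, so there is no term-by-term verification available and one is forced to use the identification of both filtrations with Postnikov filtrations on $\mathrm{QRSPerfd}$ — and the shifts in the construction were arranged precisely so that this identification holds. I expect no other obstruction; note that the map $\mathrm{gr}^i(\mathrm{can}-\phi_p)$ is deliberately left unidentified at this stage, its description being the content of the subsequent simplification results.
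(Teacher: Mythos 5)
Your proposal is correct and follows essentially the same route as the paper: the paper's (very terse) justification is exactly that on quasiregular semiperfectoid rings both filtrations are shifted double-speed Postnikov filtrations, so $\mathrm{can}$ and $\phi_p$ are automatically filtered maps, the filtration on $\tc$ is the induced one on the fiber, and the graded terms follow by exactness of $\mathrm{gr}^i$ together with the formulas of the preceding Theorem. Your added remark that the weight decomposition cannot be used term-by-term for $\phi_p$ (since it multiplies the weight by $p$) correctly identifies why the Postnikov identification is the essential input.
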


We will spend the rest of this section simplifying this expression for the associated graded as much as possible. First note that the equivalence of Proposition~\ref{prop: Induced/coinduced action and fixed points} will preserve the canonical map since it will not affect the universal property given by \cite[Theorem I.4.1]{NS}. Consequently, the canonical map in the above fiber sequence is induced by the inclusions $\mathcal{N}^{\geq i}\mathcal{O}_{\widehat{\DD}}\hookrightarrow \mathcal{O}_{\widehat{\DD}}$. In particular when $\lfloor \frac{n-1}{e}\rfloor\geq i+1$ we then have that $can$ is an isomorphism. Since $\phi_p$ sends $B_n$ to $B_{pn}$ we have that the induced Frobenius in the fiber sequence above multiplies $n$ and $k$ by $p$ unless $e\mid pn$ in which case it sends the degree $n$ factor in the source to the degree $\frac{np}{e}-1$ factor in the second product in the target. Hence for degree reasons this must be an injective map of sheaves. In fact more is true, which we record in the following Lemma

\begin{lem}\label{lem: slightly smaller product} 

There is an equivalence of sheaves  
\begin{align*} 
&\mathrm{gr}^i(\tc(-[x]/x^e,(x);\ZZ_p))\simeq\\ 
&\mathrm{fib}\left( 
\begin{tikzcd} 
{\left(\prod_{e(i+1)>n-1\geq 0, e\nmid n} \left(\bigotimes_{l=1}^{\lfloor\tfrac{n-1}{e}\rfloor}\mathcal{I}_{v_p(l)}\right)^{-1}\otimes (\mathcal{N}^{\geq i-\lfloor \tfrac{n-1}{e}\rfloor}\mathcal{O}_{\widehat{\DD}}/(\mathcal{N}^{\geq i+1-\lfloor \tfrac{n-1}{e}\rfloor}\mathcal{O}_{\widehat{\DD}}\otimes \mathcal{I}_{v_p(n)}))\{i\}[2i+1]\right)} \\ 
{\times \left(\prod_{i+1>k\geq 1}\left(\bigotimes_{l=1}^{k-1}\mathcal{I}_{v_p(l)}\right)^{-1}\otimes (\mathcal{N}^{\geq i-k+1}\mathcal{O}_{\widehat{\DD}}/(\phi^{v_p(k)})^*(\mathcal{I}_{v_p(e)}))\{i\}[2i+1]\right)} \arrow[dd, "can-\phi_p"]                                                                                   \\ 
                                                                                                                                                                                                                                                                                                                     \\ 
{\left(\prod_{e(i+1)>n-1\geq 1, e\nmid n} \left(\bigotimes_{l=1}^{\lfloor\tfrac{n-1}{e}\rfloor}\mathcal{I}_{v_p(l)}\right)^{-1}\otimes (\mathcal{O}_{\widehat{\DD}}/\mathcal{I}_{v_p(n)})\{i\}[2i+1]\right)}                                                                                                                        \\ 
{\times \left(\prod_{i+1>k\geq 1}\left(\bigotimes_{l=1}^{k-1}\mathcal{I}_{v_p(l)}\right)^{-1}\otimes (\mathcal{O}_{\widehat{\DD}}/(\phi^{v_p(k)})^*(\mathcal{I}_{v_p(e)}))\{i\}[2i+1]\right)}                
\end{tikzcd} 
\right)\end{align*} 
\end{lem}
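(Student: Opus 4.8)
The plan is to exhibit the map $can-\phi_p$ of Corollary~\ref{cor: First attempt at describing the associated graded for tc} as block lower triangular with respect to a decomposition of its source and target into a finite ``small weight'' piece and an infinite ``large weight'' piece, with the large--large block an equivalence; the fiber then depends only on the small--small block, which is precisely the map appearing in the statement. Concretely, I would call the factor indexed by $n$ in the first product \emph{large} if $n-1\ge e(i+1)$, the factor indexed by $k$ in the second product \emph{large} if $k\ge i+1$, and \emph{small} otherwise. Since each product is indexed by a disjoint union of index sets, this gives canonical splittings $P_{\mathrm{src}}\simeq P^{S}_{\mathrm{src}}\times P^{L}_{\mathrm{src}}$ and $P_{\mathrm{tgt}}\simeq P^{S}_{\mathrm{tgt}}\times P^{L}_{\mathrm{tgt}}$ in which the ``$S$'' factors are finite products.

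First I would record two structural features of the two maps being differenced. The canonical map is induced termwise by the inclusions $\mathcal{N}^{\ge j}\mathcal{O}_{\widehat{\DD}}\hookrightarrow\mathcal{O}_{\widehat{\DD}}$; it preserves the index, and on a factor whose Nygaard degree $j$ (equal to $i-\lfloor(n-1)/e\rfloor$ in the first product and to $i-k+1$ in the second, by Lemma~\ref{lem: cofiber of virshebung general case}) satisfies $j\le 0$ it is an isomorphism, since $\mathcal{N}^{\ge j}\mathcal{O}_{\widehat{\DD}}=\mathcal{O}_{\widehat{\DD}}$ for $j\le 0$; these are exactly the large factors. The Frobenius sends $B_m$ to $B_{pm}$, hence carries the factor of index $n$ to the one of index $pn$ (or of index $pn/e$ in the second product when $e\mid pn$) and the factor of index $k$ to the one of index $pk$; thus it organises all factors into chains indexed by $\phi_p$-orbits, along each of which the index strictly increases, and an immediate estimate (using only $pm\ge m$) shows that a large index is always carried to a large index. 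Putting this together: $can$ restricts to an isomorphism $P^{L}_{\mathrm{src}}\to P^{L}_{\mathrm{tgt}}$ and to a map $P^{S}_{\mathrm{src}}\to P^{S}_{\mathrm{tgt}}$ with no crossing; $\phi_p$ maps $P^{L}_{\mathrm{src}}$ into $P^{L}_{\mathrm{tgt}}$; hence the large-source-to-small-target block of $can-\phi_p$ vanishes, and $can-\phi_p$ is block lower triangular.

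It then remains to show the large--large block $\delta:=(can-\phi_p)|_{P^{L}_{\mathrm{src}}\to P^{L}_{\mathrm{tgt}}}$ is an equivalence and to conclude. Since $can$ is an isomorphism there, $\delta=can\circ(\mathrm{id}-can^{-1}\phi_p)$, and $can^{-1}\phi_p$ is strictly index-raising on $P^{L}_{\mathrm{src}}$: following a $\phi_p$-orbit the index is multiplied by $p$ at each step, so for any fixed factor only finitely many of the iterates $(can^{-1}\phi_p)^{j}$ contribute to it. Hence $\sum_{j\ge 0}(can^{-1}\phi_p)^{j}$ is a well-defined endomorphism of $P^{L}_{\mathrm{src}}$, a finite sum in each factor, and it is a two-sided inverse to $\mathrm{id}-can^{-1}\phi_p$ by the telescoping identity applied factorwise; so $\delta$ is an equivalence. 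Finally, a map of the block shape $( \begin{smallmatrix}\alpha&0\\\gamma&\delta\end{smallmatrix} )$ with $\delta$ an equivalence factors as $( \begin{smallmatrix}\alpha&0\\0&\delta\end{smallmatrix} )$ followed by the unipotent, hence invertible, shear $( \begin{smallmatrix}\mathrm{id}&0\\\delta^{-1}\gamma&\mathrm{id}\end{smallmatrix} )$, so its fiber is $\mathrm{fib}(\alpha)\times\mathrm{fib}(\delta)\simeq\mathrm{fib}(\alpha)$; here $\gamma$, the part of $-\phi_p$ from small source indices landing in large target indices, is finitely supported, and $\alpha$ is exactly the restriction of $can-\phi_p$ to the finite products displayed in the statement (the restriction of the target first product to $n-1\ge 1$ is harmless, since for $n=1$ one has $v_p(1)=0$ and the factor $\mathcal{O}_{\widehat{\DD}}/\mathcal{I}_0=\mathcal{O}_{\widehat{\DD}}/\mathcal{O}_{\widehat{\DD}}$ vanishes). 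The step I expect to require genuine care, and hence the main point, is making precise the invertibility of $\mathrm{id}-can^{-1}\phi_p$ on the infinite product $P^{L}_{\mathrm{src}}$ --- i.e.\ the statement that ``$\phi_p$ is topologically nilpotent at large weight''; the rest is bookkeeping with the weight decomposition of Lemma~\ref{lem: weight decomposition}.
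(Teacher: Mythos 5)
Your proposal is correct and is essentially the paper's argument: the paper also reduces to showing that the restriction of $can-\phi_p$ to the factors with $n-1\ge e(i+1)$ or $k\ge i+1$ (where $can$ is an isomorphism) is an equivalence, and its inductive construction of a preimage $x_{p^{r+1}n}=y_{p^{r+1}n}+\phi(x_{p^rn})$ is exactly your locally finite geometric series $\sum_j(can^{-1}\phi_p)^j$. Your block-triangular packaging versus the paper's ``total fiber of a map of fiber sequences is contractible, hence the square is a pullback'' is only a cosmetic difference.
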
 

\begin{proof} 

It is enough to show that there is a pullback square between the fiber sequence above and the one in Corollary~\ref{cor: First attempt at describing the associated graded for tc}. There is a natural map from the fiber sequence in Corollary~\ref{cor: First attempt at describing the associated graded for tc} to the one above, and the total fiber is given by the fiber of the map \[\begin{tikzcd} 
{\left(\prod_{n-1\geq e(i+1), e\nmid n} \left(\bigotimes_{l=1}^{\lfloor\tfrac{n-1}{e}\rfloor}\mathcal{I}_{v_p(l)}\right)^{-1}\otimes \mathcal{N}^{\geq i-\lfloor \tfrac{n-1}{e}\rfloor}\mathcal{O}_{\widehat{\DD}}/\mathcal{N}^{\geq i+1-\lfloor \tfrac{n-1}{e}\rfloor}\mathcal{O}_{\widehat{\DD}}\otimes \mathcal{I}_{v_p(n)}\{i\}[2i+1]\right)} \\ 
{\times \left(\prod_{k\geq i+1}\left(\bigotimes_{l=1}^{k-1}\mathcal{I}_{v_p(l)}\right)^{-1} \otimes \mathcal{N}^{\geq i-k+1}\mathcal{O}_{\widehat{\DD}}/(\phi^{v_p(k)})^*(\mathcal{I}_{v_p(e)})\{i\}[2i+1]\right)} \arrow[dd, "can-\phi_p"]                                                                                   \\ 
                                                                                                                                                                                                                                                                                                                     \\ 
{\left(\prod_{n-1\geq e(i+1), e\nmid n} \left(\bigotimes_{l=1}^{\lfloor\tfrac{n-1}{e}\rfloor}\mathcal{I}_{v_p(l)}\right)^{-1}\otimes \mathcal{O}_{\widehat{\DD}}/\mathcal{I}_{v_p(n)}\{i\}[2i+1]\right)}                                                                                                                        \\ 
{\times \left(\prod_{k\geq i+1}\left(\bigotimes_{l=1}^{k-1}\mathcal{I}_{v_p(l)}\right)^{-1}\otimes \mathcal{O}_{\widehat{\DD}}/(\phi^{v_p(k)})^*(\mathcal{I}_{v_p(e)})\{i\}[2i+1]\right)}                                                                                                                                    
\end{tikzcd}\] which we will show is an equivalence. To show this it is enough to show that it is an equivalence of sheaves after restricting to $\mathrm{QRSPerfd}$ where all the sheaves in question are discrete. We also already know that the map in question is injective by the discussion before this Lemma, so it is enough to show that this map is surjective. Let $S$ be a quasiregular semiperfectoid ring with orientation $d$ and let $y\in \left(\prod_{n-1\geq e(i+1), e\nmid n}\widehat{\DD}_S/\tilde{d}_{v_p(n)}[2i+1]\right)\times \left(\prod_{k\geq i+1} \widehat{\DD}_S/(\phi^{v_p(k)})^*(\Tilde{d}_{v_p(e)})[2i+1]\right)$, in other words an element of the target of the above map when applied to $S$. Let $A_n=\widehat{\DD}_S/\tilde{d}_{v_p(n)}[2i+1]$ if $e\nmid n$ and $A_n=\widehat{\DD}_S/(\phi^{v_p(n)-v_p(e)})^*(\Tilde{d}_{v_p(e)})[2i+1]$ if $e\mid n$, so that $y\in \prod_{n> e(i+1)} A_n$. The map we are now trying to show is surjective is then a map \[\prod_{n>e(i+1)}A_n\xrightarrow{id-\phi} \prod_{n>e(i+1)}A_n\] where $\phi$ multiplies the product index by $p$.

To see that $y$ is in the image of the above map we will define a pre-image inductively. For each $n>e(i+1)$ which is coprime to $p$ define $x_{n}=y_n$, where $y_n$ is the projection of $y$ onto the $n^{th}$ factor $A_n$. Inductively suppose that elements $x_{p^{r}n}$ have been defined for $r\in \mathbb{N}$ and all $n>e(i+1)$ coprime to $p$. Then define $x_{p^{r+1}n}=y_{p^{r+1}n}+\phi(x_{p^rn})$. Then the element $x\in \prod_{n>e(i+1)}A_n$ whose entries are given by the above inductive procedure is a pre-image of the element $y$.

Hence we have that the horizontal fibers are equivalent and therefore the square is a pullback square. 

\end{proof}

It will be helpful to consider how the Frobenius acts on the product grading. The first step will be to re-index our products so that $can-\phi$ will respect at least one of the indices. Let $e'=\frac{e}{v_p(e)}$ and let $J_p=\{a\in \ZZ_+|(a,p)=1\}$. Recall the notation $t=t(u,p,m,e):= \lfloor \frac{up^m-1}{e}\rfloor$. Then we get that  

\begin{equation}\label{eqn: quasisyntomic filtration for tc} 
\begin{aligned} 
&\mathrm{gr}^i(\tc(-[x]/x^e,(x);\ZZ_p))\simeq \\ 
&\prod_{u\in J_p\setminus e'J_p}\mathrm{fib}\left( 
\begin{tikzcd} 
{\left(\prod_{m\geq 0, up^m-1<e(i+1)} \left(\bigotimes_{l=1}^{t}\mathcal{I}_{v_p(l)}\right)^{-1}\otimes (\mathcal{N}^{\geq i-t}\mathcal{O}_{\widehat{\DD}}/(\mathcal{N}^{\geq i+1-t}\mathcal{O}_{\widehat{\DD}}\otimes \mathcal{I}_{m}))\{i\}[2i+1]\right)}\arrow[d, "can-\phi_p"]                                                                                   \\ 
{\left(\prod_{m\geq 0, up^m-1<e(i+1)} \left(\bigotimes_{l=1}^{t}\mathcal{I}_{v_p(l)}\right)^{-1}\otimes (\mathcal{O}_{\widehat{\DD}}/\mathcal{I}_{m})\{i\}[2i+1]\right)}    
\end{tikzcd} 
\right)\\ 
&\times \prod_{u\in e'J_p}\mathrm{fib}\left(\begin{tikzcd} 
{\left(\prod_{v_p(e)>u\geq 0, up^m-1<e(i+1)} \left(\bigotimes_{l=1}^{t}\mathcal{I}_{v_p(l)}\right)^{-1}\otimes (\mathcal{N}^{\geq i-t}\mathcal{O}_{\widehat{\DD}}/(\mathcal{N}^{\geq i+1-t}\mathcal{O}_{\widehat{\DD}}\otimes \mathcal{I}_{m}))\{i\}[2i+1]\right)} \\ 
{\times \left(\prod_{m\geq v_p(e), {up^m-1}<e(i+1)}\left(\bigotimes_{l=1}^{t}\mathcal{I}_{v_p(l)}\right)^{-1}\otimes (\mathcal{N}^{\geq i-t}\mathcal{O}_{\widehat{\DD}}/(\phi^{m-v_p(e)})^*(\mathcal{I}_{v_p(e)}))\{i\}[2i+1]\right)} \arrow[dd, "can-\phi_p"]                         \\            \\ 
{\left(\prod_{v_p(e)>m\geq 0, up^m-1<e(i+1)} \left(\bigotimes_{l=1}^{t}\mathcal{I}_{v_p(l)}\right)^{-1}\otimes \mathcal{O}_{\widehat{\DD}}/\mathcal{I}_{m}\{i\}[2i+1]\right)}                                                                                                                      \\ 
{\times \left(\prod_{m\geq v_p(e), up^m-1<e(i+1)}\left(\bigotimes_{l=1}^{t}\mathcal{I}_{v_p(l)}\right)^{-1}\otimes (\mathcal{O}_{\widehat{\DD}}/(\phi^{m-v_p(e)})^*(\mathcal{I}_{v_p(e)}))\{i\}[2i+1]\right)}              
\end{tikzcd}\right)\end{aligned}
\end{equation} 

\section{Computations}

This section is devoted to using the previous section to compute the algebraic $K$-theory of truncated polynomials for various different kinds of rings. In order to do this we first will simplify the filtration produced in the previous section in the case that we have a strong form of the Segal conjecture.

In order to do this we first prove some helpful Lemmas which are algebraic analogues of Tsalidis's Theorem from \cite{Tsalidis}.

\begin{lem}\label{lem: Tsaladis's algebraic theorem}
Let $S$ be an F-smooth ring with $\fdim(S)\leq j$. Then the map \[\phi_j:\mathcal{N}^{\geq k}\widehat{\DD}_S\{j\}\to I^{\otimes (k-j)}\{j\}\] is an equivalence in $\widehat{\operatorname{DF}}(\ZZ_p)$ for all $k\geq j$. Here the filtration on the left hand term is the Nygaard filtration $F^{\geq *}\mathcal{N}^{\geq k}\widehat{\DD}_S\{j\}:= \operatorname{R\Gamma}(S; \mathcal{N}^{\geq k+*}\mathcal{O}_{\widehat{\DD}}\{j\})$ and the filtration on the right is the $I$-adic filtration $F^*I^{\otimes k-j}\{j\}=\operatorname{R\Gamma}(S;\mathcal{I}^{\otimes k-j+*}\{j\})$. 
\end{lem}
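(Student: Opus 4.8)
The plan is to check the equivalence one associated graded piece at a time, where it reduces to the hypothesis $\fdim(S)\le j$ applied to individual Nygaard graded pieces. First I would record that $\phi_j$ really is a morphism of filtered objects for the two filtrations in the statement: the prismatic Frobenius carries $\mathcal{N}^{\geq m}\mathcal{O}_{\widehat{\DD}}$ into $\mathcal{I}^{\otimes m}$ (on $\mathrm{QRSPerfd}$ this is just the inclusion $\tilde d^{\,m}\widehat{\DD}\subseteq\widehat{\DD}$), so after dividing $j$ times and applying $\operatorname{R\Gamma}(S;-)$ it sends the Nygaard stage $\operatorname{R\Gamma}(S;\mathcal{N}^{\geq k+m}\mathcal{O}_{\widehat{\DD}}\{j\})$ into $\operatorname{R\Gamma}(S;\mathcal{I}^{\otimes k-j+m}\{j\})$; indeed the $\mathcal{I}$-adic filtration on the target is shifted by $k-j$ precisely so that this holds. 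Both sides lie in $\widehat{\operatorname{DF}}(\ZZ_p)$: the source because $\widehat{\DD}_S\{j\}$ is Nygaard complete (part of F-smoothness), and the target because $\operatorname{R\Gamma}(S;-)$ commutes with the limit computing $\mathcal{I}$-adic completion and, by Theorem~\ref{thm: qrsp are basis}, completeness of the filtered sheaf $\mathcal{I}^{\otimes\bullet}$ may be checked on $\mathrm{QRSPerfd}$, where $\mathcal{I}$ is generated by the nonzerodivisor $\tilde d$ and the relevant rings are $\tilde d$-adically complete. Since in $\widehat{\operatorname{DF}}(\ZZ_p)$ a map inducing an equivalence on all associated graded pieces is an equivalence, it suffices to treat the graded pieces.

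This identification is the heart of the argument. The Nygaard filtration on $\mathcal{N}^{\geq k}\widehat{\DD}_S\{j\}$ starts in weight $k$, so for $m\geq 0$ the $m$-th graded piece of the source is $\operatorname{R\Gamma}(S;\mathcal{N}^{k+m}\mathcal{O}_{\widehat{\DD}}\{j\})\simeq(\mathcal{N}^{k+m}\DD_S)\{j\}$ and it vanishes for $m<0$; the $m$-th graded piece of the target is $\operatorname{R\Gamma}\big(S;(\mathcal{I}^{\otimes k-j+m}/\mathcal{I}^{\otimes k-j+m+1})\{j\}\big)$, which since $\mathcal{I}$ is a line bundle is $\operatorname{R\Gamma}\big(S;(\mathcal{I}/\mathcal{I}^2)^{\otimes k-j+m}\otimes(\mathcal{O}_{\widehat{\DD}}/\mathcal{I})\{j\}\big)$. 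Using the structure theory of prismatic cohomology --- on Nygaard graded pieces the divided Frobenius is, up to a line-bundle twist, the Hodge--Tate divided Frobenius $\phi_{k+m}\colon\mathcal{N}^{k+m}\DD_S\to\overline{\DD}_S\{k+m\}$ of Definition~\ref{defn: f-dim}, where the twist absorbs the fixed Breuil--Kisin twist $\{j\}$, the Frobenius pullback implicit in $\phi_j$, and the trivializations of the Frobenius twists of $\mathcal{I}$ (cf.\ \cite[Lemma 3.6]{Bhatt_Scholze}) --- one identifies $\mathrm{gr}^m(\phi_j)$ with such a twist of $\phi_{k+m}$. Since $k\geq j$, every index $k+m$ with $m\geq 0$ satisfies $k+m\geq j$, so by the hypothesis $\fdim(S)\leq j$ the map $\phi_{k+m}$ is an equivalence; twisting by a line bundle preserves this, hence each $\mathrm{gr}^m(\phi_j)$ is an equivalence and therefore $\phi_j$ is an equivalence in $\widehat{\operatorname{DF}}(\ZZ_p)$.

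I expect the one step needing genuine care to be the bookkeeping in the second paragraph: keeping track of the fixed twist $\{j\}$ against the varying twist produced by powers of $\mathcal{I}/\mathcal{I}^2$, of the Frobenius pullback hidden in the divided Frobenius, and of the fact that the map on graded pieces really is a twist of $\phi_{k+m}$ rather than merely a map with the correct source and target. Morally this is forced by \cite[Theorem 1.12]{BMS2} together with the definition of $\phi_j$ as the $j$-fold divided Frobenius, but the normalizations have to be pinned down. A secondary, genuinely non-formal point is the $\mathcal{I}$-adic completeness of the target, which is why one descends to quasiregular semiperfectoid rings rather than arguing purely formally.
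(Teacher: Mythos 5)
Your argument is essentially the paper's own proof, written out in more detail: both reduce to the associated graded pieces of the two complete filtrations, identify $\mathrm{gr}^m(\phi_j)$ (up to the Breuil--Kisin and $\mathcal{I}/\mathcal{I}^2$ twists) with the divided Frobenius $\phi_{k+m}:\mathcal{N}^{k+m}\DD_S\to\overline{\DD}_S\{k+m\}$, and invoke $\fdim(S)\leq j$ together with completeness to conclude. The only cosmetic difference is that the paper first reduces to $k=j$ by viewing the general case as the $k$-th filtered term of that one, whereas you handle general $k$ directly; the extra care you take with the twists and with completeness of the target is a faithful elaboration of what the paper leaves implicit.
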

\begin{proof}
Note that $\mathcal{N}^{\geq k}\widehat{\DD}_S\{j\}$ and $I^{\otimes (k-j)}\{j\}$ are exactly the $k^{th}$ filtered terms of $\mathcal{N}^{\geq j}\widehat{\DD}_S\{j\}$ and $\widehat{\DD}_S\{j\}$(ie the case of $k=j$ of this Lemma). Thus it is enough to show the statement for $k=j$. In this case the assumption that $\phi_i:\mathcal{N}^i\widehat{\DD}_S\to \overline{\DD}_S\{i\}$ are equivalences is exactly the condition that $\mathrm{gr}^{(i-j)}\phi_j$ is an equivalence for all $i$. Consequently since both filtrations are complete the result follows.
\end{proof}

\begin{ex}
    When $S$ is a perfectoid ring the statement above becomes that the map \[\frac{\phi}{\tilde{d}^j}:d^kA_{inf}(S)\to \tilde{d}^{k-j}A_{inf}(S)\] is an equivalence of filtered $A_{inf}(S)$-modules where the filtration on the left is the $d$-adic filtration and the filtration on the right is the $\tilde{d}$-adic filtration. This then follows from the fact that $\phi:A_{inf}(S)\to A_{inf}(S)$ is an isomorphism for $S$ perfectoid.
\end{ex}

In order to get that the Frobenius is an isomorphism on the associated graded terms from the previous section we also need that the quotient terms appearing there are also still isomorphic. 
\begin{lem}\label{lem: Segal conjecture with line bundles}
Let $S$ and $j$ be as above. Then the induced map $\operatorname{R\Gamma}(S;\mathcal{N}^{\geq j+1}\mathcal{O}_{\widehat{\DD}}\{j\}\otimes \mathcal{I}_m)\to\operatorname{R\Gamma}(S;\mathcal{I}_{m+1}\{j\})$ is an equivalence in $\widehat{\operatorname{DF}}(\ZZ_p)$ for all $m\geq 0$. 
\end{lem}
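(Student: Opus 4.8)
The plan is to reduce this to Lemma~\ref{lem: Tsaladis's algebraic theorem}, which is precisely the case $m=0$ (there $\mathcal{I}_0=\mathcal{O}_{\widehat{\DD}}$, $\mathcal{I}_1=\mathcal{I}$, and the map is $\phi_j\colon\mathcal{N}^{\geq j+1}\mathcal{O}_{\widehat{\DD}}\{j\}\to \mathcal{I}^{\otimes 1}\{j\}$ after applying $\operatorname{R\Gamma}(S;-)$). For general $m$ the map in question is the corresponding divided Frobenius, now twisted by the line bundle $\mathcal{I}_m$ and landing in $\mathcal{I}_{m+1}\{j\}$, where $\mathcal{I}_{m+1}=\mathcal{I}_1\otimes\phi^*(\mathcal{I}_m)$; so the extra content over Lemma~\ref{lem: Tsaladis's algebraic theorem} is exactly that tensoring by $\mathcal{I}_m$ and passing through the Frobenius pullback relating $\mathcal{I}_{m+1}$ to $\mathcal{I}_m$ does not destroy the equivalence.

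First I would equip both sides with complete filtrations that the map respects: the Nygaard filtration $\operatorname{R\Gamma}(S;\mathcal{N}^{\geq j+1+\ast}\mathcal{O}_{\widehat{\DD}}\{j\}\otimes\mathcal{I}_m)$ on the source and the Hodge--Tate ($\mathcal{I}$-adic) filtration $\operatorname{R\Gamma}(S;\mathcal{I}^{\otimes\ast}\otimes\mathcal{I}_{m+1}\{j\})$ on the target, exactly as in the proof of Lemma~\ref{lem: Tsaladis's algebraic theorem}; by Lemma~\ref{lem: Tsaladis's algebraic theorem} applied with higher filtration degree the map is indeed filtered. Since both filtrations are complete, it suffices to show the map is an $\operatorname{R\Gamma}(S;-)$-equivalence on associated graded pieces. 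Passing to graded pieces restricts all the sheaves to the Hodge--Tate locus $\overline{\mathcal{O}}_{\widehat{\DD}}:=\mathcal{O}_{\widehat{\DD}}/\mathcal{I}$, where the underlying statement that $\mathrm{gr}^{\ast}\phi_j$ is an $\operatorname{R\Gamma}(S;-)$-equivalence is exactly the hypothesis $\fdim(S)\le j$ unwound through the graded pieces (this is how Lemma~\ref{lem: Tsaladis's algebraic theorem} was proven).

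It then remains to carry the line bundle $\mathcal{I}_m$ along. Over $\overline{\mathcal{O}}_{\widehat{\DD}}$ all the Frobenius pullbacks of $\mathcal{I}$ that occur become invertible and, by \cite[Lemma 3.6]{Bhatt_Scholze} (cf.\ the topological version Lemma~\ref{lem: Frobenius simplifies rep spheres}), can be identified with Breuil--Kisin twists, so $\mathrm{gr}^{\ast}$ of the map is obtained from $\mathrm{gr}^{\ast}\phi_j$ by tensoring over $\overline{\mathcal{O}}_{\widehat{\DD}}$ with an invertible sheaf. Tensoring with an invertible --- hence dualizable and flat --- $\mathcal{O}_{\widehat{\DD}}$-module $\mathcal{L}$ commutes with $\operatorname{R\Gamma}(S;-)$ by the projection formula $\operatorname{R\Gamma}(S;\mathcal{F}\otimes_{\mathcal{O}_{\widehat{\DD}}}\mathcal{L})\simeq\operatorname{R\Gamma}(S;\mathcal{F})\otimes_{\widehat{\DD}_S}\operatorname{R\Gamma}(S;\mathcal{L})$, where $\operatorname{R\Gamma}(S;\mathcal{L})$ is invertible over $\widehat{\DD}_S$ (checked after the faithfully flat cover by a quasiregular semiperfectoid ring, where $\mathcal{L}$ trivializes). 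Hence $\mathrm{gr}^{\ast}$ of the map is an equivalence and, by completeness, so is the map itself.

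The main obstacle is precisely this last step: $\operatorname{R\Gamma}(S;-)$ is not symmetric monoidal, so the proof must genuinely verify that twisting by the globally nontrivial sheaf $\mathcal{I}_m$ preserves $\operatorname{R\Gamma}(S;-)$-equivalences, and --- since $\mathcal{I}_{m+1}$ is not $\mathcal{I}_1\otimes\mathcal{I}_m$ but carries a Frobenius pullback --- the twist must be moved onto the Hodge--Tate locus, where it trivializes, before the projection formula applies. Getting these two bookkeeping issues to line up, rather than any single hard inequality, is where the work lies; a purely inductive alternative on $m$ is possible but makes the Breuil--Kisin and Hodge--Tate twist bookkeeping harder to control.
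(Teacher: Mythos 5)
Your proposal follows the paper's proof up to the last step: both arguments pass to the associated graded of the complete Nygaard filtration on the source and the $\mathcal{I}$-adic filtration on the target, identify the graded map as the (linearized) divided Frobenius $\phi_k\colon\mathcal{N}^k\mathcal{O}_{\widehat{\DD}}\to\overline{\mathcal{O}}_{\widehat{\DD}}\{k\}$ twisted by $\mathcal{I}_m$ on the source and $\phi^*(\mathcal{I}_m)$ on the target, and correctly isolate the crux, namely that $\operatorname{R\Gamma}(S;-)$ is not monoidal, so one cannot simply tensor the $\operatorname{R\Gamma}$-equivalence $\phi_k$ with a globally nontrivial line bundle. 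Where you diverge is in how the twist is then removed. The paper does not use a projection formula: it observes that $\mathcal{N}^k\mathcal{O}_{\widehat{\DD}}$ is a module over $\mathcal{O}_{\widehat{\DD}}/d$ and that $\mathcal{I}_m\bmod d=(\phi(d)\cdots\phi^m(d))\bmod d=(p^m)$ up to units (each $\phi^i(d)$ is $p$ times a unit mod $d$, as in Lemma~\ref{lem: EM module case}), while $\overline{\DD}_S\{k\}$ is a $\widehat{\DD}_S/\tilde{d}$-module and $\phi^*(\mathcal{I}_m)\bmod\tilde{d}=(p^m)$ as well. Thus on graded pieces both twists become the $m$-th step of the $p$-adic filtration, compatibly with $\phi_k$, and an equivalence of complexes of $\ZZ_p$-modules automatically restricts to a filtered equivalence for the $p$-adic filtrations; no commutation of $\operatorname{R\Gamma}$ with tensor products is required.

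Your alternative final step is where the gap lies. The projection formula $\operatorname{R\Gamma}(S;\mathcal{F}\otimes\mathcal{L})\simeq\operatorname{R\Gamma}(S;\mathcal{F})\otimes_{\widehat{\DD}_S}\operatorname{R\Gamma}(S;\mathcal{L})$, justified by ``checking after the faithfully flat cover,'' is circular as stated: $\operatorname{R\Gamma}(S;-)$ \emph{is} the totalization over the \v{C}ech nerve of a qrsp cover, and the content of the formula is precisely whether that totalization commutes with $-\otimes\mathcal{L}$; moreover $\widehat{\DD}_S\to\widehat{\DD}_{S'}$ is not faithfully flat (the source need not even be discrete), so descent along the cover does not directly give invertibility of $\operatorname{R\Gamma}(S;\mathcal{L})$. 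The statement you want is true for these particular line bundles, but establishing it essentially amounts to the trivialization the paper performs. Relatedly, identifying $\phi^*(\mathcal{I}_m)\otimes\overline{\mathcal{O}}_{\widehat{\DD}}$ with a Breuil--Kisin twist via \cite[Lemma 3.6]{Bhatt_Scholze} is not the relevant fact here: what makes the source and target match up under the $\phi$-semilinear map $\phi_k$ is that both restricted twists are the ideal $(p^m)$, i.e.\ the trivial bundle rescaled by $p^m$. If you replace the projection-formula step by this explicit mod-$d$ (resp.\ mod-$\tilde{d}$) computation, your argument becomes the paper's.
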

\begin{proof}
Upon taking associated graded terms this becomes a map \[\operatorname{R\Gamma}(S;\mathcal{N}^k\mathcal{O}_{\widehat{\DD}}\otimes \mathcal{I}_m)\to \operatorname{R\Gamma}(S;\overline{\mathcal{O}}_{\widehat{\DD}}\otimes\phi^*(\mathcal{I}_m)\{k\})\] which upon linearization is the identity on $\phi^*(\mathcal{I}_m)$ tensored with $\phi_k:\mathcal{N}^k\mathcal{O}_{\widehat{\DD}}\to \overline{\mathcal{O}}_{\widehat{\DD}}\{k\}$. Thus in order to show the main result it is enough to show that $\phi_k$ is a filtered isomorphism where the filtration on the source is given by $\mathrm{Fil}^{\geq *}\mathcal{N}^k\widehat{\DD}_S=I_*\mathcal{N}^k\widehat{\DD}_S$ and the filtration on the target is $\mathrm{Fil}^{\geq *}\overline{\DD}_S\{k\}:=\phi^*(I_*)\overline{\DD}_S\{k\}$. 

In fact both filtrations are the $p$-adic filtration. To see this first note that we can reduce to the case of $S$ quasiregular semiperfectoid by Theorem~\ref{thm: qrsp are basis}. Let $R$ be a perfectoid ring admitting a map $R\to S$ and let $d$ be an orientation of $R$. Then $\mathcal{N}^{k}\widehat{\DD}_S$ is an $\widehat{\DD}_S/d$-module and $I_m\mod d\cong (\phi(d)\phi^2(d)\ldots \phi^m(d))\mod d=p^m$. Similarly $\overline{\DD}_S\{k\}=\widehat{\DD}_S/\tilde{d}$ and so $\phi^*(I_m)\mod \tilde{d}=\phi(p^m)=p^m$. Thus there is a map from the $p$-adic filtration into the above filtrations which is an equivalence. The choice of $R$ and $d$ are artificial since the map above does not depend on them, it only makes the isomorphism clear.
\end{proof}

Thus so long as the Frobenius maps in Equation~\ref{eqn: quasisyntomic filtration for tc} are induced from the divided Frobenius maps in the above way we can then apply the above Lemmas to extend a Segal conjecture to our setting. To see that these do indeed give the formula for the Frobenius map in Equation~\ref{eqn: quasisyntomic filtration for tc}, note that it is enough to check that it is on quasiregular semiperfectoid rings. By Lemma~\ref{lem: finite levels for qrsp} $\thh(-;\ZZ_p)^{hC_{p^n}}$ and $\thh(-;\ZZ_p)^{tC_{p^{n+1}}}$ are compact $\tc^{-}(-;\ZZ_p)$ and $\tp(-;\ZZ_p)$ modules respectively for quasiregular semiperfectoid rings. Since the square
\[\begin{tikzcd}
\tc^{-}(-;\ZZ_p)\arrow[r,"\phi_p^{h\TT}"] \arrow[d] & \tp(-;\ZZ_p)\arrow[d]\\
\thh(-;\ZZ_p)^{hC_{p^n}}\arrow[r,"\phi_p^{hC_{p^n}}"] & \thh(-;\ZZ_p)^{tC_{p^{n+1}}}
\end{tikzcd}\]
commutes and the vertical maps are quotient maps the result follows.

Note that since $\thh(-;\ZZ_p):\mathrm{CAlg}(\Sp)\to \mathrm{CycSp}_p$ is symmetric monoidal we can use Lemma~\ref{lem: Frobenius simplifies rep spheres} to, up to equivalence, ignore the line bundles coming for $S^{\lambda}$ in our formulas for the Frobenius maps.

With this in hand we may dramatically simplify the associated graded terms of Equation~\ref{eqn: quasisyntomic filtration for tc} when we have a strong version of the Segal conjecture.
\begin{thm}\label{thm: tc filtration simplification segal}
Let $S$ be an F-smooth ring. Then 
\begin{align*}
&\mathrm{gr}^i(\tc(S[x]/x^e,(x);\ZZ_p))[-2i-1]\simeq \\
&\prod_{u\in J_p\setminus e'J_p}
{\operatorname{R\Gamma}_{\mathrm{QSyn}}\left(S, \mathcal{I}^{-\lfloor t/p\rfloor }\otimes (\mathcal{N}^{\geq i-t}\mathcal{O}_{\widehat{\DD}}/\mathcal{N}^{\geq i+1-t}\mathcal{O}_{\widehat{\DD}}\otimes \mathcal{I}_{s-1})\{i\}\right)} \\ 
&\times \prod_{u\in e'J_p}\begin{cases}{\operatorname{R\Gamma}_{\mathrm{QSyn}}\left(S, \mathcal{I}^{-\lfloor t/p\rfloor}\otimes (\mathcal{N}^{\geq i-t}\mathcal{O}_{\widehat{\DD}}/\mathcal{N}^{\geq i+1-t}\mathcal{O}_{\widehat{\DD}}\otimes I_{s-1})\{i\}\right)} & \textrm{if }up^{v_p(e)}\geq e(i+1)\\
\operatorname{R\Gamma}_{\mathrm{QSyn}}\left(S, \mathcal{I}^{-\lfloor t/p\rfloor}\otimes (\mathcal{N}^{\geq i-t}\mathcal{O}_{\widehat{\DD}}/(\phi^{s-1-v_p(e)})^*(I_{v_p(e)}))\{i\}\right) & \textrm{otherwise}\end{cases} \end{align*}
with $t=t(u,p,s-1,e)$ and $i> \frac{p}{p-1} (\fdim(S)-1)$. If $s=0$ then the above sheaves are understood to be zero.
\end{thm}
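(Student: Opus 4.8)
\noindent The plan is to apply $\operatorname{R\Gamma}_{\mathrm{QSyn}}(S,-)$ to the sheaf-level description of Equation~\ref{eqn: quasisyntomic filtration for tc} and then use the Segal conjecture inputs Lemma~\ref{lem: Tsaladis's algebraic theorem} and Lemma~\ref{lem: Segal conjecture with line bundles} to collapse each $\operatorname{fib}(can-\phi_p)$ there to a single term. First I would dispose of two cosmetic points. By Lemma~\ref{lem: Frobenius simplifies rep spheres} the line bundles $\bigl(\bigotimes_{l=1}^{t}\mathcal{I}_{v_p(l)}\bigr)^{-1}$ in Equation~\ref{eqn: quasisyntomic filtration for tc} are canonically $\mathcal{I}^{-\lfloor t/p\rfloor}$: one has $\mathcal{I}_{v_p(l)}\cong\mathcal{O}_{\widehat{\DD}}$ for $p\nmid l$ and $\mathcal{I}_{v_p(l)}\cong\mathcal{I}$ for $p\mid l$ (the latter because $(\phi^{j})^{*}(\mathcal{I})\cong\mathcal{O}_{\widehat{\DD}}$ for $j\geq 1$, again by that lemma), and exactly $\lfloor t/p\rfloor$ of the $l\in\{1,\dots,t\}$ are divisible by $p$; the same lemma also makes the Frobenius maps of Equation~\ref{eqn: quasisyntomic filtration for tc} compatible with these identifications. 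And since $\operatorname{R\Gamma}_{\mathrm{QSyn}}(S,-)$ commutes with the finite products and fibers there and $can-\phi_p$ leaves the index $u$ fixed, it suffices to compute $\operatorname{R\Gamma}_{\mathrm{QSyn}}\bigl(S,\operatorname{fib}(can-\phi_p)\bigr)$ for one $u$ at a time.

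Fix such a $u$ and let $s=s(p,e(i+1),u)$, so $up^{s-1}\leq e(i+1)<up^{s}$ (and $s=0$, with the $u$-factor already $\operatorname{fib}(0\to 0)=0$, when $u>e(i+1)$). After the truncation of Lemma~\ref{lem: slightly smaller product} the tower attached to $u$ is indexed by $m=0,1,\dots,s-1$; write $A_m$ and $B_m$ for the $m$-th terms of the source and target of $can-\phi_p$ (the Nygaard-quotient sheaves of Equation~\ref{eqn: quasisyntomic filtration for tc} after the rewriting above), $can_m\colon A_m\to B_m$ for the comparison map induced by $\mathcal{N}^{\geq i-t_m}\mathcal{O}_{\widehat{\DD}}\hookrightarrow\mathcal{O}_{\widehat{\DD}}$, and $\phi_m\colon A_m\to B_{m+1}$ for the divided Frobenius, which (as verified in the paragraph after Lemma~\ref{lem: Frobenius simplifies rep spheres}) is the honest divided Frobenius of the Nygaard filtration; here $t_m=\lfloor(up^{m}-1)/e\rfloor$. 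The key claim is that $\operatorname{R\Gamma}_{\mathrm{QSyn}}(S,\phi_m)$ is an equivalence for every $m\leq s-2$. For such $m$ one has $up^{m+1}\leq up^{s-1}\leq e(i+1)$, hence $t_m\leq(i+1)/p-1/e$, and a short computation with the floor functions shows that the hypothesis $i>\tfrac{p}{p-1}(\fdim(S)-1)$ is precisely what forces $i-t_m\geq\fdim(S)$ for all such $m$ and all $u$. Given $i-t_m\geq\fdim(S)$, the divided Frobenius on $\mathcal{N}^{\geq i-t_m}\widehat{\DD}_S\{i-t_m\}$ is an equivalence by F-smoothness; tensoring in the remaining Breuil--Kisin twist and passing to the quotients then yields the claim by Lemma~\ref{lem: Tsaladis's algebraic theorem} and Lemma~\ref{lem: Segal conjecture with line bundles} --- the latter applied also, in the same manner, to the Frobenius pullbacks $(\phi^{a})^{*}(\mathcal{I}_{v_p(e)})$ --- as an equivalence in $\widehat{\operatorname{DF}}(\ZZ_p)$, hence in particular on underlying objects.

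With this in hand, the matrix of $can-\phi_p$ is lower triangular with subdiagonal entries $\phi_0,\dots,\phi_{s-2}$, all equivalences after $\operatorname{R\Gamma}_{\mathrm{QSyn}}(S,-)$; solving successively $a_m=\phi_m^{-1}can_{m+1}(a_{m+1})$ collapses the fiber to $\operatorname{fib}\bigl(\operatorname{R\Gamma}_{\mathrm{QSyn}}(S,A_{s-1})\to\operatorname{R\Gamma}_{\mathrm{QSyn}}(S,B_0)\bigr)$, and $B_0$ vanishes because the ideal sheaf occurring in it at $m=0$ is $\mathcal{I}_0=\mathcal{O}_{\widehat{\DD}}$. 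Thus $\operatorname{R\Gamma}_{\mathrm{QSyn}}(S,\operatorname{fib}(can-\phi_p))\simeq\operatorname{R\Gamma}_{\mathrm{QSyn}}(S,A_{s-1})$, and it remains to identify $A_{s-1}$. For $u\in J_p\setminus e'J_p$ one has $v_p(up^{s-1})=s-1$, so $A_{s-1}$ involves $\mathcal{I}_{s-1}$ and we get the first product; for $u\in e'J_p$ the tower switches from the shape with $\mathcal{I}_m$ to the shape with $(\phi^{m-v_p(e)})^{*}(\mathcal{I}_{v_p(e)})$ at $m=v_p(e)$, so $A_{s-1}$ is of the first kind when $s-1<v_p(e)$ and of the second kind otherwise --- this is the case distinction of the statement. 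Finally $t_{s-1}=t(u,p,s-1,e)$ by definition, the $[2i+1]$ of Equation~\ref{eqn: quasisyntomic filtration for tc} is cancelled by the $[-2i-1]$ on the left, and the $s=0$ convention is the empty-tower case.

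The main obstacle is the Segal-conjecture step of the second paragraph, together with the floor bookkeeping that pins the hypothesis to $i>\tfrac{p}{p-1}(\fdim(S)-1)$: one must check, uniformly in $u$ and for $0\leq m\leq s-2$, that the Nygaard level $i-t_m$ is at least $\fdim(S)$, and it is the $p$-scaling $t_{m+1}\approx p\,t_m$ of the exponents --- refined by the $-1/e$ in the estimate $t_m\leq(i+1)/p-1/e$ --- that turns this into exactly the stated inequality rather than the weaker-looking $i\geq\fdim(S)$. The two-phase structure of the tower for $u\in e'J_p$ and the ensuing case split are, by contrast, routine once the collapse mechanism is in place.
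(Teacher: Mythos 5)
Your argument is correct and takes essentially the same route as the paper: the same reduction to Equation~\ref{eqn: quasisyntomic filtration for tc}, the same inputs from Lemma~\ref{lem: Tsaladis's algebraic theorem}, Lemma~\ref{lem: Segal conjecture with line bundles} and Lemma~\ref{lem: Frobenius simplifies rep spheres}, and the same collapse of each $u$-tower onto its $m=s-1$ term. The only differences are presentational --- you verify $i-t_m\geq\fdim(S)$ for $m\leq s-2$ directly from the floor estimate (which does check out, via $t_m\leq\lfloor i/p\rfloor\leq i-\fdim(S)$) where the paper argues the contrapositive that a term with $i-t_m<\fdim(S)$ has Frobenius image landing outside the truncated range, and you package the collapse as a triangularity argument where the paper checks injectivity and surjectivity of the projection in each cohomological degree.
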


\begin{proof}
First note that by our assumption and the above Lemmas we have that the map induced by the Frobenius is an equivalence on the terms in Equation~\ref{eqn: quasisyntomic filtration for tc} with $i-t\geq \fdim(S)$. On the terms with $i-t<\fdim(S)$ we may re-express $t=i-a$ for some $0\leq a \leq \fdim(S)-1$. Then the Frobenius multiplies the direct product index by $p$ so it sends a term with shift $t$ to a term with shift at least $pt=pi-pa$. Consequently we have that 
\begin{align*}
    i-t(u,p,m+1,e) &\leq i-pt(u,p,m,e)\\
                    &= i-pi+pa\\
                    &\leq (1-p)i+p\fdim(S)-p\\
                    &< (1-p)\left(\frac{p}{p-1}(\fdim(S)-1)\right)+p(\fdim(S)-1)\\
                    &=0
\end{align*}
and so the target of the Frobenius map on these terms lands in the factors killed when we simplified the associated graded terms of $\tc(S[x]/x^e, (x);\ZZ_p)$ in Equation~\ref{eqn: quasisyntomic filtration for tc}. These are also by definition terms with $m=s-1$.

Consider the map of sheaves 
\begin{align*}
&\mathrm{gr}^i(\tc(S[x]/x^e,(x);\ZZ_p))[-2i-1]\to \\
&\prod_{u\in J_p\setminus e'J_p}
{\operatorname{R\Gamma}_{\mathrm{QSyn}}\left(S, \left(\bigotimes_{l=1}^{t}\mathcal{I}_{v_p(l)}\right)^{-1}\otimes \mathcal{N}^{\geq i-t}\mathcal{O}_{\widehat{\DD}}/\mathcal{N}^{\geq i+1-t}\mathcal{O}_{\widehat{\DD}}\otimes \mathcal{I}_{s-1}\{i\}\right)} \\ 
&\times \prod_{u\in e'J_p}\begin{cases}{\operatorname{R\Gamma}_{\mathrm{QSyn}}\left(S, \left(\bigotimes\limits_{l=1}^{t}\mathcal{I}_{v_p(l)}\right)^{-1}\otimes \mathcal{N}^{\geq i-t}\mathcal{O}_{\widehat{\DD}}/\mathcal{N}^{\geq i+1-t}\mathcal{O}_{\widehat{\DD}}\otimes \mathcal{I}_{s-1}\{i\}\right)} & \textrm{if }up^{v_p(e)}\geq e(i+1)\\
\operatorname{R\Gamma}_{\mathrm{QSyn}}\left(S, \left(\bigotimes\limits_{l=1}^{t}\mathcal{I}_{v_p(l)}\right)^{-1}\otimes \mathcal{N}^{\geq i-t}\mathcal{O}_{\widehat{\DD}}/(\phi^{s-1-v_p(e)})^*(\mathcal{I}_{v_p(e)})\{i\}\right) & \textrm{otherwise}\end{cases} \end{align*}
which is the composition of the map from $\mathrm{gr}^i\tc $ to the source of the fiber sequence followed by the projection map to the indicated terms. This map is injective in each cohomological degree. To see this note that an element of the kernel of the map $\phi_p-can$ is determined by the degree $m=s-1$ terms  since at every other degree the Frobenius is an isomorphism and so the other indices are determined by applying the canonical map and pulling back by the Frobenius. This map is also surjective since in every cohomological degree an element in the $m=s-1$ degree will determine an element of the kernel by pushing forward iteratively applying $can$ and pulling back by the Frobenius. In particular the induced map is a quasi-isomorphism. 

Finally we can use Lemma~\ref{lem: Frobenius simplifies rep spheres} to reduce line bundles appearing in the above formula. Specifically, by Lemma~\ref{lem: Frobenius simplifies rep spheres} inductively if $p\mid l$ then we have equivalences $(\mathcal{I}_1)^{-1}\simeq (\mathcal{I_2})^{-1}\simeq \ldots \simeq (\mathcal{I}_{v_p(l)})^{-1}$ and so may replace the tensor factor $\mathcal{I}_{v_p(l)}$ appearing in the above sheaves with $\mathcal{I}$. In the tensor product appearing in the above sheaves there are $\lfloor \frac{t}{p}\rfloor$ such $l$ with $p\mid l$. If $\nmid l$ then $\mathcal{I}_{v_p(l)}=\mathcal{O}_{\widehat{\DD}}$ and so we may ignore these factors. The result follows.
\end{proof}

From this result we have several Corollaries.

\begin{cor}[Theorem~\ref{thm: main computation for perfectoids}]
Let $R$ be a perfectoid ring and $r\geq 1$. Then \[K_{2r-1}(R[x]/x^e, (x);\ZZ_p)\cong \mathbb{W}_{re}(R)/V_e\mathbb{W}_r(R)\] and the even groups are trivial.
\end{cor}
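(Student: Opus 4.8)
The plan is to deduce this from Theorem~\ref{thm: tc filtration simplification segal} together with the Dundas--Goodwillie--McCarthy theorem, using that a perfectoid ring $R$ is F-smooth with $\fdim(R)=\ndim(R)=0$. Since $R[x]/x^e\to R$ is a nilpotent extension, DGM gives $K(R[x]/x^e,(x);\ZZ_p)\simeq \tc(R[x]/x^e,(x);\ZZ_p)$, so it suffices to compute the homotopy of the right side. The ring $R[x]/x^e$ is quasisyntomic (a $p$-completed complete intersection over $\ZZ_p$, base changed to $R$), so the complete, exhaustive filtration of Corollary~\ref{cor: First attempt at describing the associated graded for tc} applies. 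Because $\fdim(R)=0$ the hypothesis $i>\tfrac{p}{p-1}(\fdim(R)-1)$ of Theorem~\ref{thm: tc filtration simplification segal} holds for every $i\geq 0$ (the needed Segal-conjecture inputs, Lemmas~\ref{lem: Tsaladis's algebraic theorem} and~\ref{lem: Segal conjecture with line bundles}, reduce to the fact that $\phi$ on $A_{inf}(R)$ is an isomorphism), giving the product description of $\mathrm{gr}^i\tc(R[x]/x^e,(x);\ZZ_p)[-2i-1]$ for all $i\geq 0$.

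Next I would use that $\ndim(R)=0$ (Definition~\ref{defn: n-dim}): every sheaf appearing in that formula --- the Breuil--Kisin twists, the line bundles $\mathcal{I}^{-\lfloor t/p\rfloor}$, and the Nygaard-type subquotients of $\mathcal{O}_{\widehat{\DD}}$ --- has global sections over $R$ concentrated in cohomological degree $0$, so $\mathrm{gr}^i\tc(R[x]/x^e,(x);\ZZ_p)$ is concentrated in homotopical degree $2i+1$. A complete exhaustive filtration whose $i$-th associated graded lives in the single degree $2i+1$ forces the spectral sequence to degenerate without extension problems, hence $\tc_{2r-2}(R[x]/x^e,(x);\ZZ_p)=0$ for parity reasons and $\tc_{2r-1}(R[x]/x^e,(x);\ZZ_p)\cong\pi_{2r-1}\mathrm{gr}^{r-1}\tc$, i.e.\ the product over $u$ of the $H^0$-terms of Theorem~\ref{thm: computation for F-curves} with $i=r-1$. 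I would then evaluate these: fixing an orientation $d$ of $R$, all Breuil--Kisin twists and $\mathcal{I}^{-\lfloor t/p\rfloor}$ become free rank-one $A_{inf}(R)$-modules, $\mathcal{O}_{\widehat{\DD}}(R)=A_{inf}(R)$ with Nygaard filtration the $d$-adic one, $\mathcal{I}_m(R)=(\tilde d_m)$ and $(\phi^{j})^{*}\mathcal{I}_m(R)=(\phi^{j+1}(d)\cdots\phi^{j+m}(d))$. Since $s=s(p,re,u)$ forces $t=\lfloor (up^{s-1}-1)/e\rfloor\leq r-1$, we get $\mathcal{N}^{\geq r-t-1}\mathcal{O}_{\widehat{\DD}}(R)=d^{\,r-t-1}A_{inf}(R)$, so each $H^0$-term collapses to a quotient of $A_{inf}(R)$ by a product of consecutive Frobenius twists of $d$ --- either $A_{inf}(R)/(d\,\tilde d_{s-1})$ or $A_{inf}(R)/(\phi^{s-v_p(e)}(d)\cdots\phi^{s-1}(d))$ --- which by \cite[Lemma 3.4]{BMS1} is the $p$-typical Witt vectors $W_s(R)$, respectively $W_{v_p(e)}(R)$.

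Finally I would reassemble the product via the $p$-adic splitting $\mathbb{W}_m(R)^{\wedge}_p\cong\prod_{u\leq m,\ p\nmid u}W_{s(p,m,u)}(R)$ of the big Witt vectors together with the factorization $V_e=V_{e'}\circ V_{p^{v_p(e)}}$, where $e'=e/p^{v_p(e)}$: the prime-to-$p$ Verschiebung $V_{e'}$ permutes blocks, relabelling the block at $v$ to the block at $e'v$, while $V_{p^{v_p(e)}}$ acts within a block as the Witt-vector Verschiebung $W_s(R)\hookrightarrow W_{s+v_p(e)}(R)$ with cokernel $W_{v_p(e)}(R)$; with the index identity $s(p,re,e'v)=s(p,r,v)+v_p(e)$ this identifies the block of $\mathbb{W}_{re}(R)/V_e\mathbb{W}_r(R)$ at $u$ with $W_{s(p,re,u)}(R)$ when $u$ is not an $e'$-multiple (or the corresponding $v$ exceeds $r$) and with $W_{v_p(e)}(R)$ otherwise, matching the two cases above term by term and hence $\tc_{2r-1}\cong\mathbb{W}_{re}(R)/V_e\mathbb{W}_r(R)$; for $r=1$ one can instead check directly that both sides are $(1+xR[x]/x^e)^{\times}\cong\mathbb{W}_{e-1}(R)$. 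The main obstacle is this last combinatorial bookkeeping --- lining up the weight index $u$ from Lemma~\ref{lem: weight decomposition}, the case split on $up^{v_p(e)}$ versus $er$ in the $\tc$-formula, and the block structure of $V_e$ on big Witt vectors --- but it is the same combinatorics carried out over a perfect field in \cite{Speirs_truncated,Sulyma_truncated}, and it goes through verbatim since the only ring-theoretic inputs (the presentation of $p$-typical Witt vectors of a perfectoid ring as $A_{inf}$-quotients, and Verschiebung as multiplication by a distinguished element) are available for every perfectoid $R$.
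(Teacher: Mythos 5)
Your proposal is correct and follows essentially the same route as the paper: DGM reduction to relative $\tc$, the collapse of the filtration from Theorem~\ref{thm: tc filtration simplification segal} using $\fdim(R)=\ndim(R)=0$, identification of each factor with $A_{inf}(R)$ modulo a product of Frobenius twists of $d$ and hence with $p$-typical Witt vectors, and the final reassembly into $\mathbb{W}_{re}(R)/V_e\mathbb{W}_r(R)$ (which the paper delegates to the argument of \cite[Lemma 2]{Speirs_truncated}, noting it works verbatim for any $p$-complete ring). The extra detail you give on the Verschiebung block combinatorics and the $r=1$ case is fine but not needed beyond what the paper records.
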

\begin{proof}
Since $(x)\subseteq R[x]/x^e$ is nilpotent by the Dundas-Goodwillie-McCarthy Theorem \cite[Theorem 7.0.0.1]{Dundas_Goodwillie_McCarthy} to compute the relative $K$-theory it is enough to instead compute the relative topological cyclic homology. For this we then have by Theorem~\ref{thm: tc filtration simplification segal} a complete and exhaustive filtration on $\tc(R[x]/x^e, (x);\ZZ_p)$ with 
\begin{align*}
&\mathrm{gr}^i(\tc(R[x]/x^e,(x);\ZZ_p))[-2i-1]\simeq \\
&\prod_{u\in J_p\setminus e'J_p}
{\operatorname{R\Gamma}_{\mathrm{QSyn}}\left(R, \mathcal{I}^{-\lfloor t/p\rfloor }\otimes \mathcal{N}^{\geq i-t}\mathcal{O}_{\widehat{\DD}}/\mathcal{N}^{\geq i+1-t}\mathcal{O}_{\widehat{\DD}}\otimes \mathcal{I}_{s-1}\{i\}\right)} \\ 
&\times \prod_{u\in e'J_p}\begin{cases}{\operatorname{R\Gamma}_{\mathrm{QSyn}}\left(R, \mathcal{I}^{-\lfloor t/p\rfloor}\otimes \mathcal{N}^{\geq i-t}\mathcal{O}_{\widehat{\DD}}/\mathcal{N}^{\geq i+1-t}\mathcal{O}_{\widehat{\DD}}\otimes I_{s-1}\{i\}\right)} & \textrm{if }up^{v_p(e)}\geq e(i+1)\\
\operatorname{R\Gamma}_{\mathrm{QSyn}}\left(R, \mathcal{I}^{-\lfloor t/p\rfloor}\otimes \mathcal{N}^{\geq i-t}\mathcal{O}_{\widehat{\DD}}/(\phi^{s-1-v_p(e)})^*(I_{v_p(e)})\{i\}\right) & \textrm{otherwise}\end{cases} \end{align*}
which in our case we can compute explicitly. Note first that since $R$ is perfectoid the line bundle $\mathcal{I}$ and the Breuil-Kisin twists are trivial. In particular we can reduce the above to 
\begin{align*}
&\mathrm{gr}^i(\tc(R[x]/x^e,(x);\ZZ_p))[-2i-1]\simeq \\
&\prod_{u\in J_p\setminus e'J_p}
{\mathcal{N}^{\geq i-t}\widehat{\DD}_{R}/(\mathcal{N}^{\geq i+1-t}{\widehat{\DD}_R}\otimes \mathcal{I}_{s-1})} \\ 
&\times \prod_{u\in e'J_p}\begin{cases}{ \mathcal{N}^{\geq i-t}\widehat{\DD}_R/(\mathcal{N}^{\geq i+1-t}{\widehat{\DD}_R}\otimes I_{s-1})} & \textrm{if }up^{v_p(e)}\geq e(i+1)\\
\mathcal{N}^{\geq i-t}{\widehat{\DD}_R}/(\phi^{s-1-v_p(e)})^*(I_{v_p(e)}) & \textrm{otherwise}\end{cases} \end{align*}
and we have that $\mathcal{N}^{\geq i}\widehat{\DD}_R\cong d^iA_{inf}(R)$ and that $I_l\cong \tilde{d}_lA_{inf}(R)=\phi(d)\phi^2(d)\ldots \phi^{l}(d)A_{inf}(R)$. Since $d\in A_{inf}(R)$ is a nonzero divisor we have that 
\begin{align*}
    \mathrm{gr}^i(\tc(R[x]/x^e,(x);\ZZ_p))[-2i-1]\simeq &\prod_{u\in J_p\setminus e'J_p}A_{inf}(R)/d\tilde{d}_{s(u,e(i+1),p)-1}\\ &\times \prod_{u\in e'J_p}\begin{cases} A_{inf}(R)/d\tilde{d}_{s(u,e(i+1),p)-1} & \textrm{if }up^{v_p(e)}\geq e(i+1)\\
    A_{inf}(R)/\phi^{s-1-v_p(e)}(\tilde{d}_{v_p(e)}) &\textrm{otherwise}\end{cases}\\
    &\cong \prod_{u\in J_p\setminus e'J_p}W_{s(u,e(i+1),p)}(R)\\
    &\times \prod_{u\in e'J_p}\begin{cases}W_{s(u,e(i+1),p)}(R) & \textrm{if }up^{v_p(e)}\geq e(i+1)\\
    W_{v_p(e)}(R)&\textrm{otherwise}\end{cases}
\end{align*}
where the second equivalence comes from the fact that the Frobenius is an isomorphism on $A_{inf}(R)$ and $A_{inf}(R)/\tilde{d}_l\cong W_{l}(R)$ by \cite[Lemma 3.12]{BMS1}. Note that this is a $\phi$-linear isomorphism.

Thus the associated graded pieces of $\tc(R[x]/x^e, (x);\ZZ_p)$ are all concentrated in a single degree, so for degree reasons the associated spectral sequence collapses and we have that the even homotopy groups vanish and the odd homotopy groups are given by the product above. Note that the argument given in \cite[Lemma 2]{Speirs_truncated} works verbatim for any $p$-complete ring to identify the above product of $p$-typical witt vectors with the big Witt vector expression given above, and so the result follows.
\end{proof}

The above was already known to experts. Sulyma in private correspondence has informed the author that their methods in \cite{Sulyma_truncated} can be extended to cover the above case as well. The utility of the approach taken in this paper is that in addition to the above we also get two interesting extensions.

\begin{cor}
Let $S$ be a $p$-completely formally smooth curve over a perfectoid ring $R$. Then for $r\geq 2$ there are isomorphisms
\begin{align*}K_{2r-1}(S[x]/x^e,(x);\ZZ_p)\cong &\prod_{u\in J_p\setminus e'J_p}H^0\left(\mathcal{N}^{\geq r-1-t}\DD_S/\tilde{d}_{s}\mathcal{N}^{\geq r-t}\DD_S\right)\\
&\times \prod_{u\in e'J_p}\begin{cases}H^0\left(\mathcal{N}^{\geq r-1-t}\DD_S/\tilde{d}_{s}\mathcal{N}^{\geq r-t}\DD_S\right)&\textrm{if }up^{v_p(e)}\geq er\\
H^0\left(\mathcal{N}^{\geq r-1-t}\DD_S/\phi^{s-v_p(e)}(\tilde{d}_{v_p(e)})\right)&\textrm{otherwise}\end{cases}\end{align*} and \begin{align*}K_{2r-2}(S[x]/x^e,(x);\ZZ_p)\cong &\prod_{u\in J_p\setminus e'J_p}H^1\left(\mathcal{N}^{\geq r-1-t}\DD_S/\tilde{d}_{s}\mathcal{N}^{\geq r-t}\DD_S\right)\\
&\times \prod_{u\in e'J_p}\begin{cases}H^1\left(\mathcal{N}^{\geq r-1-t}\DD_S/\tilde{d}_{s}\mathcal{N}^{\geq r-t}\DD_S\right)&\textrm{if }up^{v_p(e)}\geq er\\
H^1\left(\mathcal{N}^{\geq r-1-t}\DD_S/\phi^{s-v_p(e)}(\tilde{d}_{v_p(e)})\right)&\textrm{otherwise}\end{cases}\end{align*}
\end{cor}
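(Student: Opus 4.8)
The plan is to deduce this from Theorem~\ref{thm: tc filtration simplification segal} together with a degeneration argument. First I would reduce from $K$-theory to topological cyclic homology: since $(x)\subseteq S[x]/x^e$ is a nilpotent ideal, the Dundas--Goodwillie--McCarthy theorem \cite{Dundas_Goodwillie_McCarthy} gives a natural equivalence $K(S[x]/x^e,(x);\ZZ_p)\simeq \tc(S[x]/x^e,(x);\ZZ_p)$, so it suffices to identify the homotopy groups of the relative $\tc$. A $p$-completely formally smooth curve $S$ over a perfectoid ring $R$ is F-smooth with $\fdim(S)\leq 1$ and $\ndim(S)\leq 1$: F-smoothness and the bound on $\fdim$ are the Examples of Subsection~\ref{ssec:f and n dimension} (via the Hodge--Tate comparison), while $\ndim(S)\leq 1$ holds because the prismatic cohomology of a smooth affine relative curve, and hence by F-smoothness and Nygaard-completeness every $\mathcal{N}^{\geq i}\DD_S$, lies in cohomological amplitude $[0,1]$. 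In particular, for $r\geq 2$ the integer $i:=r-1$ satisfies $i\geq 1>\tfrac{p}{p-1}(\fdim(S)-1)$, so Theorem~\ref{thm: tc filtration simplification segal} applies and equips $\tc(S[x]/x^e,(x);\ZZ_p)$ with a complete, exhaustive decreasing filtration whose weight-$(r-1)$ graded piece, after the shift appearing there, is the displayed product of complexes $\operatorname{R\Gamma}_{\mathrm{QSyn}}(S,-)$ (with $s$ and $t$ as in that theorem for $i=r-1$).

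Next I would pin down the cohomological amplitude of these graded pieces. By construction, on a quasiregular semiperfectoid ring the filtration on $\tc(-[x]/x^e,(x);\ZZ_p)$ is the (shifted) Postnikov filtration, so $\mathrm{gr}^i\tc(-[x]/x^e,(x);\ZZ_p)[-2i-1]$ is a \emph{discrete} quasisyntomic sheaf; hence its derived global sections over any $S$ carry no negative cohomology. Writing each such sheaf on $\mathrm{QRSPerfd}$ as a quotient of line-bundle twists of Nygaard pieces $\mathcal{N}^{\geq a}\mathcal{O}_{\widehat{\DD}}$ by an injective map of discrete sheaves, the resulting fiber sequence, together with $\ndim(S)\leq 1$ and its stability under tensoring with $\mathcal{I}$, $\mathcal{I}_m$, $\mathcal{O}_{\widehat{\DD}}\{1\}$ (Lemma~\ref{lem: cohomology bounds stable with I}), forces $\operatorname{R\Gamma}_{\mathrm{QSyn}}(S,-)$ of it into $D^{[0,1]}(\ZZ_p)$. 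Therefore, for $i=r-1\geq 1$, the graded piece $\mathrm{gr}^i\tc(S[x]/x^e,(x);\ZZ_p)$ has homotopy concentrated in the two degrees $2i$ and $2i+1$, with $\pi_{2i+1}$ the product of the $H^0$'s and $\pi_{2i}$ the product of the $H^1$'s in the statement; and a routine connectivity estimate (the relative $\tc$ is $1$-connective by nilinvariance of low-degree $K$-theory, and the pieces $\mathrm{gr}^i\tc$ with $i\leq 0$ contribute only in degrees $\leq 1$) shows that in every degree $\geq 2$ exactly one graded piece is relevant.

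Consequently the filtration spectral sequence degenerates for numerical reasons: consecutive graded pieces occupy the disjoint windows $\{2i,2i+1\}$ and $\{2i+2,2i+3\}$, so there are no differentials, and since each homotopy group in degree $\geq 2$ receives a single graded contribution there are no extension problems. Hence $K_{2r-1}(S[x]/x^e,(x);\ZZ_p)\cong \pi_{2r-1}\mathrm{gr}^{r-1}\tc=\prod_u H^0(S,-)$ and $K_{2r-2}(S[x]/x^e,(x);\ZZ_p)\cong \pi_{2r-2}\mathrm{gr}^{r-1}\tc=\prod_u H^1(S,-)$. It then remains to unwind the notation of Theorem~\ref{thm: tc filtration simplification segal} into the displayed form: since $S$ receives a map from the perfectoid ring $R$, a choice of orientation $d$ of $R$ simultaneously trivializes $\mathcal{O}_{\widehat{\DD}}\{1\}$ and each $\mathcal{I}_m$ and its Frobenius pullbacks over $\DD_S$ (these are base-changed from $A_{inf}(R)$, where they are free of rank one), so the twist $\mathcal{I}^{-\lfloor t/p\rfloor}\{r-1\}$ disappears and $\mathcal{N}^{\geq r-t}\mathcal{O}_{\widehat{\DD}}\otimes\mathcal{I}_{s-1}$ and $(\phi^{s-1-v_p(e)})^*(\mathcal{I}_{v_p(e)})$ become $\tilde{d}_{\bullet}\mathcal{N}^{\geq r-t}\DD_S$ and $\phi^{\bullet}(\tilde{d}_{v_p(e)})\DD_S$, matching the asserted answer up to the harmless reindexing of $s$.

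The main obstacle is the amplitude control in the second step: one must show each $\operatorname{R\Gamma}_{\mathrm{QSyn}}(S,-)$ term lies in $D^{[0,1]}(\ZZ_p)$ and not merely in $D^{[-1,2]}(\ZZ_p)$. This needs both the hypothesis $\ndim(S)\leq 1$ (with its compatibility under line-bundle twists) and the discreteness of the quotient sheaves on quasiregular semiperfectoid rings — the latter ruling out negative cohomology. Without this sharp $[0,1]$ bound the filtration spectral sequence could carry differentials and nontrivial extensions, and the clean two-term description of the $K$-groups would break down.
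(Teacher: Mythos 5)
Your proposal is correct and follows essentially the same route as the paper: reduce to relative $\tc$ via Dundas--Goodwillie--McCarthy, apply Theorem~\ref{thm: tc filtration simplification segal} with $i=r-1$, force the graded pieces into $D^{[0,1]}(\ZZ_p)$ using $\ndim(S)\leq 1$ together with Lemma~\ref{lem: cohomology bounds stable with I} (plus coconnectivity from discreteness on quasiregular semiperfectoids), conclude degeneration for degree reasons, and trivialize the line bundles and Breuil--Kisin twists via the orientation $d$ of $R$. The paper states this argument only in compressed form, so your explicit treatment of the amplitude bound and the collapse is a faithful expansion of its intended proof rather than a different one.
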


\begin{ex}
Let $R$ be a $p$-torsion free perfectoid ring with orientation $d$. Let $S:=R[t]^\wedge_p$. Then from \cite[Lemma 4.6]{Cusps-Paper} we find that $H^0(\mathcal{N}^{\geq i}\widehat{S})\cong d^i A_{inf}(R)$ and \[H^1(\mathcal{N}^{\geq i}\widehat{\DD}_S)\simeq d^i\left(\bigoplus_{j\geq 1}A_{inf}(R)/d\tilde{d}_{v_p(j)}\right)^\wedge_{(p,d)}.\] Consequently we have that \[H^1(\mathcal{N}^{\geq r-1-t}\DD_S/\tilde{d}_s\mathcal{N}^{\geq r-t}\DD_S)\simeq \left(\bigoplus_{j\geq 1}A_{inf}(R)/d\tilde{d}_{v_p(j)}\right)^\wedge_{(p,d)}/d\tilde{d}_s\cong \left(\bigoplus_{j\geq 1}A_{inf}(R)/d\tilde{d}_{\min\{v_p(j),s\}}\right)^\wedge_p\] and  
\begin{align*}H^1(\mathcal{N}^{\geq r-1-t}\DD_S/\phi^{s-v_p(e)}(d_{v_p(e)}))&\cong \left(\bigoplus_{j\geq 1}A_{inf}(R)/d\tilde{d}_{v_p(j)}\right)^\wedge_{(p,d)}/\phi^{s-v_p(e)}(\tilde{d}_{v_p(e)})\\
&\cong \left(\bigoplus_{j\geq 1}A_{inf}(R)/(d\tilde{d}_{v_p(j)}, \phi^{s-v_p(e)}(\tilde{d}_{v_p(e)})\right)^\wedge_p
\end{align*} where we can remove the $d$-adic completion from both since modulo $p$ we have that all the modules appearing in both direct sums are uniformly bounded $d$-torsion. This completely computes the odd $K$-groups and computes the even $K$-groups up to an extension. 

For the even groups note that $S$ is split as a ring by the maps $R\to R[t]^\wedge_p\to R$ so all the above complexes split as groups by functoriality. The inclusions $\mathcal{N}^{\geq i}\DD_R\to \mathcal{N}^{\geq i}\DD_S$ is an equivalence on $H^0$ and so all of these complexes are formal. Thus the short exact sequences we get describing the $H^0$ terms from the previous paragraph are all canonically split. Therefore \[H^0(\mathcal{N}^{\geq r-1-t}\DD_S/\tilde{d}_s\mathcal{N}^{\geq r-t}\DD_S)\cong A_{inf}(R)/d\tilde{d}_s \oplus \ker\left(d\tilde{d}_s:H^1(\DD_S)\to H^1(\DD_S)\right)\] and there is a similar formula for $H^0(\mathcal{N}^{\geq r-1-t}\DD_S/\phi^{s-v_p(e)}(\tilde{d}_{v_p(e)}))$. Note that we have an explicit formula for $H^1(\DD_S)$ but for clarity and brevity we will leave the computation as it is stated above. 
\end{ex}
\begin{rem}
In the case of $R$ perfectoid with $p$-torsion the computation above still works for the odd $K$-groups. The computation of $H^0(\mathcal{N}^{\geq i}\DD_S)$ used in the above no longer works however. The issue is that the $p^\infty$ torsion in $\bigoplus_{\geq j}A_{inf}(R)/d\tilde{d}_{v_p(j)}$ is unbounded and so the derived $(p,d)$-adic completion will contribute to $H^0(\mathcal{N}^{\geq i}\DD_S)$. We still get summands of the form $A_{inf}(R)/d\tilde{d}_s$ showing up in the even $K$-groups but there is an a priori non-split short exact sequence that must be studied to find the other summand. 
\end{rem}
As a final Corollary we also get Theorem~\ref{thm: computation for F-curves}.

\begin{cor}[Theorem~\ref{thm: computation for F-curves}]
Let $S$ be an F-smooth ring with $\ndim(S)\leq 1$ and $r\geq 2$. Then there are isomorphisms
\begin{align*}
&K_{2r-1}(S[x]/x^e, (x);\ZZ_p)\simeq \\
&\prod_{u\in J_p\setminus e'J_p}
{H^0\left(S, \mathcal{I}^{-\lfloor t/p\rfloor}\otimes \mathcal{N}^{\geq r-t-1}\mathcal{O}_{\widehat{\DD}}/(\mathcal{N}^{\geq r-t}\mathcal{O}_{\widehat{\DD}}\otimes \mathcal{I}_{s-1})\{r-1\}\right)} \\ 
&\times \prod_{u\in e'J_p}\begin{cases}{H^0\left(S, \mathcal{I}^{-\lfloor t/p\rfloor}\otimes \mathcal{N}^{\geq r-t-1}\mathcal{O}_{\widehat{\DD}}/(\mathcal{N}^{\geq r-t}\mathcal{O}_{\widehat{\DD}}\otimes \mathcal{I}_{s-1})\{r-1\}\right)} & \textrm{if }up^{v_p(e)}\geq er\\
H^0\left(S, \mathcal{I}^{-\lfloor t/p\rfloor}\otimes \mathcal{N}^{\geq r-t-1}\mathcal{O}_{\widehat{\DD}}/(\phi^{s-1-v_p(e)})^*(\mathcal{I}_{v_p(e)})\{r-1\}\right) & \textrm{otherwise}\end{cases} \end{align*}
and
\begin{align*}
&K_{2r-2}(S[x]/x^e, (x);\ZZ_p)\simeq \\
&\prod_{u\in J_p\setminus e'J_p}
{H^1\left(S,\mathcal{I}^{-\lfloor t/p\rfloor}\otimes \mathcal{N}^{\geq r-t-1}\mathcal{O}_{\widehat{\DD}}/(\mathcal{N}^{\geq r-t}\mathcal{O}_{\widehat{\DD}}\otimes \mathcal{I}_{s-1})\{r-1\}\right)} \\ 
&\times \prod_{u\in e'J_p}\begin{cases}{H^1\left(S, \mathcal{I}^{-\lfloor t/p\rfloor}\otimes\mathcal{N}^{\geq r-t-1}\mathcal{O}_{\widehat{\DD}}/(\mathcal{N}^{\geq r-t}\mathcal{O}_{\widehat{\DD}}\otimes \mathcal{I}_{s-1})\{r-1\}\right)} & \textrm{if }up^{v_p(e)}\geq er\\
H^1\left(S, \mathcal{I}^{-\lfloor t/p\rfloor}\otimes\mathcal{N}^{\geq r-t-1}\mathcal{O}_{\widehat{\DD}}/(\phi^{s-1-v_p(e)})^*(\mathcal{I}_{v_p(e)})\{r-1\}\right) & \textrm{otherwise}\end{cases} \end{align*}
with $s=s(p,re,u)$. When $s=0$ we define the above groups to be zero.
\end{cor}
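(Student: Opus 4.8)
The plan is to deduce the statement from Theorem~\ref{thm: tc filtration simplification segal}, the Dundas--Goodwillie--McCarthy theorem, and the cohomological amplitude bound built into the hypothesis $\ndim(S)\leq 1$. First, since $(x)\subseteq S[x]/x^e$ is a nilpotent ideal, \cite[Theorem 7.0.0.1]{Dundas_Goodwillie_McCarthy} identifies the relative $p$-complete $K$-theory $K(S[x]/x^e,(x);\ZZ_p)$ with the relative topological cyclic homology $\tc(S[x]/x^e,(x);\ZZ_p)$, so it suffices to compute the latter in degrees $2r-1$ and $2r-2$. Because $\ndim(S)\leq 1$ forces $\fdim(S)\leq\ndim(S)\leq 1$, the hypothesis $i>\tfrac{p}{p-1}(\fdim(S)-1)$ of Theorem~\ref{thm: tc filtration simplification segal} is satisfied by every $i\geq 1$; this is exactly why we assume $r\geq 2$, so that $i=r-1\geq 1$ qualifies. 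Applying that theorem, together with the complete and exhaustive filtration on $\tc(-[x]/x^e,(x);\ZZ_p)$ constructed in Section~\ref{sec: bms spectral sequence for truncated polynomial algebras}, gives for each $i\geq 1$ a description of $\mathrm{gr}^i\tc(S[x]/x^e,(x);\ZZ_p)[-2i-1]$ as a product over $u$ of the sheaf cohomologies $\operatorname{R\Gamma}_{\mathrm{QSyn}}(S,-)$ of the indicated line bundles over quotients of Nygaard pieces of $\mathcal{O}_{\widehat{\DD}}$, with $s$ the function of that theorem, which for $i=r-1$ is $s(p,re,u)$, and $t=t(u,p,s-1,e)$.

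Next I would bound cohomological amplitude. Every sheaf occurring in $\mathrm{gr}^i\tc[-2i-1]$ is a line bundle tensored with either $\mathcal{N}^{\geq j}\mathcal{O}_{\widehat{\DD}}\{j'\}$ or the cofiber of an injection of two such sheaves of discrete abelian groups; in particular it is itself a discrete quasisyntomic sheaf, so its $\operatorname{R\Gamma}_{\mathrm{QSyn}}(S,-)$ lies in $D^{\geq 0}(\ZZ_p)$. For the upper bound, $\ndim(S)\leq 1$ gives $\operatorname{R\Gamma}_{\mathrm{QSyn}}(S,\mathcal{N}^{\geq j}\mathcal{O}_{\widehat{\DD}}\{j'\})\in D^{[0,1]}(\ZZ_p)$, and this persists under the line-bundle twists by $\mathcal{I}^{-\lfloor t/p\rfloor}$ by Lemma~\ref{lem: cohomology bounds stable with I}; for the quotient sheaves the defining cofiber sequence has both outer terms in $D^{[0,1]}(\ZZ_p)$, whence the cofiber lies in $D^{[-1,1]}(\ZZ_p)$, and the potential $H^{-1}$ vanishes because the cofiber is discrete. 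Hence each such $\operatorname{R\Gamma}_{\mathrm{QSyn}}(S,-)$ lies in $D^{[0,1]}(\ZZ_p)$, and consequently $\mathrm{gr}^i\tc(S[x]/x^e,(x);\ZZ_p)$ is concentrated in homotopy degrees $2i$ and $2i+1$.

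Finally, I would run the resulting spectral sequence, which degenerates for parity reasons. Since $\mathrm{gr}^i$ is concentrated in degrees $\{2i,2i+1\}$, the only graded piece contributing to $\pi_{2r-1}$ or $\pi_{2r-2}$ is $\mathrm{gr}^{r-1}$; there is no room for differentials into or out of these two spots, and, using completeness and exhaustiveness of the filtration, no extension ambiguity either. Therefore $K_{2r-1}(S[x]/x^e,(x);\ZZ_p)\cong\pi_{2r-1}(\mathrm{gr}^{r-1}\tc)=H^0\big(\mathrm{gr}^{r-1}\tc(S[x]/x^e,(x);\ZZ_p)[-2r+1]\big)$ and $K_{2r-2}(S[x]/x^e,(x);\ZZ_p)\cong\pi_{2r-2}(\mathrm{gr}^{r-1}\tc)=H^1\big(\mathrm{gr}^{r-1}\tc(S[x]/x^e,(x);\ZZ_p)[-2r+1]\big)$. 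Since $H^0$ and $H^1$ commute with the countable products, plugging $i=r-1$ into Theorem~\ref{thm: tc filtration simplification segal} and reading off $H^0$ and $H^1$ yields the two stated product formulas with $s=s(p,re,u)$; when $s=0$ the corresponding factor vanishes by the convention in that theorem. The only genuine input beyond bookkeeping is the amplitude bound of the previous paragraph, i.e.\ confirming that $\ndim(S)\leq 1$ really does confine all the relevant sheaf cohomologies to $D^{[0,1]}(\ZZ_p)$ so that no higher differential or extension can occur; that is where I expect the (mild) main obstacle to lie.
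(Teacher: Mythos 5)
Your proposal is correct and follows essentially the same route as the paper: reduce to relative $\tc$ via Dundas--Goodwillie--McCarthy, apply Theorem~\ref{thm: tc filtration simplification segal} at $i=r-1$ (legitimate since $\fdim(S)\leq\ndim(S)\leq 1$ and $r\geq 2$), and observe that the cohomological amplitude bound coming from $\ndim(S)\leq 1$ together with Lemma~\ref{lem: cohomology bounds stable with I} confines each $\mathrm{gr}^i$ to homotopy degrees $2i$ and $2i+1$, forcing the spectral sequence to degenerate with no extension problems. Your write-up is in fact more explicit than the paper's one-sentence justification, and the details you supply (coconnectivity of the quotient sheaves, the contribution of only $\mathrm{gr}^{r-1}$ to degrees $2r-1$ and $2r-2$) are the intended ones.
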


These Corollaries both follow from the spectral sequence constructed above collapsing for degree reasons. This is because cohomological bounds produced by the definition of $\ndim(S)\leq 1$ are preserved when taking the tensor product by the line bundles appearing in the spectral sequence, which we prove below.

\begin{lem}\label{lem: cohomology bounds stable with I}
Let $S$ be a quasisyntomic ring. Then $\operatorname{R\Gamma}_{\mathrm{QSyn}}(S;\mathcal{I}^{k}\otimes \mathcal{N}^{\geq i}\mathcal{O}_{\widehat{\DD}}\{j\})\in D^{[0,\ndim(S)]}(\ZZ_p)$ for all $i,j,k\in \ZZ$.
\end{lem}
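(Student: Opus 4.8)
The lemma is vacuous unless $\ndim(S)=:j_0<\infty$, so we may assume $S$ is F-smooth with $\mathcal{N}^{\geq m}\widehat{\DD}_S\in D^{[0,j_0]}(\ZZ_p)$ for all $m\in\ZZ$. First I would reduce to the basis of quasiregular semiperfectoid rings: by Theorem~\ref{thm: qrsp are basis} the value $\operatorname{R\Gamma}_{\mathrm{QSyn}}(S;\mathcal{F})$ is computed on $\mathrm{QRSPerfd}_S$, e.g.\ as the totalization $\lim_{\Delta}\mathcal{F}(S^{\bullet})$ of the cosimplicial object attached to a quasisyntomic cover $S\to S^{0}$ by a quasiregular semiperfectoid ring. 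For any such test object $T$ (chosen with a perfectoid ring $R\to T$ and an orientation $d$ of $R$) the sheaf $\mathcal{O}_{\widehat{\DD}}$ takes the discrete value $\widehat{\DD}_T$; each of $\mathcal{I}$, its Frobenius pullbacks $(\phi^{a})^{*}\mathcal{I}$, and $\mathcal{O}_{\widehat{\DD}}\{1\}$ takes a value which is a free rank-one $\widehat{\DD}_T$-module; and $\mathcal{N}^{\geq i}\mathcal{O}_{\widehat{\DD}}(T)$ is a $\widehat{\DD}_T$-submodule of $\widehat{\DD}_T$ on which $\tilde{d}$ acts as a non-zero-divisor (the corollary after Lemma~\ref{lem: finite levels for qrsp}). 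Hence $\mathcal{I}^{k}\otimes\mathcal{N}^{\geq i}\mathcal{O}_{\widehat{\DD}}\{j\}$ is discrete on $\mathrm{QRSPerfd}_S$, so its global sections are automatically connective and the content of the lemma is the bound by $j_0$ from above.

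For that bound the key point is that the twisting sheaves $\mathcal{I}$, $(\phi^{a})^{*}\mathcal{I}$ and $\mathcal{O}_{\widehat{\DD}}\{1\}$ become \emph{canonically} trivial after base change to an oriented perfectoid ring: over $A_{inf}(R)$ with orientation $d$ one has canonical trivializations $A_{inf}(R)\{1\}\cong A_{inf}(R)$ and $\mathcal{I}(R)=(\tilde{d})\cong A_{inf}(R)$, compatibly with $p$-completely flat base change of oriented perfectoid rings. So I would compute $\operatorname{R\Gamma}_{\mathrm{QSyn}}(S;-)$ along a hypercover $S^{\bullet}$ each of whose terms admits a map from a perfectoid ring, equipped with a compatible system of orientations; this identifies $\bigl(\mathcal{I}^{k}\otimes\mathcal{O}_{\widehat{\DD}}\{j\}\bigr)(S^{\bullet})$ with $\mathcal{O}_{\widehat{\DD}}(S^{\bullet})=\widehat{\DD}_{S^{\bullet}}$ as cosimplicial modules, hence identifies $\bigl(\mathcal{I}^{k}\otimes\mathcal{N}^{\geq i}\mathcal{O}_{\widehat{\DD}}\{j\}\bigr)(S^{\bullet})$ with $\bigl(\mathcal{N}^{\geq i}\mathcal{O}_{\widehat{\DD}}\bigr)(S^{\bullet})$ after tensoring. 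Passing to totalizations gives $\operatorname{R\Gamma}_{\mathrm{QSyn}}(S;\mathcal{I}^{k}\otimes\mathcal{N}^{\geq i}\mathcal{O}_{\widehat{\DD}}\{j\})\simeq\operatorname{R\Gamma}_{\mathrm{QSyn}}(S;\mathcal{N}^{\geq i}\mathcal{O}_{\widehat{\DD}})\in D^{[0,j_0]}(\ZZ_p)$, as claimed.

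The step I expect to be the main obstacle is arranging the compatible orientations honestly along the cosimplicial diagram --- a single map $R_{0}\to S^{0}$ pulled through one tensor factor of $S^{n}=(S^{0})^{\widehat{\otimes}_{S}(n+1)}$ is not cosimplicially natural --- so one must build the hypercover carefully, in practice by passing to a prismatic-type resolution in which perfect prisms occur and the Breuil--Kisin twist and the ideal $\mathcal{I}$ extend with their canonical trivializations. Should that bookkeeping become unwieldy, the fallback is to avoid global trivializations and strip the twists one at a time: the sequence $0\to\mathcal{I}\to\mathcal{O}_{\widehat{\DD}}\to\mathcal{O}_{\widehat{\DD}}/\mathcal{I}\to 0$ on $\mathrm{QRSPerfd}$ stays exact after $\otimes_{\mathcal{O}_{\widehat{\DD}}}\mathcal{M}$ whenever $\mathcal{M}$ is a twist of some $\mathcal{N}^{\geq i}\mathcal{O}_{\widehat{\DD}}$ (such $\mathcal{M}$ has no $\tilde{d}$-torsion), so, using the connectivity from the first paragraph and completeness of the Nygaard filtration for the Breuil--Kisin twist, one obtains fiber sequences of $\mathrm{QSyn}$-sheaves reducing the desired bound to cohomological bounds for modules over the Hodge--Tate structure sheaf $\mathcal{O}_{\widehat{\DD}}/\mathcal{I}$ twisted by powers of the conormal $\mathcal{I}/\mathcal{I}^{2}$; since Bhatt--Scholze identify the conormal with a Breuil--Kisin twist, these are again controlled by the same input. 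In every version the sole non-formal ingredient is the hypothesis $\operatorname{R\Gamma}_{\mathrm{QSyn}}(S;\mathcal{N}^{\geq i}\mathcal{O}_{\widehat{\DD}})\in D^{[0,j_0]}(\ZZ_p)$ for all $i\in\ZZ$.
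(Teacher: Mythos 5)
Your first route cannot work as written, and you correctly flag the reason yourself: on $\mathrm{QSyn}_S$ for a general quasisyntomic $S$ the sheaves $\mathcal{I}$ and $\mathcal{O}_{\widehat{\DD}}\{1\}$ are genuinely nontrivial line bundles --- there is no compatible system of orientations along the terms of a quasisyntomic hypercover, and for rings such as $S=\ZZ_p$ no global trivialization exists at all --- so the identification $\operatorname{R\Gamma}_{\mathrm{QSyn}}(S;\mathcal{I}^{k}\otimes\mathcal{N}^{\geq i}\mathcal{O}_{\widehat{\DD}}\{j\})\simeq\operatorname{R\Gamma}_{\mathrm{QSyn}}(S;\mathcal{N}^{\geq i}\mathcal{O}_{\widehat{\DD}})$ that this route would produce is false in general; the lemma only asserts equality of cohomological amplitude. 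The paper's actual proof is a devissage, but run along the Nygaard filtration rather than the $\mathcal{I}$-adic one: filter $\mathcal{I}^{k}\otimes\mathcal{N}^{\geq i}\mathcal{O}_{\widehat{\DD}}\{j\}$ by $\mathcal{I}^{k}\otimes\mathcal{N}^{\geq m}\mathcal{O}_{\widehat{\DD}}\{j\}$ and observe that each associated graded piece $\mathcal{N}^{m}\mathcal{O}_{\widehat{\DD}}\{j\}$ is a module over $\mathcal{O}_{\widehat{\DD}}/\mathcal{N}^{\geq 1}$, on which $\mathcal{I}$ is \emph{canonically} trivialized: by Lemma~\ref{lem: Segal conjecture with line bundles} its generator $\tilde d$ becomes $p$ times a unit modulo an orientation, and $p$ is a globally defined element, so the trivialization is independent of all choices. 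Hence tensoring by $\mathcal{I}^{k}$ is an isomorphism on Nygaard graded pieces, the hypothesis $\ndim(S)\leq j_0$ bounds those pieces directly, and completeness of the filtration reassembles the bound.

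Your fallback is the right genre of argument but, as sketched, does not close. Stripping one copy of $\mathcal{I}$ at a time via $0\to\mathcal{I}\otimes\mathcal{M}\to\mathcal{M}\to\mathcal{M}/\mathcal{I}\mathcal{M}\to 0$ leaves you with Hodge--Tate--type quotients twisted by powers of $\mathcal{I}/\mathcal{I}^{2}$, and these are \emph{not} controlled by the stated input: the definition of $\ndim(S)$ bounds the Nygaard filtration steps, and converting that into a bound on Hodge--Tate pieces requires the divided Frobenii $\phi_i$, which are equivalences only in the range governed by $\fdim(S)$ (Lemma~\ref{lem: Tsaladis's algebraic theorem}). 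Moreover, even granting bounds on $\mathcal{M}$ and $\mathcal{M}/\mathcal{I}\mathcal{M}$ in $D^{[0,j_0]}(\ZZ_p)$, the fiber $\mathcal{I}\otimes\mathcal{M}$ a priori only lies in $D^{[0,j_0+1]}(\ZZ_p)$ unless you separately prove surjectivity on $H^{j_0}$, and for negative $k$ the argument must run in the opposite direction. The Nygaard-graded devissage avoids both issues because the trivialization of $\mathcal{I}$ there is an isomorphism of sheaves rather than a two-term resolution; I would rebuild the proof around Lemma~\ref{lem: Segal conjecture with line bundles}.
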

\begin{proof}
On the associated graded terms of the Nygaard filtration the line bundle $\mathcal{I}$ becomes trivial, in fact in Lemma~\ref{lem: Segal conjecture with line bundles} we showed that it is equivalent to the line bundle $(p)$. By devissage the result follows.
\end{proof}

\section{\texorpdfstring{Comments on the quasisyntomic filtration on $\mathrm{TR}$}{Comments on the quasisyntomic filtration on topological restriction homology}}\label{sec: tr stuff}
This Section is in some sense independent from the previous two Sections. While there are certainly connections via \cite[Theorem A]{McCandless_curves} which we believe should be explored, we have found that the methods developed in this paper more readily apply to a different formulation of topological restriction homology. One interesting question is if one can prove \cite[Theorem A]{McCandless_curves} purely algebraically by combining the previous two sections with what we do in this section.

In order to study the quasisyntomic filtration on topological restriction homology it will first be helpful to have a purely Borel equivariant description of it. We recall the result of McCandless which lets us do this.

\begin{thm}[Remark 2.4.5, \cite{McCandless_curves}]
Let $X$ be a bounded below $p$-cyclotomic spectrum. Then there is a functorial in $X$ equivalence 
\[\tr(X)\simeq \operatorname{Eq}\left(\begin{tikzcd}
\prod\limits_{n\geq 0}X^{hC_{p^n}} \arrow[r, shift left] \arrow[r, shift right] & \prod\limits_{n\geq 0}X^{tC_{p^n}}
\end{tikzcd}\right)\] where the top map is the product over the canonical maps and the bottom map is the Frobenius map $X^{hC_{p^n}}\xrightarrow{\phi_X^{hC_{p^n}}}(X^{tC_p})^{hC_{p^n}}\simeq X^{tC_{p^{n+1}}}$.
\end{thm}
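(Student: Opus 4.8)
The plan is to derive the equalizer formula from the classical presentation of $\tr$ as an inverse limit of genuine fixed point spectra along the cyclotomic restriction maps, using that for a bounded below $p$-cyclotomic spectrum these genuine fixed points are functorially reconstructed from the naive cyclotomic datum $\phi_p\colon X\to X^{tC_p}$. Concretely, I would start from the identification $\tr(X)\simeq \lim_n X^{C_{p^n}}$, the limit running over the restriction maps $R\colon X^{C_{p^n}}\to X^{C_{p^{n-1}}}$, and reduce the theorem to an analysis of this tower and of the maps appearing in it.

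The essential input is the pullback square
\[
\begin{tikzcd}
X^{C_{p^n}} \arrow[r] \arrow[d, "R"] & X^{hC_{p^n}} \arrow[d, "\mathrm{can}"] \\
X^{C_{p^{n-1}}} \arrow[r, "\psi_{n-1}"] & X^{tC_{p^n}}
\end{tikzcd}
\]
valid for every $n\geq 1$, arising from the isotropy separation square for the normal subgroup $C_p\subseteq C_{p^n}$ together with the cyclotomic identification $\Phi^{C_p}X\simeq X$ of residual $\mathbb{T}/C_p$-spectra and the consequence of the Tate orbit lemma of \cite{NS} that, for bounded below $X$, the natural map $X^{tC_{p^n}}\to (X^{tC_p})^{hC_{p^{n-1}}}$ is an equivalence. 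Under these identifications the bottom arrow is the composite $\psi_{n-1}\colon X^{C_{p^{n-1}}}\to X^{hC_{p^{n-1}}}\xrightarrow{\phi_p^{hC_{p^{n-1}}}}(X^{tC_p})^{hC_{p^{n-1}}}\simeq X^{tC_{p^n}}$, while $R$ is the leg onto the geometric fixed points, i.e.\ the cyclotomic restriction. I expect the main obstacle to be checking that, after the Nikolaus--Scholze reconstruction of $X^{C_{p^n}}$, all the structure maps in this square are literally the expected ones rather than twisted versions; this is precisely where the full strength of the reconstruction (or of McCandless's own development of $\tr$ for bounded below cyclotomic spectra in \cite{McCandless_curves}) is used. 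Everything after this point is formal.

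I would then unfold these squares by induction on $n$. Splicing them yields, for each $n$, an equivalence
\[
X^{C_{p^n}}\simeq \operatorname{Eq}\left(\begin{tikzcd}
\prod\limits_{i=0}^{n} X^{hC_{p^i}} \arrow[r, shift left] \arrow[r, shift right] & \prod\limits_{i=1}^{n} X^{tC_{p^i}}
\end{tikzcd}\right),
\]
where the upper map is the product of the canonical maps $X^{hC_{p^i}}\to X^{tC_{p^i}}$ and the lower map carries the $i$-th source factor by $\phi_p^{hC_{p^i}}$ into the $(i+1)$-st target factor; the base case is $X^{C_1}=X=X^{hC_1}$. From this presentation one reads off that the restriction $R\colon X^{C_{p^n}}\to X^{C_{p^{n-1}}}$ is exactly the projection discarding the top factors $X^{hC_{p^n}}$ and $X^{tC_{p^n}}$, so that $(X^{C_{p^n}})_n$ is the tower of truncated equalizers along these projections.

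Finally, since an equalizer is a finite limit it commutes with $\lim_n$, and $\lim_n \prod_{i=0}^{n}(-)=\prod_{i\geq 0}(-)$ along the projections, whence
\[
\tr(X)\simeq \lim_n X^{C_{p^n}}\simeq \operatorname{Eq}\left(\begin{tikzcd}
\prod\limits_{i\geq 0} X^{hC_{p^i}} \arrow[r, shift left] \arrow[r, shift right] & \prod\limits_{i\geq 0} X^{tC_{p^i}}
\end{tikzcd}\right),
\]
with the two parallel maps as in the statement; the $i=0$ target factor is $X^{tC_1}\simeq 0$, which accounts for the apparent asymmetry visible at finite level. Functoriality in $X$ is inherited from every step. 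An alternative route I would keep in reserve is to identify $\tr$ with a corepresentable functor on $p$-cyclotomic spectra, dual to the Nikolaus--Scholze description $\tc=\operatorname{map}_{\mathrm{CycSp}}(\mathbb{S},-)$, and to compute the corepresenting object's mapping spectrum directly.
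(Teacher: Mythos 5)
Your proposal is correct and is essentially the argument behind the cited result: the paper itself imports this statement from McCandless without proof, and the proof there is exactly your unwinding of the Nikolaus--Scholze iterated pullback formula $X^{C_{p^n}}\simeq X^{hC_{p^n}}\times_{X^{tC_{p^n}}}X^{hC_{p^{n-1}}}\times_{X^{tC_{p^{n-1}}}}\cdots\times_{X^{tC_p}}X$ (valid for bounded below $X$ by the Tate orbit lemma), followed by passing to the limit over the restriction maps, which are the projections off the top factors. The only point needing care, which you correctly flag, is that the genuine restriction map really is this projection under the reconstruction equivalence; granting that, the rest is formal.
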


Recall that for us $\tr(-)$ always means the $p$-typical topological restriction homology. If one wants to work with integral $\tr(-;\ZZ_p)$, since everything we consider is $p$-complete, we note that there is a product decomposition $\tr(-;\ZZ_p)\simeq \prod_{J_p}\tr(-)$ where $J_p:=\NN\setminus p\NN$.

In particular on $\mathrm{QRSPerfd}$ we may define the filtration \[\mathrm{Fil}^{\geq *}\tr(-):=\operatorname{Eq}\left(\begin{tikzcd}
\prod\limits_{n\geq 0}\tau_{\geq 2*}\thh(-;\ZZ_p)^{hC_{p^n}} \arrow[r, shift left] \arrow[r, shift right] & \prod\limits_{n\geq 0}\tau_{\geq 2*}\thh(-;\ZZ_p)^{tC_{p^n}}
\end{tikzcd}\right)\] which since each term in the filtration is the limit of quasisyntomic sheaves the whole filtration will be by quasisyntomic sheaves. By Theorem~\ref{thm: qrsp are basis} we then get a filtration of $\tr(-):\mathrm{QSyn}\to \Sp$ by quasisyntomic sheaves. We can identify the associated graded sheaves.

\begin{cor}\label{cor: quasisyntomic associated graded for tr}
For all $i\in \ZZ$ there are equivalences \[\mathrm{gr}^i\tr(-)\simeq \operatorname{Eq}\left(\begin{tikzcd}
\prod\limits_{n\geq 0}(\mathcal{N}^{\geq i}\mathcal{O}_{\widehat{\DD}}/\mathcal{N}^{\geq i+1}\mathcal{O}_{\widehat{\DD}}\otimes \mathcal{I}_{n})\{i\} \arrow[r, shift left] \arrow[r, shift right] & \prod\limits_{n\geq 0}\mathcal{O}_{\widehat{\DD}}\{i\}/\mathcal{I}_{n}
\end{tikzcd}\right)[2i]\]
where the top map is the map induced by the inclusions $\mathcal{N}^{\geq i}\mathcal{O}_{\widehat{\DD}}, \mathcal{N}^{\geq i+1}\mathcal{O}_{\widehat{\DD}}\hookrightarrow \mathcal{O}_{\widehat{\DD}}$ and the bottom map is the map induced by $\phi_i$ on each of the factors. 
\end{cor}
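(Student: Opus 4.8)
The plan is to reduce everything to $\mathrm{QRSPerfd}$ via unfolding (Theorem~\ref{thm: qrsp are basis}) and then observe that, on quasiregular semiperfectoid rings, the filtration $\mathrm{Fil}^{\geq *}\tr(-)$ is by construction the equalizer of the double-speed Postnikov truncations of the canonical and Frobenius maps, and that all of $\thh(-;\ZZ_p)^{hC_{p^n}}$ and $\thh(-;\ZZ_p)^{tC_{p^n}}$ have homotopy concentrated in even degrees on $\mathrm{QRSPerfd}$ (by the Corollary following Lemma~\ref{lem: finite levels for qrsp}). Given this, passing to the associated graded is purely formal.

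First I would rewrite the equalizer as the fiber of $\mathrm{can}-\phi$, so that on $\mathrm{QRSPerfd}$
\[\mathrm{Fil}^{\geq i}\tr(-)\simeq \mathrm{fib}\big(\textstyle\prod_n \tau_{\geq 2i}\thh(-;\ZZ_p)^{hC_{p^n}} \to \prod_n \tau_{\geq 2i}\thh(-;\ZZ_p)^{tC_{p^n}}\big),\]
noting that both structure maps are induced by functorial maps of spectra and hence commute with the truncation functors $\tau_{\geq 2i}$, so this really is a decreasing filtration. Then, using that fiber and cofiber sequences commute in spectra (the $3\times 3$ lemma),
\[\mathrm{gr}^i\tr(-)\simeq \mathrm{fib}\Big(\mathrm{cofib}\big(\textstyle\prod_n\tau_{\geq 2i+2}\thh(-;\ZZ_p)^{hC_{p^n}}\to\prod_n\tau_{\geq 2i}\thh(-;\ZZ_p)^{hC_{p^n}}\big)\to \mathrm{cofib}\big(\textstyle\prod_n\tau_{\geq 2i+2}\thh(-;\ZZ_p)^{tC_{p^n}}\to\prod_n\tau_{\geq 2i}\thh(-;\ZZ_p)^{tC_{p^n}}\big)\Big).\]
Since products in spectra preserve cofiber sequences (they preserve the underlying fiber sequences), each cofiber is the product of the termwise cofibers, and since each $\thh(-;\ZZ_p)^{hC_{p^n}}$ and $\thh(-;\ZZ_p)^{tC_{p^n}}$ has vanishing odd homotopy on $\mathrm{QRSPerfd}$, each $\mathrm{cofib}(\tau_{\geq 2i+2}Y\to\tau_{\geq 2i}Y)$ is just $\pi_{2i}(Y)[2i]$. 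Plugging in the computations of these homotopy groups from the Corollary after Lemma~\ref{lem: finite levels for qrsp} (equivalently Lemma~\ref{lem: Associated graded terms at finite levels}) produces the two products in the statement, up to identifying the two maps.

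Identifying the maps is the part with real content. The canonical map on $\mathrm{gr}^i$ was already pinned down inside the proof of Lemma~\ref{lem: Associated graded terms at finite levels}: it is induced by the inclusions $\mathcal{N}^{\geq i}\mathcal{O}_{\widehat{\DD}},\ \mathcal{N}^{\geq i+1}\mathcal{O}_{\widehat{\DD}}\hookrightarrow \mathcal{O}_{\widehat{\DD}}$. For the Frobenius I would invoke the observation recorded just before Theorem~\ref{thm: tc filtration simplification segal}: on $\mathrm{QRSPerfd}$ the map $\phi^{hC_{p^n}}_X\colon \thh(-;\ZZ_p)^{hC_{p^n}}\to \thh(-;\ZZ_p)^{tC_{p^{n+1}}}$ is obtained from $\phi_p^{h\TT}\colon \tc^{-}(-;\ZZ_p)\to\tp(-;\ZZ_p)$ by the (filtered) quotient maps, so on associated graded it is $\phi_i$ on each factor, and the index shift $n\mapsto n+1$ is precisely what sends $\mathcal{I}_n$ to $\mathcal{I}_{n+1}$ on the target, matching the $\mathrm{gr}^i\thh(-;\ZZ_p)^{tC_{p^{n+1}}}$-description. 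Unfolding (Theorem~\ref{thm: qrsp are basis}) then carries the whole identification from $\mathrm{QRSPerfd}$ to all of $\mathrm{QSyn}$.

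The main obstacle I expect is bookkeeping rather than anything deep: making sure $\mathrm{gr}^i$ commutes simultaneously with the infinite product and the equalizer (handled by the fiber/cofiber interchange together with the even concentration of homotopy on $\mathrm{QRSPerfd}$), and tracking the Breuil--Kisin twists and the shift $\mathcal{I}_n\to\mathcal{I}_{n+1}$ through the Frobenius carefully enough that the stated description of the bottom map is literally correct.
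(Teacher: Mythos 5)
Your proposal is correct and takes essentially the same route as the paper: both reduce to $\mathrm{QRSPerfd}$ via Theorem~\ref{thm: qrsp are basis}, use that passing to associated graded commutes with the equalizer (which you justify more explicitly via the fiber of $can-\phi$ and the fiber/cofiber interchange), and then identify the graded factors using Lemma~\ref{lem: Associated graded terms at finite levels}. Your extra care in pinning down the two maps on associated graded is a welcome elaboration of what the paper leaves implicit, but it is not a different argument.
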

\begin{proof}
    There is, by definition, an identification of $\mathrm{Fil}^{\geq *}\tr(-)$ with the equalizer \[\operatorname{Eq}\left(\begin{tikzcd}
\prod\limits_{n\geq 0}\tau_{\geq 2*}\thh(-;\ZZ_p)^{hC_{p^n}} \arrow[r, shift left] \arrow[r, shift right] & \prod\limits_{n\geq 0}\tau_{\geq 2*}\thh(-;\ZZ_p)^{tC_{p^n}}
\end{tikzcd}\right)\] for quasiregular semiperfecoid inputs, so it follows that \[\mathrm{Fil}^{\geq *}\tr(-)\simeq \operatorname{Eq}\left(\begin{tikzcd}\prod\limits_{n\geq 0}\mathrm{Fil}^{\geq *}\thh(-;\ZZ_p)^{hC_{p^n}} \arrow[r, shift left] \arrow[r, shift right] & \prod\limits_{n\geq 0}\mathrm{Fil}^{\geq}\thh(-;\ZZ_p)^{tC_{p^n}}\end{tikzcd}\right)\] by Theorem~\ref{thm: qrsp are basis}. Therefore since taking associated graded terms commutes with limits there are equivalences
\begin{align*}
    \mathrm{gr}^i\tr(-) &\simeq \operatorname{Eq}\left(\begin{tikzcd}[ampersand replacement=\&]\prod\limits_{n\geq 0}\mathrm{gr}^i\thh(-;\ZZ_p)^{hC_{p^n}}\arrow[r, shift left]\arrow[r, shift right] \& \prod\limits_{n\geq 0}\mathrm{gr}^i\thh(-;\ZZ_p)^{tC_{p^n}}\end{tikzcd}\right)\\
    &\simeq \operatorname{Eq}\left(\begin{tikzcd}[ampersand replacement=\&]
\prod\limits_{n\geq 0}(\mathcal{N}^{\geq i}\mathcal{O}_{\widehat{\DD}}/\mathcal{N}^{\geq i+1}\mathcal{O}_{\widehat{\DD}}\otimes \mathcal{I}_{n})\{i\} \arrow[r, shift left] \arrow[r, shift right] \& \prod\limits_{n\geq 0}\mathcal{O}_{\widehat{\DD}}\{i\}/\mathcal{I}_{n}
\end{tikzcd}\right)[2i]
\end{align*}
where the second equivalence comes from Lemma~\ref{lem: Associated graded terms at finite levels}.
\end{proof}

Notice that the top map in the above Corollary preserves the product index and the bottom map  multiplies the product index by $p$. Consequently whenever the top map is an equivalence the sheaves $\mathrm{gr}^i\tr(-)$ will vanish since we can chase any error terms introduced by the bottom map to infinity. In particular $\mathrm{gr}^i\tr(-)\simeq 0$ whenever $i<0$. In addition we note that Lemma~\ref{lem: Tsaladis's algebraic theorem} and Lemma~\ref{lem: Segal conjecture with line bundles} apply to the bottom map in the equalizer diagram above as well. 

\begin{con}
Let $S$ be an F-smooth ring with $\fdim(S)\leq k$. Then we may define maps \[\theta^{\widehat{\DD}_S}_{n+1,n}\{i\}:\widehat{\DD}_S/I_{n+1}\{i\}\to \widehat{\DD}_S/I_{n}\{i\}\] for all $i\geq k$ as the composition \[\widehat{\DD}_S/I_{n+1}\{i\}\xrightarrow{\operatorname{R\Gamma}_{\mathrm{QSyn}}(S;\phi_i)^{-1}}\operatorname{R\Gamma}_{\mathrm{QSyn}}(S;\mathcal{N}^{\geq i}\mathcal{O}_{\widehat{\DD}}/\mathcal{N}^{\geq i+1}\mathcal{O}_{\widehat{\DD}}\otimes \mathcal{I}_{n}\{i\})\xrightarrow{can}\widehat{\DD}_S/I_{n}\{i\}\] which is well defined by our assumption on the F-dimension of $S$ and Lemma~\ref{lem: Tsaladis's algebraic theorem} and Lemma~\ref{lem: Segal conjecture with line bundles}. 
\end{con}

\begin{rem}
To justify our notation, take $R$ to be a perfectoid ring and consider the maps $\theta^{\widehat{\DD}_R}_{n+1, n}:W_{n+1}(R)\to W_n(R)$. These are exactly given by the restriction maps by the first diagram in \cite[Lemma 3.4]{BMS1}. In particular by induction the map $A_{inf}(R)\to W_n(R)$ obtained by taking the limit of these maps is exactly the map $\theta_n$ from $p$-adic Hodge theory. The higher Breuil-Kisin twisted maps are given by this map multiplied by a corresponding Frobenius twisted product of the orientation.
\end{rem}

\begin{defn}
Let $S$ be an F-smooth ring with $\fdim(S)\leq j$. For all $i\geq j$ define \[\theta^{\widehat{\DD}_S}_{\infty}(i):=\operatorname{\lim}_{\theta^{\widehat{\DD}_S}_{n+1,n}\{i\}}\widehat{\DD}_S/I_n\{i\}\] as an object of $D(\ZZ_p)$. We will use the shorthand notation $\theta^{\widehat{\DD}_S}_\infty := \theta^{\widehat{\DD}_S}_\infty(0)$.
\end{defn}

\begin{ex}
Let $R$ be a perfectoid ring with orientation $d$. We will use the shorthand notation  $d_n$ for the product $d\phi^{-1}(d)\ldots \phi^{-n+1}(d)$. Then there are isormorphisms \[\theta^{\widehat{\DD}_R}_{\infty}(i)\cong \operatorname{\lim}\left(\ldots\to A_{inf}(R)/\tilde{d}_{n+1}\xrightarrow{\phi^{-1}(-)\times d^i}A_{inf}(R)/\tilde{d}_{n}\to\ldots\right)\] so for example when $R$ is a perfect $\FF_p$-algebra we have that $\theta^{\widehat{\DD}_R}_\infty = A_{inf}(R)$ and $\theta^{\widehat{\DD}_R}_\infty(i)=0$ for all $i>0$. In general note that there is an isomorphism of diagrams \[\begin{tikzcd}
\ldots \arrow[r] & A_{inf}(R)/\tilde{d}_{n+1} \arrow[r, "\phi^{-1}(-)\times d^i"] \arrow[d, "\phi^{-n-1}"] & A_{inf}(R)/\tilde{d}_n \arrow[r] \arrow[d, "\phi^{-n}"] & \ldots \\
\ldots \arrow[r] & A_{inf}(R)/d_{n+1} \arrow[r, "-\times \phi^{-n}(d)^i"]                & A_{inf}(R)/\tilde{d}_n \arrow[r]             & \ldots
\end{tikzcd}\] so we will compute the limit of the bottom inverse system. This inverse system is the diagonal of mod $(d_\bullet)$ system as columns and multiplication by $\phi^{-\bullet}(d)^i$ rows. Since the limit of an $\NN\times \NN$-index system agrees with the limit along the diagonal we may compute the $\NN\times \NN$ limit instead. This then agrees with the limit along the first factor followed by the limit along the second factor.

Note first that $\operatorname{R\lim} A_{inf}(R)/d_{\bullet}\cong A_{inf}(R)$. This is because the limit of $A_{inf}(R)/d_{\bullet}$ is the same as the limit of $W_{\bullet}(R)$ along the restriction maps and the transition maps are surjective so there is no $\lim^1$ term. Consequently we have that \[\theta^{\widehat{\DD}_R}_\infty (i)\simeq \operatorname{\lim}\left(\ldots \to A_{inf}(R)\xrightarrow{-\times \phi^{-\bullet}(d)^i}A_{inf}(R)\to\ldots\right)\] as complexes.

To see what the limit is consider the exact sequence of inverse systems
\[
\begin{tikzcd}
\vdots \arrow[d] & \vdots \arrow[d] &\vdots \arrow[d]\\
A_{inf}(R) \arrow[r, "\times d_3^i"] \arrow[d, "\times \phi^{-2}(d)^i"] & A_{inf}(R) \arrow[d, "id"] \arrow[r] & A_{inf}(R)/(d_3)^i\arrow[d]\\
A_{inf}(R) \arrow[r, "\times d_2^i"] \arrow[d, "\times \phi^{-1}(d)^i"] & A_{inf}(R) \arrow[d, "id"] \arrow[r] & A_{inf}(R)/(d_2)^i\arrow[d]\\
A_{inf}(R) \arrow[r, "\times d_1^i"] & A_{inf}(R)  \arrow[r] & A_{inf}(R)/(d)^i
\end{tikzcd}
\]
The middle system has no $\lim^1$ term and so we have an identification \[\theta^{\widehat{\DD}_R}_{\infty}(i)\simeq \operatorname{fib}(A_{inf}(R)\to \lim A_{inf}(R)/(d_{\bullet})^i)\] as complexes. When $i=1$ this map is $\theta_{\infty}:A_{inf}(R)\to W(R)$.
\end{ex}

This complex, in addition to having the above connection to $p$-adic Hodge theory, can be used to compute topological restriction homology.

\begin{thm}
Let $S$ be a F-smooth ring with $\fdim(S)\leq j$. Then for all $i\geq j$ there is an equivalence \[\mathrm{gr}^i\tr(S)\simeq \theta^{\widehat{\DD}_S}_{\infty}(i)[2i]\] of complexes.
\end{thm}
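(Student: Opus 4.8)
The plan is to start from the explicit description of $\mathrm{gr}^i\tr(-)$ as a quasisyntomic sheaf given in Corollary~\ref{cor: quasisyntomic associated graded for tr}, namely as the equalizer of the canonical map and the Frobenius map between $\prod_{n\geq 0}(\mathcal{N}^{\geq i}\mathcal{O}_{\widehat{\DD}}/\mathcal{N}^{\geq i+1}\mathcal{O}_{\widehat{\DD}}\otimes \mathcal{I}_n)\{i\}$ and $\prod_{n\geq 0}\mathcal{O}_{\widehat{\DD}}\{i\}/\mathcal{I}_n$, shifted by $[2i]$. Evaluating derived global sections $\operatorname{R\Gamma}_{\mathrm{QSyn}}(S;-)$ commutes with the limit defining the equalizer, so $\mathrm{gr}^i\tr(S)[-2i]$ is the equalizer in $D(\ZZ_p)$ of the two induced maps between $\prod_{n\geq 0}\operatorname{R\Gamma}_{\mathrm{QSyn}}(S;\mathcal{N}^{\geq i}\mathcal{O}_{\widehat{\DD}}/\mathcal{N}^{\geq i+1}\mathcal{O}_{\widehat{\DD}}\otimes \mathcal{I}_n\{i\})$ and $\prod_{n\geq 0}\widehat{\DD}_S/I_n\{i\}$. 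An equalizer of two maps $f,g\colon A\to B$ is the fiber of $f-g$; here I will instead keep it as a genuine equalizer and exploit that the two legs interact nicely with the product indexing.

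The key input is that, because $S$ is F-smooth with $\fdim(S)\leq j$ and $i\geq j$, Lemma~\ref{lem: Tsaladis's algebraic theorem} and Lemma~\ref{lem: Segal conjecture with line bundles} tell us that the Frobenius leg $\operatorname{R\Gamma}_{\mathrm{QSyn}}(S;\phi_i)\colon \operatorname{R\Gamma}_{\mathrm{QSyn}}(S;\mathcal{N}^{\geq i}\mathcal{O}_{\widehat{\DD}}/\mathcal{N}^{\geq i+1}\mathcal{O}_{\widehat{\DD}}\otimes \mathcal{I}_n\{i\})\to \widehat{\DD}_S/I_{n+1}\{i\}$ is an equivalence for every $n\geq 0$ (the target index is $n+1$ because $\phi_i$ pulls $\mathcal{I}_n$ back to $\mathcal{I}_{n+1}$). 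Using this, I can rewrite the source of the equalizer so that the Frobenius leg becomes the identity-like shift $\prod_{n\geq 0}\widehat{\DD}_S/I_{n+1}\{i\}\hookrightarrow \prod_{n\geq 0}\widehat{\DD}_S/I_n\{i\}$, and the canonical leg becomes, on each factor, the composite $\widehat{\DD}_S/I_{n+1}\{i\}\xrightarrow{\operatorname{R\Gamma}(S;\phi_i)^{-1}}\operatorname{R\Gamma}(S;\cdots)\xrightarrow{can}\widehat{\DD}_S/I_n\{i\}$ — which is exactly the map $\theta^{\widehat{\DD}_S}_{n+1,n}\{i\}$ introduced in the Construction preceding the theorem. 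Thus the equalizer becomes the equalizer of $\prod_n\widehat{\DD}_S/I_{n+1}\{i\}\rightrightarrows\prod_n\widehat{\DD}_S/I_n\{i\}$ where one map is the ``forget the $n=0$ slot'' shift and the other records $(\theta^{\widehat{\DD}_S}_{n+1,n}\{i\})_n$. This is precisely the standard equalizer presentation of the sequential limit $\lim_{\theta^{\widehat{\DD}_S}_{n+1,n}\{i\}}\widehat{\DD}_S/I_n\{i\} = \theta^{\widehat{\DD}_S}_\infty(i)$, so after the $[2i]$ shift we get $\mathrm{gr}^i\tr(S)\simeq \theta^{\widehat{\DD}_S}_\infty(i)[2i]$.

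Concretely I would proceed in this order: first, record that $\operatorname{R\Gamma}_{\mathrm{QSyn}}(S;-)$ is exact and commutes with the (finite) limit defining the equalizer and with the products, so evaluating Corollary~\ref{cor: quasisyntomic associated graded for tr} on $S$ yields the equalizer of the two described maps in $D(\ZZ_p)$; second, invoke Lemma~\ref{lem: Tsaladis's algebraic theorem} and Lemma~\ref{lem: Segal conjecture with line bundles} with the hypothesis $\fdim(S)\leq j\leq i$ to see that each Frobenius leg $\operatorname{R\Gamma}(S;\phi_i)$ on the $n$-th factor is an equivalence onto $\widehat{\DD}_S/I_{n+1}\{i\}$ (including $n=0$, where it lands in $\widehat{\DD}_S/I_1\{i\}$, using that $\mathcal{I}_0=\mathcal{O}_{\widehat{\DD}}$); third, use these equivalences to replace the source $\prod_{n\geq 0}\operatorname{R\Gamma}(S;\cdots)$ by $\prod_{n\geq 0}\widehat{\DD}_S/I_{n+1}\{i\}$, under which the Frobenius leg becomes the natural inclusion/shift map into $\prod_{n\geq 0}\widehat{\DD}_S/I_n\{i\}$ and the canonical leg becomes $\prod_n\theta^{\widehat{\DD}_S}_{n+1,n}\{i\}$ by the very definition of $\theta^{\widehat{\DD}_S}_{n+1,n}\{i\}$ in the Construction; fourth, identify this equalizer with $\operatorname{\lim}_{\theta^{\widehat{\DD}_S}_{n+1,n}\{i\}}\widehat{\DD}_S/I_n\{i\}=\theta^{\widehat{\DD}_S}_\infty(i)$ via the usual fact that a sequential limit along maps $f_n\colon X_{n+1}\to X_n$ is computed as the equalizer of $\mathrm{shift}, (f_n)\colon \prod_n X_{n+1}\to\prod_n X_n$; finally, reinstate the $[2i]$ shift. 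I expect the main obstacle to be purely bookkeeping: carefully matching the product indices on the two sides (the Frobenius shifts $n\mapsto n+1$ on the Breuil-Kisin twist / $\mathcal{I}_n$ level, so one must check that after the substitution the two legs really are the shift and the $\theta_{n+1,n}$-transition maps, with no off-by-one error at $n=0$), and confirming that $\operatorname{R\Gamma}(S;\phi_i)^{-1}\circ can$ is literally the map $\theta^{\widehat{\DD}_S}_{n+1,n}\{i\}$ as defined, rather than only agreeing with it up to the identifications already in play; both are routine but need to be stated cleanly.
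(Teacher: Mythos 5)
Your proposal is correct and follows essentially the same route as the paper: evaluate the equalizer presentation of $\mathrm{gr}^i\tr(-)$ from Corollary~\ref{cor: quasisyntomic associated graded for tr} at $S$, use Lemma~\ref{lem: Tsaladis's algebraic theorem} and Lemma~\ref{lem: Segal conjecture with line bundles} to invert the Frobenius leg on each factor, and recognize the resulting equalizer of the shift and the $\theta^{\widehat{\DD}_S}_{n+1,n}\{i\}$ maps as the derived sequential limit defining $\theta^{\widehat{\DD}_S}_\infty(i)$. The indexing care you flag (the $n\mapsto n+1$ shift and the vanishing $n=0$ target factor) is exactly the point the paper's proof also has to handle.
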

\begin{proof}
Upon evaluating at $S$ we may rewrite the equalizer diagram in Corollary~\ref{cor: quasisyntomic associated graded for tr} as
\[\mathrm{gr}^i\tr(S)\simeq \operatorname{Eq}\left(\begin{tikzcd}
\prod\limits_{n\geq 0}(\mathcal{N}^{\geq i}{\widehat{\DD}_S}/\mathcal{N}^{\geq i+1}{\widehat{\DD}_S}\otimes I_{n})\{i\} \arrow[r, shift left] \arrow[r, shift right] & \prod\limits_{n\geq 1}{\widehat{\DD}_S}\{i\}/I_{n}
\end{tikzcd}\right)[2i]\] and we know that on each factor the bottom map is an equivalence. Thus up to equivalence of diagrams we may replace this equalizer diagram with \[\operatorname{Eq}\left(\begin{tikzcd}
\prod\limits_{n\geq 0}({\widehat{\DD}_S}/I_{n})\{i\} \arrow[r, shift left] \arrow[r, shift right] & \prod\limits_{n\geq 1}{\widehat{\DD}_S}\{i\}/I_{n}
\end{tikzcd}\right)[2i]\] where the bottom map is the identity and the top map is the product of the maps $\theta_{n+1,n}^{\widehat{\DD}_S}\{i\}$ and is the zero map on the last factor. In other words we have an inverse system $\ldots X_{n+1}\xrightarrow{f_{n+1}} X_{n}\to\ldots$ and we are taking the fiber of the map $\prod_{n\in \NN}X_n\to \prod_{n\in \NN}X_n$ given by $1-\prod_{n\in \NN}f_n$ which is exactly the derived limit of the system. 
\end{proof}

\begin{proof}[Proof of Theorem~\ref{thm: main calculation for tr}]
We have already shown the first part of this Theorem, so we will now prove parts 2 and 3. It will be helpful to have the cohomological bounds on the complexes $\theta^{\widehat{\DD}_S}_\infty(i)$ in the proof of part 2, so we will prove part 3 first. To see the bounds on the cohomology, first notice that by Lemma~\ref{lem: cohomology bounds stable with I} we have that $I_n\{i\},\widehat{\DD}_S\{i\}\in D^{[0,\ndim(S)]}(\ZZ_p)$ for all $n\in \NN$ and $i\in \ZZ$. Consequently by the long exact sequence the cofbers $\widehat{\DD}_S/I_n\{i\}\in D^{[0,\ndim(S)]}(\ZZ_p)$ for all $n\in \NN$ and $i\in \ZZ$. From the Milnor short exact sequence it then follows that $\theta^{\widehat{\DD}_S}_\infty(i)\in D^{[0,\ndim(S)+1]}(\ZZ_p)$ whenever it is defined. 

To see part 2,  note that the spectral sequence associated to $\mathrm{Fil}^{\geq *}\tr(S)$ collapses when $\ndim(S)\leq 1$. This is because each $\theta^{\widehat{\DD}_S}_\infty (i)$ is in $D^{[0,2]}(\ZZ_p)$ for $i\geq 1$ and $\mathrm{gr}^0\tr(S)\in D^{\geq 0}(\ZZ_p)$ since quasisyntomic locally is it discrete. Thus for degree reasons there can be no differentials. The statement for the odd homotopy groups follows, and there are short exact sequences \[0\to H^2(\theta^{\widehat{\DD}_S}_\infty(i+1))\to \tr_{2i}(S;\ZZ_p)\to H^0(\theta^{\widehat{\DD}_S}_\infty(i))\to 0\] for all $i\geq 1$.
\end{proof}

We close this Section with a note for when $S$ is quasiregular semiperfectoid. Note that in this case $\mathrm{gr}^i\tr(S)$ is given by the equalizer of discrete groups. In particular by Theorem~\ref{thm: qrsp are basis} we have that \[\mathrm{gr}^i\tr(-)\simeq \tau^{\mathrm{QSyn}}_{[2i-1,2i]}\tr(-)\] since these agree on quasiregular semiperfectoid rings. Thus $\tr(-)$ is quasisyntomic locally even if and only if $\mathrm{gr}^i\tr(-)$ is quasisyntomic locally discrete. 

\section{Proof of Theorem~\ref{thm: AGH theorem}}\label{sec: agh proof}
This Section is dedicated to a proof of Theorem~\ref{thm: AGH theorem} which we record in this Section as Theorem~\ref{thm: AGH theorem redux}. We will first make a change of notation which we wish to emphasize: in the rest of the paper, we used $e$ for the degree of nilpotence we were considering. This was done to match notation with previous work on the subject. In this situation we will be working with complete discrete valuation rings where it is customary to denote the ramification degree by $e$. Thus for this Section:\[n=\textrm{ the exponent of } x \textrm{ we are modding out by}\] and \[e=\textrm{ the ramification of }A.\] 

To prove Theorem~\ref{thm: AGH theorem} for a given ring $A$ satisfying the conditions of the Theorem we will need some control on $\mathcal{N}^i{\widehat{\DD}}_{A}$. In the cases we consider we can in fact get a precise computation of these groups by importing the computations of $\thh(A;\ZZ_p)$ appearing either as \cite[Theorem 5.1]{Lindenstrauss_Madsen} in the case when $A$ has finite residue field or \cite[Theorem 4.4]{Krause_Nikolaus} when the residue field of $A$ is perfect. 

In the proof, we do not need to know any of the groups $\mathcal{N}^{i}\widehat{\DD}_A$ apart from $\mathcal{N}^0\widehat{\DD}_A=A$. What we need are the following: 
\begin{enumerate} 
    \item each $\mathcal{N}^i\DD_A$ is concentrated in homological degree $-1$ for $i\geq 1$, and 
    \item $|H^1(\mathcal{N}^i\DD_A)|=|A/E'(\pi)|\cdot|A/i|$ where $\pi$ is the uniformizer of $A$ and $E(x)$ is the Eisenstein polynomial of $\pi$. 
\end{enumerate} 

We can in fact prove these facts without knowing what the topological Hochschild homology of $A$ is, and we can get these results purely algebraically. The following Lemma was worked out with Ayelet Lindenstrauss.

\begin{lem} \label{lem: thh(A) purely algebraically}

Let $A$ be a CDVR of mixed characteristic $(0,p)$ and with perfect residue field $k$. Let $\pi$ be the uniformizer of $A$ and let $E(x)$ be its Eisenstein polynomial. Then  
\begin{enumerate} 
    \item each $\mathcal{N}^i\DD_A$ is concentrated in homological degree $-1$ for $i\geq 1$, and 
    \item $|H^1(\mathcal{N}^i\DD_A)|=|A/E'(\pi)|\cdot|A/i|$ where $\pi$ is the uniformizer of $A$ and $E(x)$ is the Eisenstein polynomial of $\pi$. 
\end{enumerate} 

\end{lem}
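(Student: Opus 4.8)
The plan is to compute $\mathcal{N}^{\geq i}\DD_A$ directly from the prismatic cohomology of $A$, using the fact that $A$ is a quasisyntomic (in fact F-smooth) ring which is moreover an honest complete DVR, so that $\DD_A$ is a genuine prism over the perfectoid base $A_{\mathrm{inf}}(\widehat{A^{\mathrm{perf}}})$ or, more usefully, over the prism attached to the maximal unramified subring $W(k)$. Concretely, I would write $A = W(k)[x]/E(x)$ with $E$ the Eisenstein polynomial of $\pi$, take the prism $(W(k)[[z]], (E(z)))$ mapping to $(A, (0))$ by $z \mapsto \pi$, and recognize this as a quasiregular-semiperfectoid-adjacent presentation: after base change to a perfectoid cover the prismatic cohomology is computed by a Koszul-type complex. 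The cleanest route is to use the $q$-de Rham / Nygaard description: $\DD_A$ is the derived $(p, E)$-completion of a two-term complex, and since $A$ is a DVR the cotangent complex $L_{A/W(k)}$ is the module $A/E'(\pi)$ placed in degree $0$ (because $\Omega_{A/W(k)} = A\,dx/(E'(\pi)dx) \cong A/E'(\pi)$ and $A/W(k)$ is a local complete intersection, so the cotangent complex is concentrated in degree $0$). This is the algebraic input that replaces citing \cite{Lindenstrauss_Madsen} or \cite{Krause_Nikolaus}.

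First I would establish (1): that $\mathcal{N}^{\geq i}\DD_A$ lies in homological degree $-1$ for $i \geq 1$, equivalently $\mathcal{N}^i\DD_A := \mathcal{N}^{\geq i}\DD_A / \mathcal{N}^{\geq i+1}\DD_A$ sits in degree $-1$, equivalently $\overline{\DD}_A\{i\}$ sits in degree $-1$ for $i \geq 1$ and $H^0$ vanishes there. By the Hodge--Tate comparison, $\overline{\DD}_A$ has a finite filtration with graded pieces $\wedge^j L_{A/\mathbb{Z}_p}[-j]$ (Breuil--Kisin twisted); since $A$ is relatively a complete intersection of relative dimension $0$ over the perfectoid base, $L_{A/A_{\mathrm{inf}}}$ (suitably interpreted) is a shifted module, so $\overline{\DD}_A\{i\}$ has cohomology only in the expected range, and one checks $H^0$ vanishes for $i \geq 1$ using that the map $\mathcal{N}^i\DD_A \to \overline{\DD}_A\{i\}$ is part of the F-smooth structure with fiber in degrees $\geq i+2$ and the F-dimension of $A$ is $\leq 1$ (the example in Subsection~\ref{ssec:f and n dimension}). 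Actually the cleanest statement is: $\ndim(A) \leq 1$, so $\mathcal{N}^{\geq i}\DD_A \in D^{[0,1]}(\mathbb{Z}_p)$, and then I must show $H^0 = 0$ for $i\geq 1$; this follows because $H^0(\mathcal{N}^{\geq i}\DD_A) = H^0(\mathcal{N}^{\geq i}\widehat{\DD}_A)$ injects into $H^0(\DD_A) = A$ compatibly with the Nygaard filtration, and on a DVR the Nygaard filtration on $A = H^0(\DD_A)$ in positive weights is zero (the Nygaard filtration is the $p$-adic/$E$-adic filtration pulled back, and $H^0(\DD_A) = A$ carries the trivial filtration in weights $\geq 1$ since $\DD_A$ is $E$-torsion-free and $\mathcal{N}^{\geq 1}\DD_A \to \DD_A$ has image in $\phi^{-1}(E)\DD_A$, whose intersection with $H^0$ over the DVR quotient dies).

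Next I would establish (2): the cardinality count $|H^1(\mathcal{N}^i\DD_A)| = |A/E'(\pi)|\cdot|A/i|$. The idea is that $H^1(\mathcal{N}^i\DD_A) = H^1$ of the graded piece, which by devissage on the Nygaard filtration and the Hodge--Tate comparison is an extension built from $H^1(\overline{\DD}_A\{j\})$ for appropriate $j$, and the latter is governed by $\Omega_{A/W(k)} \cong A/E'(\pi)$ together with a $p$-adic (or $i$-adic) contribution coming from the divided-power / Frobenius structure — precisely the shape already visible in the perfectoid example $\theta^{\widehat{\DD}_R}_\infty$ in the excerpt, where $A_{\mathrm{inf}}/\tilde d_n \cong W_n$ contributes the "$A/i$" factor. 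Quantitatively I would argue: $H^1(\mathcal{N}^i\DD_A)$ fits in an exact sequence relating it to $H^1(\DD_A/\phi^{\text{-twisted stuff}})$ and to $A/E'(\pi)$; using that $\DD_A$ is $E$-torsion-free with $\DD_A/E \cong \overline{\DD}_A$ and $\overline{\DD}_A$ has $H^1 = \Omega^1_{A}\{-1\}$ of order $|A/E'(\pi)|$ plus the length contributed by the difference between the Nygaard filtration level $i$ and the Hodge filtration, which is exactly $\mathrm{length}_A(A/i) = v_p(i)\cdot[k:\mathbb{F}_p]$ worth, giving $|A/i|$. The multiplicativity of lengths in short exact sequences then yields the product formula. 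The main obstacle will be pinning down this second length contribution cleanly — i.e., identifying exactly which subquotient of the Nygaard filtration contributes the $|A/i|$ factor and showing no further torsion appears (no "cross terms"); I expect to handle this by reducing modulo a perfectoid cover $A \to R_\infty$ where $\DD$ becomes explicit ($A_{\mathrm{inf}}$-modules, Frobenius an isomorphism), running the computation there as in the perfectoid example of the excerpt, and then descending, using flatness of the cover and the fact that all the modules in sight are finite-length $A$-modules so that faithfully flat descent of lengths applies. A secondary subtlety is the case $i$ coprime to $p$, where $|A/i| = 1$ and the formula must reduce to $|A/E'(\pi)|$; this is a consistency check that the Hodge--Tate $H^1$ is computed correctly with its Breuil--Kisin twist.
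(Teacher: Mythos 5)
Your overall skeleton---present $A=W(k)[x]/E(x)$ as an lci over $W(k)$, compute $\Omega_{A/W(k)}\cong A/E'(\pi)$, and run a devissage through the Hodge--Tate/conjugate filtration---matches the first half of the paper's argument (the paper likewise computes $\mathbb{L}\Omega^i_{A/\ZZ_p}\simeq A/E'(\pi)[i-1]$ from the presentation $A\cong W(k)[x]\otimes_{W(k)[y]}W(k)$). But there is a genuine gap at exactly the point you flag yourself: you never identify the mechanism producing the factor $|A/i|$, and the substitutes you propose would not produce it. The paper's source for this factor is the Bhatt--Lurie Sen-operator description of \emph{absolute} prismatic cohomology: there are fiber sequences $\mathcal{N}^i\DD_A\to\mathrm{Fil}^{conj}_{i}\widehat{\Omega}^{\slashed{D}}_A\xrightarrow{\Theta-i}\mathrm{Fil}^{conj}_{i-1}\widehat{\Omega}^{\slashed{D}}_A$, where $\Theta$ acts by $j$ on $\mathrm{gr}^{conj}_j\simeq\mathbb{L}\Omega^j_A[-j]$. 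On $H^0(\mathrm{Fil}^{conj}_i)=A$ the map $\Theta-i$ is multiplication by $-i$: its kernel is $0$, which is what gives part (1), and its cokernel is $A/i$, which is the $|A/i|$ in part (2); the factor $|A/E'(\pi)|$ then comes from $|H^1(\mathrm{Fil}^{conj}_i)|/|H^1(\mathrm{Fil}^{conj}_{i-1})|=|A/E'(\pi)|$. Without this fiber sequence (or the alternative of importing the $\thh$ computations of Lindenstrauss--Madsen or Krause--Nikolaus) the count cannot be completed.

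Concretely, your two fallbacks fail as follows. First, showing $H^0(\mathcal{N}^{\geq i}\DD_A)=0$ for $i\geq 1$ does not give (1): the statement concerns the graded pieces $\mathcal{N}^i=\mathcal{N}^{\geq i}/\mathcal{N}^{\geq i+1}$, and the long exact sequence permits $H^0(\mathcal{N}^i\DD_A)$ to be a nonzero subgroup of $H^1(\mathcal{N}^{\geq i+1}\DD_A)$ even when all $H^0(\mathcal{N}^{\geq i}\DD_A)$ vanish; moreover the assertion $H^0(\DD_A)=A$ is unjustified for absolute prismatic cohomology. Second, "faithfully flat descent of lengths" along a perfectoid cover $A\to R_\infty$ cannot detect $|A/i|$: on the cover everything is torsion-free and the Nygaard filtration is $d$-adic, with no weight-dependent torsion, so the factor $A/i$ arises only in the totalization of the \v{C}ech complex---which is precisely what the Sen operator packages. (Your heuristic that it is "the shape visible in $A_{inf}(R)/\tilde{d}_n\cong W_n(R)$" points at the wrong phenomenon; those quotients govern the $\tr$ computation, not the Sen weight.) A smaller but real bookkeeping error: for $i\geq 2$ one has $\mathbb{L}\wedge^iL_{A/W(k)}\simeq A/E'(\pi)$ placed in homological degree $i-1$, not $\wedge^i(A/E'(\pi))=0$, so the conjugate-graded pieces contribute to $H^1$ in every positive weight; your devissage must account for this, as the paper does when it shows $|H^1(\mathrm{Fil}^{conj}_i\widehat{\Omega}^{\slashed{D}}_A)|=|A/E'(\pi)|^i$.
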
 

\begin{proof} 

We will first show that $\ndim(A)\leq 1$. Taking $A$ as a $\ZZ[x]$-algebra with $x\mapsto \pi$ there is a fiber sequence \[\left(\bigoplus_{i\geq 0} \mathcal{N}^i\DD_A\right)^\wedge_p/\pi\to \left(\bigoplus_{i\geq 0}\mathcal{N}^i\DD_{k}\right)\to \left(\bigoplus_{i\geq 0}\mathcal{N}^i\DD_{k}\right)\] by \cite[Proposition 3.13]{fsmooth_paper}. Since $k$ is perfect both the middle and right terms in this fiber sequence are discrete, so the left-hand side has homotopy concentrated in degrees $[-1,0]$. Since the left-hand side is $p$-complete and $\pi^e$ is a unit times $p$ it follows that $\left(\bigoplus_{i\geq 0} \mathcal{N}^i\DD_A\right)^\wedge_p$ has homotopy concentrated in degrees $[-1,1]$. Consequently, each of the summands $\mathcal{N}^i\DD_A$ have homotopy concentrated in degrees $[-1,1]$, and the Nygaard associated graded terms are always coconnective so $\ndim(A)\leq 1$. 

To show the desired properties for $\mathcal{N}^i\DD_A$ we must first understand the complexes $\mathbb{L}\Omega^i_A$. Define $C(-):=\bigoplus_{i\in \ZZ}\mathbb{L}\Omega^i_{-/\ZZ_p}[-i]$. Recall from \cite[Appendix B]{Bhatt_Lurie} that this construction is in fact an $\mathbb{E}_\infty$-algebra in $\operatorname{D}(\ZZ_p)$, and as a functor $C:\mathrm{CAlg}^{an}\to \mathrm{CAlg}(\operatorname{D}(\ZZ_p))$ commutes with all small colimits. In particular the identification $A\cong W(k)[x]\otimes_{W(k)[y]}W(k)$, where $y\mapsto E(x)$ in $W(k)[x]$ and $y\mapsto 0$ in $W(k)$, gives an equivalence \[C(A)\simeq C(W(k)[x])\otimes_{C(W(k)[y])}W(k)\] as $\mathbb{E}_\infty$-algebras. Since $W(k)[x]$ is ind-smooth over $\ZZ_p$ we have that \[C(W(k)[x])\simeq \bigoplus_{i\geq 0}\Omega^i_{W(k)[x]/\ZZ_p}[-i]\cong W(k)[x,dx]\] where $|dx|=-1$ and $(dx)^2=0$. The map $W(k)[y,dy]\to W(k)[x,dx]$ is given by $y\mapsto E(x)$ and $dy\mapsto E'(x)dx$. Consequently \[C(A)\simeq W(k)[x,dx]\otimes_{W(k)[y,dy]}W(k)\simeq A[dx]\otimes_{W(k)[dy]}W(k)\simeq A\oplus (\bigoplus_{i\geq 1}A/E'(\pi))[-1]\] and since all the maps in question are graded this identifies \[\mathbb{L}\Omega^i_{A/\ZZ_p}\simeq A/E'(\pi)[i-1]\] for all $i\geq 1$.  

Let $\widehat{\Omega}^{\slashed{D}}_A$ be the diffracted Hodge cohomology of \cite[Notation 4.7.12]{Bhatt_Lurie}, and let $\mathrm{Fil}^{cong}_j\widehat{\Omega}^{\slashed{D}}_A$ be the conjugate filtration as constructed in \cite[Construction 4.7.1]{Bhatt_Lurie}. Then $\mathrm{gr}_j^{conj}\widehat{\Omega}^{\slashed{D}}$ is given by $\mathbb{L}\Omega^j_{A}[-j]$ and so $\widehat{\Omega}^{\slashed{D}}_A$ is concentrated in homological degrees $[-1,0]$ with $H^0(\widehat{\Omega}^{\slashed{D}}_A)\cong A$. There are also fiber sequences \[\mathcal{N}^i\DD_A\to \mathrm{Fil}_{i}^{conj}\widehat{\Omega}^{\slashed{D}}_A\xrightarrow{\Theta-i}\mathrm{Fil}^{conj}_{i-1}\widehat{\Omega}^{\slashed{D}}_A\] by \cite[Remark 5.5.8]{Bhatt_Lurie}. Here $\Theta$ is the Sen operator which on $\mathrm{gr}^{conj}_j\widehat{\Omega}^{\slashed{D}}_A$ acts by multiplication by $j$. Thus there are exact sequences \[\begin{tikzcd} 
0 \arrow[r] & H^0(\mathcal{N}^i\DD_A) \arrow[r] & A \arrow[r, "\times i"]                                              & A \arrow[lld, out=-30]  &   \\ 
            & H^1(\mathcal{N}^i\DD_A) \arrow[r] & H^1(\mathrm{Fil}_i^{conj}\widehat{\Omega}^{\slashed{D}}_A) \arrow[r] & H^1(\mathrm{Fil}_{i-1}^{conj}\widehat{\Omega}^{\slashed{D}}_A) \arrow[r] & 0 
\end{tikzcd}\] for all $i\geq 0$ where the last map is surjective since $H^2(\mathcal{N}^i\DD_A)=0$ since $\ndim(A)\leq 1$. Thus for $i\geq 1$,  $H^0(\mathcal{N}^i\DD_A)=0$ and there is an exact sequence \[0\to A/i\to H^1(\mathcal{N}^i\DD_A)\to H^1(\mathrm{Fil}_{i}^{conj}\widehat{\Omega}^{\slashed{D}}_A) \to H^1(\mathrm{Fil}_{i-1}^{conj}\widehat{\Omega}^{\slashed{D}}_A)\to 0.\] Since $\mathrm{gr}^{conj}_j(\widehat{\DD}^{\slashed{D}}_A)\simeq A/E'(\pi)[-1]$ for all $j\geq 1$ it follows that $|H^1(\mathrm{Fil}_{i}^{conj}\widehat{\Omega}^{\slashed{D}}_A)|=|A/E'(\pi)|^{i}$ and so the exact sequences above gives that $|H^1(\mathcal{N}^i\DD_A)|=|A/i|\cdot|A/E'(\pi)|$ as desired.  

\end{proof}

\begin{lem}\label{lem: only junk in odd degrees} 

Let $A$ be a CDVR of mixed characteristic $(0,p)$ and perfect residue field. Let $\pi$ be a uniformizer of $A$ with Eisenstein polynomial $E(x)$. Finally, let $i, s\in \NN$ and $u\in J_p$ be such that $i-t(u,p,s-1,n)>0$. Then \[\operatorname{R\Gamma}_{\mathrm{qSyn}}(A; \mathcal{I}^{-\lfloor t/p\rfloor}\otimes(\mathcal{N}^{\geq i-t}\mathcal{O}_{\widehat{\DD}}/\mathcal{N}^{\geq i+1-t}\mathcal{O}\otimes \mathcal{I}_{s-1})\{i\})\] is $p$-power torsion concentrated in homological degree $-1$. If in addition $|A/\pi|<\infty$ then the order of this group is given inductively by the formula \begin{align*} 
v_p(|H^1(A; \mathcal{I}^{-\lfloor t/p\rfloor}\otimes(\mathcal{N}^{\geq i-t}\mathcal{O}_{\widehat{\DD}}&/\mathcal{N}^{\geq i+1-t}\mathcal{O}\otimes \mathcal{I}_{s-1})\{i\})|)\\ &=v_p(|A/\pi|)(v_\pi(E'(\pi))+e(s-1)+ev_p(i-t))+\\ 
&v_p(|H^1(A; \mathcal{I}^{-\lfloor t'/p\rfloor}\otimes(\mathcal{N}^{\geq i-t'}\mathcal{O}_{\widehat{\DD}}/\mathcal{N}^{\geq i+1-t'}\mathcal{O}\otimes \mathcal{I}_{s-2})\{i\})|) 
\end{align*} 
with $t'=t(u,p,s-2,n)<t$ as in \cite[Theorem B(ii)]{AGH}. 

\end{lem}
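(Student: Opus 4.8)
The plan is to reduce the computation of $\operatorname{R\Gamma}_{\mathrm{qSyn}}(A;\mathcal{I}^{-\lfloor t/p\rfloor}\otimes(\mathcal{N}^{\geq i-t}\mathcal{O}_{\widehat{\DD}}/\mathcal{N}^{\geq i+1-t}\mathcal{O}_{\widehat{\DD}}\otimes\mathcal{I}_{s-1})\{i\})$ to the cohomology of the Nygaard pieces $\mathcal{N}^{\geq j}\mathcal{O}_{\widehat{\DD}}$ worked out in Lemma~\ref{lem: thh(A) purely algebraically}, by peeling off one Frobenius twist of $\mathcal{I}$ at a time. First I would observe that since $A$ is a CDVR of mixed characteristic with $\ndim(A)\leq 1$ (established in Lemma~\ref{lem: thh(A) purely algebraically}) and $i-t>0$, Lemma~\ref{lem: cohomology bounds stable with I} gives that all the relevant global sections live in $D^{[0,1]}(\ZZ_p)$; combined with $H^0(\mathcal{N}^{\geq j}\DD_A)=0$ for $j\geq 1$ and the devissage of Lemma~\ref{lem: cohomology bounds stable with I} (the line bundle $\mathcal{I}$ becomes $(p)$ on the Nygaard graded pieces), the whole complex is concentrated in degree $-1$ and is $p$-power torsion, which gives the qualitative statement.

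For the order, I would set up a short exact sequence of sheaves realizing the $\mathcal{I}_{s-1}$ twist as an extension of $\mathcal{I}_{s-2}$ by a Frobenius-pulled-back copy of $\mathcal{I}_1$. Concretely, from $\mathcal{I}_{s-1}=\mathcal{I}_{s-2}\otimes(\phi^{s-2})^*(\mathcal{I})$ one gets a filtration whose graded piece is $(\phi^{s-2})^*(\overline{\mathcal{O}}_{\widehat{\DD}})$-linear, and by Lemma~\ref{lem: Segal conjecture with line bundles} the relevant Nygaard-to-Hodge-Tate map identifies $\mathcal{N}^{\geq i-t}\mathcal{O}_{\widehat{\DD}}\otimes(\phi^{s-2})^*(\mathcal{I})/(\text{next Nygaard step})$ with a twist of $\overline{\mathcal{O}}_{\widehat{\DD}}$. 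Taking $H^1$ of the resulting long exact sequence and using that each term is concentrated in a single degree so the sequence is short exact, the cardinality multiplies: the ``new'' factor is $|H^1(A;\mathcal{N}^{\geq i-t}\overline{\mathcal{O}}_{\widehat{\DD}}\text{-type term}\{\cdot\})|$, which by Lemma~\ref{lem: thh(A) purely algebraically}(2) (and tracking Breuil–Kisin twists, which over a CDVR contribute a factor of $|A/\pi^e|=|A/p|$ per twist) equals $|A/\pi|^{v_\pi(E'(\pi))+e(s-1)+ev_p(i-t)}$. The exponent $v_\pi(E'(\pi))$ comes from the $H^1(\mathcal{N}^{\geq \bullet}\DD_A)$ formula $|A/E'(\pi)|\cdot|A/\bullet|$, the $e(s-1)$ from the $s-1$ Breuil–Kisin twists packaged in $\mathcal{I}_{s-1}$, and the $ev_p(i-t)$ from the $|A/(i-t)|$ contribution with $v_p$ of the Nygaard index translated through $v_\pi(p)=e$; one must also check the $\mathcal{I}^{-\lfloor t/p\rfloor}$ prefactor is an honest line bundle (isomorphic to a power of $(p)$ after the reductions already used) and so contributes nothing to the cardinality.

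The remaining index-bookkeeping step is to check that $t'=t(u,p,s-2,n)<t=t(u,p,s-1,n)$ and that $i-t'>0$ so the inductive hypothesis applies to the smaller term; this is immediate from the definition $t(u,p,m,n)=\lfloor(up^m-1)/n\rfloor$ being nondecreasing in $m$, together with $i-t>0$. The main obstacle I expect is the precise identification of the Breuil–Kisin twist contributions: one needs to be careful that over $A$ the twist $\mathcal{O}_{\widehat{\DD}}\{1\}$ reduces modulo the Hodge–Tate ideal to a free $\overline{\DD}_A$-module (so no extra cardinality from $\{i\}$ itself) while the \emph{difference} between $\mathcal{I}_{s-1}$ and its reduction genuinely contributes a factor of $|A/p|^{s-1}=|A/\pi|^{e(s-1)}$, and similarly to correctly convert the $p$-adic valuation appearing in $|A/(i-t)|$ into $ev_p(i-t)$ copies of $|A/\pi|$ via $v_\pi(p)=e$. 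Matching this against the inductive formula of \cite[Theorem B(ii)]{AGH} is the bridge that makes the final count come out.
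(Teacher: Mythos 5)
Your qualitative step and your identification of the $v_\pi(E'(\pi))+ev_p(i-t)$ contribution are in the right spirit, but the inductive mechanism you propose does not produce the recursion in the statement. Peeling $(\phi^{s-2})^*(\mathcal{I})$ off of $\mathcal{I}_{s-1}$ relates the $(s-1)$-term to $\mathcal{I}^{-\lfloor t/p\rfloor}\otimes(\mathcal{N}^{\geq i-t}\mathcal{O}_{\widehat{\DD}}/\mathcal{N}^{\geq i+1-t}\mathcal{O}_{\widehat{\DD}}\otimes \mathcal{I}_{s-2})\{i\}$ with the \emph{same} Nygaard cut $i-t$, whereas the lemma's recursion lands on the term with cut $i-t'$, $t'=t(u,p,s-2,n)<t$. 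The change of Nygaard index is the whole point, and it is achieved by the divided Frobenius: the paper first splits off the top graded piece $\mathcal{N}^{i-t}\mathcal{O}_{\widehat{\DD}}\otimes\mathcal{I}_{s-1}\{i\}$ (this is where $v_\pi(E'(\pi))+ev_p(i-t)$ enters, via Lemma~\ref{lem: thh(A) purely algebraically}), then walks the remaining quotient $\mathcal{N}^{\geq j}\mathcal{O}_{\widehat{\DD}}/(\mathcal{N}^{\geq j}\mathcal{O}_{\widehat{\DD}}\otimes\mathcal{I}_{s-1})$ down from $j=i-t$ to $j=0$ using cofibers $\mathcal{N}^{j-1}\mathcal{O}_{\widehat{\DD}}/p^{s-1}$ (the entire $e(s-1)$ appears in the single rung $j=0$, as $|A/p^{s-1}|$, not as one factor per twist), and only then applies Lemmas~\ref{lem: Tsaladis's algebraic theorem} and~\ref{lem: Segal conjecture with line bundles} to identify $\mathcal{O}_{\widehat{\DD}}/\mathcal{I}_{s-1}\{i\}$ with the $(s-2,t')$-term via the Frobenius, closing the induction. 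Without that last equivalence your scheme has no way to reach the term with $t'$ in place of $t$.

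There is also a gap in the qualitative claim. Knowing $\operatorname{R\Gamma}\in D^{[0,1]}$ and $H^0(\mathcal{N}^{\geq j}\DD_A)=0$ for $j\geq 1$ does not force the quotient to be concentrated in degree $1$: for a cofiber sequence $F\to X\to Q$ with $F,X$ having cohomology only in degree $1$, one gets $H^0(Q)=\ker\bigl(H^1(F)\to H^1(X)\bigr)$, which need not vanish; indeed the intermediate quotients $\mathcal{N}^{j}\DD_A/p^{s-1}$ in the paper's ladder have nonzero $H^0$. The required injectivity on $H^1$ is exactly what the paper's inductive fiber-sequence analysis (bottoming out in the Frobenius equivalence with a term already known to be concentrated in degree $1$) supplies, so it cannot be waved through as ``devissage.''
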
 

\begin{proof} 

We will prove this by induction on $s$. When $s=0$ this is by definition trivial. It will also be helpful to handle the case $s=1$ separately. When $s=1$ this is equivalent to the complex $\mathcal{N}^{i-t}\DD_{A}$ and by our assumptions on the integers $i$ and $t$ together with the previous Lemma the result follows.

Now suppose that for some $s\in \NN$ we have the above result. We then have a fiber sequence \[ 
\mathcal{I}^{-\lfloor t/p\rfloor}\otimes \mathcal{N}^{i-t}\mathcal{O}_{\DD}\otimes \mathcal{I}_s\{i\}\to \mathcal{I}^{-\lfloor t/p\rfloor}\otimes (\mathcal{N}^{\geq i-t}\mathcal{O}_{\DD}/(\mathcal{N}^{\geq i-t-1}\mathcal{O}_\DD\otimes \mathcal{I}_s)\{i\})\to \mathcal{I}^{-\lfloor t/p\rfloor}\otimes (\mathcal{N}^{\geq i-t}\mathcal{O}_\DD/(\mathcal{N^{\geq i-t}\mathcal{O}_\DD\otimes \mathcal{I}_s})\{i\}) 
\] coming from factoring $\mathcal{N}^{\geq i-t-1}\mathcal{O}_\DD\otimes \mathcal{I}_s\to \mathcal{N}^{\geq i-t}\mathcal{O}_\DD$ through $\mathcal{N}^{\geq i-t}\mathcal{O}_\DD\otimes \mathcal{I}_s$. By the above and the fact that the line bundles $\mathcal{I}$ and $\mathcal{O}_\DD\{1\}$ are trivial after tensoring with $\mathcal{N}^i\mathcal{O}_{\DD}$ we are reduced to showing the result for $\mathcal{I}^{-\lfloor t/p\rfloor}\otimes \mathcal{N}^{\geq i-t}\mathcal{O}_{\DD}\{t\}/(\mathcal{N}^{\geq i-t}\mathcal{O}_{\DD}\otimes \mathcal{I}_{s})\{i\}$. Assuming that this is true we then also have from the induced short exact sequence that 
\begin{align*} 
    v_p(|H^1(A; \mathcal{I}^{-\lfloor t/p\rfloor}\otimes(\mathcal{N}^{\geq i-t}\mathcal{O}_{\widehat{\DD}}&/\mathcal{N}^{\geq i+1-t}\mathcal{O}\otimes \mathcal{I}_{s})\{i\})|)\\ 
    &=v_p(|H^1(\mathcal{N}^{i-t}\DD_A)|)\\ 
    &+v_p(|H^1(\mathcal{O}^\wedge_p; \mathcal{I}^{-\lfloor t/p\rfloor}\otimes(\mathcal{N}^{\geq i-t}\mathcal{O}_{\widehat{\DD}}/\mathcal{N}^{\geq i-t}\mathcal{O}\otimes \mathcal{I}_{s})\{i\})|)\\ 
    &= v_p(|A/i-t|)+v_p(|A/E'(\pi)|)\\&+v_p(|H^1(\mathcal{O}^\wedge_p; \mathcal{I}^{-\lfloor t/p\rfloor}\otimes(\mathcal{N}^{\geq i-t}\mathcal{O}_{\widehat{\DD}}/\mathcal{N}^{\geq i-t}\mathcal{O}\otimes \mathcal{I}_{s})\{i\})|)\\ 
    &= v_p(|A/\pi|)(ev_p(i-t)+v_\pi(E'(\pi)))\\ 
    &+v_p(|H^1(\mathcal{O}^\wedge_p; \mathcal{I}^{-\lfloor t/p\rfloor}\otimes(\mathcal{N}^{\geq i-t}\mathcal{O}_{\widehat{\DD}}/\mathcal{N}^{\geq i-t}\mathcal{O}\otimes \mathcal{I}_{s})\{i\})|).
\end{align*}

Notice that there are maps of fiber sequences \[\begin{tikzcd} 
\mathcal{N}^{\geq i-t}\mathcal{O}_{\DD}\otimes \mathcal{I}_{s}\{i\} \arrow[d]\arrow[r] & \mathcal{N}^{\geq i-t}\mathcal{O}_{\DD}\{i\}\arrow[d] \arrow[r] & \mathcal{N}^{\geq i-t}\mathcal{O}_{\DD}\{i\}/\mathcal{N}^{\geq i-t}\mathcal{O}_{\DD}\otimes \mathcal{I}_{s}\{i\}\arrow[d]\\ 
\mathcal{N}^{\geq i-t-1}\mathcal{O}_{\DD}\otimes \mathcal{I}_{s}\{t\} \arrow[r] & \mathcal{N}^{\geq i-1-t}\mathcal{O}_{\DD}\{i\} \arrow[r] & \mathcal{N}^{\geq i-1-t}\mathcal{O}_{\DD}\{i\}/\mathcal{N}^{\geq i-1-t}\mathcal{O}_{\DD}\otimes \mathcal{I}_{s}\{i\} 
\end{tikzcd}\] and so there is a fiber sequence of the vertical cofibers which remains true after tensoring with $\mathcal{I}^{-\lfloor t/p\rfloor}$. The cofiber of the left and middle maps are both equivalent to $\mathcal{N}^{i-1-t}\mathcal{O}_{\DD}$ and the induced map between them is multiplication by $p^{s}$. To see this note that by Theorem~\ref{thm: qrsp are basis} it is enough to show this on quasiregular semiperfectoid rings where $\mathcal{N}^{i-1-t}\DD_S$ are naturally $\DD_S/d$-modules and $\mathcal{I}_{s}=(\phi(d)\ldots \phi^{s}(d))= (p^{s})\mod d$. In particular for all $j\in \NN$ there are fiber sequences \[\mathcal{N}^{\geq j}\mathcal{O}_{\DD}\{i\}/\mathcal{N}^{\geq j}\mathcal{O}_{\DD}\otimes \mathcal{I}_{s}\{i\}\to \mathcal{N}^{\geq j-1}\mathcal{O}_{\DD}\{i\}/\mathcal{N}^{\geq j-1}\mathcal{O}_{\DD}\otimes \mathcal{I}_{s}\{i\}\to \mathcal{N}^{j-1}\mathcal{O}_{\DD}/p^{s}\] and so inductively it is enough to show the desired statement for $j=0$.

With the exception of $j=0$ we also have that $\mathcal{N}^j\DD_A/p^s$ is concentrated in degrees $0$ and $1$ with $v_p(|H^0|)=v_p(|H^1|)$. This is because $\mathcal{N}^j\DD_A$ is concentrated in degree $1$ and so the cofiber of a map can only pick up new cohomology in degree $0$, and the fact that the cardinalities are equal follows from the long exact sequence. In particular the orders of the first homology groups $\mathcal{I}^{-\lfloor t/p\rfloor}\otimes \mathcal{N}^{\geq j}\mathcal{O}_\DD/(\mathcal{N}^{\geq j}\mathcal{O}_\DD\otimes \mathcal{I}_s)\{i\}$ are all the same with the exception of $j=0$. We also have that $N^0\DD_{A}/p^s=A/p^s$ and therefore  
\begin{align*} 
    v_p(|H^1(A; \mathcal{I}^{-\lfloor t/p\rfloor}\otimes&(\mathcal{N}^{\geq i-t}\mathcal{O}_{\widehat{\DD}}/\mathcal{N}^{\geq i-t}\mathcal{O}\otimes \mathcal{I}_{s})\{i\})|)=\\ 
    & v_p(|A/\pi|)(es)+v_p(|H^1(A; \mathcal{I}^{-\lfloor t/p\rfloor}\otimes(\mathcal{O}_{\widehat{\DD}}/ \mathcal{I}_{s})\{i\})|).
\end{align*}

From Lemma~\ref{lem: Tsaladis's algebraic theorem}, Lemma~\ref{lem: Segal conjecture with line bundles}, the proof of Theorem~\ref{thm: tc filtration simplification segal}, and since we are assuming $i-t\geq 1$, the Frobenius \[\operatorname{R\Gamma}_{\mathrm{qSyn}}(A; \mathcal{I}^{-\lfloor t'/p\rfloor}\mathcal{N}^{\geq i-t'}\mathcal{O}_{\DD}/\mathcal{N}^{\geq i-t'+1}\mathcal{O}_{\DD}\otimes \mathcal{I}_{s-1}\{i\})\to \operatorname{R\Gamma}_{\mathrm{qSyn}}(A; \mathcal{I}^{-\lfloor t/p\rfloor}\otimes \mathcal{O}_{\DD}/\mathcal{I}_{s}\mathcal{O}_{\DD}\{i\})\] is an equivalence. This satisfies the inductive hypothesis since $i-t'\geq i-t$ and so the result follows. 

\end{proof}

We have now handled the majority of the cases appearing in Theorem~\ref{thm: computation for F-curves}, all the remains are the sheaves appearing in the final product. We handle those in the following Lemma.

\begin{lem}\label{lem: only junk in odd degrees coker of v} 

    Let $i,s\in \mathbb{N}$ and $u\in J_p$ be such that $i-t(u,p,s-1,n)>0$ and $s>v_p(n)$. Then \[\operatorname{R\Gamma}_{\mathrm{qSyn}}(A; \mathcal{I}^{-\lfloor t/p\rfloor}\otimes (\mathcal{N}^{\geq i-t}\mathcal{O}_\DD/(\phi^{s-1-v_p(n)})^*\mathcal{I}_{v_p(n)})\{i\})\] is $p$-power torsion concentrated in homological degree $-1$.   

\end{lem}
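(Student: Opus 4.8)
The plan is to run an induction on $s$ --- equivalently on $k:=s-1-v_p(n)\geq 0$, the hypothesis $s>v_p(n)$ being exactly $k\geq 0$ --- parallel to, and re-using, the proof of Lemma~\ref{lem: only junk in odd degrees}. For the base case $k=0$ one has $(\phi^0)^*(\mathcal{I}_{v_p(n)})=\mathcal{I}_{v_p(n)}$, so the sheaf is $\mathcal{N}^{\geq i-t}\mathcal{O}_{\widehat{\DD}}/(\mathcal{N}^{\geq i-t}\mathcal{O}_{\widehat{\DD}}\otimes\mathcal{I}_{v_p(n)})\{i\}$, which is precisely the sheaf whose $\operatorname{R\Gamma}_{\mathrm{qSyn}}(A;-)$ is shown to be $p$-power torsion concentrated in homological degree $-1$ inside the proof of Lemma~\ref{lem: only junk in odd degrees} (using $i-t\geq 1$, Lemma~\ref{lem: thh(A) purely algebraically}, the triviality of $\mathcal{I}$ and $\mathcal{O}_{\widehat{\DD}}\{1\}$ on Nygaard graded pieces, and $\ndim(A)\leq 1$).

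For the inductive step I would combine two fiber sequences. First, the defining cofiber sequence $(\phi^{k})^*(\mathcal{I}_{v_p(n)})\otimes\mathcal{N}^{\geq i-t}\mathcal{O}_{\widehat{\DD}}\{i\}\to\mathcal{N}^{\geq i-t}\mathcal{O}_{\widehat{\DD}}\{i\}\to\mathcal{N}^{\geq i-t}\mathcal{O}_{\widehat{\DD}}/(\phi^{k})^*(\mathcal{I}_{v_p(n)})\{i\}$: since $i-t\geq 1$, the two source/target terms and their $\mathcal{I}^{-\lfloor t/p\rfloor}$-twists have Nygaard graded pieces the complexes $\mathcal{N}^{j}\widehat{\DD}_A$ with $j\geq 1$ (all the line bundles trivializing there, by Lemma~\ref{lem: Segal conjecture with line bundles} and the dévissage of Lemma~\ref{lem: cohomology bounds stable with I}), hence $\operatorname{R\Gamma}_{\mathrm{qSyn}}(A;-)$ of them is concentrated in homological degree $-1$ by Lemma~\ref{lem: thh(A) purely algebraically}; thus the complex $F:=\operatorname{R\Gamma}_{\mathrm{qSyn}}(A;\mathcal{I}^{-\lfloor t/p\rfloor}\otimes\mathcal{N}^{\geq i-t}\mathcal{O}_{\widehat{\DD}}/(\phi^{k})^*(\mathcal{I}_{v_p(n)})\{i\})$ lies in $D^{[0,1]}(\ZZ_p)$, and it is $p$-power torsion because on Nygaard graded pieces $(\phi^{k})^*(\mathcal{I}_{v_p(n)})$ reduces to $(p^{v_p(n)})$. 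Second, the canonical map compares $F$ to the ``$\mathrm{TP}$-side'' complex $Y:=\operatorname{R\Gamma}_{\mathrm{qSyn}}(A;\mathcal{I}^{-\lfloor t/p\rfloor}\otimes\mathcal{O}_{\widehat{\DD}}/(\phi^{k})^*(\mathcal{I}_{v_p(n)})\{i\})$ via a cofiber sequence $F\to Y\to Z$, where $Z=\operatorname{R\Gamma}_{\mathrm{qSyn}}(A;-)$ of $(\phi^k)^*(\mathcal{I}_{v_p(n)})\otimes(\mathcal{O}_{\widehat{\DD}}/\mathcal{N}^{\geq i-t}\mathcal{O}_{\widehat{\DD}})\to\mathcal{O}_{\widehat{\DD}}/\mathcal{N}^{\geq i-t}\mathcal{O}_{\widehat{\DD}}$; all Nygaard graded pieces of this cofiber are $\mathcal{N}^{j}\widehat{\DD}_A/p^{v_p(n)}$ for $0\leq j<i-t$ (with $\mathcal{N}^0\widehat{\DD}_A=A$), which lie in $D^{[0,1]}(\ZZ_p)$, so $Z$ has no cohomology in negative degrees. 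Meanwhile the cyclotomic Frobenius underlying the $can-\phi$ map of Equation~\ref{eqn: quasisyntomic filtration for tc} identifies $Y$, by Lemma~\ref{lem: Tsaladis's algebraic theorem}, Lemma~\ref{lem: Segal conjecture with line bundles} and the argument of Theorem~\ref{thm: tc filtration simplification segal} (legitimate since $\fdim(A)\leq 1\leq i-t'$ with $t'=t(u,p,s-2,n)\leq t$), with $\operatorname{R\Gamma}_{\mathrm{qSyn}}(A;\mathcal{I}^{-\lfloor t'/p\rfloor}\otimes\mathcal{N}^{\geq i-t'}\mathcal{O}_{\widehat{\DD}}/(\phi^{k-1})^*(\mathcal{I}_{v_p(n)})\{i\})$, which is concentrated in homological degree $-1$ by the inductive hypothesis ($k-1\geq 0$ and $i-t'\geq i-t>0$). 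Feeding this into $F\to Y\to Z$ forces $H^{0}(F)=0$ (as $Z$ has no $H^{-1}$ and $Y$ no $H^{0}$); together with $F\in D^{[0,1]}(\ZZ_p)$ from the first fiber sequence, $F$ is concentrated in homological degree $-1$, and $p$-power torsion as already noted.

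The step I expect to require the most care is this Frobenius equivalence for the cokernel-of-Verschiebung terms: Lemma~\ref{lem: Segal conjecture with line bundles} and the proof of Theorem~\ref{thm: tc filtration simplification segal} spell out the Segal-conjecture input only for the ``$e\nmid n$'' sheaves, and one must check that the same dévissage --- $\phi_{i-t'}$ is a filtered equivalence on $\mathcal{N}^{\geq i-t'}\mathcal{O}_{\widehat{\DD}}$ because $i-t'\geq\fdim(A)$, and it carries the ideal $(\phi^{k-1})^*(\mathcal{I}_{v_p(n)})$ to $(\phi^{k})^*(\mathcal{I}_{v_p(n)})$ compatibly with the induced filtrations --- applies verbatim to these quotients, so that the $can-\phi$ analysis of Theorem~\ref{thm: tc filtration simplification segal} genuinely yields the displayed identification. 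The remaining bookkeeping --- that $Z$ has no negative cohomology and that the (infinite) Nygaard tower does not spoil $p$-power torsion, both consequences of $\ndim(A)\leq 1$ --- is routine.
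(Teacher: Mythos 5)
Your proof is correct and follows essentially the same route as the paper's: induction on $s$ with the base case $s-1=v_p(n)$ delegated to the first reduction made in the proof of Lemma~\ref{lem: only junk in odd degrees}, a Nygaard d\'evissage to pass from the $\mathcal{N}^{\geq i-t}$-quotient to the $\mathcal{O}_{\widehat{\DD}}$-quotient, and the divided-Frobenius equivalence (Lemma~\ref{lem: Tsaladis's algebraic theorem}, Lemma~\ref{lem: Segal conjecture with line bundles}, and the argument of Theorem~\ref{thm: tc filtration simplification segal}) to identify that quotient with the inductive-hypothesis complex. The only cosmetic difference is that you package the d\'evissage as a single cofiber sequence whose third term is built from $\mathcal{O}_{\widehat{\DD}}/\mathcal{N}^{\geq i-t}\mathcal{O}_{\widehat{\DD}}$, rather than descending the Nygaard tower one step at a time as in the paper.
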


\begin{proof} 

    Note that when $s-1=v_p(e)$ then this result was already proven in the proof of Lemma~\ref{lem: only junk in odd degrees}, it was the first reduction made. This will be the base case of an inductive proof.

    To this end, suppose the statement is true for some $s$. Then there are maps of fiber sequences  
\[
    \begin{tikzcd} 
\mathcal{N}^{\geq i-t}\mathcal{O}_{\DD}\otimes (\phi^{s-v_p(n)})^*\mathcal{I}_{v_p(n)}\{i\} \arrow[d]\arrow[r] & \mathcal{N}^{\geq i-t}\mathcal{O}_{\DD}\{i\}\arrow[d] \arrow[r] & \mathcal{N}^{\geq i-t}\mathcal{O}_{\DD}\{i\}/(\phi^{s-v_p(n)})^*\mathcal{I}_{v_p(n)}\{i\}\arrow[d]\\ 
\mathcal{N}^{\geq i-t-1}\mathcal{O}_{\DD}\otimes (\phi^{s-v_p(n)})^*\mathcal{I}_{v_p(n)}\{i\} \arrow[r] & \mathcal{N}^{\geq i-1-t}\mathcal{O}_{\DD}\{i\} \arrow[r] & \mathcal{N}^{\geq i-1-t}\mathcal{O}_{\DD}\{i\}/ (\phi^{s-v_p(n)})^*\mathcal{I}_{v_p(n)}\{i\} 
\end{tikzcd}\]
as in the previous proof inductively reduces us to showing that the sheaves \[\mathcal{I}^{-\lfloor t/p\rfloor}\otimes\mathcal{O}_\DD/((\phi^{s-v_p(n)})^*\mathcal{I}_{v_p(n)})\{i\}\] have cohomology concentrated in degree $1$ and are $p$-power torsion in those degrees. These sheaves are the target of the Frobenius maps of the forms \[\begin{tikzcd}\operatorname{R\Gamma}_{\mathrm{qSyn}}(A;\mathcal{I}^{-\lfloor t'/p\rfloor}\otimes\mathcal{N}^{\geq i-t'}\mathcal{O}_\DD/((\phi^{s-1-v_p(n)})^*\mathcal{I}_{v_p(n)})\{i\})\arrow[d]\\ 
\operatorname{R\Gamma}_{\mathrm{qSyn}}(A;\mathcal{I}^{-\lfloor t/p\rfloor}\otimes\mathcal{O}_\DD/((\phi^{s-1-v_p(n)})^*\mathcal{I}_{v_p(n)})\{i\})\end{tikzcd}\] which by Lemma~\ref{lem: Segal conjecture with line bundles} and the proof of Theorem~\ref{thm: tc filtration simplification segal} is an equivalence. The result follows.  

\end{proof}

We are now ready to prove Theorem~\ref{thm: AGH theorem}.

\begin{thm}\label{thm: AGH theorem redux} 

Let $n$ be a positive integer, $i$ a non-negative integer, and $A$ a CDVR of mixed characteristic $(0,p)$, uniformizer $\pi$ with Eisenstein polynomial $E(x)$, perfect residue field $k$, and ramification index $e$. Then there are isomorphisms \[K_{2i+1}(A[x]/x^n, (x);\ZZ_p)\simeq A^{n-1}.\] If in addition $k$ is finite then \[v_p(|K_{2i}(A[x]/x^n, (x);\ZZ_p)|)=ev_p(|k|)v_p((ni)!(i!)^{(n-2)})+v_p(k)v_\pi(E'(\pi))(ni-i)\] and since these groups are $p$-torsion this determines the cardinality. 

\end{thm}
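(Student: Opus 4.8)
The plan is to run the ring $A$ through Theorem~\ref{thm: computation for F-curves}. By Lemma~\ref{lem: thh(A) purely algebraically} the ring $A$ is F-smooth with $\ndim(A)\le 1$, so that theorem applies with exponent $n$; writing $r=i+1$ it identifies $K_{2i+1}(A[x]/x^n,(x);\ZZ_p)$ with a finite product of groups $H^0\!\big(A,\ \mathcal I^{-\lfloor t/p\rfloor}\otimes\mathcal N^{\ge i-t}\mathcal O_{\widehat{\DD}}/(\cdots)\{i\}\big)$ and $K_{2i}(A[x]/x^n,(x);\ZZ_p)$ with the analogous product of $H^1$'s, the index running over $u\in J_p$ with $s=s(p,(i+1)n,u)\ge 1$ (and the $s=0$ terms vanish). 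Since $t\le i$ always, each factor has ``outer Nygaard weight'' $i-t\ge 0$, and the whole computation is governed by the Nygaard-graded pieces $\mathcal N^j\DD_A$, which by Lemma~\ref{lem: thh(A) purely algebraically} are: $\mathcal N^0\DD_A=A$, and for $j\ge 1$ a finite group in cohomological degree $1$ of order $|A/E'(\pi)|\cdot|A/j|$. For $i=0$ the statement reduces to the classical computation of $K_1$ of a truncated polynomial algebra, so I assume $i\ge 1$ below.

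For the odd groups, the factors with $i-t\ge 1$ have $H^0=0$ and finite $p$-power $H^1$ by Lemma~\ref{lem: only junk in odd degrees} and Lemma~\ref{lem: only junk in odd degrees coker of v}, whose hypothesis is exactly $i-t>0$. The factors with $i-t=0$ are those built out of $\mathcal N^0\DD_A=A$; on this locus the Breuil--Kisin twists $\mathcal I^{\otimes\bullet}$ and $\mathcal O_{\widehat{\DD}}\{\bullet\}$ trivialize (Lemma~\ref{lem: Segal conjecture with line bundles}), and since $A$ is local each such factor contributes a free rank-one $A$-summand to $H^0$ (and a finite $p$-group to $H^1$), with the exception of the finitely many $u\in n'J_p$ ($n':=n/p^{v_p(n)}$) lying in the ``otherwise'' branch of Theorem~\ref{thm: computation for F-curves}, where $(\phi^{s-1-v_p(n)})^*(\mathcal I_{v_p(n)})$ kills the unit. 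A count of the $u$ for which the outer weight vanishes and the factor survives — the same combinatorial count as in the proof of Theorem~A of \cite{AGH}, which one checks depends only on $n$ — shows there are exactly $n-1$ of them, so $K_{2i+1}(A[x]/x^n,(x);\ZZ_p)\cong A^{n-1}$.

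For the even groups (now assuming $k$ finite, so all these $H^1$'s are finite $p$-groups) I would sum $v_p(|H^1|)$ over all factors using the inductive formulas of Lemma~\ref{lem: only junk in odd degrees} and Lemma~\ref{lem: only junk in odd degrees coker of v}. Unwinding the recursion, each factor contributes $v_p(|k|)$ times a sum over its constituent pieces: each $\mathcal N^j\DD_A$ with $j\ge 1$ contributes $v_\pi(E'(\pi))+e\,v_p(j)$ and each $\mathcal I_s$-subquotient contributes $e\,s$. The essential point is that the combinatorial skeleton of this double sum — the index set, the functions $s(p,(i+1)n,u)$ and $t$, and the exponents $v_p(j)$ and $s$ that occur — is independent of $A$; only $v_p(|k|)$, $v_\pi(E'(\pi))$ and $e$ enter as scalars. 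Hence $v_p\big(|K_{2i}(A[x]/x^n,(x);\ZZ_p)|\big)=v_p(|k|)\,v_\pi(E'(\pi))\cdot N+e\,v_p(|k|)\cdot M$, with $N$ the total number of pieces $\mathcal N^{\ge 1}\DD_A$ occurring and $M$ an $A$-independent integer. Specializing to $A=\ZZ_p$ — where $e=1$, $v_p(|k|)=1$, $E'(\pi)=1$, and $K(\ZZ[x]/x^n,(x))\otimes\ZZ_p\simeq K(\ZZ_p[x]/x^n,(x);\ZZ_p)$ with even groups $p$-torsion — Theorem~A of \cite{AGH} forces $M=v_p\big((ni)!(i!)^{n-2}\big)$, leaving only the $A$-free count $N=(n-1)i$.

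The hard part is exactly this last bookkeeping. Identifying $M=v_p\big((ni)!(i!)^{n-2}\big)$ amounts to recognizing Legendre's formula $v_p(m!)=\sum_{\ell\ge 1}\lfloor m/p^\ell\rfloor$ inside the telescoping sum of the $v_p(i-t)$'s and $s$'s generated by the recursion, and establishing $N=(n-1)i$ is the analogous counting identity for the number of ``$\mathbb L\Omega^1$-type'' contributions; both are of the flavor of the cyclic-polytope bookkeeping of \cite{Cyclic_Polytopes} and its streamlining in \cite{Speirs_truncated, AGH}, carried out here with the single new input that $\mathbb L\Omega^1_{A/\ZZ_p}$ has order $|A/E'(\pi)|$, i.e. the residue cardinality of the different, which is precisely what produces the extra term $v_p(|k|)\,v_\pi(E'(\pi))(ni-i)$.
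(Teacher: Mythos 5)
Your proposal is correct and follows essentially the same route as the paper: run $A$ through Theorem~\ref{thm: computation for F-curves} (legitimized by Lemma~\ref{lem: thh(A) purely algebraically}), kill the $H^0$ of every factor with positive outer Nygaard weight via Lemmas~\ref{lem: only junk in odd degrees} and~\ref{lem: only junk in odd degrees coker of v}, count the $n-1$ surviving copies of $A=\mathcal{N}^0\widehat{\DD}_A$ for the odd groups, and for the even groups observe that $v_p(|K_{2i}|)$ is a fixed homogeneous linear function of $ev_p(|k|)$ and $v_p(|k|)v_\pi(E'(\pi))$, pinning down the first coefficient by specializing to $\ZZ_p$ and quoting \cite{AGH}. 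The only step you leave unexecuted is the count $N=ni-i$ of the $E'(\pi)$-contributions; the paper closes this by noting that each factor contributes $s$ (or $s-1$ in the $n-1$ exceptional cases, or $v_p(n)$ for the second-product factors), so that $N=\sum_{u}h(p,i+1,n,u)-(n-1)=v_p(|\mathbb{W}_{(i+1)n}(\FF_p)/V_n\mathbb{W}_{i+1}(\FF_p)|)-(n-1)=n(i+1)-(i+1)-(n-1)$ by \cite[Lemma 2]{Speirs_truncated} — a short argument, but one you should supply rather than defer to ``bookkeeping.''
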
 

\begin{proof}[Proof of Theorem~\ref{thm: AGH theorem}] 

We will begin with the odd group computation.  Note that all of the sheaves appearing in Theorem~\ref{thm: computation for F-curves} are of the form needed for either Lemma~\ref{lem: only junk in odd degrees} or Lemma~\ref{lem: only junk in odd degrees coker of v} unless $r+1=t$ and $e\nmid up^{s-1}$. In the case when $r+1=t$ and $e\nmid ep^{s-1}$ we then have a cofiber sequence of sheaves \[\mathcal{N}^0\mathcal{O}_{\DD}\otimes \mathcal{I}_{s-1}\{r-1\}\to \mathcal{N}^{\geq 0}\mathcal{O}_{\widehat{\DD}}/\mathcal{N}^{\geq 1}\mathcal{O}_{\widehat{\DD}}\otimes \mathcal{I}_{s-1}\{r-1\}\to \mathcal{N}^{\geq 1}\mathcal{O}_{\DD}/(\mathcal{N}^{\geq 1}\mathcal{O}_\DD\otimes \mathcal{I}_{s-1})\{r-1\}\] and so a cofiber sequence after tensoring with $\mathcal{I}^{-\lfloor t/p\rfloor}$. Lemma~\ref{lem: only junk in odd degrees coker of v} applies to sheaf on the right so the only possible contribution to the rank can come from the left-hand sheaf. We also have a canonical identification of $\mathcal{N}^{0}\mathcal{O}_{\DD}$ with the ($p$-completed) structure sheaf and a canonical trivialization of all the other line bundles appearing in the left-hand term of this fiber sequence after base changing. Hence from the long exact sequence we have that we get that we get $H^0(A; \mathcal{N}^{\geq 0}\mathcal{O}_{\widehat{\DD}}/\mathcal{N}^{\geq 1}\mathcal{O}_{\widehat{\DD}}\otimes \mathcal{I}_{s-1}\{r-1\})=A$.

To finish the first part of the proof it only remains to count how many times $t=r+1$ with $e\nmid up^{s-1}$. By definition this amounts to counting the number of integers $up^{s-1}$ with $\lfloor (up^{s-1}-1)/e\rfloor =r+1$ and $e\nmid up^{s-1}$. The options for $up^{s-1}$ are $\{n(r+1)+1, n(r+1)+2, \ldots, n(r+1)+n-1\}$.

For the even groups, note that the recurrence relation of \cite[Theorem B(ii)]{AGH} is the input needed to make the arguments of \cite[Proposition 3.1(ii)]{AGH} and \cite[Proposition 3.2(ii)]{AGH} work. From Lemma~\ref{lem: only junk in odd degrees} and the proof of Lemma~\ref{lem: only junk in odd degrees coker of v} we have the correct generalization of this relation. This can be worked out, but if we take Theorem~\ref{thm: agh} as input it makes the argument easier. We take this approach below.

The recursive formulas set up in Lemma~\ref{lem: only junk in odd degrees} and Lemma~\ref{lem: only junk in odd degrees coker of v}, and therefore the formulas for $p$-adic valuation of the sizes of the groups $K_{2i}(A[x]/x^n, (x);\ZZ_p)$, do not depend on $A$ itself but only the values $|k|$, $e$, and $v_\pi(E'(\pi))$. In particular the recursion relations give that\[v_p(|K_{2i}(A[x]/x^n, (x);\ZZ_p)|)=l(ev_p(|k|), v_p(|k|)v_\pi(E'(\pi)))\] where $l(x,y)$ is some fixed homogeneous linear polynomial. From Theorem~\ref{thm: agh} \[l(1,0)=v_p((ni)!(i!)^{n-2})\] and so to determine $l(x,y)$ we need only find $l(0,1)$. By the recursive formula from Lemma~\ref{lem: only junk in odd degrees} the coefficient of $v_p(|k|)v_\pi(E'(\pi))$ in $v_p(|H^1(A; \mathcal{I}^{-\lfloor t/p\rfloor}\otimes(\mathcal{N}^{\geq (i+1)-t}\mathcal{O}_{\widehat{\DD}}/\mathcal{N}^{\geq (i+1)+1-t}\mathcal{O}\otimes \mathcal{I}_{s-1})\{i\})|)$ is $s$ unless $i+1=t$ in which case the above argument shows the coefficient is $s-1$. The only other terms appearing in Theorem~\ref{thm: computation for F-curves} which we need to account for are the sheaves from Lemma~\ref{lem: only junk in odd degrees coker of v} which by the proof of that Lemma each have coefficient $v_p(n)$. These are, except for when $i+1=t$, the values of the function $h(p,i+1,n,u)$ given in \cite[Lemma 2]{Speirs_truncated}. When $i+1=t$ then $h(p,i+1,n,u)=s$ but our coefficient is $s-1$, so we must subtract $1$ for each time this occurs to match these values up, which from the above occurs $n-1$ times. It then follows that  

\begin{align*} 
l(0,1)&=(\sum_{u\in J_p}h(p,i+1,n,u))-(n-1)\\ 
&=v_p(|\mathbb{W}_{(i+1)n}(\mathbb{F}_p)/V_{n}\mathbb{W}_{i+1}(\mathbb{F}_p)|)-(n-1)\\ 
&=n(i+1)-(i+1)-(n-1)\\ 
&= ni-i 
\end{align*} 
as desired. Here the second equality is coming from \cite[Lemma 2]{Speirs_truncated}. 
\end{proof} 
\printbibliography
\end{document}